\newtheorem{theorem}{Theorem}
\newtheorem{lemma}{Lemma}
\newtheorem{corollary}{Corollary}
\theoremstyle{definition}
\newtheorem{remark}{Remark}
\newtheorem{definition}{Definition}
\newtheorem*{assumption*}{\assumptionnumber}
\providecommand{\assumptionnumber}{}
\newcommand{\argmin}{\mathop{\mathrm{argmin}}}
\newcommand{\minimize}{\mathop{\mathrm{minimize}}}
\newcommand{\st}{\mathop{\mathrm{subject\,\,to}}}
\def\R{\mathbb{R}}
\def\C{\mathbb{C}}
\def\E{\mathbb{E}}
\def\P{\mathbb{P}}
\def\col{\mathrm{col}}
\def\nul{\mathrm{null}}
\def\spa{\mathrm{span}}
\def\sign{\mathrm{sign}}
\def\diag{\mathrm{diag}}
\def\hy{\hat{y}}
\def\hf{\hat{f}}
\def\halpha{\hat{\alpha}}
\def\htheta{\hat{\theta}}
\def\cG{\mathcal{G}}
\def\cH{\mathcal{H}}
\def\cN{\mathcal{N}}
\def\cV{\mathcal{V}}
\def\cW{\mathcal{W}}
\def\cX{\mathcal{X}}
\newcommand*\rel@kern[1]{\kern#1\dimexpr\macc@kerna}
\newcommand*\widebar[1]{%
  \begingroup
  \def\mathaccent##1##2{%
    \rel@kern{0.8}%
    \overline{\rel@kern{-0.8}\macc@nucleus\rel@kern{0.2}}%
    \rel@kern{-0.2}%
  }%
  \macc@depth\@ne
  \let\math@bgroup\@empty \let\math@egroup\macc@set@skewchar
  \mathsurround\z@ \frozen@everymath{\mathgroup\macc@group\relax}%
  \macc@set@skewchar\relax
  \let\mathaccentV\macc@nested@a
  \macc@nested@a\relax111{#1}%
  \endgroup
}
\DeclareSymbolFont{bbold}{U}{bbold}{m}{n}
\DeclareSymbolFontAlphabet{\mathbm}{bbold}
\def\Id{\mathrm{Id}}
\def\S{\mathcal{S}}
\def\DS{\mathcal{DS}}
\def\NS{\mathcal{NS}}
\def\DNS{\mathcal{DNS}}
\def\TV{\mathrm{TV}}
\def\A{\mathbb{A}}
\def\B{\mathbb{B}}
\def\D{\mathbb{D}}
\def\H{\mathbb{H}}
\def\I{\mathbb{I}}
\def\F{\mathbb{F}}
\def\G{\mathbb{G}}
\def\K{\mathbb{K}}
\def\L{\mathbb{L}}
\def\M{\mathbb{M}}
\def\N{\mathbb{N}}
\def\Q{\mathbb{Q}}
\def\U{\mathbb{U}}
\def\V{\mathbb{V}}
\def\W{\mathbb{W}}
\def\Z{\mathbb{Z}}
\def\T{\mathsf{T}}
\def\cumsum{\mathrm{cumsum}}
\def\diff{\mathrm{diff}}
\def\rev{\mathrm{rev}}
\def\nfrac{\nicefrac}
\title{Divided Differences, Falling Factorials, and Discrete Splines \\
{\Large Another Look at Trend Filtering and Related Problems}}
\author{Ryan J.\ Tibshirani}
\date{}
\begin{document}
\maketitle
\tableofcontents
\newpage

\begin{abstract}
This paper reviews a class of univariate piecewise polynomial functions known as
{\it discrete splines}, which share properties analogous to the better-known
class of spline functions, but where continuity in derivatives is replaced by (a  
suitable notion of) continuity in {\it divided differences}. As it happens,
discrete splines bear connections to a wide array of developments in applied
mathematics and statistics, from divided differences and Newton interpolation
(dating back to over 300 years ago) to trend filtering (from the last 15 years).
We survey these connections, and contribute some new perspectives and new
results along the way.    
\end{abstract}

\section{Introduction}
\label{sec:intro}

Nonparametric regression is a fundamental problem in statistics, in which we
seek to flexibly estimate a smooth trend from data without relying on specific 
assumptions about its form or shape. The standard setup is to assume that data
comes from a model (often called the ``signal-plus-noise'' model):
$$
y_i = f_0(x_i) + \epsilon_i, \quad i=1,\ldots,n.
$$
Here, $f_0 : \cX \to \R$ is an unknown function to be estimated, referred to as 
the {\it regression function}; $x_i \in \cX$, $i=1,\ldots,n$ are {\it design
  points}, often (though not always) treated as nonrandom; $\epsilon_i \in \R$,
$i=1,\ldots,n$ are random errors, usually assumed to be i.i.d.\ (independent and
identically distributed) with zero mean; and $y_i \in \R$, $i=1,\ldots,n$ are
referred to as {\it response points}. Unlike in a {\it parametric} problem,
where we would assume $f_0$ takes a particular form (for example, a polynomial 
function) that would confine it to some finite-dimensional function space, in a 
{\it nonparametric} problem we make no such restriction, and instead assume $f_0$ 
satisfies some broader smoothness properties (for example, it has two bounded
derivatives) that give rise to an infinite-dimensional function space.

The modern nonparametric toolkit contains an impressive collection of diverse 
methods, based on ideas like kernels, splines, and wavelets, to name just a
few. Many estimators of interest in nonparametric regression can be formulated
as the solutions to optimization problems based on the observed data. At a high 
level, such optimization-based methods can be divided into two camps. The
first can be called the {\it continuous-time approach}, where we optimize over a 
function $f : \cX \to \R$ that balances some notion of goodness-of-fit (to the 
data) with another notion of smoothness. The second can be called the {\it 
  discrete-time approach}, where we optimize over function evaluations
$f(x_1),\ldots,f(x_n)$ at the design points, again to balance goodness-of-fit
with smoothness.\footnote{The use of the word ``time'' here is completely
  informal. In some applications, the input $x \in \cX$ might actually index
  time, and thus the names ``continuous-time'' and ``discrete-time'' would take
  on a direct meaning; but in general, they are only to be understood loosely, in 
  reference to the distinction between modeling an entire function, and modeling
  function evaluations, as in \eqref{eq:tv_denoising_cont} and
  \eqref{eq:tv_denoising}, respectively.}    

The main difference between these approaches lies in the optimization variable:
in the first it is a function $f$, and in the second it is a vector $\theta =
(f(x_1),\ldots,f(x_n)) \in \R^n$. Each perspective comes with its advantages.
The discrete-time approach is often much simpler, conceptually speaking, as it
often requires only a fairly basic level of mathematics in order to explain and
understand the formulation at hand. Consider, for example, a setting with $\cX
= [a,b]$ (the case of univariate design points), where we assume without a loss
of generality that $x_1 < x_2 < \cdots < x_n$, and we define an estimator by the
solution of the optimization problem: 
\begin{equation} 
\label{eq:tv_denoising}
\minimize_\theta \; \frac{1}{2} \sum_{i=1}^n (y_i - \theta_i)^2 + \lambda
\sum_{i=1}^{n-1} |\theta_i - \theta_{i+1}|. 
\end{equation}
In the above criterion, each $\theta_i$ plays the role of a function evaluation
$f(x_i)$; the first term measures the goodness-of-fit (via squared error loss)
of the evaluations to the responses; the second term measures the jumpiness of
the evaluations across neighboring design points, $\theta_i = f(x_i)$ and
$\theta_{i+1} = f(x_{i+1})$; and $\lambda \geq 0$ is a tuning parameter
determining the relative importance of the two terms for the overall
minimization, with a larger $\lambda$ translating into a higher importance on 
encouraging smoothness (mitigating jumpiness). 

Reasoning about the discrete-time problem \eqref{eq:tv_denoising} can be done
without appealing to sophisticated mathematics, both conceptually and formally. 
Arguably, this could be appropriate for an introductory course on nonparametric
statistical estimation. On the other hand, consider the estimator defined by the
solution of the optimization problem:\footnote{Here and throughout, we say ``the
  solution'' only for simplicity. Problem \eqref{eq:tv_denoising_cont}, and more
  generally problem \eqref{eq:trend_filter_cont}, need not admit unique
  solutions. The discrete-time problems \eqref{eq:tv_denoising} and 
  \eqref{eq:trend_filter_old} do, however, always admit unique solutions,
  because their criteria are strictly convex.}
\begin{equation} 
\label{eq:tv_denoising_cont}
\minimize_f \; \frac{1}{2} \sum_{i=1}^n \big(y_i - f(x_i)\big)^2 +  
\lambda \, \TV(f).
\end{equation}
The minimization is taken over functions (for which the criterion is
well-defined and finite); the first term measures the goodness-of-fit of the
evaluations to the response points, as before; the second term measures the
jumpiness of $f$, now using the total variation operator $\TV(\cdot)$ acting on
univariate functions; and $\lambda \geq 0$ is again a tuning parameter.
Relative to \eqref{eq:tv_denoising}, the continuous-time problem
\eqref{eq:tv_denoising_cont} requires an appreciably higher level of
mathematical sophistication, in order to develop any conceptual or formal
understanding. However, problem \eqref{eq:tv_denoising_cont} does have the
distinct advantage of delivering a {\it function} as its solution, call it
\smash{$\hf$}: this allows us to predict the value of the response at any point
$x \in [a,b]$, via \smash{$\hf(x)$}.

From the solution in \eqref{eq:tv_denoising}, call it \smash{$\htheta$}, it is
not immediately clear how to predict the response value at an arbitrary point $x
\in [a,b]$. This is about choosing the ``right'' method for interpolating (or
extrapolating, on $[a,x_1) \cup (x_n,b]$) a set of $n$ function evaluations. To
be fair, in the particular case of problem \eqref{eq:tv_denoising}, its solution
is generically piecewise-constant over its components
\smash{$\htheta_i$}, $i=1,\ldots,n$, which suggests a natural interpolant. In
general, however, the task of interpolating the estimated function evaluations
from a discrete-time optimization problem into an entire estimated function is
far from clear-cut. Likely for this reason, the statistics literature---which
places a strong emphasis, both applied and theoretical, on prediction at a new
points $x \in [a,b]$---has focused primarily on the continuous-time approach  
to optimization-based nonparametric regression. While the discrete-time
approach is popular in signal processing and econometrics, the lines of
work on discrete- and continuous-time smoothing seem to have evolved
mostly in parallel, with limited interplay.

The optimization problems in \eqref{eq:tv_denoising},
\eqref{eq:tv_denoising_cont} are not arbitrary examples of the discrete- and
continuous-time perspectives, respectively; they are in fact deeply related to
the main points of study in this paper. Interestingly, problems
\eqref{eq:tv_denoising}, \eqref{eq:tv_denoising_cont} are equivalent in the
sense that their solutions, denoted \smash{$\htheta,\hf$} respectively, satisfy 
\smash{$\htheta_i = \hf(x_i)$}, $i=1,\ldots,n$. In other words, the solution in
\eqref{eq:tv_denoising} reproduces the evaluations of the solution in 
\eqref{eq:tv_denoising_cont} at the design points. The common estimator here is 
well-known, called {\it total variation denoising} \citep{rudin1992nonlinear}
in some parts of applied mathematics, and the {\it fused lasso}
\citep{tibshirani2005sparsity} in statistics.   

The equivalence between \eqref{eq:tv_denoising}, \eqref{eq:tv_denoising_cont} is
a special case of a more general equivalence between classes of discrete- and
continuous-time optimization problems, in which the differences $\theta_i -
\theta_{i+1}$ in \eqref{eq:tv_denoising} are replaced by higher-order discrete
derivatives (based on divided differences), and $\TV(f)$ in
\eqref{eq:tv_denoising_cont} is replaced by the total variation of a suitable
derivative of $f$. The key mathematical object powering this connection is a
linear space of univariate piecewise polynomials called {\it discrete splines},
which is the central focus of this paper. We dive into the details, and explain
the importance of such equivalences, in the next subsection.

\subsection{Motivation} 

The jumping-off point for the developments that follow is a generalization of
the discrete-time total variation denoising problem \eqref{eq:tv_denoising},
proposed independently by \citet{steidl2006splines,kim2009trend} (though similar
ideas were around earlier, see Section \ref{sec:trend_filter_review}), defined
for an integer $k \geq 0$ by:
\begin{equation}
\label{eq:trend_filter_old}
\minimize_\theta \; \frac{1}{2} \|y-\theta\|_2^2 + \lambda \|\C^{k+1}_n 
\theta\|_1. 
\end{equation}
Here, $\lambda \geq 0$ is a tuning parameter, $y=(y_1,\ldots,y_n) \in \R^n$ is 
the vector of response points, \smash{$\C^{k+1}_n \in \R^{(n-k-1) \times
n}$} is an explicit banded matrix that corresponds to a weighted $(k+1)$st 
order discrete derivative operator (this can be defined in terms of the
$(k+1)$st order divided difference coefficients across the design points; see
the construction in \eqref{eq:diff_mat}--\eqref{eq:discrete_deriv_mat_old}), and 
$\|\cdot\|_2$ and $\|\cdot\|_1$ are the standard $\ell_2$ and $\ell_1$ norms
acting on vectors.    
 
The estimator defined by solving problem \eqref{eq:trend_filter_old} is known as
{\it $k$th order trend filtering}. A important aspect to highlight right away
is computational: since \smash{$\C^{k+1}_n$} is a banded matrix (with bandwidth 
$k+2$), the trend filtering problem \eqref{eq:trend_filter_old} can be solved
efficiently using various convex optimization techniques that take advantage of
this structure (see, for example,
\citet{kim2009trend,arnold2016efficient,ramdas2016fast}). The original papers
on trend filtering \citet{steidl2006splines,kim2009trend} considered the special
case of evenly-spaced design points, $x_{i+1}-x_i=v>0$, $i=1,\ldots,n-1$, where
the penalty term in \eqref{eq:trend_filter_old} takes a perhaps more familiar
form:
\begin{equation}
\label{eq:trend_filter_penalties}
\|\C^{k+1}_n \theta\|_1 = 
\begin{cases}
\displaystyle
\frac{1}{v} \sum_{i=1}^{n-1} |\theta_i - \theta_{i+1}| & \text{if $k=0$} \\
\displaystyle
\frac{1}{v^2} \sum_{i=1}^{n-2} |\theta_i - 2\theta_{i+1} + \theta_{i+2}| & 
\text{if $k=1$} \\ 
\displaystyle
\frac{1}{v^3} \sum_{i=1}^{n-3} |\theta_i - 3\theta_{i+1} + 3\theta_{i+2} - 
\theta_{i+3}| & \text{if $k=2$},
\end{cases}
\end{equation}
and so forth, where for a general $k \geq 0$, the penalty is a $1/v^{k+1}$ times a  
sum of absolute $(k+1)$st forward differences. (The factor of $1/v^{k+1}$ can
always be abosrbed into the tuning parameter $\lambda$; and so we can see 
that \eqref{eq:trend_filter_old} reduces to \eqref{eq:tv_denoising} for $k=0$,
modulo a rescaling of $\lambda$). The extension of trend filtering to arbitrary
(unevenly-spaced) design points is due to \citet{tibshirani2014adaptive}. The
continuous-time (functional) perspective on trend filtering is also due to
\citet{tibshirani2014adaptive}, which we describe next. 

\paragraph{Connections to continuous-time.}

To motivate the continuous-time view, consider \smash{$\C^{k+1}_n\theta$}, the
vector of (weighted) $(k+1)$st discrete derivatives of $\theta$ across the
design points: since discrete differentiation is based on iterated differencing,
we can equivalently interpret \smash{$\C^{k+1}_n\theta$} as a vector of {\it
  differences} of $k$th discrete derivatives of $\theta$ at adjacent design 
points. By the sparsity-inducing property of the $\ell_1$ norm, the penalty in
problem \eqref{eq:trend_filter_old} thus drives the $k$th discrete derivatives
of $\theta$ to be equal at adjacent design points, and the trend filtering
solution \smash{$\htheta$} generically takes on the structure of a $k$th degree
piecewise polynomial (as its $k$th discrete derivative will be piecewise
constant), with adaptively-chosen knots (points at which the $k$th discrete
derivative changes). This intuition is readily confirmed by empirical examples;
see Figure \ref{fig:tf}.

\begin{figure}[tb]
\centering
\includegraphics[width=0.325\textwidth]{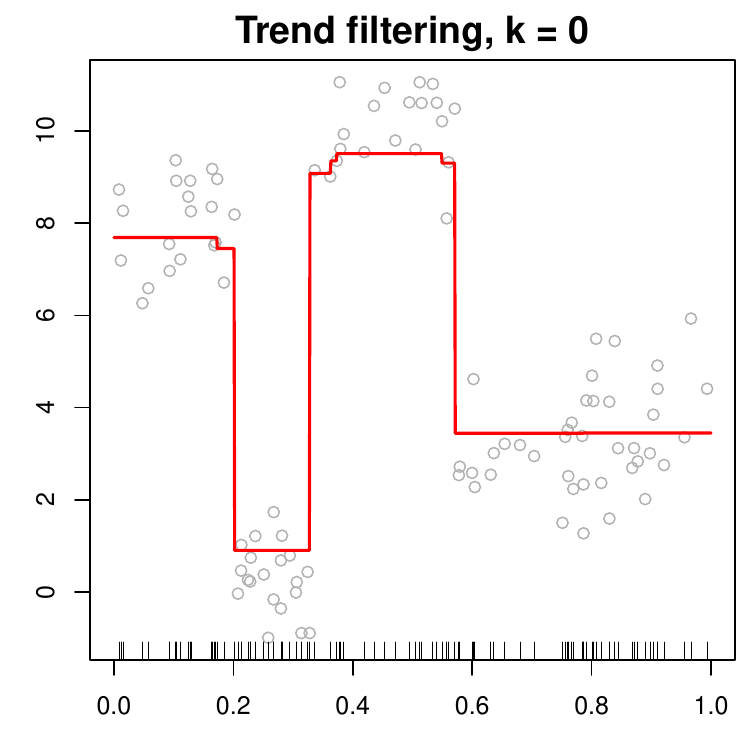}
\includegraphics[width=0.325\textwidth]{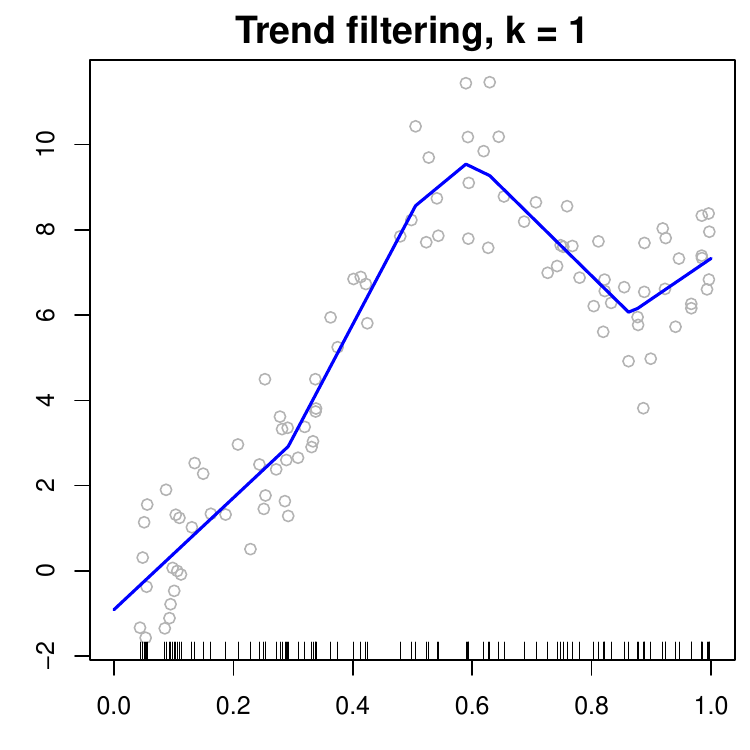}
\includegraphics[width=0.325\textwidth]{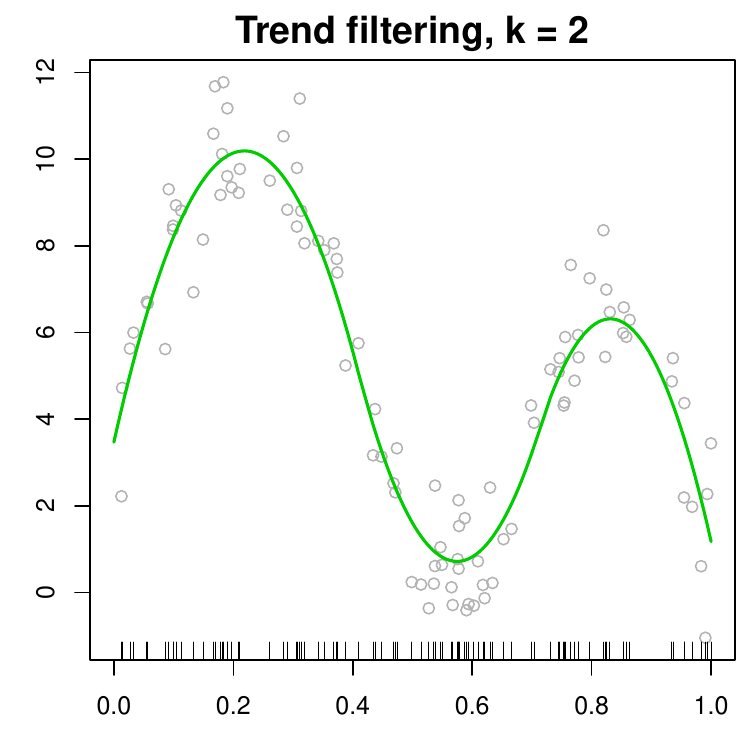}
\caption{\small (Adapted from \citet{tibshirani2014adaptive}.)  Example trend
  filtering estimates for $k=0$, $k=1$, and $k=2$, exhibiting piecewise
  constant, piecewise linear, and piecewise quadratic behavior, respectively. In
  each panel, the $n=100$ design points are marked by ticks on the horizontal
  axis (note that they are not evenly-spaced).}   
\label{fig:tf}
\end{figure}

These ideas were formalized in \citet{tibshirani2014adaptive}, and then
developed further in \citet{wang2014falling}. These papers introduced what were
called {\it $k$th degree falling factorial basis}, a set of functions defined
as 
\begin{equation}
\label{eq:ffb}
\begin{aligned}
h^k_j(x) &= \frac{1}{(j-1)!} \prod_{\ell=1}^{j-1}(x-x_\ell), 
\quad j=1,\ldots,k+1, \\
h^k_j(x) &= \frac{1}{k!} \prod_{\ell=j-k}^{j-1} (x-x_\ell) \cdot  
1\{x > x_{j-1}\}, \quad j=k+2,\ldots,n. 
\end{aligned}
\end{equation}
(Note that this basis depends on the design points $x_1,\ldots,x_n$, though this
is notationally suppressed.)  The functions in \eqref{eq:ffb} are $k$th degree
piecewise polynomials, with knots at $x_{k+1},\ldots,x_{n-1}$. Here and
throughout, we interpret the empty product to be equal to 1, for convenience
(that is, \smash{$\prod_{i=1}^0 a_i = 1$}). Note the similarity of the above
basis and the standard truncated power basis for splines, with knots at
$x_{k+1},\ldots,x_{n-1}$ (see \eqref{eq:tpb}); in fact, when $k=0$ or $k=1$, the
two bases are equal, and the above falling factorial functions are exactly
splines; but when $k \geq 2$, this is no longer true---the above falling
factorial functions are piecewise polynomials with {\it discontinuities} in
their derivatives of orders $1,\ldots,k-1$ (see \eqref{eq:ffb_deriv},
\eqref{eq:ffb_deriv_lim}), and thus span a different space than that of $k$th
degree splines.

The key result connecting \eqref{eq:ffb} and \eqref{eq:trend_filter_old} was
given in Lemma 5 of \citet{tibshirani2014adaptive} (see also Lemma 2 of 
\citet{wang2014falling}), and can be explained as follows. For each $\theta
\in \R^n$, there is a function in the span of the falling factorial basis, 
\smash{$f \in \spa\{h^k_1,\ldots,h^k_n\}$}, with two properties: first,
$f$ interpolates each $\theta_i$ at $x_i$, which we write as
$\theta=f(x_{1:n})$, where $f(x_{1:n})=(f(x_1),\ldots,f(x_n)) \in \R^n$ 
denotes the vector of evaluations of $f$ at the design points; and second
\begin{equation}
\label{eq:ffb_tv_mat_old}
\TV(D^k f) =  \big\|\C^{k+1}_n f(x_{1:n}) \big\|_1.
\end{equation}
On the right-hand side is the trend filtering penalty, which, recall, we can
interpret as a sum of absolute differences of $k$th discrete derivatives of $f$
over the design points, and therefore as a type of total variation penalty on
the $k$th discrete derivative. On the left-hand side above, we denote by $D^k
f$ the $k$th derivative of $f$ (which we take to mean the $k$th left
derivative when this does not exist), and by $\TV(\cdot)$ the usual total
variation operator on functions. Hence, taking total variation of the $k$th
derivative as our smoothness measure, the property in \eqref{eq:ffb_tv_mat_old}
says that the interpolant $f$ of $\theta$ is {\it exactly as smooth} in
continuous-time as $\theta$ is in discrete-time.

Reflecting on this result, the first property---that $f$ interpolates $\theta_i$
at $x_i$, for $i=1,\ldots,n$---is of course not special in it of itself. Any
rich enough function class, of dimension at least $n$, will admit such a
function. However, paired with the second property \eqref{eq:ffb_tv_mat_old},
the result becomes interesting, and even somewhat surprising. Said differently,
any function $f$ lying in the span of the $k$th degree falling factorial basis
has the property that its discretization to the design points is {\it lossless}
with respect to the total variation smoothness functional $\TV(D^k f)$: this
information is exactly preserved by $\theta=f(x_{1:n})$. Denoting by
\smash{$\cH^k_n=\spa\{h^k_1,\ldots,h^k_n\}$} the span of falling factorial
functions, we thus see that the trend filtering problem
\eqref{eq:trend_filter_old} is equivalent to the variational problem:
\begin{equation}
\label{eq:trend_filter_cont}
\minimize_{f \in \cH^k_n} \; \frac{1}{2} \sum_{i=1}^n \big(y_i - f(x_i)\big)^2 +  
\lambda \, \TV(D^k f),
\end{equation}
in the sense that at the solutions \smash{$\htheta,\hf$} in problems
\eqref{eq:trend_filter_old}, \eqref{eq:trend_filter_cont}, respectively, we have
\smash{$\htheta=\hf(x_{1:n})$}. Moreover, it turns out that forming
\smash{$\hf$} from \smash{$\htheta$} is straightforward: starting with the
falling factorial basis expansion \smash{$\hf=\sum_{j=1}^n \halpha_j h^k_j$},
and then writing the coefficient vector in block form \smash{$\halpha =
(\hat{a}, \hat{b}) \in \R^{k+1} \times \R^{n-k-1}$}, the piecewise polynomial
basis coefficients are given by \smash{$\hat{b} = \C^{k+1}_n \htheta$}, and the
polynomial basis coefficients \smash{$\hat{a}$} can also be expressed simply in
terms of lower-order discrete derivatives. This shows that \smash{$\hf$} is a
$k$th degree piecewise polynomial, with knots occurring at the nonzeros of
\smash{$\C^{k+1}_n \htheta$}, that is, at changes in the $k$th discrete
derivative of \smash{$\htheta$}, formally justifying the intuition about the
structure of \smash{$\htheta$} given above.

\paragraph{Reflections on the equivalence.}

One might say that the developments outlined above bring trend filtering closer
to the ``statistical mainstream'': we move from being able to estimate the
values of the regression function $f_0$ at the design points $x_1,\ldots,x_n$ to
being able to estimate $f_0$ itself. This has several uses:
practical---we can use the interpolant \smash{$\hf$} to estimate $f_0(x)$ at
unseen values of $x$; conceptual---we can better understand what kinds of 
``shapes'' trend filtering is inclined to produce, via the representation in
terms of falling factorial functions; and theoretical---we can tie
\eqref{eq:trend_filter_cont} to an unconstrained variational problem, where we
minimize the same criterion over {\it all} functions $f$ (for which the
criterion is well-defined and finite): 
\begin{equation}
\label{eq:local_spline}
\minimize_f \; \frac{1}{2} \sum_{i=1}^n \big(y_i - f(x_i)\big)^2 + \lambda 
\, \TV(D^k f). 
\end{equation}
This minimization is in general computationally difficult, but its solution,
called the {\it locally adaptive regression spline} estimator
\citep{mammen1997locally} has favorable theoretical properties, in terms of its
rate of estimation of $f_0$ (see Section \ref{sec:local_spline_review} for a
review). By showing that the falling factorial functions are ``close'' to
certain splines, \citet{tibshirani2014adaptive,wang2014falling} showed that the
solution in \eqref{eq:trend_filter_cont} is ``close'' to that in
\eqref{eq:local_spline}, and thus trend filtering inherits the favorable
estimation guarantees of the locally adaptive regression spline (which is
important because trend filtering is computationally easier; for more, see
Sections \ref{sec:local_spline_review} and \ref{sec:trend_filter_review}).

The critical device in all of this were the falling factorial basis functions
\eqref{eq:ffb}, which provide the bridge between the discrete and continuous
worlds. This now brings us to the motivation for the current paper. One has to
wonder: did we somehow get ``lucky'' with trend filtering and this basis?  Do
the falling factorial functions have other properties aside from
\eqref{eq:ffb_tv_mat_old}, that is, aside from equating
\eqref{eq:trend_filter_old} and \eqref{eq:trend_filter_cont}?  At the time of 
writing \citet{tibshirani2014adaptive,wang2014falling} (and even in subsequent
work on trend filtering), we were not fully aware of the relationship of the
falling factorial functions and what appears to be fairly classical work in
numerical analysis. First and foremost: 
\begin{quote}
The span \smash{$\cH^k_n = \spa\{h^k_1,\ldots,h^k_n\}$} of the $k$th degree
falling factorial basis functions is a special space of piecewise polynomials
known as {\it $k$th degree discrete splines}.
\end{quote}
Discrete splines have been studied since the early 1970s by applied
mathematicians, beginning with
\citet{mangasarian1971discrete,mangasarian1973best}. 
The current paper recasts some of our previous work on trend filtering to better
connect it to the discrete spline literature, reviews some relevant existing
results on discrete splines and discusses the implications for trend filtering
and related problems, and lastly, contributes some new results and perspectives
on discrete splines.

\subsection{Summary}
\label{sec:summary}

An outline and summary of this paper is as follows. 

\begin{itemize}
\item In Section \ref{sec:background}, we provide relevant background and
  historical remarks. 

\item In Section \ref{sec:fall_fact}, we give a new perspective on how to
  construct the falling factorial basis ``from scratch''. We start by defining
  a natural discrete derivative operator and its inverse, a discrete
  integrator. We then show that the falling factorial basis functions are given
  by $k$th order discrete integration of appropriate step functions (Theorem
  \ref{thm:ffb_discrete_integ}). 

\item In Section \ref{sec:smoothness}, we verify that the span of the falling
  factorial basis is indeed a space of discrete splines (Lemma
  \ref{lem:ffb_span}), and establish that functions in this span satisfy a key 
  matching derivatives property: their $k$th discrete derivative matches their
  $k$th derivative everywhere, and moreover, they are the {\it only} $k$th
  degree piecewise polynomials with this property (Corollary
  \ref{cor:deriv_match}). 

\item In Section \ref{sec:dual_basis}, we give a dual basis to the falling
  factorial basis, based on evaluations of discrete derivatives. As a primary
  use case, we show how to use such a dual basis to perform efficient
  interpolation in the falling factorial basis, which generalizes Newton's
  divided difference interpolation formula (Theorem \ref{thm:ffb_interp}).  
  We also show that this interpolation formula can be recast in an implicit
  manner, which reveals that interpolation using discrete splines can be done in 
  {\it constant-time} (Corollary \ref{cor:ffb_interp_implicit}), and further,
  discrete splines are uniquely determined by this implicit result: they are the
  {\it only} functions that satisfy such an implicit interpolation formula
  (Corollary \ref{cor:ffb_interp_implicit_conv}).     

\item In Section \ref{sec:matrix_comp}, we present a matrix-centric view of the
  results given in previous sections, drawing connections to the way some
  related results have been presented in past papers. We review specialized
  methods for fast matrix operations with discrete splines from
  \citet{wang2014falling}. 

\item In Section \ref{sec:discrete_bs}, we present a new discrete B-spline basis 
  for discrete splines (it is new for arbitrary designs, and our construction
  here is a departure from the standard one): we first define these basis
  functions as discrete objects, by fixing their values at the design points,
  and we then define them as continuum functions, by interpolating these values 
  within the space of discrete splines, using the implicit interpolation view
  (Lemma \ref{lem:discrete_nbs_supp}). We show how this discrete B-spline basis
  can be easily modified to provide a basis for discrete natural splines (Lemma 
  \ref{lem:discrete_natural_nbs}). 

\item In Section \ref{sec:sparse_knots}, we demonstrate how the previous results 
  and developments can be ported over to the case where the knot set that
  defines the space of discrete splines is an arbitrary (potentially sparse)
  subset of the design points. An important find here is that the discrete
  B-spline basis provides a much more stable (better-conditioned) basis for
  solving least squares problems involving discrete splines.   

\item In Section \ref{sec:representation}, we present two representation results 
  for discrete splines. First, we review a result from
  \citet{tibshirani2014adaptive,wang2014falling} on representing the total
  variation functional $\TV(D^k f)$ for a $k$th degree discrete spline $f$ in 
  terms of a sum of absolute differences of its $k$th discrete derivatives
  (Theorem \ref{thm:ffb_tv}). (Recall that we translated this in
  \eqref{eq:ffb_tv_mat_old}.) Second, we establish a new result on
  representing the $L_2$-Sobolev functional \smash{$\int_a^b (D^m f)(x)^2 \,
    dx$} for a $(2m-1)$st degree discrete spline $f$ in terms of a quadratic
  form of its $m$th discrete derivatives (Theorem \ref{thm:ffb_sobolev}).  

\item In Section \ref{sec:approximation}, we derive some simple (crude) 
  approximation bounds for discrete splines, over bounded variation
  spaces.   

\item In Section \ref{sec:trend_filter}, we revisit trend filtering. We discuss 
  some potential computational improvements, stemming from the development of 
  discrete B-splines and their stability properties. We also show that the  
  optimization domain in trend filtering can be further restricted to the space
  of discrete natural splines by adding simple linear constraints to the
  original problem, and that this modification can lead to better boundary
  behavior.   

\item In Section \ref{sec:bw_filter}, we revisit Bohlmann-Whittaker (BW)
  filtering. In the case of arbitrary design points, we propose a simple
  modification of the BW filter using a weighted penalty, which for $m=1$
  reduces to the linear smoothing spline. For $m=2$, we derive a
  deterministic bound on the $\ell_2$ distance between the weighted cubic BW 
  filter and the cubic smoothing spline (Theorem
  \ref{thm:ss_bw_bound}). We use this, in combination with classical
  nonparametric regression theory for smoothing splines, to prove that the
  weighted BW filter attains minimax optimal estimation rates over the
  appropriate $L_2$-Sobolev classes (Corollary \ref{cor:ss_bw_bound}).
\end{itemize}

Most proofs are deferred to Appendix \ref{app:proofs}. Other relevant technical
details (background and otherwise) are deferred to Appendices \ref{app:bs_dbs}
and \ref{app:fast_mult}. 

\subsection{Notation}

Here is an overview of some general notation used in this paper. For integers $a 
\leq b$, we use $z_{a:b} = \{z_a, z_{a+1}, \ldots, z_b\}$. For a set $C$, we use
$1_C$ for the indicator function of $C$, that is, $1_C(x)=1\{x \in C\}$. We
write $f|_C$ for the restriction of a function $f$ to $C$. We use $D$ for the
differentiation operator, and $I$ for the integration operator: acting on
functions $f$ on $[a,b]$, we take $If$ to itself be a function on $[a,b]$,
defined by 
$$
(If)(x) = \int_a^x f(t) \, dt.
$$  
For a nonnegative integer $k$, we use $D^k$ and $I^k$ to denote $k$ repeated
applications (that is, $k$ times composition) of the differentiation and
integration operators, respectively. In general, when the derivative of a
function $f$ does not exist, we interpret $Df$ to mean the {\it left}
derivative, assuming the latter exists, and the same with $D^k f$.

An important note: we refer to a $k$th degree piecewise polynomial that has
$k-1$ continuous derivatives as a spline of {\it degree} $k$, whereas much of
the classical literature refers to this as a spline of {\it order} $k+1$; we
specifically avoid the use of the word ``order'' when it comes to such functions
or functions spaces, to avoid confusion.

Finally, throughout, we use ``blackboard'' fonts for matrices
(such as $\F,\G$, etc.), in order to easily distinguish them from operators that
act on functions (for which we use $F,G$, etc.). The only exceptions are that
we reserve $\R$ to denote the set of real numbers and $\E$ to denote the 
expectation opterator.

For a more detailed summary of notation, and discrete-continuum analogies or
equivalences, see Appendix \ref{app:notation}.

\section{Background}
\label{sec:background}

We provide background on various topics that will play important roles in the
remainder of the paper. Of course, we do not intend to give a comprehensive
review of any of the subjects covered, just the basic elements needed for what
follows. We also use this space to make historical remarks and discuss related
work.

\subsection{Divided differences}

Divided differences have a very old, rich history in mathematics, and are
usually attributed to Newton (due to 
\citet{newton1687philosophiae,newton1711methodus}). They also serve a one 
of the primary building blocks in classical numerical analysis (for example, see 
\citet{whittaker1924calculus}). For a beautiful review of divided differences,
their properties, and connections, see \citet{deboor2005divided}. Given a
univariate function $f$, the divided difference of $f$ at distinct points   
$z_1,z_2$ is defined by  
$$
f[z_1,z_2] =  \frac{f(z_2)-f(z_1)}{z_2-z_1},
$$
and more generally, for an integer $k \geq 1$, the $k$th order divided
difference at distinct $z_1,\ldots,z_{k+1}$ is defined by 
$$
f[z_1,\ldots,z_{k+1}] = \frac{f[z_2,\ldots,z_{k+1}] -
f[z_1,\ldots,z_k]}{z_{k+1}-z_1}.
$$
(For this to reduce to the definition in the previous display, when $k=1$, we
take by convention $f[z]=f(z)$.) We refer to the points $z_1,\ldots,z_{k+1}$
used to define the divided difference above as {\it centers}. Note that these
centers do not need to be in sorted order for this definition to make sense,
and the definition of a divided difference is invariant to the ordering of
centers: \smash{$f[z_1,\ldots,z_{k+1}]=f[z_{\sigma(1)},\ldots,z_{\sigma(k+1)}]$}
for any permutation $\sigma$ acting on $\{1,\ldots,k+1\}$. (We also note that
requiring the centers to be distinct is not actually necessary, but we will
maintain this assumption for simplicity; for a more general definition that
allows for repeated centers, see, for example, Definition 2.49 in
\citet{schumaker2007spline}.)      

A notable special case is when the centers are evenly-spaced, say, $z+iv$,
$i=0,\ldots,k$, for some spacing $v>0$, in which case the divided difference
becomes a (scaled) {\it forward difference}, or equivalently a (scaled) {\it
  backward difference}, 
$$
k! \cdot f[z,\ldots,z+kv] = \frac{1}{v^k} (F^k_v f)(z) = \frac{1}{v^k}  
(B^k_v f)(z+kv),
$$
where we use \smash{$F^k_v,B^k_v$} to denote the $k$th order forward and 
backward difference operators, respectively; to be explicit, we recall that
\smash{$(F^k_v f)(z) = \sum_{i=0}^k (-1)^{k-i} {k \choose i} f(z+iv)$}. 

\paragraph{Linear combination formulation.}  
 
It is not hard to see that divided differences are linear combinations of
function evaluations. A simple calculation reveals the exact form of the
coefficients in this linear combination, for example, 
\begin{align*}
f[z_1,z_2,z_3] &= \frac{f[z_1,z_2]}{z_1-z_3}
+\frac{f[z_2,z_3]}{z_3-z_1} \\
&= \frac{f(z_1)}{(z_1-z_2)(z_1-z_3)} + 
\frac{f(z_2)}{(z_2-z_1)(z_1-z_3)} + 
\frac{f(z_2)}{(z_2-z_3)(z_3-z_1)} +
\frac{f(z_3)}{(z_3-z_2)(z_3-z_1)} \\ 
&= \frac{f(z_1)}{(z_1-z_2)(z_1-z_3)} +
\frac{f(z_2)}{(z_2-z_1)(z_2-z_3)} +
\frac{f(z_3)}{(z_3-z_2)(z_3-z_1)}.
\end{align*}
By an inductive argument (whose inductive step is similar to the calculation
above), we may also write for a general order $k \geq 1$,
\begin{equation}
\label{eq:divided_diff_linear}
f[z_1,\ldots,z_{k+1}] = \sum_{i=1}^{k+1} \frac{f(z_i)}
{\prod_{j \in \{1,\ldots,k+1\} \setminus \{i\}} (z_i-z_j)}. 
\end{equation}
This expression is worth noting because it is completely explicit, but it is
not often used, and the recursive formulation given previously is the more
common view of divided differences.

\paragraph{Newton interpolation.}  

For distinct points $t_{1:r}=\{t_1,\ldots,t_r\}$, we denote the {\it Newton
  polynomial} based on $t_{1:r}$ by   
\begin{equation}
\label{eq:newton_poly}
\eta(x; t_{1:r}) = \prod_{j=1}^r (x-t_j).
\end{equation}
Here, when $r=0$, we set $t_{1:0}=\emptyset$ and $\eta(x;t_{1:0})=1$ for 
notational convenience. It is important to note that the pure polynomial
functions in the falling factorial basis, given in the first line of
\eqref{eq:ffb}, are simply Newton polynomials, and the piecewise polynomial
functions, given in the second line of \eqref{eq:ffb}, are {\it truncated}
Newton polynomials:
\begin{gather*}
h^k_j(x) = \frac{1}{(j-1)!} \eta(x; x_{1:j}), \quad j=1,\ldots,k+1, \\ 
h^k_j(x) = \frac{1}{k!} \eta(x; x_{(j-k):(j-1)}) \cdot 1\{x > x_{j-1}\}, 
\quad j=k+2,\ldots,n. 
\end{gather*}
In this light, it would also be appropriate to call the basis in \eqref{eq:ffb}
the {\it truncated Newton polynomial basis}, but we stick to the name falling
factorial basis for consistency with our earlier work (and Chapter 8.5 of 
\citet{schumaker2007spline}). 

Interestingly, Newton polynomials and divided differences are closely connected,
via {\it Newton's divided difference interpolation formula} (see, for example,
Proposition 7 in \citet{deboor2005divided}), which says that for a polynomial
$p$ of degree $k$, and any centers $t_1,\ldots,t_{k+1}$,
\begin{equation}
\label{eq:newton_interp}
p(x) = \sum_{j=1}^{k+1} p[t_1,\ldots,t_j] \cdot \eta(x; t_{1:(j-1)}).
\end{equation}
One of our main developments later, in Theorem \ref{thm:ffb_interp}, may be seen
as extending \eqref{eq:newton_interp} to interpolation with truncated Newton
polynomials (that is, with the falling factorial basis). In particular,
compare \eqref{eq:newton_interp} and \eqref{eq:ffb_interp_explicit}. 

An important fact about the representation in \eqref{eq:newton_interp} is that
it is unique (meaning, any $k$th degree polynomial can only be written as a
linear combination of Newton polynomials in one particular way, which is given
by \eqref{eq:newton_interp}). This property has the following implication for 
divided differences of Newton polynomials (that we will use extensively in
later parts of this paper): for any integer $r \geq 0$, and any centers
$t_1,\ldots,t_j$,    
\begin{equation}
\label{eq:newton_poly_divided_diff}
\eta(\cdot; t_{1:r}) [t_1,\ldots,t_j] = 
\begin{cases}
1 & \text{if $j = r+1$} \\
0 & \text{otherwise}.
\end{cases}
\end{equation}
The result is clear when $j = r+1$ and $j > r+1$ (in these cases, it is a
statement about a $j$th order divided difference of a polynomial of degree at
most $j$, for example, see \eqref{eq:deriv_match_poly}). However, it is perhaps
less obvious for $j < r+1$ (in this case it is a statement about a $j$th order
divided difference of a polynomial of degree greater than $j$).

\subsection{Splines}

Splines play a central role in numerical analysis, approximation theory, and
nonparametric statistics. The ``father'' of spline theory is widely considered
to be Schoenberg (due to 
\citet{schoenberg1946contributions1,schoenberg1946contributions2}, where
Schoenberg also introduces the terminology ``spline function''). It should be
noted that in the early 1900s, there were many papers written about splines
(without using this name), and piecewise polynomial interpolation, more
generally; for a survey of this work, see \citet{greville1944general}. For two
wonderful books on splines, see
\citet{deboor1978practical,schumaker2007spline}. 
We will draw on the latter book extensively throughout this paper.

In simple terms, a spline is a piecewise polynomial having continuous
derivatives of all orders lower than the degree of the polynomial. We can make
this definition more precise as follows.

\begin{definition}
\label{def:spline}
For an integer $k \geq 0$, and knots $a=t_0 < t_1 < \cdots < t_r < t_{r+1}=b$,
we define the space of {\it $k$th degree splines} on $[a,b]$ with knots
$t_{1:r}$, denoted $\S^k(t_{1:r}, [a,b])$, to contain all functions $f$ on
$[a,b]$ such that    
\begin{equation}
\label{eq:spline}
\begin{gathered}
\text{for each $i=0,\ldots,r$, there is a $k$th degree polynomial $p_i$
  such that $f|_{I_i}=p_i|_{I_i}$, and} \\
\text{for each $i=1,\ldots,r$, it holds that $(D^\ell p_{i-1})(t_i) = (D^\ell
  p_i)(t_i)$, $\ell=0,\ldots,k-1$}, 
\end{gathered}
\end{equation}
where $I_0=[t_0,t_1]$ and $I_i=(t_i,t_{i+1}]$, $i=1,\ldots,r$. 
\end{definition}

We write Definition \ref{def:spline} in this particular way because it makes it
easy to compare the definition of discrete splines in Definition
\ref{def:discrete_spline_even} (and in Definition \ref{def:discrete_spline} for
the case of arbitrary design points). The simplest basis for the space
$\S^k(t_{1:r}, [a,b])$ is the {\it $k$th degree truncated power basis}, defined
by  
\begin{equation}
\begin{gathered}
\label{eq:tpb}
g^k_j(x) = \frac{1}{(j-1)!} x^j, \quad j=1,\ldots,k+1, \\
g^k_{j+k+1}(x) = \frac{1}{k!} (x-t_j)^k_+, \quad j=1,\ldots,r,
\end{gathered}
\end{equation}
where $x_+=\max\{x,0\}$. When $k=0$, we interpret \smash{$(x-t)^0_+ = 
  1\{x >  t\}$}; this choice (strict versus nonstrict inequality) is arbitrary,
but convenient, and consistent with our choice for the falling factorial basis
in \eqref{eq:ffb}. 

An alternative basis for splines, which has local support and is therefore
highly computationally appealing, is given by the {\it B-spline} basis. In
fact, most authors view B-splines as {\it the} basis for splines---not only for
computational reasons, but also because building splines out of linear
combinations of B-splines makes so many of their important properties
transparent. To keep this background section (relatively) short, we defer
discussion of B-splines until Appendix \ref{app:bs}.

\subsection{Discrete splines}

Discrete splines were introduced by
\citet{mangasarian1971discrete,mangasarian1973best}, then further developed by
\citet{schumaker1973constructive,lyche1975discrete,deboor1976splines}, among
others. As far as we know, the most comprehensive summary of discrete splines 
and their properties appears to be Chapter 8.5 of \citet{schumaker2007spline}. 

In words, a discrete spline is similar to a spline, except in the required
smoothness conditions, forward differences are used instead of
derivatives. This can be made precise as follows.

\begin{definition}
\label{def:discrete_spline_even}
For an integer $k \geq 0$, design points $[a,b]_v=\{a,a+v,\ldots,b\}$ with
$v>0$ and $b=a+Nv$, and knots $a=t_0 < t_1 < \cdots < t_r < t_{r+1}=b$ 
with $t_{1:r} \subseteq [a,b]_v$ and $t_r \leq b-kv$, we define the space of 
{\it $k$th degree discrete splines} on $[a,b]$ with knots $t_{1:r}$, denoted 
\smash{$\DS^k_v(t_{1:r}, [a,b]_v)$}, to contain all functions $f$ on $[a,b]_v$  
such that 
\begin{equation}
\label{eq:discrete_spline_even}
\begin{gathered}
\text{for each $i=0,\ldots,r$, there is a $k$th degree polynomial $p_i$ 
  such that $f|_{I_{i,v}}=p_i|_{I_{i,v}}$, and} \\
\text{for each $i=1,\ldots,r$, it holds that $(F^\ell_v p_{i-1})(t_i) =
  (F^\ell_v p_i)(t_i)$, $\ell=0,\ldots,k-1$},  
\end{gathered}
\end{equation}
where $I_{0,v}=[t_0,t_1] \cap [a,b]_v$ and $I_{i,v}=(t_i,t_{i+1}] \cap [a,b]_v$, 
$i=1,\ldots,r$. 
\end{definition}


\begin{remark}
Comparing the conditions in \eqref{eq:discrete_spline_even} and
\eqref{eq:spline}, we see that when $k=0$ or $k=1$, the space
\smash{$\DS^k_v(t_{1:r}, [a,b]_v)$} of $k$th degree discrete splines with knots 
$t_{1:r}$ is the essentially equivalent to the space $\S^k(t_{1:r}, [a,b])$ of
$k$th degree splines with knots $t_{1:r}$ (precisely, for $k=0$ and $k=1$,
functions in \smash{$\DS^k_v(t_{1:r}, [a,b]_v)$} are the restriction of
functions in \smash{$\S^k(t_{1:r}, [a,b])$} to $[a,b]_v$). This is not true for
$k \geq 2$, in which case the two spaces are genuinely different.
\end{remark}

As covered in Chapter 8.5 of \citet{schumaker2007spline}, various properties of  
discrete splines can be developed in a parallel fashion to splines. For example,
instead of the truncated power basis \eqref{eq:tpb}, the following is a basis
for \smash{$\DS^k_v(t_{1:r}, [a,b]_v)$} (Theorem 8.51 of
\citet{schumaker2007spline}):    
\begin{equation}
\label{eq:ffb_even}
\begin{gathered}
f^k_j(x) = \frac{1}{(j-1)!} (x-a)_{j-1,v}
\quad j=1,\ldots,k+1, \\
f^k_j(x) = \frac{1}{k!} (x - t_j)_{k,v} \cdot 1\{x > t_j \}, 
\quad j=1,\ldots,r, 
\end{gathered}
\end{equation}
where we write $(x)_{\ell,v} = x (x-v) \cdots (x-(\ell-1)v)$ for the falling
factorial polynomial of degree $\ell$ with gap $v$, which we take to be equal to 
1 when $\ell=0$. Note that the above basis is an evenly-spaced analog
of the falling factorial basis in \eqref{eq:ffb}; in fact, Schumaker
refers to \smash{$f^k_j$}, $j=1,\ldots,r+k+1$ as ``one-sided factorial 
  functions'', which is (coincidentally) a very similar name to that we gave to 
\eqref{eq:ffb}, in our previous papers. In addition, a local basis for
\smash{$\DS^k_v(t_{1:r}, [a,b]_v)$}, akin to B-splines and hence called {\it
  discrete B-splines}, can be formed in an analogous fashion to that for
splines; we defer discussion of this until Appendix \ref{app:discrete_bs_even}.  

It should be noted that most of the classical literature, as well as Chapter 8.5
of \citet{schumaker2007spline}, studies discrete splines in the special case of
evenly-spaced design points $[a,b]_v$. Furthermore, the classical literature
treats discrete splines as discrete objects, that is, as {\it vectors}: see
Definition \ref{def:discrete_spline_even}, which is concerned only with the
evaluations of $f$ over the discrete set $[a,b]_v$. The assumption of
evenly-spaced design points is not necessary, and in the current paper we 
consider discrete splines with arbitrary design points. We also treat discrete
splines as continuum objects, namely, as {\it functions} defined over the
continuum interval $[a,b]$. To be clear, we do not intend to portray such 
extensions alone as particularly original or important contributions. Rather,
it is the {\it perspective} that we offer on discrete splines that (we believe)
is important---this starts with constructing a basis via discrete 
integration of indicator functions in Section \ref{sec:fall_fact}, which then
leads to the development of new properties, such as the matching derivatives
property in Section \ref{sec:deriv_match}, and the implicit interpolation
formula in Section \ref{sec:ffb_interp_implicit}.   

\subsection{Smoothing splines}
\label{sec:smooth_spline_review}

Let $x_{1:n}=\{x_1,\ldots,x_n\} \in [a,b]$ be design points, assumed to be
ordered, as in $x_1 < \cdots < x_n$, and let $y_1,\ldots,y_n$ be associated 
response points. For an odd integer $k=2m-1 \geq 1$, the {\it $k$th degree
  smoothing spline} estimator is defined as the solution of the variational 
optimization problem:
\begin{equation}
\label{eq:smooth_spline}
\minimize_f \; \sum_{i=1}^n \big(y_i - f(x_i)\big)^2 + \lambda \int_a^b 
(D^m f)(x)^2 \, dx, 
\end{equation}
where $\lambda \geq 0$ is a regularization parameter, and the domain of the
minimization in \eqref{eq:smooth_spline} is all functions $f$ on $[a,b]$ that
are $m$ times weakly differentiable, with \smash{$\int_a^b (D^m f)(x)^2 \, dx <
  \infty$}; this is known as as the $L_2$-Sobolev space of order $m$, and
denoted $\cW^{m,2}([a,b])$. The smoothing spline estimator was first proposed by  
\citet{schoenberg1964spline}, where he asserts (appealing to logic from previous
work on spline interpolation) that the solution in \eqref{eq:smooth_spline} is
unique, and is a $k$th degree spline belonging to $\S^k(x_{1:n},[a,b])$. In
fact, the solution in \eqref{eq:smooth_spline} is a special type of spline that
reduces to a polynomial of degree $m-1$ on the boundary intervals $[a,x_1]$ and
$[x_n,b]$, which is called a {\it natural spline} of degree $k=2m-1$. To fix
notation, we will denote the space of $k$th degree natural splines on $[a,b]$
with knots $x_{1:n}$ by \smash{$\NS^k(x_{1:n}, [a,b])$}.

Following Schoenberg's seminal contributions, smoothing splines have become the
topic of a vast body of work in both applied mathematics and statistics, with
work in the latter community having been pioneered by Grace Wahba and coauthors;
see, for example, \citet{craven1978smoothing} for a notable early paper. Two
important books on the statistical perspective underlying smoothing splines are
\citet{wahba1990spline,green1993nonparametric}. Today, smoothing splines are
undoubtedly one of the most widely used tools for univariate nonparametric
regression.

\paragraph{Connections to discrete-time.}

An interesting historical note, which is perhaps not well-known (or at least it
seems to have been largely forgotten in discussions on motivation for the
smoothing spline from a modern point of view), is that in creating the smoothing
spline, Schoenberg was motivated by the much earlier discrete-time smoothing
(graduation) approach of \citet{whittaker1923new}, stating this explicitly in
\citet{schoenberg1964spline}. Whittaker's approach, see
\eqref{eq:whittaker_filter}, estimates smoothed values by minimizing the sum of
a squared loss term and a penalty term of squared $m$th divided differences
(Whittaker takes $m=3$); meanwhile, Schoenberg's approach
\eqref{eq:smooth_spline}, ``in an attempt to combine [spline interpolation ...]
with Whittaker's idea'', replaces $m$th divided differences with $m$th
derivatives. Thus, while Schoenberg was motivated to move from a discrete-time
to a continuous-time perspective on smoothing, we are, as one of the main themes
in this paper, interested in returning to the discrete-time perspective, and
ultimately, connecting the two.

Given this, it is not really a surprise that Schoenberg himself derived the
first concrete connection between the two perspectives, continuous and discrete.
Next we transcribe his result from \citet{schoenberg1964spline}, and we
include a related result from \citet{reinsch1967smoothing}.

\begin{theorem}[\citet{schoenberg1964spline,reinsch1967smoothing}] 
\label{thm:nsp_sobolev}
For any odd integer $k = 2m-1 \geq 1$, and any $k$th degree natural spline
\smash{$f \in \NS^k(x_{1:n}, [a,b])$} with knots in $x_{1:n}$, it holds that 
\begin{equation}
\label{eq:nsp_sobolev}
\int_a^b (D^m f)(x)^2 \, dx =
\big\|(\K^m_n)^{\hspace{-1pt}\frac{1}{2}} \D^m_n f(x_{1:n}) \big\|_2^2,  
\end{equation}
where $f(x_{1:n})=(f(x_1),\ldots,f(x_n)) \in \R^n$ is the vector of evaluations
of $f$ at the design points, and \smash{$\D^m_n \in \R^{(n-m) \times n}$} is the
$m$th order discrete derivative matrix, as in \eqref{eq:discrete_deriv_mat}.
Furthermore, \smash{$\K^m_n \in \R^{(n-m) \times (n-m)}$} is a symmetric matrix
(that depends only on $x_{1:n}$), with a banded inverse of bandwidth $2m-1$.  If
we abbreviate, for $i=1,\ldots,n-m$, the function \smash{$P^{m-1}_i =
P^{m-1}(\cdot; x_{i:(i+m)})$}, which is the degree $m-1$ B-spline with knots
$x_{i:(i+m)}$, defined in \eqref{eq:bs} in Appendix \ref{app:bs}, then we can
write the entries of \smash{$(\K^m_n)^{-1}$} as
\begin{equation}
\label{eq:nsp_sobolev_kmat}
(\K^m_n)^{-1}_{ij} = m^2 \int_a^b P^{m-1}_i(x) P^{m-1}_j(x) \, dx. 
\end{equation}
For $m=1$, this matrix is diagonal, with entries
\begin{equation}
\label{eq:nsp_sobolev_kmat_m1}
(\K_n)^{-1}_{ii} = \frac{1}{x_{i+1}-x_i}.
\end{equation}
For $m=2$, this matrix is tridiagonal, with entries
\begin{equation}
\label{eq:nsp_sobolev_kmat_m2}
(\K^2_n)^{-1}_{ij} = 
\begin{cases}
\displaystyle
\frac{4}{3(x_{i+2}-x_i)} & \text{if $i=j$} \\
\displaystyle
\frac{2(x_{i+1}-x_i)}{3(x_{i+2}-x_i)(x_{i+1}-x_{i-1})} & \text{if $i=j+1$}. 
\end{cases}
\end{equation}
\end{theorem}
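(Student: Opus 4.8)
The plan is to reduce the identity \eqref{eq:nsp_sobolev} to a clean statement about the single function $g := D^m f$, which for a natural spline of degree $2m-1$ is itself a low-degree spline, and then to expand $g$ in the B-spline basis $\{P^{m-1}_i\}_{i=1}^{n-m}$ and read off its coefficients in terms of the discrete derivatives $\D^m_n f(x_{1:n})$. First I would establish the structural fact that $g$ is a spline of degree $m-1$ supported on $[x_1,x_n]$: since $f \in \NS^{2m-1}(x_{1:n},[a,b])$ reduces to a polynomial of degree $m-1$ on the boundary pieces $[a,x_1]$ and $[x_n,b]$, we have $g \equiv 0$ there; and on $[x_1,x_n]$ the $C^{2m-2}$ smoothness of $f$ at the interior knots $x_{2:(n-1)}$ forces $g$ to be a genuine degree-$(m-1)$ spline with knots in $x_{2:(n-1)}$. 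The matching of $f$ to its boundary polynomials additionally makes $g$ and its first $m-2$ derivatives vanish at $x_1$ and $x_n$, so $g$ lies in the $(n-m)$-dimensional space of degree-$(m-1)$ splines supported on $[x_1,x_n]$, which is exactly $\spa\{P^{m-1}_1,\ldots,P^{m-1}_{n-m}\}$ (each $P^{m-1}_i$ from \eqref{eq:bs} is supported on $[x_i,x_{i+m}]$, and these are linearly independent). I then write $g = \sum_{j=1}^{n-m} c_j P^{m-1}_j$.

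The crux is to identify the coefficient vector $c$ with $\D^m_n f(x_{1:n})$. Here I would invoke the Peano kernel representation of divided differences (see \citet{deboor2005divided}): for each $i$, $f[x_i,\ldots,x_{i+m}] = \frac{1}{m!}\int_a^b (D^m f)(t)\,M_i(t)\,dt$, where $M_i$ is the unit-integral B-spline on $x_{i:(i+m)}$. Since the discrete derivative matrix of \eqref{eq:discrete_deriv_mat} produces $(\D^m_n f(x_{1:n}))_i = m!\,f[x_i,\ldots,x_{i+m}]$, and the paper's B-spline $P^{m-1}_i$ is a fixed scalar multiple of $M_i$, this rewrites as $(\D^m_n f(x_{1:n}))_i = m\int_a^b P^{m-1}_i(t)\,g(t)\,dt$. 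Substituting the B-spline expansion of $g$ and setting $G_{ij} = \int_a^b P^{m-1}_i P^{m-1}_j$ gives $\D^m_n f(x_{1:n}) = m\,G c$, so that $c = \tfrac{1}{m}G^{-1}\D^m_n f(x_{1:n})$ (with $G$ invertible, being the Gram matrix of linearly independent functions).

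It then remains to assemble the pieces. Expanding the integral and using the coefficient identity,
\[
\int_a^b (D^m f)(x)^2\,dx = c^{\T} G c = \tfrac{1}{m^2}\big(\D^m_n f(x_{1:n})\big)^{\T} G^{-1}\D^m_n f(x_{1:n}).
\]
Defining $\K^m_n = (m^2 G)^{-1}$, equivalently $(\K^m_n)^{-1}_{ij} = m^2\int_a^b P^{m-1}_i P^{m-1}_j$, reproduces \eqref{eq:nsp_sobolev} and \eqref{eq:nsp_sobolev_kmat}, and the right-hand side is exactly $\|(\K^m_n)^{1/2}\D^m_n f(x_{1:n})\|_2^2$. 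Symmetry and positive definiteness of $\K^m_n$ follow from those of $G$; bandedness of $(\K^m_n)^{-1}$ with bandwidth $2m-1$ follows because $P^{m-1}_i$ and $P^{m-1}_j$ have overlapping supports only when $|i-j|\leq m-1$. The explicit entries \eqref{eq:nsp_sobolev_kmat_m1} and \eqref{eq:nsp_sobolev_kmat_m2} then come out of directly integrating products of the degree-$0$ and degree-$1$ B-splines.

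The main obstacle I anticipate is the normalization bookkeeping: one must check that the $m!$ in the Peano formula, the scaling of $\D^m_n$ in \eqref{eq:discrete_deriv_mat}, and the scaling of $P^{m-1}_i$ in \eqref{eq:bs} combine to yield precisely the factor $m$ in $(\D^m_n f(x_{1:n}))_i = m\int P^{m-1}_i g$ and the factor $m^2$ in \eqref{eq:nsp_sobolev_kmat}; the $m=1$ case, where $g=f'$ is piecewise constant, is a clean sanity check since the identity is then immediate. A secondary point requiring care is the claim that $g$, together with its vanishing boundary behavior, genuinely lies in the span of the $n-m$ B-splines $P^{m-1}_i$; I would support this with a dimension count, noting that the kernel of $D^m$ restricted to $\NS^{2m-1}(x_{1:n},[a,b])$ is the space of degree-$(m-1)$ polynomials, of dimension $m$, leaving an image of dimension exactly $n-m$.
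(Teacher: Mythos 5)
Your proposal is correct and follows essentially the same route as the paper's proof in Appendix \ref{app:nsp_sobolev}: expand $D^m f$ in the B-splines $P^{m-1}_i$, use the Peano representation of the divided difference to identify the expansion coefficients as $\tfrac{1}{m}\Q^{-1}\D^m_n f(x_{1:n})$ (your $G$ is the paper's $\Q$), and substitute back into the Gram form of the integral; the normalization bookkeeping you flag works out exactly as you describe. The only cosmetic difference is that the paper derives the coefficient identity by computing the integral two ways and comparing, whereas you substitute the expansion directly into the Peano formula — these are the same calculation.
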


The matrix \smash{$\D^m_n \in \R^{(n-m) \times n}$} appearing in Theorem
\ref{thm:nsp_sobolev} is to be defined (and studied in detail) later, in
\eqref{eq:discrete_deriv_mat}. Acting on a vector
$f(x_{1:n})=(f(x_1),\ldots,f(x_n)) \in \R^n$, it gives $m!$ times the 
appropriate divided differences of $f$, namely,   
\begin{equation}
\label{eq:discrete_deriv_mat_act}
\big(\D^m_n f(x_{1:n})\big)_i = m! \cdot f[x_i,\ldots,x_{i+m}], 
\quad i=1,\ldots,n-m. 
\end{equation}
\citet{schoenberg1964spline} states the result in \eqref{eq:nsp_sobolev_kmat}
without proof. \citet{reinsch1967smoothing} derives the explicit form in
\eqref{eq:nsp_sobolev_kmat_m2}, for $m=2$, using a somewhat technical proof 
that stems from the Euler-Lagrange conditions for the variational problem
\eqref{eq:smooth_spline}. We give a short proof all results
\eqref{eq:nsp_sobolev_kmat}, \eqref{eq:nsp_sobolev_kmat_m1},
\eqref{eq:nsp_sobolev_kmat_m2} in Theorem \ref{thm:nsp_sobolev} in Appendix
\ref{app:nsp_sobolev}, based on the Peano representation for the B-spline (to be
clear, we make no claims of originality, this is simply done for completeness). 

\begin{remark}
Theorem \ref{thm:nsp_sobolev} reveals that the variational smoothing spline 
problem \eqref{eq:smooth_spline} can be recast as a finite-dimensional convex
quadratic program (relying on the fact that the solution in this problem lies in
\smash{$\NS^k(x_{1:n},[a,b])$} for $k=2m-1$):  
\begin{equation}
\label{eq:smooth_spline_discrete}
\minimize_\theta \; \|y-\theta\|_2^2 + \lambda
\big\|(\K^m_n)^{\hspace{-1pt}\frac{1}{2}} \D^m_n \theta \big\|_2^2, 
\end{equation}
for $y=(y_1,\ldots,y_n) \in \R^n$, and \smash{$\K^m_n$} as defined in
\eqref{eq:nsp_sobolev_kmat}. The solutions \smash{$\htheta,\hf$} in problems
\eqref{eq:smooth_spline_discrete}, \eqref{eq:smooth_spline}, respectively,
satisfy \smash{$\htheta=\hf(x_{1:n})$}. Furthermore, from
\eqref{eq:smooth_spline_discrete}, the solution is easily seen to be
\begin{equation}
\label{eq:smooth_spline_sol}
\htheta = \big(\I_n + \lambda (\D^m_n)^\T \K^m_n \, \D^m_n\big)^{-1} y,  
\end{equation}
where $\I_n$ denotes the $n \times n$ identity matrix. Despite the fact that
\smash{$\K^m_n$} is itself dense for $m \geq 2$ (recall that its inverse is
banded with bandwidth $2m-1$), the smoothing spline solution \smash{$\htheta$}
in \eqref{eq:smooth_spline_sol} can be computed in linear-time using a number of
highly-efficient, specialized approaches (see, for example, Chapter XIV of
\citet{deboor1978practical}). 
\end{remark}

\begin{remark}
It is interesting to compare \eqref{eq:smooth_spline_discrete} and what we
call the Bohlmann-Whittaker (BW) filter \eqref{eq:bw_filter_old} (note that the
traditional case studied by Bohlmann and Whittaker was unit-spaced design
points, as in \eqref{eq:bw_filter_unit}, and problem \eqref{eq:bw_filter_old}
was Whittaker's proposed extension to arbitrary design points). We can see that
the smoothing spline problem reduces to a modified version of the discrete-time
BW problem, where \smash{$\|\D^m_n \theta\|_2^2$} is replaced by the quadratic
form \smash{$\|(\K^m_n)^{\hspace{-1pt}\frac{1}{2}} \D^m_n \theta \|_2^2$}, for a
matrix \smash{$\K^m_n$} having a banded inverse. To preview one of our later
results, in Theorem \ref{thm:ffb_sobolev}: by restricting the domain in problem
\eqref{eq:smooth_spline} to discrete splines, it turns out we can obtain another
variant of the BW filter where the corresponding matrix \smash{$\K^m_n$} is now 
{\it itself} banded. 
\end{remark}

\subsection{Locally adaptive splines}
\label{sec:local_spline_review}

Smoothing splines have many strengths, but adaptivity to changes in the
local level of smoothness is not one of them. That is, if the underlying
regression function $f_0$ is smooth in some parts of its domain and wiggly in
other parts, then the smoothing spline will have trouble estimating $f_0$
adequately throughout. It is not alone: any {\it linear smoother}---meaning, an
estimator \smash{$\hf$} of $f_0$ whose fitted values
\smash{$\htheta=\hf(x_{1:n})$} are a linear function of the responses $y$---will
suffer from the same problem, as made precise by the influential work of
\citet{donoho1998minimax}. (From \eqref{eq:smooth_spline_sol}, it is easy to
check that the smoothing spline estimator is indeed a linear smoother.)   We
will explain this point in more detail shortly. 

Aimed at addressing this very issue, \citet{mammen1997locally} proposed an 
estimator based on solving the variational problem \eqref{eq:local_spline},
which recall, for a given integer $k \geq 0$, is known as the $k$th degree
locally adaptive regression spline estimator. (It is worth noting that the same
idea was proposed earlier by \citet{koenker1994quantile}, who studied total
variation smoothing of the first derivative, $k=1$, in nonparametric quantile 
regression.)  We can see that \eqref{eq:local_spline} is like the smoothing
spline problem \eqref{eq:smooth_spline}, but with the $L_2$-Sobolev penalty is
replaced by a (higher-order) total variation penalty on $f$. Note that when $f$
is $k+1$ times weakly differentiable on an interval $[a,b]$, we have  
\begin{equation}
\label{eq:tv_sobolev}
\TV(D^k f) = \int_a^b |(D^{k+1} f)(x)| \, dx.
\end{equation}
In this sense, we can interpret problem \eqref{eq:local_spline} as something
like the $L_1$ analog of problem \eqref{eq:smooth_spline}. Importantly, note
that the fitted values \smash{$\htheta = \hf(x_{1:n})$} from the locally
adaptive regression spline estimator are {\it not} a linear function of $y$,
that is, the locally adaptive regression spline estimator is not a linear
smoother.   

\paragraph{Local adaptivity.}

True to its name, the locally adaptive regression spline estimator is more
attuned to the local level of smoothness in $f_0$ compared to the smoothing
spline. This is evident both empirically and theoretically. See Figure
\ref{fig:adapt} for an empirical example. In terms of theory, there are clear  
distinctions in the optimality properties belonging to linear and nonlinear methods. In 
classical nonparametric regression, linear smoothers such as smoothing splines 
are typically analyzed for their rates of estimation of an underlying function
$f_0$ when the latter is assumed to lie in a function class like a Sobolev or 
Holder class. In a minimax sense, smoothing splines (as well as several other 
linear methods, such as kernel smoothers) are rate optimal for Sobolev or Holder 
classes (for example, see Chapter 10 of \citet{vandegeer2000empirical}).
But for ``larger'' function classes like certain total variation, Besov, or
Triebel classes, they are notably suboptimal. 

As an example, the following is an implication of the results in
\citet{donoho1998minimax} (see Section 5.1 of \citet{tibshirani2014adaptive}
for an explanation). Let $\cV^k([a,b])$ denote the space of functions $f$ on
$[a,b]$ that are $k$ times weakly differentiable, with $\TV(D^k f) < \infty$;
and denote the associated seminorm ball of radius $C>0$ by
$$
\cV^k(C; [a,b]) = \Big\{ f : [a,b] \to \R : \TV(D^k f) \leq C \Big\}.  
$$
Abbreviating $\cV^k = \cV^k(C; [a,b])$ for fixed $C,a,b$ (not depending on $n$),
and placing standard assumptions on the data generation model (that is,
assumptions on the design points $x_i$, $i=1,\ldots,n$, and errors $\epsilon =
y_i - f_0(x_i)$, $i=1,\ldots,n$), the minimax rate in mean squared $L_2$ error
over the design points is 
\begin{equation}
\label{eq:minimax_rate}
\inf_{\hf} \sup_{f_0 \in \cV^k} \; 
\E \bigg[\frac{1}{n} \big\| \hf(x_{1:n}) - f_0(x_{1:n}) \big\|_2^2 \bigg]
\lesssim n^{-\frac{2k+2}{2k+3}}, 
\end{equation}
where the infimum above is taken over all estimators \smash{$\hf$}. However, 
the minimax {\it linear} rate is
\begin{equation}
\label{eq:minimax_linear_rate}
\inf_{\hf \, \text{linear}} \sup_{f_0 \in \cV^k} \;  
\E \bigg[\frac{1}{n} \big\| \hf(x_{1:n}) - f_0(x_{1:n}) \big\|_2^2 \bigg]
\gtrsim n^{-\frac{2k+1}{2k+2}},
\end{equation}
where the infimum above is taken over all linear smoothers \smash{$\hf$}. 
(Here, we use \smash{$a_n \lesssim b_n$} to mean $a_n \leq c b_n$ for a constant
$c>0$ and large enough $n$, and $a_n \gtrsim b_n$ to mean \smash{$1/a_n \lesssim 
  1/b_n$}.)  \citet{mammen1997locally} proved that locally adaptive regression
splines achieve the optimal rate in \eqref{eq:minimax_rate} (note that wavelet
smoothing also achieves the optimal rate, as shown by
\citet{donoho1998minimax}). Importantly, from \eqref{eq:minimax_linear_rate}, we 
can see that smoothing splines---and further, any linear smoother
whatsoever---are suboptimal. 

For a concrete case, we can take $k=0$,
and then the rates \eqref{eq:minimax_rate} and \eqref{eq:minimax_linear_rate}
are \smash{$n^{-\frac{2}{3}}$} and \smash{$n^{-\frac{1}{2}}$}, respectively,
which we can interpret as follows: for estimating a function of bounded
variation, the smoothing spline requires (on the order of)
\smash{$n^{\frac{4}{3}}$} data points to achieve the same error guarantee that 
the locally adaptive regression spline has on $n$ data points. See Figure
\ref{fig:minimax} for an illustration of the rates for general $k$. 

\begin{figure}[p]
\centering
\includegraphics[width=0.495\textwidth]{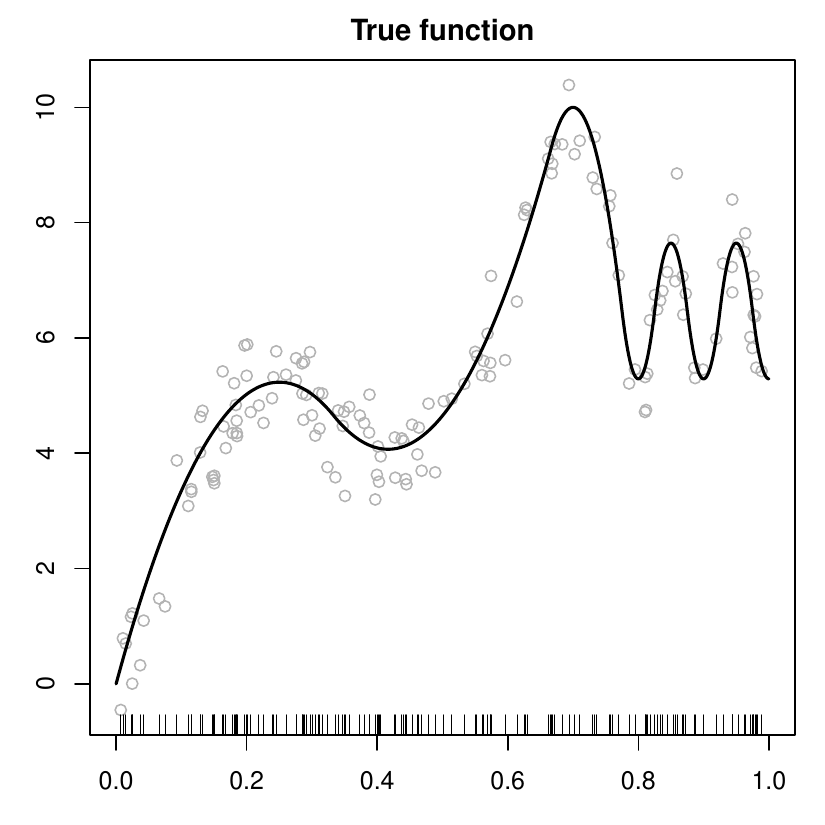}
\includegraphics[width=0.495\textwidth]{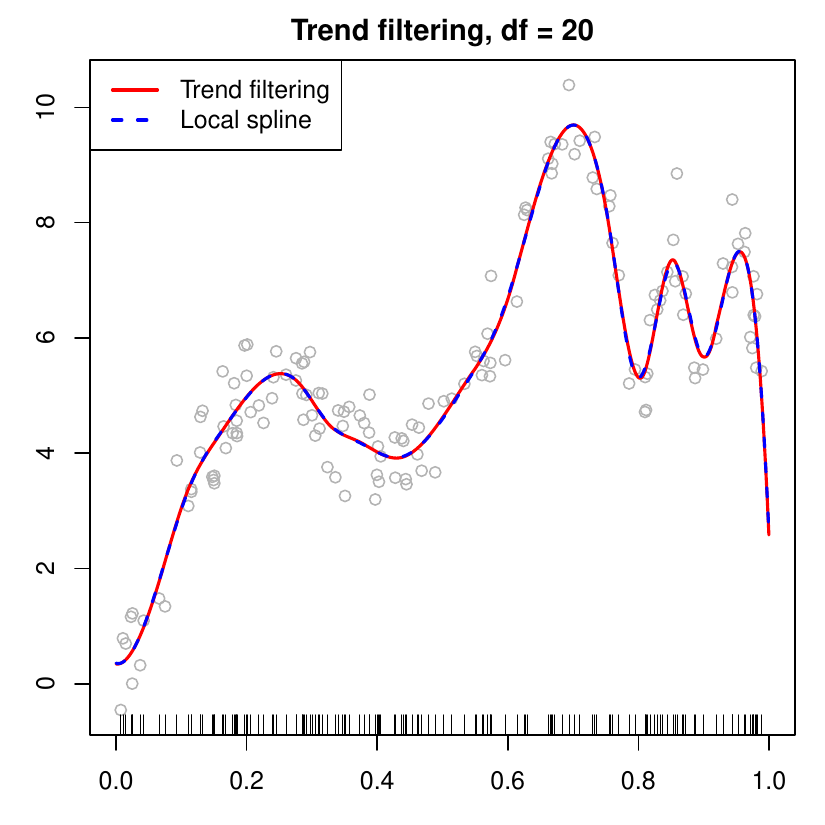}
\includegraphics[width=0.495\textwidth]{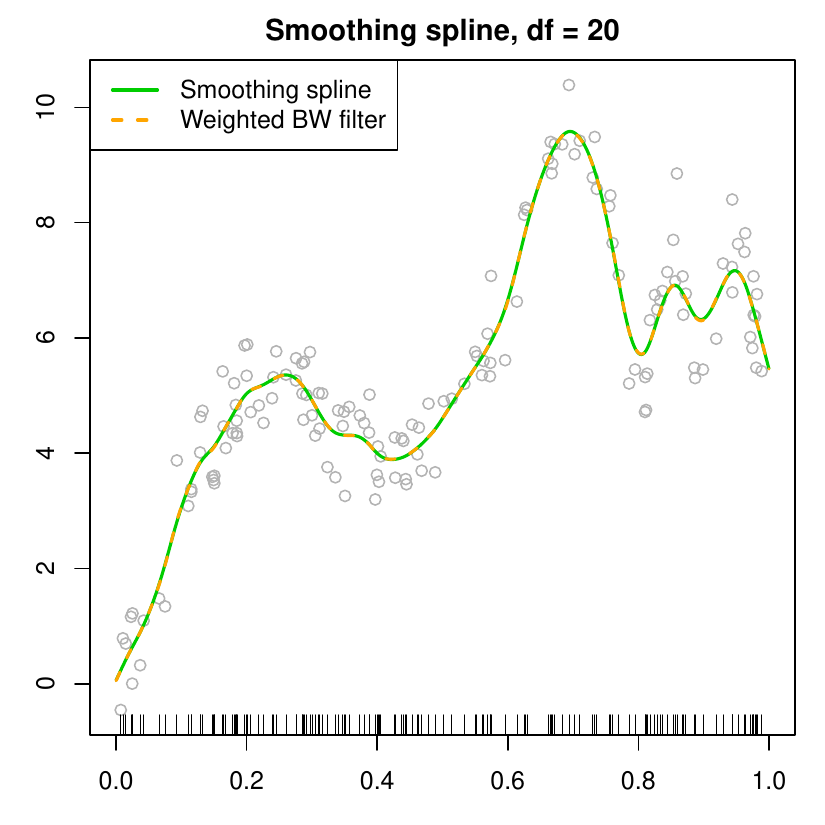}
\includegraphics[width=0.495\textwidth]{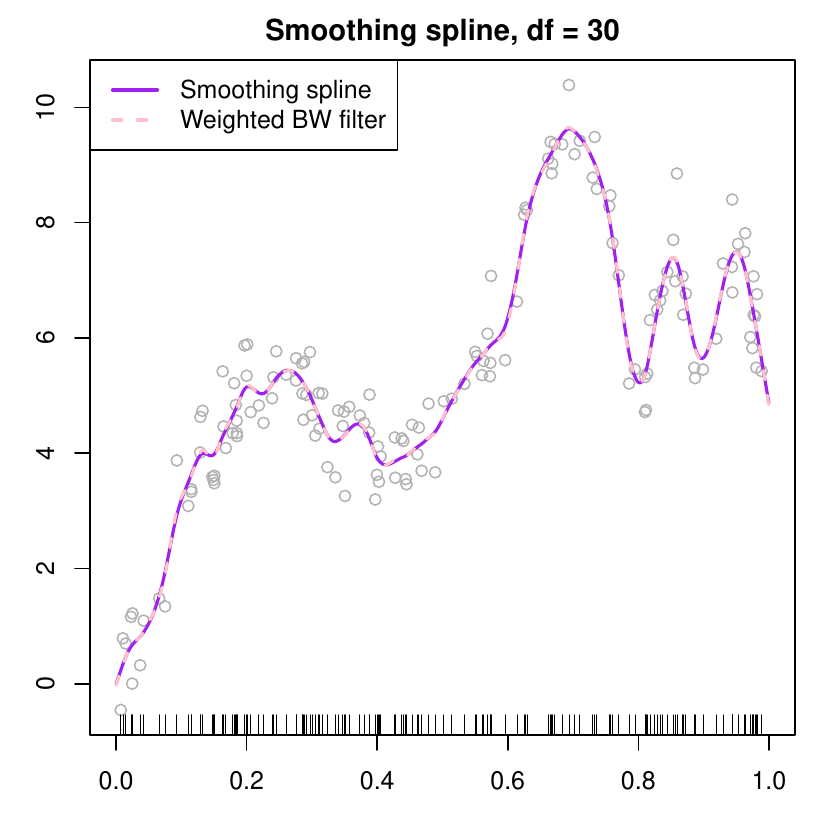}
\caption{\small (Adapted from \citet{tibshirani2014adaptive}.) Comparison of
  trend filtering and smoothing splines on an example with heterogeneous
  smoothness. The top left panel shows the true underlying regression function
  and $n=150$ sampled response points, with the design points are marked by
  ticks on the horizontal axis (they are not evenly-spaced). The top right
  panel shows the cubic trend filtering solution ($k=3$), in solid blue, with a 
  ``hand-picked'' value of the tuning parameter $\lambda$. This solution results
  in an estimated degrees of freedom (df) of 20 (see
  \citet{tibshirani2011solution}). Note that it adapts well to the smooth part 
  of the true function on the left side of the domain, as well as the wiggly
  part on the right side. Also plotted is the restricted locally
  adaptive regression spline solution ($k=3$), in dashed red, at the same value
  of $\lambda$, which looks visually identical. The bottom left panel is the
  cubic smoothing spline solution ($m=2$), in solid green, whose df is matched
  to that of the trend filtering solution; notice that it oversmooths on the
  right side of the domain. The bottom right panel is the smoothing spline
  solution when its df has been increased to 30, the first point at which it
  begins appropriately pick up the two peaks on the right side; but note that 
  it now undersmooths on the left side. Finally, in the bottom two panels, the
  cubic BW filter ($m=2$) is also plotted, in dotted orange and dotted pink---to
  be clear, this is actually our proposed weighted extension of the BW filter to 
  arbitrary designs, as given in Section \ref{sec:bw_filter}. In each case it uses
  same value of $\lambda$ as the smoothing spline solution, and looks identical 
  to the latter.} 
\label{fig:adapt}
\end{figure}

\begin{figure}[tb]
\centering
\includegraphics[width=0.85\textwidth]{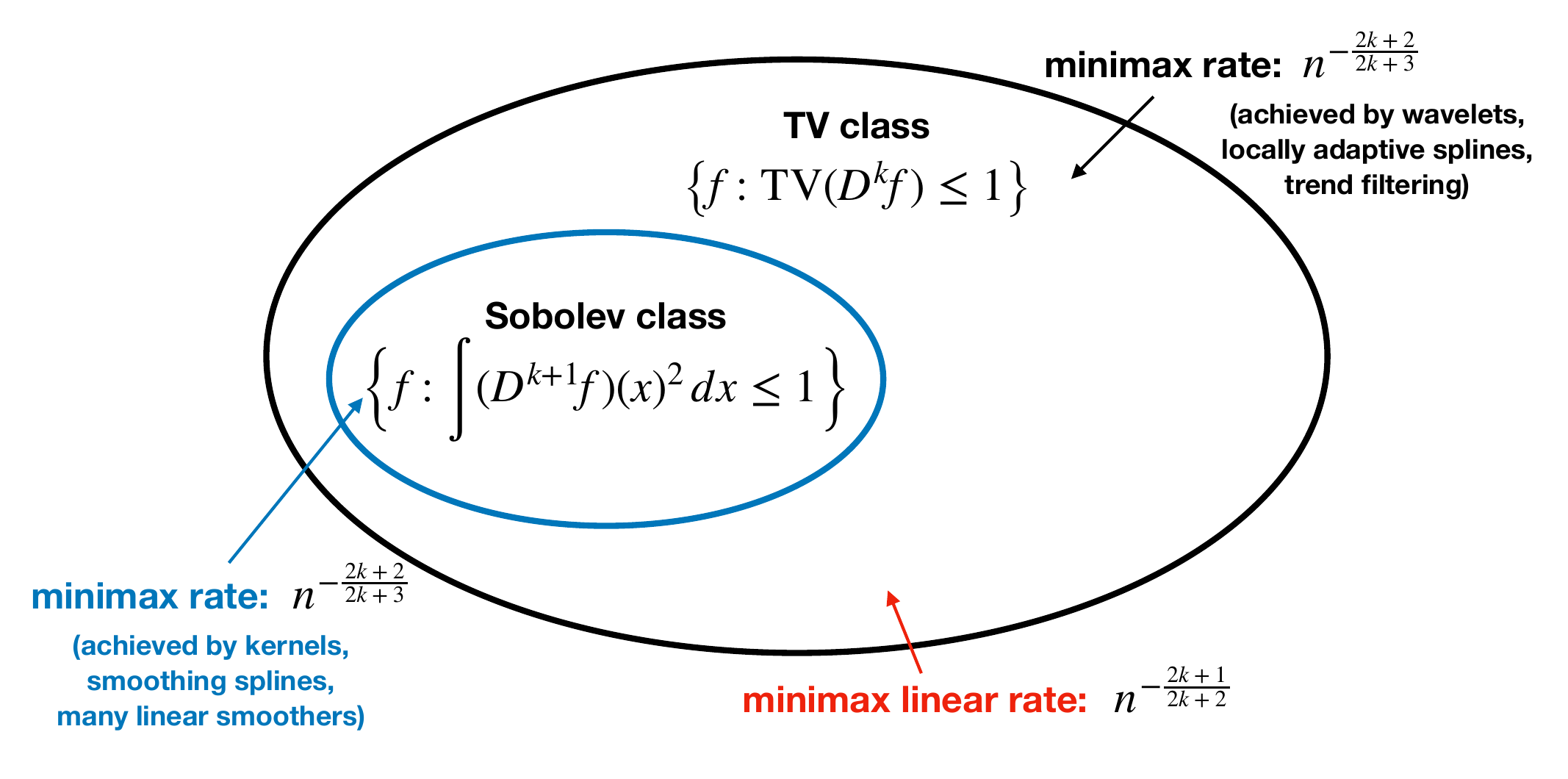}
 \captionsetup{singlelinecheck=off}
 \caption[.]{\small Minimax rates for the unit ball in the TV space of order
   $k$, \smash{$\cV^k(1; [0,1]) = \{ f : [0,1] \to \R : \TV(D^k f) \leq 1 \}$},
   and for the unit ball in the Sobolev space of order $k+1$,
   \smash{$\cW^{k+1,2}(1; [0,1]) = \{ f : [0,1] \to \R : \int_0^1 (D^{k+1}
     f)(x)^2 \, dx \leq 1 \}$}. Observe that 
   $$
   \cV^k(1; [0,1]) \supseteq \cW^{k+1,1}(1; [0,1]) 
   \supseteq \cW^{k+1,2}(1; [0,1]);
   $$ 
   the first containment is due to the equality in \eqref{eq:tv_sobolev} for
   $k+1$ times weakly differentiable functions, and the second containment is
   due to the relation between $L_1$ and $L_2$ norms on $[0,1]$. The minimax
   rates between \smash{$\cV^k(1; [0,1])$} and the \smash{$\cW^{k+1,2}(1;
     [0,1])$} are the same; but critically, linear smoothers, which can be
   optimal on the smaller set \smash{$\cW^{k+1,2}(1; [0,1])$}, {\it cannot} be
   optimal on the larger set \smash{$\cV^k(1; [0,1])$}, which contains functions
   that display more heterogeneous smoothness.}  
\label{fig:minimax}
\end{figure}

\paragraph{Computational difficulties.}

\citet{mammen1997locally} proved that the solution \smash{$\hf$} in
\eqref{eq:local_spline} is a $k$th degree spline. For $k=0$ or $k=1$, they show 
that the knots in \smash{$\hf$} must lie in particular subset of the design
points, denoted \smash{$T_{n,k} \subseteq x_{1:n}$}, with cardinality 
\smash{$|T_{n,k}|=n-k-1$}; that is, for $k=0$ or $k=1$, we know that  
\smash{$\hf \in \S^k(T_{n,k},[a,b])$}, which reduces \eqref{eq:local_spline} to
a finite-dimensional problem. But for $k \geq 2$, this is no longer true; the
knots in \smash{$\hf$} may well lie outside of $x_{1:n}$, and
\eqref{eq:local_spline} remains an infinite-dimensional problem (since we have
to optimize over all possible knot sets).  

As a proposed fix, for a general degree $k \geq 0$, \citet{mammen1997locally}
defined (what we refer to as) the {\it $k$th degree restricted locally adaptive
  regression spline} estimator, which solves
\begin{equation}
\label{eq:local_spline_rest}
\minimize_{f \in \cG^k_n} \; \frac{1}{2} \sum_{i=1}^n \big(y_i - f(x_i)\big)^2 +   
\lambda \, \TV(D^k f),
\end{equation}
for a certain space \smash{$\cG^k_n=\S^k(T_{n,k},[a,b])$} of $k$th degree
splines with knots \smash{$T_{n,k} \subseteq x_{1:n}$}, where
\smash{$|T_{n,k}|=n-k-1$} (they define \smash{$T_{n,k}$} by excluding $k+1$ 
points at the extremes of the design). To be clear, for $k=0$ and $k=1$,
problems \eqref{eq:local_spline_rest} and \eqref{eq:local_spline} are
equivalent; but for $k \geq 2$, they are not, and the former is an approximation
of the latter.  

The proposal in \eqref{eq:local_spline_rest} is useful because it is equivalent
to a finite-dimensional convex optimization problem: letting \smash{$\G^k_n \in 
  \R^{\times n}$} be the truncated power basis matrix, with entries 
\smash{$(\G^k_n)_{ij}=g^k_j(x_i)$}, where \smash{$g^k_j$}, $j=1,\ldots,n$ are
the truncated power basis \eqref{eq:tpb} for \smash{$\cG^k_n$}, we can rewrite
\eqref{eq:local_spline_rest} as
\begin{equation}
\label{eq:local_spline_basis}
\minimize_\alpha \; \frac{1}{2} \big\|y - \G^k_n \alpha\big\|_2^2 + 
\lambda \sum_{j=k+2}^n |\alpha_j|,
\end{equation}
where the solutions \smash{$\halpha,\hf$} in \eqref{eq:local_spline_basis},
\eqref{eq:local_spline_rest}, respectively, satisfy \smash{$\hf = \sum_{j=1}^n
\halpha_j g^k_j$}. \citet{mammen1997locally} proved that the restricted locally
adaptive regression spline estimator (under weak conditions on the design
points) still achieves the optimal rate in \eqref{eq:minimax_rate}. Readers
familiar with the high-dimensional regression literature will recognize
\eqref{eq:local_spline_basis} as a type of {\it lasso} problem
\citep{tibshirani1996regression,chen1998atomic}, for which many efficient
algorithms exist (for just one example, see \citet{friedman2007pathwise}). But
for large sample sizes $n$, it can still be computationally difficult to solve,
owing to the fact that the design matrix \smash{$\G^k_n$} is dense (it is
actually lower-triangular, but generally poorly-conditioned, which causes
trouble for first-order optimization algorithms).

\subsection{Trend filtering}
\label{sec:trend_filter_review}

Building on the background and motivation for trend filtering given in the
introduction, and the motivation for locally adaptive regression splines just
given, we arrive at the following perspective. Trend filtering is an
approximation to the locally adaptive regression spline problem
\eqref{eq:local_spline}, which is similar to the proposal for restricted locally
adaptive regression splines in \eqref{eq:local_spline_rest}, but with a
different restriction for the optimization domain: it uses the $k$th degree
discrete spline space \smash{$\cH^k_n$}, as we saw in
\eqref{eq:trend_filter_cont}, rather than the $k$th degree spline space
\smash{$\cG^k_n$}. To retrieve an equivalent lasso form, similar to
\eqref{eq:local_spline_basis}, we can let \smash{$\H^k_n \in \R^{n\times n}$}
denote the falling factorial basis matrix, with entries
\smash{$(\H^k_n)_{ij}=h^k_j(x_i)$}, where \smash{$h^k_j$}, $j=1,\ldots,n$ are as
in \eqref{eq:ffb}, and then \eqref{eq:trend_filter_basis} becomes
\begin{equation}
\label{eq:trend_filter_basis}
\minimize_\alpha \; \frac{1}{2} \big\|y - \H^k_n \alpha\big\|_2^2 + 
\lambda \sum_{j=k+2}^n |\alpha_j|.
\end{equation}
where the solutions \smash{$\halpha,\hf$} in problems
\eqref{eq:trend_filter_basis}, \eqref{eq:trend_filter_cont}, respectively, are
related by \smash{$\hf = \sum_{j=1}^n \halpha_j h^k_j$}. Fortunately, trend
filtering retains (under mild conditions on the design) the minimax optimal rate
in \eqref{eq:minimax_rate}. This was shown in
\citet{tibshirani2014adaptive,wang2014falling} by bounding the distance between
solutions in \eqref{eq:trend_filter_basis}, \eqref{eq:local_spline_basis}.

Finally---and critically for practical use---the problem
\eqref{eq:trend_filter_basis} has an equivalent form given in
\eqref{eq:trend_filter_old}. The latter, original form of trend filtering is
more amenable to efficient computation, thanks to the structured, banded nature
of its penalty term; computation here scales considerably better than that in
either \eqref{eq:trend_filter_basis} or the restricted locally adaptive
regression spline problem \eqref{eq:local_spline_rest}. (We have found that in
most empirical examples, trend filtering and restricted locally adaptive spline
solutions are more or less visually identical anyway; see Figure
\ref{fig:adapt}.)

On the topic of \eqref{eq:trend_filter_old}, we remark that this problem can 
be equivalently written as
\begin{equation}
\label{eq:trend_filter}
\minimize_\theta \; \frac{1}{2} \|y - \theta\|_2^2 + 
\lambda \big\|\W^{k+1}_n \D^{k+1}_n \theta \big\|_1,
\end{equation}
with \smash{$\D^{k+1}_n \in \R^{(n-k-1) \times n}$} the $(k+1)$st order 
discrete derivative matrix to be defined in \eqref{eq:discrete_deriv_mat}   
(recall, this matrix acts by producing divided differences over the design
points), and \smash{$\W^{k+1}_n \in \R^{(n-k-1) \times (n-k-1)}$} is the
$(k+1)$st order diagonal weight matrix to be defined in \eqref{eq:weight_mat}.
The penalty in the above problem is hence
\begin{equation}
\label{eq:trend_filter_wpen}
\big\|\W^{k+1}_n \D^{k+1}_n \theta \big\|_1 
= \sum_{i=1}^{n-k-1} \big| (\D^{k+1}_n \theta)_i \big| \cdot
\frac{x_{i+k+1} - x_i}{k+1}. 
\end{equation}
Thus \eqref{eq:trend_filter_old} versus \eqref{eq:trend_filter} is a matter of
whether the natural operator is viewed as \smash{$\C^{k+1}_n$} or
\smash{$\D^{k+1}_n$}. We should note that when the design points are
evenly-spaced, we have \smash{$\W^{k+1}_n=\I_{n-k-1}$}, the identity matrix, so
this choice makes no difference; in general though, it does, and we now view
\eqref{eq:trend_filter} as a more natural way of presenting trend filtering,
which differs from the choice \eqref{eq:trend_filter_old} that we made in
\citet{tibshirani2014adaptive,wang2014falling} and our subsequent work. In
Remarks \ref{rem:discrete_deriv_mat_old} and \ref{rem:ffb_tv_mat_old}, and
Section \ref{sec:trend_filter}, we return to this point.

\paragraph{Historical remarks.}

As already mentioned, trend filtering for evenly-spaced designs was
independently proposed by \citet{steidl2006splines,kim2009trend}. However,
similar ideas were around much earlier. \citet{kim2009trend} were clear
about being motivated by \citet{hodrick1981business}, who considered an
$\ell_2$ analog of trend filtering, that is, with an $\ell_2$ penalty on forward
differences, rather than an $\ell_1$ penalty. (Actually, such $\ell_2$ analogs  
were proposed over 100 years ago, long before Hodrick and Prescott, first by
Bohlmann and then by Whittaker, as we discuss in the next subsection.)
Moreover, \citet{schuette1978linear} and \citet{koenker1994quantile} studied   
estimators defined using piecewise linear ($k=1$) trend filtering penalties,
but where the squared $\ell_2$ loss is replaced with an $\ell_1$ loss or
quantile loss, respectively. Lastly, we remark again that for the piecewise 
constant case ($k=0$), trend filtering reduces to what is known as total
variation denoising \citep{rudin1992nonlinear} in signal processing, and the
fused lasso in statistics \citep{tibshirani2005sparsity}. 

In writing \citet{tibshirani2014adaptive}, we were motivated by
\citet{kim2009trend}; these authors called their method ``$\ell_1$ trend
filtering'', which we shortened to ``trend filtering'' in our work. At this
time, we had not heard of discrete splines, but we were aware that the trend
filtering solution displayed a kind of continuity in its lower-order discrete
derivatives: this was demonstrated empirically in Figure 3 of
\citet{tibshirani2014adaptive}. By the time of our follow-up paper
\citet{wang2014falling}, we learned that \citet{steidl2006splines} had proposed 
the same idea as \citet{kim2009trend}. It was in the former paper that we
first learned of discrete splines and the foundational work by
\citet{mangasarian1971discrete,mangasarian1973best} on the topic, but it was 
not until much later---until we read the book by \citet{schumaker2007spline},
where the development of discrete splines is laid out systematically in a
parallel fashion to the development of splines---that we truly appreciated the  
connection between discrete splines and trend filtering, and the value that such
a connection can bring to both lines of work. The current paper grew from an 
attempt to pay homage to discrete splines and to make all such connections
explicit. 

\subsection{Bohlmann-Whittaker filtering}
\label{sec:bw_filter_review}

Over 120 years ago, \citet{bohlmann1899ausgleichungsproblem} studied the
solution of the problem:
\begin{equation}
\label{eq:bohlmann_filter}
\minimize_\theta \; \|y-\theta\|_2^2 + 
\lambda \sum_{i=1}^{n-1} (\theta_i - \theta_{i+1})^2,  
\end{equation}
as a smoother of responses $y_i$, $i=1,\ldots,n$ observed at evenly-spaced
(unit-spaced) design points $x_i=i$, $i=1,\ldots,n$. This is one of the
earliest references that we know of for discrete-time smoothing (or smoothing of
any kind) based on optimization. Over 20 years after this,
\citet{whittaker1923new} proposed a variant of \eqref{eq:bohlmann_filter} where 
first differences are replaced by third differences: 
\begin{equation}
\label{eq:whittaker_filter}
\minimize_\theta \; \|y-\theta\|_2^2 + 
\lambda \sum_{i=1}^{n-3} (\theta_i - 3\theta_{i+1} + 3\theta_{i+2} -
\theta_{i+3})^2.
\end{equation}
Whittaker seems to have been unaware of the work by Bohlmann, and unfortunately,
Bohlmann's work has remained relatively unknown (it is still not cited in most
references on discrete-time smoothing and its history). Meanwhile, the work of
\citet{whittaker1923new} was quite influtential and led a long line of
literature, centered in the actuarial community, where
\eqref{eq:whittaker_filter} is often called the {\it Whittaker-Henderson method
of graduation}, honoring the contributions of \citet{henderson1924new}.
Moreover, as explained previously, recall it was Whittaker's work that
inspired \citet{schoenberg1964spline} to develop the smoothing spline.

Almost 60 years after this, \citet{hodrick1981business} proposed a variation on   
\eqref{eq:whittaker_filter} in which third differences are replaced by second
differences:  
\begin{equation}
\label{eq:hp_filter}
\minimize_\theta \; \|y-\theta\|_2^2 + 
\lambda \sum_{i=1}^{n-2} (\theta_i - 2\theta_{i+1} + \theta_{i+2})^2.
\end{equation}
Hodrick and Prescott were aware of the work of Whittaker, but not of Bohlmann.
The paper by \citet{hodrick1981business}, which was later published as
\citet{hodrick1997business}, has become extremely influential in econometrics,
where \eqref{eq:hp_filter} is known as the {\it Hodrick-Prescott filter}.
Recall, as explained previously, that it was Hodrick and Prescott's work that
inspired \citet{kim2009trend} to develop trend filtering. 

Generalizing \eqref{eq:bohlmann_filter}, \eqref{eq:whittaker_filter}, 
\eqref{eq:hp_filter}, consider for an integer $m \geq 0$, the problem:  
\begin{equation}
\label{eq:bw_filter_unit}
\minimize_\theta \; \|y - \theta\|_2^2 + 
\lambda \sum_{i=1}^{n-m} (F^m \theta)(i)^2
\end{equation}
where \smash{$(F^m \theta)(i) = \sum_{\ell=0}^k (-1)^{k-\ell} {k \choose \ell} 
  \theta_{i+\ell}$} is the standard (integer-based) $m$th order forward
differences of $\theta$ starting at an integer $i$. To honor their early 
contributions, we call the solution in \eqref{eq:bw_filter_unit} the {\it 
  Bohlmann-Whittaker (BW) filter}.  

\paragraph{Arbitrary designs.}

For a set of arbitrary design points $x_{1:n}$, it would seem natural to use
divided differences in place of forward differences in
\eqref{eq:bw_filter_unit}, resulting in    
\begin{equation}
\label{eq:bw_filter_old}
\minimize_\theta \; \|y - \theta\|_2^2 + \lambda 
\big\| \D^m_n \theta\big\|_2^2, 
\end{equation}
where \smash{$\D^m_n \in \R^{(n-m) \times n}$} is the $m$th order discrete
derivative matrix defined in \eqref{eq:discrete_deriv_mat}. In fact, such an
extension \eqref{eq:bw_filter_old} for arbitrary designs was suggested by
\citet{whittaker1923new}, in a footnote of his paper. This idea caught on with
many authors, including \citet{schoenberg1964spline}, who in describing
Whittaker's method as the source of inspiration for his creation of the
smoothing spline, used the form \eqref{eq:bw_filter_old}.

In Section \ref{sec:bw_filter}, we argue that for arbitrary designs it is
actually in some ways more natural to replace the penalty in
\eqref{eq:bw_filter_old} by a weighted squared $\ell_2$ penalty,    
\begin{equation}
\label{eq:bw_filter_wpen}
\big\| (\W^m_n)^{\hspace{-1pt}\frac{1}{2}} \D^m_n \theta\big\|_2^2 = 
\sum_{i=1}^{n-m} (\D^m_n \theta)_i^2 \cdot \frac{x_{i+m} - x_i}{m}. 
\end{equation}
Here \smash{$\W^m_n \in \R^{(n-m) \times (n-m)}$} is the $m$th order diagonal 
weight matrix, defined later in \eqref{eq:weight_mat}. Notice the close
similarity between the weighting in \eqref{eq:bw_filter_wpen} and in the trend
filtering penalty \eqref{eq:trend_filter_wpen}. The reason we advocate for the
penalty \eqref{eq:bw_filter_wpen} is that the resulting estimator admits a close
tie to the smoothing spline: when $m=1$, these two exactly coincide (recall
\eqref{eq:nsp_sobolev} and \eqref{eq:nsp_sobolev_kmat_m1} from Theorem
\ref{thm:nsp_sobolev}), and when $m=2$, they are provably ``close'' in $\ell_2$
distance (for appropriate values of their tuning parameters), as we show later
in Theorem \ref{thm:ss_bw_bound}. Moreover, empirical examples support the idea
that the estimator associated with the weighted penalty
\eqref{eq:bw_filter_wpen} can be closer than the solution in
\eqref{eq:bw_filter_old} to the smoothing spline.  

Finally, unlike trend filtering, whose connection to discrete splines is
transparent and clean (at least in hindsight), the story with the BW filter is
more subtle. This is covered in Section \ref{sec:bw_discrete_spline}. 

\section{Falling factorials}
\label{sec:fall_fact}

In this section, we define a discrete derivative operator based on divided
differences, and its inverse operator, a discrete integrator, based on
cumulative sums. We use these discrete operators to construct the falling
factorial basis for discrete splines, in a manner analogous to the construction
of the truncated power basis for splines.

\subsection{Discrete differentiation}

Let $f$ be a function defined on an interval $[a,b]$\footnote{There is no real
  need to consider an interval $[a,b]$ containing the points $x_1,\ldots,x_n$.
  We introduce this interval simply because we think it may be conceptually
  helpful when defining the discrete derivative and integral operators, but the
  same definitions make sense, with minor modifcations, when we consider $f$ as
  a function on all of $\R$.}, 
and let $a \leq x_1 < \cdots < x_n \leq b$. To motivate the discrete
derivative operator that we study in this subsection, consider the following
question: given a point $x \in [a,b]$, how might we use
$f(x_1),\ldots,f(x_n)$, along with one more evaluation $f(x)$, to approximate
the $k$th derivative $(D^k f)(x)$, of $f$ at $x$?       

A natural answer to this question is given by divided differences. For an
integer $k \geq 1$, we write $\Delta^k(\cdot; x_{1:n})$ for an operator that
maps a function $f$ to a function $\Delta^k (f; x_{1:n})$, which we call the
{\it $k$th discrete derivative} (or the {\it discrete $k$th derivative}) of $f$,
to be defined below. A remark on notation: $\Delta^k (f; x_{1:n})$ emphasizes
the dependence on the underlying design points $x_{1:n}=\{x_1,\ldots,x_n\}$; 
henceforth, we abbreviate \smash{$\Delta^k_n f = \Delta^k (f; x_{1:n})$} (and
the underlying points $x_{1:n}$ should be clear from the context). Now, we
define the function \smash{$\Delta^k_n f$} at a point $x \in [a,b]$ as       
\begin{equation}
\label{eq:discrete_deriv}
(\Delta^k_n f) (x) = 
\begin{cases}
k! \cdot f[x_{i-k+1},\ldots,x_i,x] & \text{if $x \in (x_i,x_{i+1}]$, $i \geq k$} \\
i! \cdot f[x_1,\ldots,x_i,x] & \text{if $x \in (x_i,x_{i+1}]$, $i < k$} \\
f(x) & \text{if $x \leq x_1$}.
\end{cases}
\end{equation}
Here and throughout, we use $x_{n+1}=b$ for notational convenience. Note that,
on ``most'' of the domain $[a,b]$, that is, for $x \in (x_k,b]$, we define
\smash{$(\Delta^k_n f)(x)$} in terms of a (scaled) $k$th divided difference of
$f$, where the centers are the $k$ points immediately to the left of $x$, and
$x$ itself. Meanwhile, on a ``small'' part of the domain, that is, for $x \in
[a,x_k]$, we define \smash{$(\Delta^k_n f)(x)$} to be a (scaled) divided
difference of $f$ of the highest possible order, where the centers are the
points to the left of $x$, and $x$ itself.

\paragraph{Linear combination formulation.} 

As divided differences are linear combinations of function evaluations, it is
not hard to see from its definition in \eqref{eq:discrete_deriv} that
\smash{$(\Delta^k_nf)(x)$} is a linear combination of (a subset of size at most
$k+1$ of) the evaluations $f(x_1),\ldots,f(x_n)$ and $f(x)$. In fact, from the 
alternative representation for divided differences in
\eqref{eq:divided_diff_linear}, we can rewrite \eqref{eq:discrete_deriv} as 
\begin{equation}
\label{eq:discrete_deriv_linear}
(\Delta^k_n f)(x) =
\begin{cases}
\displaystyle
\sum_{j=i-k+1}^i \frac{k! \cdot f(x_j)} 
{\big(\prod_{\ell \in \{i-k+1,\ldots,i\} \setminus \{j\}}
(x_j-x_\ell)\big)(x_j-x)} + 
\frac{k! \cdot f(x)}{\prod_{\ell=i-k+1}^i (x-x_\ell)} 
& \text{if $x \in (x_i,x_{i+1}]$, $i \geq k$} \\
\displaystyle
\sum_{j=1}^i \frac{i! \cdot f(x_j)} 
{\big(\prod_{\ell \in \{1,\ldots,i\} \setminus \{j\}}
(x_j-x_\ell)\big)(x_j-x)} + 
\frac{i! \cdot f(x)}{\prod_{\ell=1}^i (x-x_\ell)} 
& \text{if $x \in (x_i,x_{i+1}]$, $i < k$} \\
f(x) & \text{if $x \leq x_1$}.
\end{cases}
\end{equation}
It is worth presenting this formula as it is completely explicit. However, it
is not directly used in the remainder of the paper. On the other hand, a more
useful formulation can be expressed via recursion, as we develop next.    

\paragraph{Recursive formulation.} 

The following is an equivalent recursive formulation for the discrete derivative
operators in \eqref{eq:discrete_deriv}. We start by explicitly defining the
first order operator $\Delta_n$ (omitting the superscript here, for $k=1$, which
we will do commonly henceforth) by
\begin{equation}
\label{eq:discrete_deriv_rec1}
(\Delta_n f)(x) =
\begin{cases}
\displaystyle
\frac{f(x) - f(x_i)}{x - x_i} & \text{if $x \in (x_i,x_{i+1}]$} \\ 
f(x) & \text{if $x \leq x_1$}.
\end{cases}
\end{equation}
For $k \geq 2$, due to the recursion obeyed by divided differences, we can   
equivalently define the $k$th discrete derivative operator by 
\begin{equation}
\label{eq:discrete_deriv_rec2}
(\Delta^k_n f)(x) =
\begin{cases}
\displaystyle
\frac{(\Delta^{k-1}_n f)(x) - (\Delta^{k-1}_n f)(x_i)}{(x - x_{i-k+1})/k} 
& \text{if $x \in (x_i,x_{i+1}]$} \\  
(\Delta^{k-1}_n f)(x) & \text{if $x \leq x_k$}.
\end{cases}
\end{equation}
To express this recusion in a more compact form, we define the simple
difference operator \smash{$\widebar\Delta_n=\widebar\Delta(\cdot; x_{1:n})$} by
\begin{equation}
\label{eq:simple_diff}
(\widebar\Delta_n f)(x) =
\begin{cases}
f(x) - f(x_i) & \text{if $x \in (x_i,x_{i+1}]$} \\ 
f(x) & \text{if $x \leq x_1$}.
\end{cases}
\end{equation}
and for $k \geq 1$, we define the weight map \smash{$W^k_n=W^k(\cdot; x_{1:n})$}
by         
\begin{equation}
\label{eq:weight_map} 
(W^k_n f)(x) =
\begin{cases}
f(x) \cdot (x-x_{i-k+1})/k  & \text{if $x \in (x_i,x_{i+1}]$, $i \geq k$} \\ 
f(x) & \text{if $x \leq x_k$}.
\end{cases}
\end{equation}
Then the recursion in \eqref{eq:discrete_deriv_rec1},
\eqref{eq:discrete_deriv_rec2} can be rewritten as 
\begin{equation}
\label{eq:discrete_deriv_rec}
\begin{aligned}
\Delta_n &= (W_n)^{-1} \circ \widebar\Delta_n, \\
\Delta^k_n &= (W^k_n)^{-1} \circ \widebar\Delta_{n-k+1} \circ \Delta^{k-1}_n,  
\quad \text{for $k \geq 2$}.
\end{aligned}
\end{equation}
An important note: here, we denote by
\smash{$\widebar\Delta_{n-k+1}=\widebar\Delta(\cdot; x_{k:n})$}, the simple
difference operator in \eqref{eq:simple_diff} when we use the $n-k+1$ underlying
points $x_{k:n}=\{x_k,\ldots,x_n\}$ (rather than the original $n$ points 
$x_{1:n}=\{x_1,\ldots,x_n\}$). 

The compact recursive formulation in \eqref{eq:discrete_deriv_rec} is quite
useful, since it allows us to define a certain discrete integrator, which acts
as the inverse to discrete differentiation, to be described in the next
subsection. 

\paragraph{Evenly-spaced design points.} 

When the design points are evenly-spaced, $x_{i+1}-x_i=v>0$, for
$i=1,\ldots,n-1$, the discrete derivative operator \eqref{eq:discrete_deriv}
can be expressed at design points as a (scaled) forward difference, or
equivalently a (scaled) backward difference,   
$$
(\Delta^k_n f) (x_i) = 
\begin{cases}
F^k_v(x_i-kv) = B^k_v(x_i) & \text{if $i \geq k+1$} \\
F^i_v(x_1) = B^i_v(x_i) & \text{if $i < k+1$}.
\end{cases}
$$
where recall we use \smash{$F^k_v,B^k_v$} for the $k$th order forward and  
backward difference operators, respectively. In the case of evenly-spaced
design points, there are some special properties of discrete derivatives
(forward/backward differences), such as
$$
(\Delta^k_n f)(x_i) = (\Delta^d_{n-k+d} \, \Delta^{k-d}_n f)(x_i),
$$
for all $i$ and all $0 \leq d \leq k$. This unfortunately does not hold more
generally (for arbitrary designs); from \eqref{eq:discrete_deriv_rec1},
\eqref{eq:discrete_deriv_rec2}, we see that for arbitrary $x_{1:n}$, the above
property holds at $x_i$ with $d=1$ if and only if $(x_i-x_{i-k})/k =
x_i-x_{i-1}$. (Further, it should be noted that the above property never
holds---whether in the evenly-spaced case, or not---at points  $x \notin
x_{1:n}$.)  

\subsection{Discrete integration}

Consider the same setup as the last subsection, but now with the following
question as motivation: given $x \in [a,b]$, how might we use
$f(x_1),\ldots,f(x_n)$, along with $f(x)$, to approximate the $k$th integral
$(I^k f)(x)$, of $f$ at $x$?

We write $S^k(\cdot; x_{1:n})$ to denote an operator that maps a function $f$ to
a function $S^k(f; x_{1:n})$, which we call the {\it $k$th discrete integral}
(or the {\it discrete $k$th integral}) of $f$, to be defined below. As before,
we abbreviate $S^k_n=S^k(\cdot; x_{1:n})$. To define the function $S^k_n f$, we
take a recursive approach, mirroring our approach in \eqref{eq:simple_diff},
\eqref{eq:weight_map}, \eqref{eq:discrete_deriv_rec}. We start by defining the
simple cumulative sum operator \smash{$\widebar{S}_n = \widebar{S}(\cdot;
x_{1:n})$} by
\begin{equation}
\label{eq:cum_sum}
(\widebar{S}_n f)(x) =
\begin{cases}
\displaystyle
\sum_{j=1}^i f(x_j) + f(x) & \text{if $x \in (x_i,x_{i+1}]$} \\ 
f(x) & \text{if $x \leq x_1$}.
\end{cases}
\end{equation}
We then define the discrete integral operators by 
\begin{equation}
\label{eq:discrete_integ_rec}
\begin{aligned}
S_n &= \widebar{S}_n \circ W_n, \\
S^k_n &= S^{k-1}_n \circ \widebar{S}_{n-k+1} \circ W^k_n,
\quad \text{for $k \geq 2$}.
\end{aligned}
\end{equation}
An important note: as before, we abbreviate
\smash{$\widebar{S}_{n-k+1}=\widebar{S}(\cdot; x_{k:n})$} the discrete integral 
operator in \eqref{eq:discrete_integ_rec} over the $n-k+1$ underlying points
$x_{k:n}=\{x_k,\ldots,x_n\}$ (instead of over the original $n$ points
$x_1,\ldots,x_n$).

\paragraph{Linear combination formulation.}  

As with discrete derivatives (recall \eqref{eq:divided_diff_linear}), the
discrete integral of a function $f$ can be written in terms of linear
combinations of evaluations of $f$. This can be seen by working through the
definitions \eqref{eq:cum_sum} and \eqref{eq:discrete_integ_rec}, which would
lead to a formula for \smash{$(S^k_n f)(x)$} as a linear combination of
$f(x_1),\ldots,f(x_n)$ and $f(x)$, with the coefficients being $k$th order
cumulative sums of certain gaps between the design points $x_1,\ldots,x_n$ and
$x$. 

A subtle fact is that this linear combination can be written in a more explicit
form, that does not involve cumulative sums at all. Letting $h^{k-1}_j$,
$j=1,\ldots,n$ denote the falling factorial basis functions as in
\eqref{eq:ffb}, but of degree $k-1$, it holds that
\begin{equation}
\label{eq:discrete_integ_linear}
(S^k_n f)(x) = 
\begin{cases}
\displaystyle
\sum_{j=1}^k h^{k-1}_j(x) \cdot f(x_j) \;+
\sum_{j=k+1}^i h^{k-1}_j(x) \cdot \frac{x_j-x_{j-k}}{k} \cdot f(x_j) 
 \,+\, h^{k-1}_{i+1}(x) \cdot \frac{x-x_{i-k+1}}{k} \cdot f(x) & \\
& \hspace{-75pt}\text{if $x \in (x_i,x_{i+1}]$, $i \geq k$} \\
\displaystyle
\sum_{j=1}^i h^{k-1}_j(x) \cdot f(x_j) \,+\, h^{k-1}_{i+1}(x) \cdot f(x)  
& \hspace{-75pt}\text{if $x \in (x_i,x_{i+1}]$, $i <  k$} \\ 
f(x) & \hspace{-75pt}\text{if $x \leq x_1$},
\end{cases}
\end{equation}
The above is a consequence of results that we will develop in subsequent parts
of this paper: the inverse relationship between discrete differentation and
discrete integration (Lemma \ref{lem:discrete_deriv_integ_inv}, next), and the
dual relationship between discrete differentiation and the falling factorial
basis (Lemmas \ref{lem:ffb_discrete_deriv_extra_pp} and
\ref{lem:ffb_discrete_deriv_extra_poly}, later). We defer its proof to
Appendix \ref{app:discrete_integ_linear}. As with the discrete derivative
result \eqref{eq:discrete_deriv_linear}, it is worth presenting
\eqref{eq:discrete_integ_linear} because its form is completely explicit.
However, again, we note that this linear combination formulation is not itself
directly used in the remainder of this paper. 

\paragraph{Inverse relationship.}  

The next result shows an important relationship between discrete differentiation
and discrete integration: they are precisely inverses of each other. The proof
follows by induction and is given in Appendix
\ref{app:discrete_deriv_integ_inv}.

\begin{lemma}
\label{lem:discrete_deriv_integ_inv}
For any $k \geq 1$, it holds that $(\Delta^k_n)^{-1} = S^k_n$, that is, 
$\Delta^k_n S^k_n f = f$ and $S^k_n \Delta^k_n f = f$ for all functions $f$. 
\end{lemma}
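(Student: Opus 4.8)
The plan is to induct on $k$, exploiting the compact recursive forms \eqref{eq:discrete_deriv_rec} and \eqref{eq:discrete_integ_rec}. The point of these recursions is that both $\Delta^k_n$ and $S^k_n$ are built by sandwiching the order-$(k-1)$ operators between a weight map $W^k_n$ (or its inverse) and a \emph{simple} difference/sum operator over the shifted point set $x_{k:n}$. Consequently the whole statement reduces to two elementary ingredients: a discrete fundamental theorem of calculus for the simple operators, namely $\widebar\Delta_n \circ \widebar{S}_n = \Id = \widebar{S}_n \circ \widebar\Delta_n$ (and the same with $x_{k:n}$ in place of $x_{1:n}$), together with the invertibility of $W^k_n$, which gives $(W^k_n)^{-1} \circ W^k_n = \Id = W^k_n \circ (W^k_n)^{-1}$.

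First I would establish the simple discrete FTC. For $\widebar\Delta_n \circ \widebar{S}_n = \Id$, set $g = \widebar{S}_n f$: on $x \leq x_1$ both operators act as the identity, while for $x \in (x_i, x_{i+1}]$ we have $g(x) = \sum_{j=1}^i f(x_j) + f(x)$ and, since $x_i \in (x_{i-1}, x_i]$, also $g(x_i) = \sum_{j=1}^{i-1} f(x_j) + f(x_i)$, so $(\widebar\Delta_n g)(x) = g(x) - g(x_i) = f(x)$. For the reverse identity, set $g = \widebar\Delta_n f$; then $\sum_{j=1}^i g(x_j) = f(x_1) + \sum_{j=2}^i \big(f(x_j) - f(x_{j-1})\big)$ telescopes to $f(x_i)$, and adding $g(x) = f(x) - f(x_i)$ recovers $f(x)$. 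These telescoping computations are the only genuine calculations in the proof; the one thing to watch is that, under the half-open convention, the value $g(x_i)$ at a design point is governed by the interval $(x_{i-1}, x_i]$ immediately to its left. The base case $k=1$ is then immediate: from $\Delta_n = (W_n)^{-1} \circ \widebar\Delta_n$ and $S_n = \widebar{S}_n \circ W_n$ we get $\Delta_n S_n = (W_n)^{-1} \circ \big(\widebar\Delta_n \circ \widebar{S}_n\big) \circ W_n = \Id$ and $S_n \Delta_n = \widebar{S}_n \circ \big(W_n \circ (W_n)^{-1}\big) \circ \widebar\Delta_n = \Id$.

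For the inductive step, assume $\Delta^{k-1}_n \circ S^{k-1}_n = \Id = S^{k-1}_n \circ \Delta^{k-1}_n$. Regrouping the composition of the two recursions gives
\[
\Delta^k_n \circ S^k_n = (W^k_n)^{-1} \circ \widebar\Delta_{n-k+1} \circ \big(\Delta^{k-1}_n \circ S^{k-1}_n\big) \circ \widebar{S}_{n-k+1} \circ W^k_n,
\]
and collapsing the bracketed middle by the inductive hypothesis, then applying the discrete FTC over the points $x_{k:n}$ to cancel $\widebar\Delta_{n-k+1} \circ \widebar{S}_{n-k+1}$, and finally cancelling $W^k_n$, yields $\Delta^k_n \circ S^k_n = \Id$. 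The reverse composition is symmetric:
\[
S^k_n \circ \Delta^k_n = S^{k-1}_n \circ \widebar{S}_{n-k+1} \circ \big(W^k_n \circ (W^k_n)^{-1}\big) \circ \widebar\Delta_{n-k+1} \circ \Delta^{k-1}_n = S^{k-1}_n \circ \Delta^{k-1}_n = \Id,
\]
using $\widebar{S}_{n-k+1} \circ \widebar\Delta_{n-k+1} = \Id$ and then the inductive hypothesis.

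The main obstacle is not the induction, which is purely formal once the recursions are in hand, but the bookkeeping around the base identities: verifying the telescoping at the design points, and checking that the simple operators over the shifted set $x_{k:n}$ obey exactly the same FTC so the argument can be reused verbatim at each order. I would also record explicitly that $W^k_n$ is invertible: on $(x_i, x_{i+1}]$ with $i \geq k$ it is multiplication by $(x - x_{i-k+1})/k > 0$ (since $x > x_i \geq x_{i-k+1}$), and it is the identity on $x \leq x_k$, so its pointwise inverse is well defined and the cancellations above are legitimate.
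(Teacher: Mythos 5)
Your proof is correct and follows essentially the same route as the paper's: an induction on $k$ whose base case is the telescoping verification of the simple discrete fundamental theorem of calculus, and whose inductive step regroups the compositions in \eqref{eq:discrete_deriv_rec} and \eqref{eq:discrete_integ_rec} to cancel $\Delta^{k-1}_n \circ S^{k-1}_n$, $\widebar\Delta_{n-k+1} \circ \widebar{S}_{n-k+1}$, and $W^k_n \circ (W^k_n)^{-1}$ in turn. The only cosmetic difference is that you isolate the simple FTC as a standalone ingredient and derive the $k=1$ case from it, whereas the paper verifies $k=1$ by writing out $S_n = \widebar{S}_n \circ W_n$ explicitly and telescoping directly; the content is identical.
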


\begin{remark}
It may be surprising, at first glance, that the $k$th order discrete derivative
operator $\Delta^k_n$ has an inverse at all. In continuous-time, by comparison,
the $k$th order derivative operator $D^k$ annihilates all polynomials of degree
$k$, thus we clearly cannot have $I^k D^k f = f$ for all $f$. Viewed as an
operator over all functions with sufficient regularity, $D^k$ only has a right
inverse, that is, $D^k I^k f = f$ for all $f$ (by the fundamental theorem of
calculus). The fact that $\Delta^k_n$ has a proper (both left and right) inverse
$S^k_n$ is due to the special way in which $\Delta^k_n f$ is defined towards the
left side of the underlying domain: recall that $(\Delta^k_n f)(x)$ does not
involve a divided difference of order $k$ for $x \in [a,x_k]$, but rather, a
divided difference of order $k-1$ for $x \in (x_{k-1},x_k]$, of order $k-2$ for
$x \in (x_{k-2},x_{k-1}]$, etc. This ``fall off'' in the order of the divided
difference being taken, as $x$ approaches the left boundary point $a$, is what
renders $\Delta^k_n$ invertible. For example, when $p$ is $kt$h degree
polynomial, we have $(\Delta^k_n p)(x)=0$ for $x \in (x_k,b]$, and yet the lower
order divided differences, $(\Delta^k_n p)(x)$ for $x \in [a,x_k]$, encode
enough information that we can recover $p$ via discrete integration. 
\end{remark}

\subsection{Constructing the basis}

We recall a simple way to construct the truncated power basis for splines.
Let us abbreviate $1_t=1_{(t,b]}$, that is, the step function with step at $t
\in [a,b]$,  
$$
1_t(x) = 1\{x > t\}.
$$
(The choice of left-continuous step function is arbitrary, but convenient for
our development). It can be easily checked by induction that for all $k \geq
0$,  
$$
(I^k 1_t)(x) = \frac{1}{k!} (x - t)^k_+,
$$
where recall $x_+=\max\{x,0\}$, and we denote by $I^0=\Id$, the identity map,
for notational convenience. We can thus see that the truncated power basis in
\eqref{eq:tpb}, for the space $\S^k(t_{1:r}, [a,b])$ of $k$th degree splines
with knot set $t_{1:r}$, can be constructed by starting with the polynomials
$x^{j-1}$, $j=1,\ldots,k+1$ and including the $k$th order antiderivatives of the
appropriate step functions,
\begin{equation}
\label{eq:tpb_integ}
\frac{1}{k!} (x-t_j)^k_+ = (I^k 1_{t_j})(x), \quad j=1,\ldots,r. 
\end{equation}

We now show that an analogous construction gives rise to the falling factorial
basis functions in \eqref{eq:ffb}. 

\begin{theorem}
\label{thm:ffb_discrete_integ}
For any $k \geq 0$, the piecewise polynomials in the $k$th degree falling
factorial basis, given in the second line of \eqref{eq:ffb}, satisfy     
\begin{equation}
\label{eq:ffb_discrete_integ}
\frac{1}{k!} \prod_{\ell=j-k}^{j-1} (x-x_\ell) \cdot 1\{x > x_{j-1}\} = 
(S^k_n 1_{x_{j-1}})(x),  \quad j=k+2,\ldots,n.
\end{equation}
Here, we use $S^0_n=\Id$, the identity map, for notational convenience.
\end{theorem}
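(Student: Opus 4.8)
The plan is to invert the problem. Since Lemma \ref{lem:discrete_deriv_integ_inv} gives $(\Delta^k_n)^{-1} = S^k_n$, the identity \eqref{eq:ffb_discrete_integ} is equivalent to the claim that applying the $k$th discrete derivative to the piecewise polynomial $h^k_j(x) = \frac{1}{k!} \prod_{\ell=j-k}^{j-1}(x-x_\ell)\cdot 1\{x > x_{j-1}\}$ returns the step function, that is, $(\Delta^k_n h^k_j)(x) = 1\{x > x_{j-1}\}$ for all $x \in [a,b]$. This is the form I would prove, since $\Delta^k_n$ is defined pointwise through divided differences (as in \eqref{eq:discrete_deriv}) and the right-hand side is completely explicit. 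I would write $q = \frac{1}{k!}\,\eta(\cdot; x_{(j-k):(j-1)})$ for the polynomial factor, a degree-$k$ polynomial with leading coefficient $1/k!$ whose roots are exactly $x_{j-k},\ldots,x_{j-1}$, so that $h^k_j = q \cdot 1\{\cdot > x_{j-1}\}$. The only extra fact needed beyond the definitions is that a divided difference $f[z_1,\ldots,z_{k+1}]$ depends on $f$ only through its values at the centers $z_1,\ldots,z_{k+1}$, which is immediate from \eqref{eq:divided_diff_linear}.

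The argument splits according to the position of $x$ relative to the knot $x_{j-1}$. First suppose $x > x_{j-1}$. Because $j \geq k+2$, any interval $(x_i,x_{i+1}]$ containing such an $x$ has $i \geq j-1 \geq k+1 > k$, so the top case of \eqref{eq:discrete_deriv} applies and $(\Delta^k_n h^k_j)(x) = k! \cdot h^k_j[x_{i-k+1},\ldots,x_i,x]$. I would then verify that $h^k_j$ agrees with $q$ at each of these $k+1$ centers: the center $x$ satisfies $x > x_{j-1}$, so $h^k_j(x) = q(x)$; and every design center $x_\ell$ has index in the range $j-k \leq i-k+1 \leq \ell \leq i$, so either $\ell \geq j$, in which case $x_\ell > x_{j-1}$ and $h^k_j(x_\ell) = q(x_\ell)$, or $j-k \leq \ell \leq j-1$, in which case $x_\ell$ is a root of $q$ and both sides vanish, $h^k_j(x_\ell) = 0 = q(x_\ell)$. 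Hence the two functions share all center values, their divided differences coincide, and $(\Delta^k_n h^k_j)(x) = k! \cdot q[x_{i-k+1},\ldots,x_i,x] = \eta(\cdot; x_{(j-k):(j-1)})[x_{i-k+1},\ldots,x_i,x]$. This is a $k$th order divided difference of a monic degree-$k$ polynomial at $k+1$ distinct centers, so it equals the leading coefficient, namely $1$; this last step follows by matching the degree-$k$ terms in Newton's interpolation formula \eqref{eq:newton_interp}. Thus $(\Delta^k_n h^k_j)(x) = 1$, as required.

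For the complementary case $x \leq x_{j-1}$, I want $(\Delta^k_n h^k_j)(x) = 0$. If $x \leq x_1$, this is immediate from the bottom case of \eqref{eq:discrete_deriv}, since $h^k_j(x) = 0$. Otherwise $x \in (x_i,x_{i+1}]$ with $x \leq x_{j-1}$, which forces $i \leq j-2$; whichever of the top two cases of \eqref{eq:discrete_deriv} applies, the centers involved form a subset of $\{x_1,\ldots,x_i,x\}$, all of which are $\leq x_{j-1}$ and hence satisfy $1\{\cdot > x_{j-1}\} = 0$. Therefore $h^k_j$ vanishes at every center, its divided difference is $0$, and $(\Delta^k_n h^k_j)(x) = 0$. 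Combining the two cases gives $\Delta^k_n h^k_j = 1_{x_{j-1}}$, and applying $S^k_n$ to both sides yields \eqref{eq:ffb_discrete_integ}.

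I expect the main obstacle to be the index bookkeeping in the first case: the entire argument hinges on the observation that the roots $x_{j-k},\ldots,x_{j-1}$ of the polynomial factor $q$ precisely cover those design centers at which the indicator $1\{\cdot > x_{j-1}\}$ has switched off. This alignment is what makes $h^k_j$ and $q$ agree at all centers despite $h^k_j$ being only piecewise polynomial, and it is guaranteed by the bound $i-k+1 \geq j-k$ coming from $x > x_{j-1}$. A direct forward computation of $S^k_n 1_{x_{j-1}}$ from the recursion \eqref{eq:discrete_integ_rec} is possible but would entangle nested cumulative sums with the weight factors, so the inverse route above is preferable.
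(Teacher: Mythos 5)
Your proof is correct, and it takes a genuinely different route from the paper's. Both arguments begin the same way, using Lemma \ref{lem:discrete_deriv_integ_inv} to reduce \eqref{eq:ffb_discrete_integ} to the claim $(\Delta^k_n h^k_j)(x) = 1\{x>x_{j-1}\}$, but from there the paper proceeds by induction on $k$: it invokes the lateral recursion of Lemma \ref{lem:ffb_lateral_rec}, which writes $h^k_j$ as a weighted sum of the degree-$(k-1)$ basis functions $h^{k-1}_\ell$, $\ell \geq j$, and then telescopes the resulting differences inside the recursive definition \eqref{eq:discrete_deriv_rec2}. You instead argue directly and non-inductively: for $x \in (x_i,x_{i+1}]$ with $x > x_{j-1}$, the bound $i \geq j-1$ forces $i-k+1 \geq j-k$, so every divided-difference center at or below the knot $x_{j-1}$ is necessarily one of the roots $x_{j-k},\ldots,x_{j-1}$ of the Newton polynomial $\eta(\cdot;x_{(j-k):(j-1)})$, where the truncated and untruncated functions both vanish; hence $h^k_j$ and its untruncated polynomial extension share all center values, and \eqref{eq:deriv_match_poly} finishes the computation. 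Your index bookkeeping checks out in all cases, including $x>x_n$ (via $x_{n+1}=b$) and the lower-order cases for $x \leq x_{j-1}$. Interestingly, the remark following Corollary \ref{cor:deriv_match} asserts that this kind of direct argument is straightforward only at the design points and that general $x$ requires the lateral recursion; your proof shows the direct argument in fact extends to all $x$, precisely because of the center--root alignment you isolate. What each approach buys: yours is shorter, self-contained modulo the classical fact \eqref{eq:deriv_match_poly}, and makes the mechanism transparent; the paper's inductive route is longer but establishes Lemma \ref{lem:ffb_lateral_rec} along the way, which is reused elsewhere (for the matrix recursion \eqref{eq:ffb_lateral_rec_mat} and in the proof of Theorem \ref{thm:ffb_sobolev}), so the lemma cannot simply be discarded even if this theorem no longer depends on it.
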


Theorem \ref{thm:ffb_discrete_integ} shows that the falling factorial basis
functions arise from $k$ times discretely integrating step functions with jumps 
at $x_{k+1},\ldots,x_{n-1}$. These are nothing more than truncated Newton
polynomials, with the left-hand side in \eqref{eq:ffb_discrete_integ} being
$\eta(x; x_{(j-k):(j-1)}) 1\{x > x_{j-1}\} / k!$, using the compact notation for 
Newton poynomials, as defined in \eqref{eq:newton_poly}. 

Recalling that the discrete integrators are defined recursively, in
\eqref{eq:discrete_integ_rec}, one might guess that the result in
\eqref{eq:ffb_discrete_integ} can be established by induction on $k$. While
this is indeed true, the inductive proof for Theorem
\ref{thm:ffb_discrete_integ} does not follow a standard approach that one might
expect: it is not at all clear from the recursion in
\eqref{eq:discrete_integ_rec} how to express each \smash{$h^k_j$} in terms of a
discrete integral of \smash{$h^{k-1}_j$}. Instead, it turns out that we can
derive what we call a {\it lateral} recursion, where we express \smash{$h^k_j$}
as a weighted sum of \smash{$h^{k-1}_\ell$} for $\ell \geq j$, and similarly for
their discrete derivatives. This is the key driver behind the proof of Theorem
\ref{thm:ffb_discrete_integ}, and is stated next.

\begin{lemma}
\label{lem:ffb_lateral_rec}
For any $k \geq 1$, the piecewise polynomials in the $k$th degree falling
factorial basis, given in the second line of \eqref{eq:ffb}, satisfy the
following recursion. For each $d \geq 0$, $j \geq k+2$, and $x \in
(x_i,x_{i+1}]$, where $i \geq j-1$,        
\begin{equation}
\label{eq:ffb_lateral_rec}
(\Delta^d_n h^k_j)(x) = \sum_{\ell=j}^i (\Delta^d_n h^{k-1}_\ell)(x) \cdot
\frac{x_\ell-x_{\ell-k}}{k} + (\Delta^d_n h^{k-1}_{i+1})(x) \cdot
\frac{x-x_{i-k+1}}{k}.    
\end{equation}
Here, we use $\Delta^0_n=\Id$, the identity map, for notational convenience. 
\end{lemma}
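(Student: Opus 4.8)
The plan is to prove the identity in two stages: first nail down the base case $d=0$ as a polynomial identity on each interval (by induction on $i$), and then bootstrap to general $d$ by a second induction that feeds on the recursive definition \eqref{eq:discrete_deriv_rec2} of the discrete derivative operators. Throughout I abbreviate $c_\ell = (x_\ell - x_{\ell-k})/k$ for the constant coefficients and $w_i(x) = (x - x_{i-k+1})/k$ for the $x$-dependent weight on $(x_i,x_{i+1}]$, so that the asserted identity reads $(\Delta^d_n h^k_j)(x) = \sum_{\ell=j}^i c_\ell\,(\Delta^d_n h^{k-1}_\ell)(x) + w_i(x)\,(\Delta^d_n h^{k-1}_{i+1})(x)$. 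The key structural fact I will exploit is that $w_i(x)$, read off from \eqref{eq:weight_map}, satisfies $w_{i-1}(x_i) = (x_i - x_{i-k})/k = c_i$, so that at a design point the ``open'' weighted term folds into the constant-coefficient sum.

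For $d=0$, on any interval $(x_i,x_{i+1}]$ with $i \geq j-1 \geq k+1$ all the indicators are active, so each falling factorial equals its Newton-polynomial piece from \eqref{eq:ffb}; after clearing factorials the claim becomes the polynomial identity
\[
\eta(x; x_{(j-k):(j-1)}) = \sum_{\ell=j}^i (x_\ell - x_{\ell-k})\,\eta(x; x_{(\ell-k+1):(\ell-1)}) + (x - x_{i-k+1})\,\eta(x; x_{(i-k+2):i}).
\]
I would prove this by induction on $i$ from $i=j-1$, where the sum is empty and the boundary term telescopes because $(x-x_{j-k})\,\eta(x; x_{(j-k+1):(j-1)}) = \eta(x; x_{(j-k):(j-1)})$ just restores the missing leading factor. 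For the step I compute the difference of the two right-hand sides at $i$ and $i-1$; factoring out the common product $\eta(x; x_{(i-k+1):(i-1)})$, the remaining bracket collapses to $(x_i - x_{i-k}) + (x - x_i) - (x - x_{i-k}) = 0$, so the right-hand side is unchanged and equals the left for all $i$.

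For general $d$ I induct on $d$, the case just done being the base; the hypothesis is that the full lateral identity holds at order $d-1$ for every admissible $j$ and interval. Fix $x \in (x_i,x_{i+1}]$ with $i \geq j-1$. If $i < d$ then $x \leq x_d$, and the second branch of \eqref{eq:discrete_deriv_rec2} gives $\Delta^d_n = \Delta^{d-1}_n$ at $x$ for every function, so the order-$d$ identity reduces termwise to order $d-1$. If $i \geq d$, I apply the first branch of \eqref{eq:discrete_deriv_rec2} to $h^k_j$, invoking the inductive hypothesis at the point $x$ and at the design point $x_i \in (x_{i-1},x_i]$. The crucial bookkeeping is that at $x_i$ the weight closes up via $w_{i-1}(x_i)=c_i$, so $(\Delta^{d-1}_n h^k_j)(x_i) = \sum_{\ell=j}^i c_\ell\,(\Delta^{d-1}_n h^{k-1}_\ell)(x_i)$ with no leftover term; subtracting and dividing by $(x-x_{i-d+1})/d$, each constant-weighted difference $(\Delta^{d-1}_n h^{k-1}_\ell)(x)-(\Delta^{d-1}_n h^{k-1}_\ell)(x_i)$ becomes $(\Delta^d_n h^{k-1}_\ell)(x)$ by the very same recursion, regenerating the sum $\sum_{\ell=j}^i c_\ell\,(\Delta^d_n h^{k-1}_\ell)(x)$.

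The only term needing care is the open one carrying $w_i(x)$, and this is where I expect the main obstacle. Here I use that $h^{k-1}_{i+1}$ is supported on $\{x>x_i\}$, so $(\Delta^{d-1}_n h^{k-1}_{i+1})(x_i)=0$ (a discrete derivative at $x_i$ uses only centers $\leq x_i$); consequently $w_i(x)\,(\Delta^{d-1}_n h^{k-1}_{i+1})(x)\big/\big((x-x_{i-d+1})/d\big)$ equals $w_i(x)\,(\Delta^d_n h^{k-1}_{i+1})(x)$, reproducing the open term at order $d$ with the \emph{same} weight $w_i(x)$ and completing the step. The edge case $i=j-1$ (empty sum) goes through identically, using that $h^k_j$ and $h^{k-1}_j$ both vanish on $[a,x_{j-1}]$. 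Verifying that this $x$-dependent term regenerates itself unchanged—rather than mutating under differentiation, as a careless termwise argument would suggest—is the delicate point that genuinely distinguishes the lateral recursion from a naive application of a linear operator.
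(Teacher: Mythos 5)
Your proposal is correct and follows essentially the same route as the paper's proof: reduce the base case $d=0$ to the Newton-polynomial identity (which the paper packages as a separate telescoping lemma, Lemma \ref{lem:newton_poly_diff}, while you run an equivalent induction on $i$ using the same cancellation $(x_i-x_{i-k})+(x-x_i)-(x-x_{i-k})=0$), then induct on $d$ via the recursion \eqref{eq:discrete_deriv_rec2}, using exactly the two facts the paper uses — the folding $w_{i-1}(x_i)=c_i$ of the inductive hypothesis at $x_i$, and $(\Delta^{d-1}_n h^{k-1}_{i+1})(x_i)=0$ from the support of $h^{k-1}_{i+1}$ — so that the $x$-dependent weight passes through the difference quotient unchanged. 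No gaps.
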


The proof of Lemma \ref{lem:ffb_lateral_rec} is elementary and is deferred   
until Appendix \ref{app:ffb_lateral_rec}. We now show how it can be used to
prove Theorem \ref{thm:ffb_discrete_integ}.  

\allowdisplaybreaks
\begin{proof}[Proof of Theorem \ref{thm:ffb_discrete_integ}]  
Note that, by the invertibility of $S^k_n$, from Lemma
\ref{lem:discrete_deriv_integ_inv}, it suffices to show that for all $k \geq 0$, 
\begin{equation}
\label{eq:ffb_discrete_deriv}
(\Delta^k_n h^k_j)(x) = 1\{x > x_{j-1}\}, \quad j=k+2,\ldots,n.
\end{equation}
We proceed by induction on $k$. When $k=0$, the result is immediate from the 
definition of the falling factorial basis functions in \eqref{eq:ffb}. Assume
the result holds for the degree $k-1$ falling factorial basis. Fix $j \geq
k+2$. If $x \leq x_{j-1}$, then it is easy to check that \smash{$(\Delta^k_n
  h^k_j)(x) = 0$}. Thus let $x \in (x_i,x_{i+1}]$ where $i \geq j-1$. By the
recursive representation \eqref{eq:ffb_lateral_rec},
\begin{align*}
&(\Delta^{k-1}_n h^k_j)(x) - (\Delta^{k-1}_n h^k_j)(x_i) \\
&= \sum_{\ell=j}^i (\Delta^{k-1}_n h^{k-1}_\ell)(x)
  \cdot \frac{x_\ell-x_{\ell-k}}{k} + (\Delta^{k-1}_n h^{k-1}_{i+1})(x) 
  \cdot \frac{x-x_{i-k+1}}{k} - \sum_{\ell=j}^i (\Delta^{k-1}_n
  h^{k-1}_\ell)(x_i) \cdot \frac{x_\ell-x_{\ell-k}}{k} \\
&= \sum_{\ell=j}^i \big((\Delta^{k-1}_n h^{k-1}_\ell)(x) -
 (\Delta^{k-1}_n h^{k-1}_\ell)(x_i)\big) \cdot
  \frac{x_\ell-x_{\ell-k}}{k} + (\Delta^{k-1}_n h^{k-1}_{i+1})(x) \cdot
  \frac{x-x_{i-k+1}}{k} \\ 
&= \sum_{\ell=j}^i \big(1\{x>x_{\ell-1}\} - 1\{x_i>x_{\ell-1}\}\big)
  \cdot \frac{x_\ell-x_{\ell-k}}{k} + 1\{x>x_i\} \cdot \frac{x-x_{i-k+1}}{k}, 
\end{align*}
where in the last line we used the inductive hypothesis. As all indicators in
above line are equal to 1, the sum is equal to 0, and hence by the definition 
in \eqref{eq:discrete_deriv_rec2}, 
\begin{align*}
(\Delta^k_n h^k_j)(x) &= \frac{(\Delta^{k-1}_n h^k_j)(x) - 
  (\Delta^{k-1}_n h^k_j)(x_i)}{(x-x_{i-k+1})/k}  \\
&= \frac{(x-x_{i-k+1})/k}{(x-x_{i-k+1})/k} = 1. 
\end{align*}
This completes the proof. 
\end{proof}
\allowdisplaybreaks[0]

Now that we have constructed the piecewise polynomials in the $k$th degree
falling factorial basis functions, using $k$th order discrete integration of
step functions in \eqref{eq:ffb_discrete_integ}, we can add any set of $k+1$
linearly independent $k$th degree polynomials to these piecewise polynomials to
form an equivalent basis: the falling factorial basis. For example, the
monomials $x^{j-1}$, $j=1,\ldots,k+1$ would be a simple choice. However, as
originally defined in \eqref{eq:ffb}, we used a different set of $k$th degree
polynomials: Newton polynomials of degrees $0,\ldots,k$. This is a natural
pairing, because the falling factorial basis can be seen as a set of truncated
Newton polynomials; furthermore, as we show later in Section
\ref{sec:dual_basis}, this choice leads to a convenient dual basis to the
falling factorials \smash{$h^k_j$}, $j=1,\ldots,n$. 

\paragraph{Evenly-spaced design points.}

When the design points are evenly-spaced, $x_{i+1}-x_i=v>0$, for
$i=1,\ldots,n-1$, the falling factorial basis functions in \eqref{eq:ffb} reduce
to  
\begin{gather*}
h^k_j(x) = \frac{1}{(j-1)!} (x-x_1)_{j-1,v}, 
\quad j=1,\ldots,k+1, \\
h^k_j(x) = \frac{1}{k!} (x-x_{j-k})_{k,v} \cdot 1\{x > x_{j-1}\}, 
\quad j=k+2,\ldots,n, 
\end{gather*}
where recall we write $(x)_{\ell,v} = x (x-v) \cdots (x-(\ell-1)v)$ for the
falling factorial polynomial of degree $\ell$ with gap $v$, which we interpret 
to be equal to 1 when $\ell=0$. This connection inspired the name of these
basis functions as given in \citet{tibshirani2014adaptive,wang2014falling}.
Further, it follows by a simple inductive argument (for example, see Lemma 2 in 
\citet{tibshirani2014adaptive}) that, evaluated at a design point $x_i$, the
basis functions become  
\begin{gather*}
h^k_j(x_i) = v^{j-1} \sigma^{j-1}_{i-j+1} \cdot 1\{i > j-1\}, 
\quad j=1,\ldots,k+1, \\ 
h^k_j(x_i) = v^k \sigma^k_{i-j+1} \cdot 1\{i > j-1\}, 
\quad j=k+2,\ldots,n,
\end{gather*}
where we define $\sigma^0_i=1$ for all $i$ and \smash{$\sigma^\ell_i =
  \sum_{j=1}^i \sigma^{\ell-1}_j$}, the $\ell$th order cumulative sum of
$1,\ldots,1$ (repeated $i$ times).


\section{Smoothness properties}
\label{sec:smoothness}

We study some properties relating to the structure and smoothness of functions
in the span of the falling factorial basis. To begin, we point out an important
{\it lack of} smoothness in the usual sense: the piecewise polynomial falling
factorial basis functions \smash{$h^k_j$}, $j=k+2,\ldots,n$, given in the second
line of \eqref{eq:ffb}, do not have continuous derivatives. To see this, write,
for each $j \geq k+2$, 
$$
h^k_j(x) = \frac{1}{k!} \eta(x; x_{(j-k):(j-1)}) \cdot 1\{x > x_{j-1}\}, 
$$
where recall \smash{$\eta(x; x_{(j-k):(j-1)}) = \prod_{i=j-k}^{j-1} (x-x_i)!$}
is the $k$th degree Newton polynomial, as introduced in \eqref{eq:newton_poly}. 
Note that for any $0 \leq d \leq k$, and $x < x_{j-1}$, we have 
\smash{$(D^d h^k_j)(x) = 0$}, whereas for $x>x_{j-1}$, 
\begin{equation}
\label{eq:ffb_deriv}
(D^d h^k_j)(x) = \frac{d!}{k!} 
 \sum_{\substack{I \subseteq (j-k):(j-1) \\ |I| = k-d}} \eta(x; x_I).
\end{equation}
where for a set $I$, we let $x_I=\{x_i : i\in i\}$. We can hence see that, for
$d \geq 1$, 
\begin{equation}
\label{eq:ffb_deriv_lim}
\lim_{x \to x_{j-1}^+}  (D^d h^k_j)(x) = \frac{d!}{k!} 
 \sum_{\substack{I \subseteq (j-k):(j-2) \\ |I| = k-d}} \eta(x_{j-1}; x_I) > 0, 
\end{equation}
which is strictly positive because the design points are assumed to be distinct, 
and hence the left and right derivatives do not match at $x_{j-1}$.  

In other words, we have just shown that the falling factorial basis functions 
$h^k_j$, $j=1,\ldots,n$, when $k \geq 2$, are not $k$th degree splines, as
their derivatives lack continuity at the knot points. On the other hand, as we
show next, the falling factorial functions are not void of smoothness, it is
simply expressed in a different way: their {\it discrete} derivatives end up
being continuous at the knot points. 

\subsection{Discrete splines}
\label{sec:discrete_splines}

We begin by extending the definition of discrete splines in Definition
\ref{def:discrete_spline_even} to the setting of arbitrary design points, where 
naturally, divided differences appear in place of forward differences.

\begin{definition}
\label{def:discrete_spline}
For an integer $k \geq 0$, design points $a \leq x_1 < \cdots < x_n \leq b$
(that define the operators \smash{$\Delta^\ell_n=\Delta^\ell(\cdot; x_{1:n})$},
$\ell=1,\ldots,k-1$), and knots $a=t_0 < t_1 < \cdots < t_r < t_{r+1}=b$ such
that $t_{1:r} \subseteq x_{1:n}$ and $t_1 \geq x_{k+1}$, we define the space of 
{\it $k$th degree discrete splines} on $[a,b]$ with knots $t_{1:r}$, denoted 
\smash{$\DS^k_n(t_{1:r}, [a,b])$}, to contain all functions $f$ on $[a,b]$ such
that 
\begin{equation}
\label{eq:discrete_spline}
\begin{gathered}
\text{for each $i=0,\ldots,r$, there is a $k$th degree polynomial $p_i$ 
  such that $f|_{I_{i,v}}=p_i$, and} \\
\text{for each $i=1,\ldots,r$, it holds that $(\Delta^\ell_n p_{i-1})(t_i) = 
  (\Delta^\ell_n p_i)(t_i)$, $\ell=0,\ldots,k-1$}, 
\end{gathered}
\end{equation}
where $I_0=[t_0,t_1]$ and $I_i=(t_i,t_{i+1}]$, $i=1,\ldots,r$. 
\end{definition}


\begin{remark}
It is worth emphasizing again that we treat (in Definition
\ref{def:discrete_spline}, and throughout) a discrete spline as a {\it
  function}, defined on the continuum interval $[a,b]$, whereas the classical
literature (recall Definition \ref{def:discrete_spline_even}) treats a discrete
spline as a {\it vector}: a sequence of function evaluations made on a discrete
(and evenly-spaced) subset $[a,b]_v \subseteq [a,b]$.
\end{remark}

\begin{remark}
When $k=0$ or $k=1$, the space \smash{$\DS^k_n(t_{1:r}, [a,b])$} of $k$th 
degree discrete splines with knots $t_{1:r}$ is equal to the space
$\S^k(t_{1:r}, [a,b])$ of $k$th degree splines with knots $t_{1:r}$, as the
conditions in \eqref{eq:spline} and \eqref{eq:discrete_spline} match
(for $k=0$, there is no smoothness condition at the knots, and for $k=1$,
there is only continuity at the knots). When $k \geq 2$, this is no longer true,
and the two spaces are different; however, they contain ``similar'' piecewise 
polynomial functions for large $n,r$, which will be made precise in Section
\ref{sec:approximation}.  
\end{remark}

Now denote the linear span of the $k$th degree falling factorial basis functions 
defined in \eqref{eq:ffb} by  
\begin{equation}
\label{eq:ffb_span}
\cH^k_n = \spa\{h^k_1,\ldots,h^k_n\} = \Bigg\{ \sum_{j=1}^n \alpha_j h^k_j :
\alpha_j \in \R, \; j=1,\ldots,n \Bigg\}. 
\end{equation}
Next we show that the span of falling factorial basis functions is a space of
discrete splines. The arguments are similar to those for the case of
evenly-spaced design points, see, for example, Theorem 8.51 of
\citet{schumaker2007spline}.   

\begin{lemma}
\label{lem:ffb_span}
For any $k \geq 0$, the span \smash{$\cH^k_n$} of the $k$th degree falling
factorial basis functions, in \eqref{eq:ffb_span}, can be equivalently
represented as 
$$
\cH^k_n = \DS^k_n (x_{(k+1):(n-1)}, [a,b]),
$$
the space of $k$th degree discrete splines on $[a,b]$ with knots in
$x_{(k+1):(n-1)}=\{x_{k+1},\ldots,x_{n-1}\}$.
\end{lemma}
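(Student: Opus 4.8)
The plan is to show the two spaces coincide by establishing the inclusion $\cH^k_n \subseteq \DS^k_n(x_{(k+1):(n-1)}, [a,b])$ directly, and then matching dimensions, both of which I claim equal $n$; since a containment of linear spaces of equal finite dimension is an equality, this suffices.

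For the inclusion, because $\DS^k_n(x_{(k+1):(n-1)}, [a,b])$ is a linear space it is enough to check that each falling factorial basis function lies in it. The pure polynomials $h^k_1,\ldots,h^k_{k+1}$ from the first line of \eqref{eq:ffb} have degree at most $k$, and therefore satisfy Definition \ref{def:discrete_spline} trivially (every piece is the same polynomial, so the matching conditions hold vacuously). For $j \geq k+2$, the function $h^k_j$ has a single genuine knot, at $x_{j-1} \in x_{(k+1):(n-1)}$; to its left $h^k_j \equiv 0$ and to its right $h^k_j = \tfrac{1}{k!}\eta(\cdot; x_{(j-k):(j-1)})$, while at any other knot the two surrounding pieces agree, so nothing need be checked there. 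At $x_{j-1}$ the smoothness requirement reduces to $(\Delta^\ell_n \eta(\cdot; x_{(j-k):(j-1)}))(x_{j-1}) = 0$ for $\ell = 0,\ldots,k-1$. Reading off \eqref{eq:discrete_deriv}, this discrete derivative is proportional to the divided difference of the Newton polynomial over the centers $x_{j-1-\ell},\ldots,x_{j-1}$; since $j-1-\ell \geq j-k$, all of these centers are roots of $\eta(\cdot; x_{(j-k):(j-1)})$, so the linear-combination formula \eqref{eq:divided_diff_linear} writes the divided difference as a combination of values that all vanish. Hence each $h^k_j$ is a discrete spline and $\cH^k_n \subseteq \DS^k_n(x_{(k+1):(n-1)}, [a,b])$.

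To see $\dim \cH^k_n = n$, I would note that evaluating the basis at the design points yields a matrix $[h^k_j(x_i)]$ that is lower triangular with nonzero diagonal, since $h^k_j(x_i) = 0$ for $i < j$ and $h^k_j(x_j) \neq 0$; this forces linear independence of the $h^k_j$. For $\dim \DS^k_n(x_{(k+1):(n-1)}, [a,b])$, I would parametrize a discrete spline $f$ with pieces $p_0,\ldots,p_r$ (where $r = n-1-k$) by its first piece $p_0$ together with the successive jumps $q_i = p_i - p_{i-1}$. By linearity of $\Delta^\ell_n$, the conditions \eqref{eq:discrete_spline} decouple across knots into the single requirement that $q_i$ lie in $J_i := \{q \in \cP_k : (\Delta^\ell_n q)(t_i) = 0,\ \ell = 0,\ldots,k-1\}$, where $\cP_k$ denotes polynomials of degree at most $k$; thus the parametrization is a linear isomorphism onto $\cP_k \times J_1 \times \cdots \times J_r$ and $\dim \DS^k_n = (k+1) + \sum_{i=1}^r \dim J_i$. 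To evaluate $\dim J_i$, writing $t_i = x_m$, I would observe from \eqref{eq:discrete_deriv} that $(\Delta^\ell_n q)(x_m) = \ell!\cdot q[x_{m-\ell},\ldots,x_m]$ for $\ell = 0,\ldots,k$, which are exactly (up to scaling) the coefficients of $q$ in its Newton expansion \eqref{eq:newton_interp} relative to the backward centers $x_m, x_{m-1},\ldots,x_{m-k}$; hence $q \mapsto ((\Delta^\ell_n q)(x_m))_{\ell=0}^k$ is a linear bijection of $\cP_k$ onto $\R^{k+1}$, and killing the first $k$ coordinates leaves exactly one free coordinate, so $\dim J_i = 1$. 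Therefore $\dim \DS^k_n = (k+1) + r = n$, and combining this with the inclusion and $\dim \cH^k_n = n$ forces equality.

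I expect the dimension count for $\DS^k_n(x_{(k+1):(n-1)}, [a,b])$ to be the main obstacle: the two points requiring care are that the smoothness constraints genuinely decouple knot-by-knot (so that the jump spaces multiply independently), and that each jump space $J_i$ is precisely one-dimensional, which hinges on recognizing the discrete derivatives $(\Delta^\ell_n q)(x_m)$ as Newton coefficients and invoking uniqueness of the Newton expansion. The inclusion, by contrast, is clean, since it rests only on the elementary fact that a Newton polynomial has vanishing divided differences over any subset of its own roots.
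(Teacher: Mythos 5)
Your proof is correct and follows essentially the same route as the paper's: verify that each falling factorial basis function satisfies the discrete-spline smoothness conditions at its single knot (via vanishing divided differences of a Newton polynomial taken over a subset of its own roots), establish linear independence from the lower-triangular evaluation matrix, and conclude by matching dimensions. The one place you go beyond the paper is the dimension count for $\DS^k_n(x_{(k+1):(n-1)},[a,b])$, which the paper dispatches as a ``standard dimensionality argument''; your jump-space decomposition and the identification of $(\Delta^\ell_n q)(x_m)$ with Newton coefficients correctly supplies the linear-independence-of-constraints justification that the paper leaves implicit.
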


\begin{proof}
We first show that each basis function \smash{$h^k_j$}, $j=1,\ldots,n$ is an
element of \smash{$\DS^k_n(x_{(k+1):(n-1)}, [a,b])$}. Note that
\smash{$h^k_j$}, $j=1,\ldots,k+1$ are clearly $k$th degree discrete splines
because they are $k$th degree polynomials. Fix $j \geq k+2$. The function 
\smash{$h^k_j$} has just one knot to consider, at $x_{j-1}$. Observe
$$
h^k_j|_{[a,x_{j-1}]} = 0, \quad \text{and} \quad
h^k_j|_{[x_{j-1},b]} = \frac{1}{k!} \eta(\cdot; x_{(j-k):(j-1)}).
$$
Recall the property \eqref{eq:newton_poly_divided_diff} of divided differences
of Newton polynomials; this gives \smash{$(\Delta^\ell_n \eta(\cdot;
x_{(j-k):(j-1)}))(x_{j-1}) = 0$} for $\ell=0,\ldots,k-1$, certifying the
required property \eqref{eq:discrete_spline} for a $k$th degree discrete spline.

It is straightforward to show from the structure of their supports that
\smash{$h^k_j$}, $j=1,\ldots,n$ are linearly independent (we can evaluate them
at the design points $x_{1:n}$, yielding a lower triangular matrix, which
clearly has linearly independent columns). Furthermore, a standard
dimensionality argument shows that the linear space
\smash{$\DS^k_n(x_{(k+1):(n-1)}, [a,b])$} has dimension $(n-k-1)+(k+1)=n$ (we
can expand any function in this space as a linear combination of piecewise
polynomials the segments $I_0,\ldots,I_{n-k-1}$ then subtract the number of
constraints at the knot points). Thus the span of \smash{$h^k_j$},
$j=1,\ldots,n$ is all of \smash{$\DS^k_n(x_{(k+1):(n-1)}, [a,b])$}, completing
the proof.
\end{proof}

As we saw in \eqref{eq:ffb_deriv_lim}, functions in the span of the falling
factorial basis do not have continuous derivatives, and thus lack the
smoothness of splines, in this particular sense. However, as Lemma
\ref{lem:ffb_span} reveals, functions in this span are in fact discrete splines;
therefore they have an equal number of constraints (as splines) on their degrees
of freedom, and this is just expressed in a different way (using discrete 
derivatives in place of derivatives).    


\subsection{Matching derivatives}
\label{sec:deriv_match}

In this subsection, we investigate which kinds of functions $f$ have discrete
$k$th derivatives that everywhere match their $k$th derivatives,   
\begin{equation}
\label{eq:deriv_match}
(\Delta^k_n f)(x) = (D^k f)(x), \quad \text{for $x \in (x_k,b]$}. 
\end{equation}
Notice that, although we call the property \eqref{eq:deriv_match} an
``everywhere'' match of derivatives, we restrict our consideration to
$x \in (x_k,b]$. This is because for the $k$th discrete derivative 
operator \eqref{eq:discrete_deriv}, recall, it is only for $x \in 
(x_k,b]$ that $\Delta^k_n(f)$ is defined in terms of a $k$th divided
difference (for $x \in [a,x_k]$, it is defined in terms of a lower order
divided difference for the purposes of invertibility). 

It is a well-known fact that a $k$th degree polynomial,
\smash{$p(x)=\sum_{j=0}^k c_j x^j$}, has a $k$th divided difference equal to its
leading coefficient, with respect to any choice of $k+1$ distinct centers
$z_1,\ldots,z_{k+1}$, 
\begin{equation}
\label{eq:deriv_match_poly}
p[z_1,\ldots,z_{k+1}] = c_k = \frac{1}{k!} (D^k p)(x), \quad \text{for all
  $x$}. 
\end{equation}
(See, for example, Theorem 2.51 in \citet{schumaker2007spline}.)  Hence degree 
$k$ polynomials satisfy the matching derivatives property \eqref{eq:deriv_match} 
(note that this covers degree $\ell$ polynomials, with $\ell \leq k$, for which 
both sides in \eqref{eq:deriv_match} are zero). 

What about piecewise polynomials?  By the same logic, a $k$th degree piecewise
polynomial function $f$ will have a discrete $k$th derivative matching its
$k$th derivative at a point $x$, {\it provided that $f$ evaluates to a single 
  polynomial over the centers $x_{i-k+1},\ldots,x_i,x$} used to define
$(\Delta^k_n f)(x)$. But, if $x_{i-k+1},\ldots,x_i,x$ straddle (at least)
two neighboring segments on which $f$ is a different polynomial, then this will
not generally be true. Take as an example the truncated power function $g(x)
= (x-t)^k_+ / k!$, for $k \geq 2$. Let $x \in (x_i,x_{i+1}]$. Consider three
cases. In the first, $x \leq t$. Then\footnote{Here we are taking $(D^k g)(x)
  = 1\{x > t\}$, the choice of left-continuous step function being arbitrary but
  convenient, and consistent with our treatment of the falling factorial
  functions.}  
$$
(\Delta^k_n g)(x) = (D^k g)(x) = 0.
$$
In the second case, $x > t$ and $x_{i-k+1} \geq t$. Then 
$$
(\Delta^k_n g)(x) = (D^k g)(x) = 1.
$$
In the third case, $x > t$ and $x_{i+k-1} < t$.\footnote{Note that if $t$ is one
  of the design points $x_{1:n}$, then this case can only occur when $k \geq
  2$ (when $k=1$, we have $x_{i+k-1}=x_i$, which is defined to be the largest 
  design point strictly less than $x$, thus we cannot have $x_i<t<x$).}
Then $(D^k f)(x) = 1$, but $(\Delta^k_n f)(x)$ will vary between 0 and 1. See
Figure \ref{fig:deriv} for a simple empirical example. To summarize: if $x$ is
far enough from the underlying knot $t$ in the truncated power function---either
to the left of $t$, or to the right of $t$ and separated by $k$ underlying
design points---then the $k$th discrete derivative and $k$th derivative at $x$
will match; otherwise, they will not. (This restriction is quite problematic
once we think about trying to match derivatives \eqref{eq:deriv_match} for a
$k$th degree spline with with knots at the design points.) 

\begin{figure}[tb]
\centering
\includegraphics[width=0.495\textwidth]{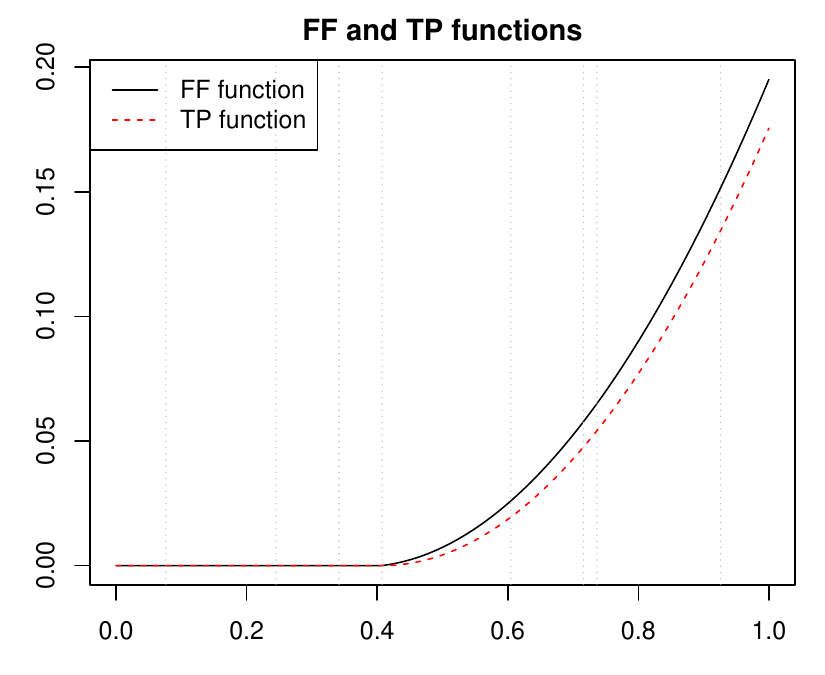}
\includegraphics[width=0.495\textwidth]{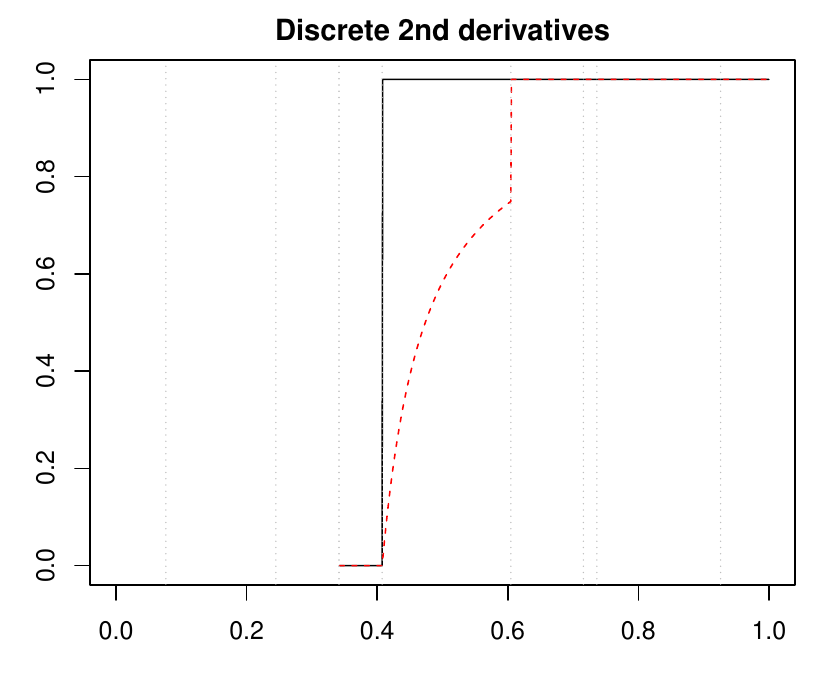}
\caption{\small Left panel: falling factorial (FF) function
  \smash{$\frac{1}{2}(x-x_3)(x-x_4)_+$} in black and truncated power (TP)
  function \smash{$\frac{1}{2}(x-x_4)^2_+$} in dashed red. The $n=8$ design 
  points are marked by dotted gray vertical lines. Both functions have a knot at
  $x_4$, and both have 2nd derivative equal to $1\{x>x_4\}$. Right panel:
  discrete 2nd derivatives of the same functions (FF in black, TP in dashed
  red), for $x \in [x_3,1]$. Note the discrete 2nd derivative of the FF
  function matches its 2nd derivative everywhere (this being $1\{x>x_4\}$), but 
  this is not true for the TP function, specifically, it fails for $x \in
  (x_4,x_5]$.}     
\label{fig:deriv}
\end{figure}

A remarkable fact about the $k$th degree falling factorial basis functions
\eqref{eq:ffb} is that their $k$th discrete derivatives and $k$th derivatives
match {\it at all $x$}, regardless of how close $x$ lies to their underlying
knot points. This result was actually already established in
\eqref{eq:ffb_discrete_deriv}, in the proof of Theorem
\ref{thm:ffb_discrete_integ} (this is for the piecewise polynomial basis
functions, and for the polynomial basis functions, it follows from the property
\eqref{eq:deriv_match_poly} on discrete derivatives of polynomials). For
emphasis, we state the full result next as a corollary. We also prove a
converse result.  

\begin{corollary}
\label{cor:deriv_match}
For any $k \geq 0$, each of the $k$th degree falling factorial basis functions
in \eqref{eq:ffb} have matching $k$th discrete derivatives and $k$th
derivatives, at all $x > x_k$, as in \eqref{eq:deriv_match}. Hence, by
linearity, any function in the span \smash{$\cH^k_n$} of the $k$th degree
falling factorial basis \eqref{eq:ffb_span}, that is, any $k$th degree discrete  
spline with knots in $x_{(k+1):(n-1)}$, also satisfies \eqref{eq:deriv_match}. 

Conversely, if $f$ is a $k$th degree piecewise polynomial with knots in
$x_{(k+1):(n-1)}$ and $f$ satisfies property \eqref{eq:deriv_match}, then $f$
must be in the span \smash{$\cH^k_n$} of the $k$th degree falling factorial
basis \eqref{eq:ffb_span}, that is, $f$ must be a $k$th degree discrete spline. 
\end{corollary}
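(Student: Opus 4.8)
The forward direction requires essentially no new work. The identity \eqref{eq:ffb_discrete_deriv}, established inside the proof of Theorem \ref{thm:ffb_discrete_integ}, already gives $(\Delta^k_n h^k_j)(x) = 1\{x > x_{j-1}\} = (D^k h^k_j)(x)$ for the piecewise polynomial basis functions $j \geq k+2$ and all $x > x_k$, while for the polynomial basis functions $j \leq k+1$ the property \eqref{eq:deriv_match_poly} gives $\Delta^k_n h^k_j = D^k h^k_j$ directly. Since both $\Delta^k_n$ and $D^k$ are linear, the matching property \eqref{eq:deriv_match} extends to every $f \in \cH^k_n$, which by Lemma \ref{lem:ffb_span} is exactly the space of $k$th degree discrete splines with knots in $x_{(k+1):(n-1)}$. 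So the first two assertions are immediate.

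For the converse, the plan is to reduce to a uniqueness statement. Given a $k$th degree piecewise polynomial $f$ with knots in $x_{(k+1):(n-1)}$ satisfying \eqref{eq:deriv_match}, I first let $\tilde f \in \cH^k_n$ be the discrete spline interpolating $f$ at the design points, $\tilde f(x_i) = f(x_i)$ for $i=1,\ldots,n$; this exists and is unique because evaluation at $x_{1:n}$ is a bijection on $\cH^k_n$ (the falling factorial basis matrix is lower triangular with nonzero diagonal, as noted in the proof of Lemma \ref{lem:ffb_span}). Set $h = f - \tilde f$. Then $h$ is again a $k$th degree piecewise polynomial with knots in $x_{(k+1):(n-1)}$, it vanishes at every design point, and---since both $f$ and $\tilde f$ satisfy \eqref{eq:deriv_match} (the latter by the forward direction)---it too satisfies $(\Delta^k_n h)(x) = (D^k h)(x)$ on $(x_k, b]$. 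It therefore suffices to prove that such an $h$ must be identically zero, for then $f = \tilde f \in \cH^k_n$.

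I would show $h \equiv 0$ by a left-to-right induction along the design intervals. On the first piece $[a, t_1]$ (where $t_1 \geq x_{k+1}$ is the first knot), $h$ is a single polynomial of degree $\leq k$ vanishing at the $k+1$ points $x_1,\ldots,x_{k+1}$, hence $h \equiv 0$ there; in particular $h \equiv 0$ on $[a, x_{k+1}]$. For the inductive step, suppose $h \equiv 0$ on $[a, x_m]$ for some $m$ with $k+1 \leq m \leq n-1$, and write $h = q$ (a single polynomial, since the knots lie among the design points) on $(x_m, x_{m+1}]$. Using the explicit linear-combination form \eqref{eq:divided_diff_linear} for the divided difference defining $\Delta^k_n$, all terms with a center in $\{x_{m-k+1},\ldots,x_m\}$ drop out because $h$ vanishes there, leaving
\[
(\Delta^k_n h)(x) = k! \cdot \frac{q(x)}{\prod_{\ell=m-k+1}^{m}(x - x_\ell)}, \qquad x \in (x_m, x_{m+1}].
\]
On the other hand $(D^k h)(x)$ is the constant $k!\gamma$, with $\gamma$ the leading coefficient of $q$. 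Matching forces $q(x) = \gamma \prod_{\ell=m-k+1}^m (x - x_\ell)$ identically; evaluating at $x_{m+1}$ and using $q(x_{m+1}) = h(x_{m+1}) = 0$ together with the distinctness of the design points gives $\gamma = 0$, so $q \equiv 0$ and $h \equiv 0$ on $(x_m, x_{m+1}]$. This propagates $h \equiv 0$ up to $x_n$.

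The one step that needs separate care---and the only place I expect real friction---is the rightmost interval $(x_n, b]$, where there is no further design point to supply the vanishing condition that kills $\gamma$. Here I would instead invoke the knot structure: since all knots lie in $x_{(k+1):(n-1)}$, there is no knot in $(x_{n-1}, b]$, so $h$ is a single polynomial on this entire interval; as it already vanishes on the subinterval $(x_{n-1}, x_n]$, it must vanish on all of $(x_{n-1}, b]$, and in particular on $(x_n, b]$. Combining the pieces gives $h \equiv 0$ on $[a,b]$, completing the converse.
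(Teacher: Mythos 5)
Your proof is correct, and the converse is argued by a genuinely different route than the paper's. The paper writes $D^k f$ as an explicit piecewise constant function $\alpha_0+\sum_{j=k+2}^n \alpha_j 1\{\cdot>x_{j-1}\}$, transfers this expression to $\Delta^k_n f$ via the matching hypothesis, and then applies the inverse operator $S^k_n$ together with Theorem \ref{thm:ffb_discrete_integ} to recognize $f$ as $\alpha_0 (S^k_n 1)+\sum_j \alpha_j h^k_j$ (the same template is reused for Corollary \ref{cor:ffb_interp_implicit_conv}). You instead subtract the discrete spline interpolant $\tilde f$ of $f$ at the design points and show $h=f-\tilde f\equiv 0$ by marching left to right across the intervals $(x_m,x_{m+1}]$, using the explicit linear combination form \eqref{eq:divided_diff_linear} to see that, once $h$ vanishes at all design points and on $[a,x_m]$, the matching condition forces $q=\gamma\prod_{\ell=m-k+1}^m(\cdot-x_\ell)$ and the interpolation condition at $x_{m+1}$ kills $\gamma$. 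The paper's argument is shorter and leans on the discrete integration machinery already built; yours is more elementary and self-contained, needing only \eqref{eq:divided_diff_linear}, \eqref{eq:deriv_match_poly}, and the invertibility of the basis matrix, and it makes visible exactly where the matching property constrains each polynomial piece. It also quietly avoids a delicacy in the paper's proof: the hypothesis pins down $\Delta^k_n f$ only on $(x_k,b]$, whereas $S^k_n$ formally consumes values on all of $[a,b]$, which the paper patches with the remark that $f$ is a single polynomial on $[a,x_{k+1}]$; your induction never needs to invert anything. Your handling of the two boundary cases (the base case via $k+1$ zeros of a degree-$k$ polynomial, and the final interval $(x_n,b]$ via the absence of knots beyond $x_{n-1}$) is exactly the care those steps require.
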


\begin{proof}
As already discussed, the first statement was already shown, for the piecewise
polynomial basis functions, in \eqref{eq:ffb_discrete_deriv} in the proof of
Theorem \ref{thm:ffb_discrete_integ}, and for the polynomial basis functions, it
is a reflection of the basic fact \eqref{eq:deriv_match_poly}. To prove the
converse statement, observe that if $f$ is a $k$th degree piecewise polynomial
and has knots in \smash{$x_{(k+1):(n-1)}$}, then its $k$th derivative is
piecewise constant with knots in \smash{$x_{(k+1):(n-1)}$}, and thus can be 
written as 
$$
(D^k f)(x) = \alpha_0 + \sum_{j=k+2}^n \alpha_j 1\{x > x_{j-1}\},
\quad \text{for $x \in [a,b]$}.
$$
for coefficients $\alpha_0,\alpha_{k+2},\ldots,\alpha_n$. But because $f$ 
satisfies property \eqref{eq:deriv_match}, we have
$$
(\Delta^k_n f)(x) = \alpha_0 + \sum_{j=k+2}^n \alpha_j 1\{x > x_{j-1}\},
\quad \text{for $x \in (x_k,b]$}.
$$
Inverting using Lemma \ref{lem:discrete_deriv_integ_inv}, then using linearity
of $S^k_n$, and Theorem \ref{thm:ffb_discrete_integ}, we have 
$$
f(x) = \alpha_0 (S^k_n 1)(x) + \sum_{j=k+2}^n \alpha_j h^k_j(x), 
\quad \text{for $x \in (x_k,b]$}.
$$
A staightforward inductive argument shows that $S^k_n 1$ is a $k$th degree
polynomial. Therefore, the above display, along with the fact that $f$ must
be a $k$th degree polynomial on $[a,x_k]$ (it is a $k$th degree piecewise
polynomial and its first knot is at $x_{k+1}$), shows that $f$ lies in the span
of the $k$th degree falling factorial basis \eqref{eq:ffb}.  
\end{proof}


\begin{remark}
In light of the discussion preceeding Corollary \ref{cor:deriv_match}, it is
somewhat remarkable that a $k$th degree piecewise polynomial with knots at each
$x_{k+1},\ldots,x_{n-1}$ can have a matching $k$th discrete derivative and $k$th 
derivative, at all $x \in (x_k,b]$. Recall that for a $k$th degree polynomial,
its $k$th discrete derivative and $k$th derivative match at all points, stemming
from the property \eqref{eq:deriv_match_poly} of divided differences of
polynomials. For a $k$th degree piecewise polynomial $f$ with knots
$x_{k+1},\ldots,x_{n-1}$, we have {\it just one evaluation of $f$ on each
segment in which $f$ is a polynomial}, yet Corollary \ref{cor:deriv_match} says
that the $k$th divided difference $f[x_{i-k+1},\ldots,x_i,x]$ still perfectly
reflects the local structure of $f$ around $x$, in such a way that $(\Delta^k_n
f)(x)=(D^k f)(x)$. This is a very different situation than that in
\eqref{eq:deriv_match_poly}, and only happens when $f$ has a particular
piecewise polynomial structure---given by the span of falling factorial
functions.
\end{remark}

\begin{remark}
It is interesting to emphasize the second part of Corollary
\ref{cor:deriv_match}. As highlighted in \eqref{eq:ffb_deriv_lim}, the $k$th
degree falling factorial functions \eqref{eq:ffb} have discontinuous lower order
derivatives at their knots, and hence so do functions in their span
\eqref{eq:ffb_span}, that is, so do $k$th degree discrete splines with knots in
\smash{$x_{(k+1):(n-1)}$}. This may seem like an undesirable property of a
piecewise polynomial (although discrete splines do enjoy continuity in discrete
derivatives across their knot points). However, if we want our piecewise
polynomial to satisfy the matching derivatives property \eqref{eq:deriv_match},
then Corollary \ref{cor:deriv_match} tells us that such discontinuities are
inevitable, as discrete splines are the {\it only} ones that satisfy this
property. 
\end{remark}

\begin{remark}
The result in \eqref{eq:ffb_discrete_deriv} can be shown to hold at the design
points $x=x_i$, $i=k+1,\ldots,n$ by directly invoking the fact in
\eqref{eq:newton_poly_divided_diff}, on divided differences of Newton
polynomials. In other words, that the matching derivatives property
\eqref{eq:deriv_match} holds for the $k$th degree falling factorial functions at
$x=x_i$, $i=k+1,\ldots,n$ has a simple proof based on the fact they are
truncated Newton polynomials, and \eqref{eq:newton_poly_divided_diff}. However,
the fact that it is true {\it for all} $x \in (x_k,b]$ is much less
straightforward, and is due to the lateral recursion obeyed by these basis
functions, from Lemma \ref{lem:ffb_lateral_rec}. 
\end{remark}

\section{Dual basis}
\label{sec:dual_basis}

In this section, we construct a natural dual basis to the falling factorial
basis in \eqref{eq:ffb}, based on discrete derivatives. We begin by building
on the matching $k$th order derivatives property
\eqref{eq:ffb_discrete_deriv} of the piecewise polynomial functions in the $k$th
degree falling factorial basis, to investigate discrete $(k+1)$st order discrete 
derivatives of such functions.    

\subsection{Discrete differentiation of one ``extra'' order}

The next lemma reveals a special form for the $(k+1)$st order discrete
derivatives of the piecewise polynomials in the $k$th degree falling factorial
basis.

\begin{lemma}
\label{lem:ffb_discrete_deriv_extra_pp}
For any $k \geq 0$, the piecewise polynomials in the $k$th degree falling
factorial basis, given in the second line of \eqref{eq:ffb}, satisfy for each $j
\geq k+2$ and $x \in [a,b]$,  
\begin{equation}
\label{eq:ffb_discrete_deriv_extra_pp}
(\Delta^{k+1}_n h^k_j)(x) = 
\begin{cases}
\frac{k+1}{x-x_{j-k-1}} & \text{if $x \in (x_{j-1}, x_j]$} \\  
0 & \text{otherwise}.
\end{cases}
\end{equation}
\end{lemma}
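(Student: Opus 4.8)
The plan is to obtain $\Delta^{k+1}_n h^k_j$ by applying one additional discrete derivative to the already-known $k$th discrete derivative of $h^k_j$. Recall from \eqref{eq:ffb_discrete_deriv}, established in the proof of Theorem \ref{thm:ffb_discrete_integ}, that $(\Delta^k_n h^k_j)(x) = 1\{x > x_{j-1}\}$, i.e.\ $\Delta^k_n h^k_j = 1_{x_{j-1}}$ is simply the step function with jump at $x_{j-1}$. Using the compact recursion \eqref{eq:discrete_deriv_rec} with $k$ replaced by $k+1$, namely $\Delta^{k+1}_n = (W^{k+1}_n)^{-1} \circ \widebar\Delta_{n-k} \circ \Delta^k_n$ (where $\widebar\Delta_{n-k} = \widebar\Delta(\cdot; x_{(k+1):n})$ acts on the points $x_{k+1},\ldots,x_n$), the problem reduces to two elementary operator evaluations applied to this step function. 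For $k=0$ the base case $\Delta_n = (W_n)^{-1} \circ \widebar\Delta_n$ plays the same role, so the argument is uniform in $k \geq 0$.

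First I would compute $\widebar\Delta_{n-k} 1_{x_{j-1}}$ directly from the definition \eqref{eq:simple_diff}. Since $j \geq k+2$ forces $x_{j-1}$ to be one of the centers $x_{k+1},\ldots,x_{n-1}$, writing $x \in (x_m, x_{m+1}]$ with $m \geq k+1$ gives $(\widebar\Delta_{n-k} 1_{x_{j-1}})(x) = 1\{x > x_{j-1}\} - 1\{x_m > x_{j-1}\}$. This difference equals $1$ exactly when $m = j-1$, that is, when $x \in (x_{j-1}, x_j]$, and $0$ otherwise (to the left of $x_{j-1}$ both indicators vanish; to the right of $x_j$ both equal one). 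Hence $\widebar\Delta_{n-k} 1_{x_{j-1}}$ is the indicator of the single interval $(x_{j-1}, x_j]$.

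Next I would apply $(W^{k+1}_n)^{-1}$, read off from \eqref{eq:weight_map} with $k$ replaced by $k+1$, which on $x \in (x_i, x_{i+1}]$ with $i \geq k+1$ multiplies by $(k+1)/(x - x_{i-k})$. The function from the previous step is supported only on $(x_{j-1}, x_j]$, where $i = j-1 \geq k+1$; there it is multiplied by $(k+1)/(x - x_{(j-1)-k}) = (k+1)/(x - x_{j-k-1})$, and it remains $0$ elsewhere. This yields exactly \eqref{eq:ffb_discrete_deriv_extra_pp}.

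The computation is short, so the only real work is index bookkeeping: one must keep straight that $\widebar\Delta_{n-k}$ and $W^{k+1}_n$ are built on $x_{(k+1):n}$ rather than $x_{1:n}$, and verify that the boundary regions ($x \leq x_{k+1}$, and the degenerate sub-case $j = k+2$ where $x_{j-1} = x_{k+1}$) are handled consistently. The main conceptual point, which makes everything collapse, is that one more simple difference turns the step function $1_{x_{j-1}}$ into the indicator of the single gap $(x_{j-1}, x_j]$; reconciling the recursion's base case at $k=0$ with the general inductive step is the only place that demands a moment's care.
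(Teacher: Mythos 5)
Your proposal is correct and follows essentially the same route as the paper: both start from $(\Delta^k_n h^k_j)(x)=1\{x>x_{j-1}\}$ (equation \eqref{eq:ffb_discrete_deriv}) and apply one more step of the recursion, the only cosmetic difference being that you phrase it via the operator composition $(W^{k+1}_n)^{-1}\circ\widebar\Delta_{n-k}$ while the paper writes out \eqref{eq:discrete_deriv_rec2} directly. The index bookkeeping (including the cases $x\leq x_{j-1}$ and $x>x_j$, and the weight $(k+1)/(x-x_{j-k-1})$ on $(x_{j-1},x_j]$) all checks out.
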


\begin{proof}
For $x \leq x_{j-1}$, it is easy to see \smash{$\Delta^{k+1}_n h^k_j(x)=0$}.
Thus consider $x \in (x_i,x_{i+1}]$ with $i \geq j-1$. By definition,
\begin{align*}
(\Delta^{k+1}_n h^k_j)(x) 
&= \frac{(\Delta^k_n h^k_j)(x) - (\Delta^k_n h^k_j)(x_i)} {(x-x_{i-k})/(k+1)} \\
&= \frac{1\{x > x_{j-1}\} - 1\{x_i > x_{j-1}\}}{(x-x_{i-k})/(k+1)}.
\end{align*}
where in the second line we used property \eqref{eq:ffb_discrete_deriv}, from
the proof of Theorem \ref{thm:ffb_discrete_integ}. When $x > x_j$, we have $i
\geq j$, and both indicators above are equal to 1, so \smash{$(\Delta^{k+1}_n  
  h^k_j)(x)$} = 0. Otherwise, when $x \in (x_{j-1},x_j]$, we have $i = j-1$,
and only the first indicator above is equal to 1, therefore we get
\smash{$(\Delta^{k+1}_n h^k_j)(x) = (k+1)/(x-x_{j-k-1})$}, as claimed.  
\end{proof}

Meanwhile, for the pure polynomials in the $k$th degree falling factorial basis,
their $(k+1)$st order discrete derivatives take an even simpler form. 

\begin{lemma}
\label{lem:ffb_discrete_deriv_extra_poly}
For any $k \geq 0$, the polynomial functions in the $k$th degree falling
factorial basis, given in the first line of \eqref{eq:ffb}, satisfy for each $j
\leq k+1$,  
\begin{equation}
\label{eq:ffb_discrete_deriv_extra_poly}
\Delta^{k+1}_n h^k_j =
\begin{cases}
1_{(x_{j-1},x_j]} &\text{if $j \geq 2$} \\
1_{[a,x_1]} & \text{if $j = 1$}.
\end{cases}
\end{equation}
\end{lemma}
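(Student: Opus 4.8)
The plan is to compute $\Delta^{k+1}_n h^k_j$ directly from the definition \eqref{eq:discrete_deriv} of the discrete derivative (with $k$ replaced by $k+1$), exploiting that for $j \le k+1$ the basis function is a scaled Newton polynomial, $h^k_j = \eta(\cdot; x_{1:(j-1)})/(j-1)!$, of degree $j-1 \le k$ with leading coefficient $1/(j-1)!$. Everything is governed by the order of the divided difference that $\Delta^{k+1}_n$ forms at a given $x$, measured against this degree, so I would split on the interval $(x_i,x_{i+1}]$ containing $x$ and use three facts: an order-$m$ divided difference of a degree-$m$ polynomial returns its leading coefficient \eqref{eq:deriv_match_poly}; a divided difference of order exceeding the degree vanishes; and the Newton-polynomial identity \eqref{eq:newton_poly_divided_diff}.

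First I would dispose of $j=1$ separately, since here $h^k_1 \equiv 1$: the operator reduces to plain evaluation for $x \le x_1$, giving $1$ there, while any genuine (order $\ge 1$) divided difference of a constant vanishes, giving $0$ for $x > x_1$; this is exactly $1_{[a,x_1]}$. For $j \ge 2$, the clean content lives to the right of the knot $x_{j-1}$. On $(x_{j-1},x_j]$ the interval index is $i = j-1 \le k$, so $\Delta^{k+1}_n$ is in its ``fall-off'' regime and returns $(j-1)! \cdot h^k_j[x_1,\ldots,x_{j-1},x]$; this is an order-$(j-1)$ divided difference of a degree-$(j-1)$ polynomial, hence equals $(j-1)!$ times the leading coefficient $1/(j-1)!$, namely $1$. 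For any $x \in (x_i,x_{i+1}]$ with $i \ge j$ --- whether still in the fall-off regime ($j \le i \le k$) or in the full order-$(k+1)$ regime ($i \ge k+1$) --- the divided difference has order at least $j > j-1$ and therefore vanishes. This already yields the value $1$ on $(x_{j-1},x_j]$ and $0$ on $(x_j,b]$, matching the claimed indicator to the right of the knot.

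The hard part will be the region $x \le x_{j-1}$, where $\Delta^{k+1}_n$ forms a divided difference of order strictly below the degree $j-1$, so that the operator no longer annihilates $h^k_j$ as readily as in the truncated setting of Lemma \ref{lem:ffb_discrete_deriv_extra_pp} (there $h^k_j$ vanishes identically on $[a,x_{j-1}]$, whereas the pure polynomial here does not). To pin down the identity on this left fall-off region I would appeal to \eqref{eq:newton_poly_divided_diff}: at a design point $x_m$ with $m \le j-1$ the relevant centers are $x_1,\ldots,x_m$, and since $m < j$ we get $h^k_j[x_1,\ldots,x_m] = \eta(\cdot;x_{1:(j-1)})[x_1,\ldots,x_m]/(j-1)! = 0$, while $h^k_j(x_{j-1}) = 0$ as well; hence $\Delta^{k+1}_n h^k_j$ vanishes at the design points throughout $[a,x_{j-1}]$, in agreement with $1_{(x_{j-1},x_j]}$ and consistent with the evaluations that the dual-basis construction of this section relies on. I would therefore present the right-of-knot regions as the clean core of the argument and treat the left region with the extra care that these lower-order divided differences demand.
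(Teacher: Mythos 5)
Your proposal follows the same basic route as the paper's proof: a case analysis on which interval $(x_i,x_{i+1}]$ contains $x$, using the order-versus-degree comparison \eqref{eq:deriv_match_poly} and the Newton-polynomial identity \eqref{eq:newton_poly_divided_diff}. The cases $j=1$, $x\in(x_{j-1},x_j]$, and $x>x_{j-1}$ with $i\ge j$ are handled exactly as in the paper and are correct.

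The one divergence is the region $x\le x_{j-1}$ for $j\ge 2$, which you rightly flag as delicate and where you only verify the claim at the design points. Your caution here is in fact warranted, and your design-point-only verification is the defensible version of the statement: off the design points the pointwise identity in \eqref{eq:ffb_discrete_deriv_extra_poly} does not hold. Concretely, for $x\in(x_i,x_{i+1}]$ with $1\le i\le j-2$, definition \eqref{eq:discrete_deriv} gives $(\Delta^{k+1}_n h^k_j)(x)=i!\cdot h^k_j[x_1,\ldots,x_i,x]$; since $h^k_j$ vanishes at $x_1,\ldots,x_i$, the linear-combination formula \eqref{eq:divided_diff_linear} collapses this to $\frac{i!}{(j-1)!}\prod_{\ell=i+1}^{j-1}(x-x_\ell)$, which is nonzero unless $x=x_{i+1}$ (and similarly $(\Delta^{k+1}_n h^k_j)(x)=h^k_j(x)\ne 0$ for $x\in[a,x_1)$). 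The paper's own proof passes over this by applying \eqref{eq:newton_poly_divided_diff} with centers $x_1,\ldots,x_i,x$, but for $i<j-1$ that identity relies on every center being a root of the Newton polynomial, and $x$ is not one; so the paper's treatment of this case is not actually tighter than yours. Since every downstream use of this lemma (the dual basis in Lemma \ref{lem:dual_basis}, the interpolation formulas, and the computation in Appendix B.2, which only invokes the lemma for $x>x_{j-1}$ or at design points) evaluates $\Delta^{k+1}_n h^k_j$ only where your argument applies, your proof covers everything that is needed; just be explicit that on $[a,x_{j-1}]$ you are proving the identity only at $x\in\{x_1,\ldots,x_{j-1}\}$ rather than pointwise.
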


\begin{proof}
Fix any $j \leq k+1$. If $x > x_j$, then \smash{$(\Delta^{k+1}_n h^k_j)(x)$} is 
given by a $(j+1)$st order divided difference of the $(j-1)$st degree polynomial 
\smash{$h^k_j$}, and is hence equal to 0. If $x \in (x_i, x_{i+1}]$ for $i < j$
(or $x \in [a,x_1]$ when $i=0$), then  
$$
(\Delta^{k+1}_n h^k_j)(x) = \eta(\cdot; x_{1:(j-1)}) [x_1,\ldots,x_i,x] = 
\begin{cases}
1 & \text{if $i=j-1$} \\
0 & \text{otherwise},
\end{cases}
$$
where we have used the important property of divided differences of Newton 
polynomials in \eqref{eq:newton_poly_divided_diff}. Observe that $i=j-1$ implies 
$x \in (x_{j-1},x_j]$ (or $x \in [a,x_1]$ when $j=1$), which completes the proof. 
\end{proof}

\subsection{Constructing the dual basis}

Simply identifying natural points of evaluation for the discrete derivative
results in Lemmas \ref{lem:ffb_discrete_deriv_extra_pp} and
\ref{lem:ffb_discrete_deriv_extra_poly} gives us a dual basis for the $k$th
degree falling factorial basis. The proof of the next lemma is immediate and
hence omitted. 

\begin{lemma}
\label{lem:dual_basis}
For any $k \geq 0$, define the linear functionals \smash{$\lambda^k_i$}, 
$i=1,\ldots,n$ according to
\begin{equation}
\label{eq:dual_basis}
\begin{gathered}
\lambda^k_i f = (\Delta^{k+1}_n f)(x_i), \quad i=1,\ldots, k+1, \\
\lambda^k_i f = (\Delta^{k+1}_n f)(x_i) \cdot \frac{x_i-x_{i-k-1}}{k+1}, \quad 
i=k+2,\ldots,n. 
\end{gathered}
\end{equation}
Then \smash{$\lambda^k_i$}, $i=1,\ldots,n$ is a dual basis to the $k$th degree
falling factorial basis in \eqref{eq:ffb}, in the sense that for all $i,j$, 
\begin{equation}
\label{eq:dual_prop}
\lambda^k_i h^k_j = 
\begin{cases}
1 & \text{if $i=j$} \\
0 & \text{otherwise}.
\end{cases}
\end{equation}
\end{lemma}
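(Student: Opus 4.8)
The plan is to verify the duality relation \eqref{eq:dual_prop} directly, by substituting the closed forms for the $(k+1)$st order discrete derivatives of the falling factorial basis functions obtained in Lemmas \ref{lem:ffb_discrete_deriv_extra_pp} and \ref{lem:ffb_discrete_deriv_extra_poly}, and then evaluating at the appropriate design point $x_i$. Since both the basis functions and the functionals split at the index $k+1$, I would organize the computation into four cases, according to whether each of $i$ and $j$ is at most $k+1$ (the polynomial regime) or at least $k+2$ (the piecewise polynomial regime).

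For a polynomial basis function ($j \leq k+1$), Lemma \ref{lem:ffb_discrete_deriv_extra_poly} gives that $\Delta^{k+1}_n h^k_j$ is the indicator of the single interval $(x_{j-1},x_j]$ (or of $[a,x_1]$ when $j=1$). Evaluating this indicator at $x_i$ returns $1$ exactly when $x_i$ lies in that interval, and since the design points are strictly increasing, $x_i \in (x_{j-1},x_j]$ happens if and only if $i=j$. For the $i \geq k+2$ functionals the extra weight multiplies a value that is already zero (as $i > j$ forces $x_i > x_j$), so nothing changes. This already yields $\lambda^k_i h^k_j = 1\{i=j\}$ throughout the polynomial regime.

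For a piecewise polynomial basis function ($j \geq k+2$), Lemma \ref{lem:ffb_discrete_deriv_extra_pp} shows that $(\Delta^{k+1}_n h^k_j)(x)$ vanishes off $(x_{j-1},x_j]$ and equals $(k+1)/(x-x_{j-k-1})$ on it. Hence $(\Delta^{k+1}_n h^k_j)(x_i)=0$ unless $i=j$, which already settles every off-diagonal entry (and for $i \leq k+1 < j$ the unweighted functional likewise returns zero, since then $x_i \leq x_{j-1}$). The only point requiring care is the diagonal entry $i=j \geq k+2$: here $x_j \in (x_{j-1},x_j]$ gives $(\Delta^{k+1}_n h^k_j)(x_j) = (k+1)/(x_j-x_{j-k-1})$, and multiplying by the weight $(x_j-x_{j-k-1})/(k+1)$ built into $\lambda^k_j$ produces exactly $1$.

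The computation is entirely elementary, so there is no genuine obstacle; the one thing worth flagging is that the weight $(x_i-x_{i-k-1})/(k+1)$ appearing in $\lambda^k_i$ for $i \geq k+2$ is precisely what cancels the factor $(k+1)/(x_j-x_{j-k-1})$ from Lemma \ref{lem:ffb_discrete_deriv_extra_pp} on the diagonal. Recognizing that this weight is engineered for exactly this cancellation is really the whole content of the result, and it explains why the proof can be described as immediate once Lemmas \ref{lem:ffb_discrete_deriv_extra_pp} and \ref{lem:ffb_discrete_deriv_extra_poly} are in hand.
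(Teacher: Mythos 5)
Your proof is correct and follows exactly the route the paper intends: the paper omits the proof as ``immediate'' precisely because it amounts to evaluating the formulas of Lemmas \ref{lem:ffb_discrete_deriv_extra_pp} and \ref{lem:ffb_discrete_deriv_extra_poly} at the design points, which is what you carry out. Your case analysis and the observation that the weight $(x_i-x_{i-k-1})/(k+1)$ is engineered to cancel the factor $(k+1)/(x_j-x_{j-k-1})$ on the diagonal are both accurate.
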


One general property of a dual basis is that it allows us to explicitly
compute coefficients in a corresponding basis expansion: if
\smash{$f=\sum_{i=1}^n \alpha_i h^k_i$}, then for each $i=1,\ldots,n$, applying
the linear functional \smash{$\lambda^k_i$} to both sides gives \smash{$\alpha_i
  = \lambda^k_i f$}, by \eqref{eq:dual_prop}. Next we develop the implications
of this for interpolation with the falling factorial basis.  

\subsection{Falling factorial interpolation}
\label{sec:ffb_interp}

An immediate consequence of the dual basis developed in Lemma
\ref{lem:dual_basis} is the following interpolation result. 

\begin{theorem}
\label{thm:ffb_interp}
Let $y_i$, $i=1,\ldots,n$ be arbitrary. For any $k \geq 0$, we can construct a
$k$th degree discrete spline interpolant \smash{$f \in \cH^k_n$} with  
knots in \smash{$x_{(k+1):(n-1)}$}, satisfying $f(x_i)=y_i$, $i=1,\ldots,n$, via  
\begin{equation}
\label{eq:ffb_interp}
f(x) = \sum_{i=1}^{k+1} (\Delta^{k+1}_n f)(x_i) \cdot h^k_i(x) \;+
\sum_{i=k+2}^n (\Delta^{k+1}_n f)(x_i) \cdot \frac{x_i-x_{i-k-1}}{k+1} 
\cdot h^k_i(x). 
\end{equation}
(Note that the discrete derivatives \smash{$(\Delta^{k+1}_n f)(x_i)$},
$i=1,\ldots,n$ above, though notationally dependent on $f$, actually only 
depend on the points $y_i$, $i=1,\ldots,n$.)  Moreover, the representation in  
\eqref{eq:ffb_interp} is unique.
\end{theorem}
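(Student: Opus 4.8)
The plan is to derive the whole statement from two results already established: the dual basis of Lemma \ref{lem:dual_basis} and the triangular structure of the falling factorial basis exploited in the proof of Lemma \ref{lem:ffb_span}. First I would observe that the evaluation map $E : \cH^k_n \to \R^n$ sending $g$ to $(g(x_1),\ldots,g(x_n))$ is a linear bijection. Injectivity is immediate from the lower-triangular form of the matrix $(h^k_j(x_i))_{i,j}$, whose diagonal entries are nonzero because the design points are distinct; this is exactly the observation used in Lemma \ref{lem:ffb_span}. Since that lemma also gives $\dim \cH^k_n = n$, the injective map $E$ is automatically onto. Consequently, for the given data $y_1,\ldots,y_n$ there is exactly one $f \in \cH^k_n$ with $f(x_i)=y_i$ for all $i$, which settles both the existence of a discrete-spline interpolant and its uniqueness in one stroke.

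Next I would apply the general dual basis expansion. Because $\lambda^k_i h^k_j = \delta_{ij}$ by \eqref{eq:dual_prop}, every $f \in \cH^k_n$ satisfies $f = \sum_{i=1}^n (\lambda^k_i f)\, h^k_i$, as noted immediately after Lemma \ref{lem:dual_basis}. Substituting the explicit forms of the functionals from \eqref{eq:dual_basis}, namely $\lambda^k_i f = (\Delta^{k+1}_n f)(x_i)$ for $i \leq k+1$ and $\lambda^k_i f = (\Delta^{k+1}_n f)(x_i)\cdot (x_i-x_{i-k-1})/(k+1)$ for $i \geq k+2$, reproduces formula \eqref{eq:ffb_interp} verbatim for the interpolant $f$ produced in the first step. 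Uniqueness of the representation is then nothing more than the linear independence of $\{h^k_1,\ldots,h^k_n\}$.

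The one genuinely delicate point, and the source of the apparent circularity in \eqref{eq:ffb_interp} (the coefficients are written in terms of $f$ itself), is the parenthetical claim that $(\Delta^{k+1}_n f)(x_i)$ depends only on $y_1,\ldots,y_i$. I would resolve this directly from the definition \eqref{eq:discrete_deriv}: evaluated at a design point $x_i$, the $(k+1)$st discrete derivative is a scaled divided difference whose centers form a subset of $\{x_1,\ldots,x_i\}$, hence a fixed linear combination of $f(x_1),\ldots,f(x_i)$, which are pinned to $y_1,\ldots,y_i$ by the interpolation constraint. This locality lets each coefficient be read off from the data alone, indeed computed sequentially in $i$, so the right-hand side of \eqref{eq:ffb_interp} is a bona fide construction from $y$ rather than a circular definition.

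I expect the main obstacle to be expository rather than mathematical: the argument must be laid out in the correct logical order, first pinning down the unique interpolant through bijectivity of $E$, then expanding it in the dual basis, and only afterward invoking locality to show the coefficients are data-determined, so that \eqref{eq:ffb_interp} is not misread as defining $f$ in terms of itself. All of the substantive content has already been supplied by Lemmas \ref{lem:ffb_span} and \ref{lem:dual_basis}, so no new estimates or computations are required.
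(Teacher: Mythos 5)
Your proposal is correct and follows essentially the same route as the paper's proof: establish existence and uniqueness of the interpolant from the $n$-dimensionality of $\cH^k_n$ (via the triangular evaluation matrix), then read off the coefficients using the dual basis of Lemma \ref{lem:dual_basis}. Your additional remark that $(\Delta^{k+1}_n f)(x_i)$ is a fixed linear combination of $y_1,\ldots,y_i$ is a correct and slightly more explicit justification of the parenthetical claim than the paper provides, but it does not alter the argument.
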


\begin{proof}
As \smash{$\cH^k_n$} is $n$-dimensional, we can find a unique interpolant $f$ 
passing through any $n$ points $y_i$, $i=1,\ldots,n$. Let
\smash{$f=\sum_{i=1}^n \alpha_i h^k_i$}. As explained after Lemma
\ref{lem:dual_basis}, for each $i=1,\ldots,n$, we have \smash{$\alpha_i =
  \lambda^k_i f$} for the dual basis defined in \eqref{eq:dual_basis}, that is,
\smash{$\alpha_i = (\Delta^{k+1}_n f)(x_i)$} for $i \leq k+1$ and
\smash{$\alpha_i = (\Delta^{k+1}_n f)(x_i) \cdot (x_i-x_{i-k-1})/(k+1)$} for $i
\geq k+2$.
\end{proof}

\begin{remark}
The result in \eqref{eq:ffb_interp} can be written in a more explicit form,
namely,  
\begin{equation}
\label{eq:ffb_interp_explicit}
f(x) = \sum_{i=1}^{k+1} f [x_1,\ldots,x_i] \cdot \eta(x; x_{1:(i-1)}) \;+
\sum_{i=k+2}^n f [x_{i-k-1}, \ldots, x_i] \cdot (x_i-x_{i-k-1}) \cdot 
\eta_+(x; x_{(i-k):(i-1)}),
\end{equation}
where we introduce the notation \smash{$\eta_+(x; t_{1:r})=\eta(x; t_{1:r})
  \cdot 1\{x > \max(t_{1:r}) \}$} for a truncated Newton polynomial. In this 
form, we can see it as a natural extension of Newton interpolation in
\eqref{eq:newton_interp}. The latter \eqref{eq:newton_interp} constructs a
polynomial of degree $n-1$ passing through any $n$ points, whereas the former
\eqref{eq:ffb_interp_explicit} separates the degree of the polynomial from the
number of points, and allows us to construct a piecewise polynomial
(specifically, a discrete spline) of degree $k$, with $n-k-1$ knots, passing
through any $n$ points. A nice feature of this generalization is that it
retains the property of the classical Newton formula that the coefficients in
the interpolatory expansion are simple, explicit, and easy to compute (they are
just based on sliding divided differences).
\end{remark}

\subsection{Implicit form interpolation}
\label{sec:ffb_interp_implicit}

To proceed in an opposite direction from our last remark, we now show that the 
interpolation result in Theorem \ref{thm:ffb_interp} can be written in a more
implicit form. 

\begin{corollary}
\label{cor:ffb_interp_implicit}
Let $y_i$, $i=1,\ldots,n$ be arbitrary. For any $k \geq 0$, we can construct a
$k$th degree discrete spline interpolant \smash{$f \in \cH^k_n$} with  
knots in \smash{$x_{(k+1):(n-1)}$}, satisfying $f(x_i)=y_i$, $i=1,\ldots,n$, in
the following manner. For $x \in [a,b] \setminus x_{1:n}$, if $x > x_{k+1}$ and
$i$ is the smallest index such that $x_i>x$ (with $i=n$ when $x>x_n$), then
$f(x)$ is the unique solution of the linear system      
\begin{equation}
\label{eq:ffb_interp_implicit1}
f[x_{i-k}, \ldots, x_i, x] = 0.
\end{equation}
If instead $x < x_{k+1}$, then $f(x)$ is the unique solution of the linear
system 
\begin{equation}
\label{eq:ffb_interp_implicit2}
f[x_1, \ldots, x_{k+1}, x] = 0.
\end{equation}
We note that \eqref{eq:ffb_interp_implicit1}, \eqref{eq:ffb_interp_implicit2}
are each linear systems in just one unknown, $f(x)$.
\end{corollary}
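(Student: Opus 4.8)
The plan is to treat the implicitly-defined value at each non-design point $x$ in two steps: first show that each linear system has a genuinely unique solution in the single unknown $f(x)$, and then show that the interpolant $f \in \cH^k_n$ supplied by Theorem \ref{thm:ffb_interp} actually satisfies that system. Since the system has a unique solution and $f$ is one solution, the value returned by the implicit procedure must coincide with $f(x)$, proving the claim. Uniqueness is the easy half: expanding the relevant divided difference as a linear combination of function evaluations via \eqref{eq:divided_diff_linear}, the coefficient multiplying the unknown $f(x)$ is $1/\prod_\ell (x-x_\ell)$, the product running over the design-point centers; since $x \notin x_{1:n}$ this coefficient is nonzero, so each of \eqref{eq:ffb_interp_implicit1}, \eqref{eq:ffb_interp_implicit2} determines $f(x)$ uniquely.

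The boundary regime $x < x_{k+1}$ is immediate. The first knot of $f$ is $x_{k+1}$, so $f$ restricts to a single $k$th degree polynomial $p_0$ on $[a,x_{k+1}]$. All the centers $x_1,\ldots,x_{k+1}$ and $x$ itself lie in $[a,x_{k+1}]$, so $f[x_1,\ldots,x_{k+1},x] = p_0[x_1,\ldots,x_{k+1},x]$ is a $(k+1)$st divided difference of a degree-$k$ polynomial, hence $0$ (a consequence of \eqref{eq:deriv_match_poly}); this verifies \eqref{eq:ffb_interp_implicit2}.

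The main regime is $x > x_{k+1}$, where $i$ is the smallest index with $x_i > x$ (or $i=n$ if $x > x_n$), so that $i \geq k+2$. Here the centers $x_{i-k},\ldots,x_i$ straddle one or more knots of $f$, so $f$ is \emph{not} a single polynomial over them and the divided difference $f[x_{i-k},\ldots,x_i,x]$ does not vanish for the trivial reason used above; this is the crux. The idea is to reorganize the $(k+1)$st divided difference so that it collapses onto a comparison of two $k$th discrete derivatives. Applying the divided-difference recursion with $x$ and $x_i$ played off against the shared centers $x_{i-k},\ldots,x_{i-1}$ gives $f[x_{i-k},\ldots,x_i,x] = \big(f[x_{i-k},\ldots,x_i] - f[x_{i-k},\ldots,x_{i-1},x]\big)/(x_i-x)$, and by the definition \eqref{eq:discrete_deriv} of $\Delta^k_n$ the two $k$th divided differences on the right are exactly $(\Delta^k_n f)(x_i)/k!$ and $(\Delta^k_n f)(x)/k!$. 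Now I invoke the matching-derivatives property (Corollary \ref{cor:deriv_match}): since $x,x_i > x_k$, both discrete derivatives equal the ordinary derivatives $(D^k f)(x)$ and $(D^k f)(x_i)$. The points $x$ and $x_i$ lie in the common polynomial piece of $f$ immediately to the right of the knot $x_{i-1}$ (namely $(x_{i-1},x_i]$ when $i \leq n-1$, or the final piece $(x_{n-1},b]$ when $i=n$), on which $D^k f$ is constant; hence $(\Delta^k_n f)(x) = (\Delta^k_n f)(x_i)$ and the divided difference vanishes, verifying \eqref{eq:ffb_interp_implicit1}.

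The only bookkeeping to watch is the case $x > x_n$ (where $i=n$ by convention and $x$ lies to the right of all centers rather than between $x_{i-1}$ and $x_i$): there one instead splits off the extreme centers $x_{n-k}$ and $x$, which produces $\big((\Delta^k_n f)(x) - (\Delta^k_n f)(x_n)\big)/(k!\,(x-x_{n-k}))$, and the same matching-derivatives argument applies since $x$ and $x_n$ share the last piece $(x_{n-1},b]$. I expect the main obstacle to be precisely this step of selecting the correct recursion and identifying the common polynomial piece, and in particular confirming that $x$ and $x_i$ always fall in a single piece of the discrete spline so that $D^k f$ is constant across them — it is exactly the matching-derivatives property, rather than any polynomial exactness over the centers, that forces the divided difference to vanish.
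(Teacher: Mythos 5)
Your proof is correct, but it takes a genuinely different route from the paper's. The paper proves \eqref{eq:ffb_interp_implicit1} by augmenting the design points with $x$ itself, applying the explicit interpolation formula of Theorem \ref{thm:ffb_interp} on the extended set \smash{$\tilde{x}_{1:(n+1)}$}, and observing that membership of $f$ in \smash{$\cH^k_n$} (no knot at $x$) forces the coefficient of the extra basis function \smash{$\tilde{h}^k_{i+1}$} to vanish; that coefficient is, up to a nonzero weight, exactly the divided difference $f[x_{i-k},\ldots,x_{i-1},x,x_i]$, and permutation invariance finishes the argument. You instead verify the identity directly: the divided-difference recursion (with permutation invariance) gives
$f[x_{i-k},\ldots,x_i,x] = \bigl((\Delta^k_n f)(x_i) - (\Delta^k_n f)(x)\bigr)/\bigl(k!\,(x_i-x)\bigr)$,
and the right-hand side vanishes because, by Corollary \ref{cor:deriv_match}, both discrete derivatives equal $D^k f$ at the respective points, and $D^k f$ is constant on the single polynomial piece containing both $x$ and $x_i$ (with the analogous split for $x>x_n$). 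Both treatments of the regime $x<x_{k+1}$ coincide (a $(k+1)$st divided difference of the degree-$k$ polynomial piece on $[a,x_{k+1}]$). Your route is more elementary and makes transparent \emph{why} the implicit equation holds---it is literally the statement that the $k$th discrete derivative of a discrete spline is constant between knots---and it dovetails with the proofs of the converse (Corollary \ref{cor:ffb_interp_implicit_conv}) and of Theorem \ref{thm:ffb_tv}, which rest on the same two facts. The paper's augmentation argument avoids any case analysis over polynomial pieces, exposes the ``one unknown, one equation'' structure via uniqueness of the dual-basis coefficients, and is the template reused later (e.g., in Lemma \ref{lem:discrete_nbs_supp}). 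One bookkeeping point worth making explicit in your write-up: your uniqueness step (nonvanishing coefficient of $f(x)$ in \eqref{eq:divided_diff_linear}) is needed to conclude that the implicitly computed value agrees with the Theorem \ref{thm:ffb_interp} interpolant, and you do supply it, whereas the paper leaves it as the closing remark about the systems having one unknown.
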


\begin{proof}
First consider the case $x>x_{k+1}$, and $x \in (x_{i-1},x_i)$ with $i \leq n$. 
Define sequences of augmented design points and target points by  
\begin{equation}
\label{eq:augmented1}
\begin{alignedat}{7}
&\tilde{x}_1 = x_1, \;\; &&\ldots, \;\; 
&&\tilde{x}_{i-1} = x_{i-1}, \;\;  
&&\tilde{x}_i = x, \;\; 
&&\tilde{x}_{i+1} = x_i, \;\; &&\ldots, \;\; 
&&\tilde{x}_{n+1} = x_n, \;\; \\
&\tilde{y}_1 = y_1, \;\; &&\ldots, \;\; 
&&\tilde{y}_{i-1} = y_{i-1}, \;\; 
&&\tilde{y}_i = f(x), \;\; 
&&\tilde{y}_{i+1} = y_i, \;\; &&\ldots, \;\;
&&\tilde{y}_{n+1} = y_n. 
\end{alignedat}
\end{equation}
In what follows, we use a subscript $n+1$ (in place of a subscript $n$) to
denote the ``usual'' quantities of interest defined with respect to design
points \smash{$\tilde{x}_{1:(n+1)}$} (instead of $x_{1:n}$). In particular, we
use \smash{$\Delta^{k+1}_{n+1}$} to denote the $(k+1)$st order discrete
derivative operator defined using \smash{$\tilde{x}_{1:(n+1)}$}, and
\smash{$\cH^k_{n+1}$} to denote the space of $k$th degree discrete splines with
knots in \smash{$\tilde{x}_{(k+1):n}$}. By Theorem \ref{thm:ffb_interp}, we
can construct an interpolant \smash{$f \in \cH^k_{n+1}$} passing through
\smash{$\tilde{y}_{1:(n+1)}$} at \smash{$\tilde{x}_{1:(n+1)}$}. Note that, by
construction, $f$ is also the unique interpolant in \smash{$\cH^k_n$} passing
through $y_{1:n}$ at $x_{1:n}$. Denote the falling factorial basis
for \smash{$\cH^k_{n+1}$} by
$$
\tilde{h}^k_j, \; j=1,\ldots,n+1.
$$
As \smash{$f \in \cH^k_n$}, the coefficient of \smash{$\tilde{h}^k_{i+1}$} in
the basis expansion of $f$ with respect to \smash{$\tilde{h}^k_j$},
$j=1,\ldots,n+1$ must be zero (this is because \smash{$\tilde{h}^k_{i+1}$} has a
knot at \smash{$\tilde{x}_i=x$}, so if its coefficient is nonzero, then $f$ will
also have a knot at $x$ and cannot be in \smash{$\cH^k_n$}). By
\eqref{eq:ffb_interp} (applied to \smash{$\tilde{x}_{1:(n+1)}$}), this means
\smash{$(\Delta^{k+1}_{n+1} f)(\tilde{x}_{i+1}) = 0$}, or equivalently by
\eqref{eq:ffb_interp_explicit} (applied to \smash{$\tilde{x}_{1:(n+1)}$}), this
means \smash{$f[x_{i-k}, \ldots, x_{i-1}, x, x_i] = 0$}. The desired result
\eqref{eq:ffb_interp_implicit1} follows by recalling that divided differences
are invariant to the ordering of the centers.

For the case $x > x_n$, a similar argument applies, but instead of augmenting
the design and target points as in \eqref{eq:augmented1} we simply append $x$
to the end of $x_{1:n}$ and $f(x)$ to the end of $y_{1:n}$. 

For the case $x < x_{k+1}$, note that as $f$ is simply a $k$th degree
polynomial on $[a,x_{k+1}]$, it hence satisfies
\eqref{eq:ffb_interp_implicit2} (any $(k+1)$st order divided difference
with centers in $[a,x_{k+1}]$ is zero, recall \eqref{eq:deriv_match_poly}). This
completes the proof.    
\end{proof}

\begin{remark}
A key feature of the implicit representation for the discrete spline interpolant
as described in Corollary \ref{cor:ffb_interp_implicit} is that it reveals
$f(x)$ can be computed in {\it constant-time}\footnote{This is not including the
  time it takes to rank $x$ among the design points: finding the index $i$ before 
  solving \eqref{eq:ffb_interp_implicit1} will have a computational cost that,
  in general, depends on $n$; say, $O(\log{n})$ if the design points are sorted
  and we use binary search. However, note that this would be constant-time if
  the design points are evenly-spaced, and we use integer divison.}, or more 
precisely, in $O(k)$ operations (independent of the number of knots in the
interpolant, and hence of $n$). This is because we can always express a $k$th
order divided difference as a linear combination of function evaluations (recall
\eqref{eq:divided_diff_linear}): writing 
\smash{$f[x_{i-k}, \ldots, x_i, x] = \sum_{j=1}^{k+1} \omega_j f(x_{i-k-1+j}) +
\omega_{k+2} f(x)$}, we see that \eqref{eq:ffb_interp_implicit1} reduces to
\smash{$f(x) = -(\sum_{j=1}^{k+1} \omega_j f(x_{i-k-1+j}))/\omega_{k+2}$}, and
similarly for \eqref{eq:ffb_interp_implicit2}.
\end{remark}

Interestingly, as we prove next, discrete splines are the {\it only}
interpolatory functions satisfying \eqref{eq:ffb_interp_implicit1},
\eqref{eq:ffb_interp_implicit2} for all $x$. In other words, equations
\eqref{eq:ffb_interp_implicit1}, \eqref{eq:ffb_interp_implicit2} uniquely define
$f$, which is reminiscent of the implicit function theorem (and serves as
further motivation for us to call the approach in Corollary
\ref{cor:ffb_interp_implicit} an ``implicit'' form of interpolation).

\begin{corollary}
\label{cor:ffb_interp_implicit_conv}
Given any evaluations $f(x_i)$, $i=1,\ldots,n$ and $k \geq 0$, if for all $x \in
[a,b] \setminus x_{1:n}$, the function $f$ satisfies
\eqref{eq:ffb_interp_implicit1} for $x > x_{k+1}$ (where $i$ is the smallest
index such that $x_i>x$, with $i=n$ when $x>x_n$), and
\eqref{eq:ffb_interp_implicit2} for $x < x_{k+1}$, then \smash{$f \in \cH^k_n$}, 
that is, $f$ must be the $k$th degree discrete spline with knots in
$x_{(k+1):(n-1)}$ that interpolates $y_{1:n}$. 
\end{corollary}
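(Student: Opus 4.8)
The plan is to reduce the converse to the uniqueness of a one-variable linear solve, leaning on the forward direction already established in Corollary~\ref{cor:ffb_interp_implicit}. By Theorem~\ref{thm:ffb_interp} there is a unique discrete spline \smash{$g \in \cH^k_n$} with knots in \smash{$x_{(k+1):(n-1)}$} satisfying $g(x_i) = f(x_i)$ for $i=1,\ldots,n$. By Corollary~\ref{cor:ffb_interp_implicit}, this $g$ satisfies exactly the implicit equations \eqref{eq:ffb_interp_implicit1} and \eqref{eq:ffb_interp_implicit2} at every $x \in [a,b] \setminus x_{1:n}$, under the same index convention. The whole content of the converse is then that these equations, together with the shared design-point values, pin down the value at each off-grid point; so it suffices to show $f(x) = g(x)$ for every non-design $x$, after which $f = g \in \cH^k_n$ follows, since $f$ and $g$ already agree on $x_{1:n}$ by hypothesis.

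First I would fix $x \in [a,b] \setminus x_{1:n}$ and treat the case $x > x_{k+1}$, with $i$ the smallest index such that $x_i > x$ (so $i \geq k+2$, making $x_{i-k},\ldots,x_i$ genuine design points). Using the explicit linear-combination form of a divided difference, \eqref{eq:divided_diff_linear}, I expand
\[
f[x_{i-k}, \ldots, x_i, x] = \sum_{j=1}^{k+1} \omega_j\, f(x_{i-k-1+j}) + \omega_{k+2}\, f(x),
\]
where the weights depend only on the points $x_{i-k},\ldots,x_i,x$, and crucially \smash{$\omega_{k+2} = 1/\prod_{\ell=i-k}^{i}(x-x_\ell) \neq 0$} because $x$ is distinct from every design point. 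Thus \eqref{eq:ffb_interp_implicit1} is a linear equation in the single unknown $f(x)$ whose remaining terms involve only the values $f(x_{i-k}),\ldots,f(x_i)$. Since $f$ and $g$ share those design-point values and both satisfy this equation with nonzero leading coefficient, solving yields \smash{$f(x) = -\big(\sum_{j=1}^{k+1}\omega_j f(x_{i-k-1+j})\big)/\omega_{k+2} = g(x)$}. The case $x > x_n$ is identical with $i=n$, and the case $x < x_{k+1}$ is handled the same way using \eqref{eq:ffb_interp_implicit2}, with centers $x_1,\ldots,x_{k+1}$.

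The argument is essentially mechanical, so the only point requiring genuine care—the would-be main obstacle—is verifying that each defining equation is a \emph{self-contained} linear system in $f(x)$ alone. A priori one might worry the constraints couple the unknown values of $f$ at different off-grid points, which would force a more global consistency argument; but the divided difference in \eqref{eq:ffb_interp_implicit1} has centers $x_{i-k},\ldots,x_i$ and $x$, so it references $f$ only at $k+1$ design points and at $x$ itself, never at another off-grid location, and the leading coefficient $\omega_{k+2}$ never vanishes. Once this decoupling is confirmed, determinacy is immediate: the value at each $x$ is fixed independently by its own equation from the common design data, so no induction or cross-point bookkeeping is needed, and $f = g$ throughout $[a,b]$.
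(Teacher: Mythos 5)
Your proof is correct, but it takes a genuinely different route from the paper's. The paper argues structurally: it uses \eqref{eq:ffb_interp_implicit1} to deduce that $\Delta^k_n f$ is piecewise constant on $(x_k,b]$ with knots in $x_{(k+1):(n-1)}$, uses \eqref{eq:ffb_interp_implicit2} together with a separate inductive helper result (Lemma \ref{lem:poly_implicit}) to deduce that $f$ is a degree-$k$ polynomial near the left boundary, and then inverts via $S^k_n$ and Theorem \ref{thm:ffb_discrete_integ} to exhibit $f$ explicitly as a linear combination of falling factorial functions---mirroring the converse part of Corollary \ref{cor:deriv_match}. You instead run a pure uniqueness argument: each implicit equation is a nondegenerate affine equation in the single unknown $f(x)$ (the coefficient $\omega_{k+2}=1/\prod_{\ell=i-k}^{i}(x-x_\ell)$ is nonzero because $x$ avoids the design points, and the equation references $f$ only at design points and at $x$ itself, so the constraints decouple across off-grid points), the discrete spline interpolant $g$ of the same design values satisfies the identical equations by the forward direction (Corollary \ref{cor:ffb_interp_implicit}), and hence $f=g\in\cH^k_n$. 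Your checks on the index range ($i\ge k+2$ when $x>x_{k+1}$, so the centers are genuine design points) and on the nonvanishing leading coefficient are exactly the points that need verifying, and they hold. What your approach buys is brevity and transparency---it avoids Lemma \ref{lem:poly_implicit} entirely and makes plain why the implicit system ``uniquely defines'' $f$, which is the informal claim the paper makes just before the corollary. What the paper's approach buys is a constructive re-derivation of membership in $\cH^k_n$ that does not lean on the forward direction and that exposes the piecewise-constant structure of $\Delta^k_n f$, which is reused elsewhere (e.g., in the proof of Lemma \ref{lem:discrete_nbs_supp}). There is no circularity in your use of Corollary \ref{cor:ffb_interp_implicit}, since that result is established independently of the converse.
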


\begin{proof}
This proof is similar to the proof of the converse statement in Corollary
\ref{cor:deriv_match}. First, note that \eqref{eq:ffb_interp_implicit1}
implies that the $k$th discrete derivative of $f$ is piecewise constant on
$[x_{k+1},b]$ with knots in $x_{(k+1):(n-1)}$. Moreover, a simple inductive
argument (deferred until Lemma \ref{lem:poly_implicit} in Appendix 
\ref{app:poly_implicit}) shows that \eqref{eq:ffb_interp_implicit2} implies $f$
is a $k$th degree polynomial on $[a,b]$. Therefore we may write      
$$
(\Delta^k_n f)(x) = \alpha_0 + \sum_{j=k+2}^n \alpha_j 1\{x > x_{j-1}\},
\quad \text{for $x \in [a,b]$},
$$
and proceeding as in the proof of Corollary \ref{cor:deriv_match} (inverting
using Lemma \ref{lem:discrete_deriv_integ_inv}, using linearity of $S^k_n$,
then Theorem \ref{thm:ffb_discrete_integ}) shows that $f$ is in the span of the
falling factorial basis, completing the proof.
\end{proof}

\section{Matrix computations}
\label{sec:matrix_comp}

We translate several of our definitions and results derived thus far to a
slightly different perspective. While there will be no new results established
in this section, phrasing our results in terms of matrices (which act on
function values at the design points) will help draw clearer connections to
results in previous papers \citep{tibshirani2014adaptive,wang2014falling}, and
will be notationally convenient for some subsequent parts of the paper. We
remind the reader that we use ``blackboard'' fonts for matrices (as in $\A,\B$,
etc.), in order to easily distinguish them from operators that act on functions.

\subsection{Discrete differentiation}

First define the simple difference matrix \smash{$\widebar\D_n \in \R^{(n-1)
    \times n}$} by  
\begin{equation}
\label{eq:diff_mat}
\widebar\D_n = 
\left[\begin{array}{rrrrrr} 
-1 & 1 & 0 & \ldots & 0 & 0 \\
0 & -1 & 1 & \ldots & 0 & 0 \\
\vdots & & & & & \\
0 & 0 & 0 & \ldots & -1 & 1 
\end{array}\right],
\end{equation}
and for $k \geq 1$, define the weight matrix \smash{$\W^k_n \in \R^{(n-k)
    \times (n-k)}$} by 
\begin{equation}
\label{eq:weight_mat}
\W^k_n = \diag\bigg( \frac{x_{k+1}-x_1}{k}, \ldots, \frac{x_n-x_{n-k}}{k}
\bigg). 
\end{equation}
Then we define the $k$th order discrete derivative matrix \smash{$\D^k_n \in 
  \R^{(n-k) \times n}$} by the recursion  
\begin{equation}
\label{eq:discrete_deriv_mat}
\begin{aligned}
\D_n &= (\W_n)^{-1} \widebar\D_n, \\
\D^k_n &= (\W^k_n)^{-1} \widebar\D_{n-k+1} \, \D^{k-1}_n,  
\quad \text{for $k \geq 2$}.
\end{aligned}
\end{equation}
We emphasize that \smash{$\D_{n-k+1}$} above denotes the $(n-k) \times  
(n-k+1)$ version of the simple difference matrix in \eqref{eq:diff_mat}. For a  
function $f$, denote by $f(x_{1:n})=(f(x_1),\ldots,f(x_n)) \in \R^n$ the vector 
of its evaluations at the design points $x_{1:n}$. It is not hard to see that
the $k$th discrete derivative matrix \smash{$\D^k_n$}, applied to $f(x_{1:n})$,
yields the vector of the $k$th discrete derivatives of $f$ at the points
$x_{(k+1):n}$, that is,   
\begin{equation}
\label{eq:discrete_deriv_conn1}
\D^k_n f(x_{1:n}) = (\Delta^k_n f)(x_{(k+1):n}). 
\end{equation}
Lastly, we note that \smash{$\D^k_n$} is a banded matrix, with bandwidth
$k+1$. 

\begin{remark}
\label{rem:discrete_deriv_mat_old}
Our definition of the discrete derivative matrices in
\eqref{eq:discrete_deriv_mat} differs from that in
\citet{tibshirani2014adaptive,wang2014falling} and subsequent papers on trend  
filtering. In these papers, the discrete derivative matrices are defined as
\begin{equation}
\label{eq:discrete_deriv_mat_old}
\begin{aligned}
\C_n &= \widebar\D_n, \\
\C^k_n &= \widebar\D_{n-k+1} (\W^{k-1}_n)^{-1} \C^{k-1}_n,
\quad \text{for $k \geq 2$}.
\end{aligned}
\end{equation}
We can hence see that \smash{$\D^k_n = (\W^k_n)^{-1} \C^k_n$} for each $k \geq
1$, that is, the discrete derivative matrices in
\eqref{eq:discrete_deriv_mat_old} are just like those in
\eqref{eq:discrete_deriv_mat}, but without the leading (inverse) weight
matrices. The main purpose of \eqref{eq:discrete_deriv_mat_old} in
\citet{tibshirani2014adaptive,wang2014falling} was to derive a convenient
formula for the total variation of derivatives of discrete splines (represented
in terms of discrete derivatives), and as we will see in Theorem
\ref{thm:ffb_tv}, and we will arrive at the same formula using
\eqref{eq:discrete_deriv_mat} (see also Remark \ref{rem:ffb_tv_mat_old}). In
this sense, the discrepancy between \eqref{eq:discrete_deriv_mat} and
\eqref{eq:discrete_deriv_mat_old} is not problematic (and if the design points
are evenly-spaced, then the two definitions coincide). However, in general, we
should note that the current definition \eqref{eq:discrete_deriv_mat} offers a
more natural perspective on discrete derivatives: recalling
\eqref{eq:discrete_deriv_conn1}, we see that it connects to \smash{$\Delta^k_n$}
and therefore to divided differences, a celebrated and widely-studied discrete
analogue of differentiation.
\end{remark}

\subsection{Extended discrete differentiation}

We can extend the construction in \eqref{eq:diff_mat}, \eqref{eq:weight_mat},
\eqref{eq:discrete_deriv_mat} to yield discrete derivatives at all points
$x_{1:n}$, as follows. 

For $k \geq 1$, define an extended difference matrix \smash{$\widebar\B_{n,k}
  \in \R^{n \times n}$} by    
\begin{equation}
\label{eq:diff_mat_ext}
\widebar\B_{n,k} = 
\left[\begin{array}{rrrrrrrrr}
1 & 0 & \ldots & 0 & \multicolumn{5}{c}{\multirow{4}{*}{0}} \\
0 & 1 & \ldots & 0 & \multicolumn{5}{c}{} \\
\vdots & & & & \multicolumn{5}{c}{} \\
0 & 0 & \ldots & 1 & \multicolumn{5}{c}{} \\
\multicolumn{3}{c}{\multirow{4}{*}{0}} & -1 & 1 & 0 & \ldots & 0 & 0 \\ 
\multicolumn{3}{c}{} & 0 & -1 & 1 & \ldots & 0 & 0 \\
\multicolumn{3}{c}{} & \vdots & & & & & \\
\multicolumn{3}{c}{} & 0 & 0 & 0 & \ldots & -1 & 1 
\end{array}\right]
\renewcommand\arraystretch{1.1}
\begin{array}{ll}
\left.\vphantom{\begin{array}{c} 1 \\ 0 \\ \cdots \\ 0 \end{array}}
\right\} & \hspace{-5pt} \text{$k$ rows} \\
\left.\vphantom{\begin{array}{c} 1 \\ 0 \\ \cdots \\ 0 \end{array}}
\right\} & \hspace{-5pt} \text{$n-k$ rows}
\end{array}
\renewcommand\arraystretch{1}
\end{equation}
(note that the top-left $k \times k$ submatrix is the identity matrix $\I_k$,
and the bottom-right $(n-k) \times (n-k+1)$ submatrix is $\D_{n-k}$), and also 
define an extended weight matrix \smash{$\Z^k_n \in \R^{n \times n}$} by  
\begin{equation}
\label{eq:weight_mat_ext}
\Z^k_n = \diag\bigg(\underbrace{1,\ldots,1
  \vphantom{\frac{x_{k+1}-x_1}{k}}}_{\text{$k$ times}},   
\frac{x_{k+1}-x_1}{k}, \ldots, \frac{x_n-x_{n-k}}{k} \bigg).
\end{equation}
Then we define the extended $k$th order discrete derivative matrix
\smash{$\B^k_n \in \R^{n\times n}$} by the recursion
\begin{equation}
\label{eq:discrete_deriv_mat_ext}
\begin{aligned}
\B_n &= (\Z_n)^{-1} \widebar\B_{n,1}, \\
\B^k_n &= (\Z^k_n)^{-1} \widebar\B_{n,k} \, \B^{k-1}_n, 
\quad \text{for $k \geq 2$}.
\end{aligned}
\end{equation}
The construction \eqref{eq:diff_mat_ext}, \eqref{eq:weight_mat_ext},
\eqref{eq:discrete_deriv_mat_ext} is precisely analogous to what was done in
\eqref{eq:simple_diff}, \eqref{eq:weight_map}, \eqref{eq:discrete_deriv_rec},
but it is just specialized to the design points, and yields
\begin{equation} 
\label{eq:discrete_deriv_conn2}
\B^k_n f(x_{1:n}) = (\Delta^k_n f)(x_{1:n}),
\end{equation}
which is the extension of property \eqref{eq:discrete_deriv_conn1} to the full
set of the design points $x_{1:n}$. Lastly, we note that \smash{$\B^k_n$} is
again banded, with bandwidth $k+1$, and that the discrete derivative matrix
\smash{$\D^k_n$} is simply given by the last $n-k$ rows of the extended matrix
\smash{$\B^k_n$}.

\subsection{Falling factorial basis}

Now define for $k \geq 0$ the falling factorial basis matrix \smash{$\H^k_n \in
  \R^{n \times n}$} to have entries
\begin{equation}
\label{eq:ffb_mat}
(\H^k_n)_{ij} = h^k_j(x_i),
\end{equation}
where \smash{$h^k_j$}, $j=1,\ldots,n$ are the falling factorial basis functions
in \eqref{eq:ffb}. The lateral recursion in Lemma \ref{lem:ffb_lateral_rec}
implies 
\begin{equation}
\label{eq:ffb_lateral_rec_mat}
\H^k_n = \H^{k-1}_n \Z^k_n
 \left[\begin{array}{cc} 
\I_k & 0 \\
0 & \L_{n-k}
\end{array}\right],
\end{equation}
where $\I_k$ denotes the $k \times k$ identity matrix, and $\L_{n-k}$ denotes 
the $(n-k) \times (n-k)$ lower triangular matrix of all 1s. Furthermore, the
dual result between discrete differentiation and the falling factorial basis in
Lemma \ref{lem:dual_basis} can be written as    
\begin{equation}
\label{eq:ffb_discrete_deriv_inv}
\Z^{k+1}_n \, \B^{k+1}_n \, \H^k_n = \I_n.
\end{equation}
We note that the results in \eqref{eq:ffb_lateral_rec_mat} and
\eqref{eq:ffb_discrete_deriv_inv} were already established in Lemmas 1 and 2 
of \citet{wang2014falling} (and for the case of evenly-spaced design points, in
Lemmas 2 and 4 of \citet{tibshirani2014adaptive}). To be clear, the analogous
results in the current paper (Lemmas \ref{lem:ffb_lateral_rec},
\ref{lem:ffb_discrete_deriv_extra_pp}, \ref{lem:ffb_discrete_deriv_extra_poly}, 
and \ref{lem:dual_basis}) are slightly more general, as they hold for
arbitrary $x$, and not just at the design points. (Their proofs are also
simpler; in particular Lemma \ref{lem:ffb_lateral_rec}, whose proof is quite 
different and considerably simpler than the proof of Lemma 1 in
\citet{wang2014falling}.)  

\subsection{Fast matrix multiplication}

A nice consequence of \eqref{eq:ffb_lateral_rec_mat} and
\eqref{eq:ffb_discrete_deriv_inv}, as developed by \citet{wang2014falling}, 
is that matrix-vector multiplication using any of
\smash{$\H^k_n, (\H^k_n)^{-1},(\H^k_n)^\T,(\H^k_n)^{-\T}$} can be done in
$O(nk)$ operations using simple, in-place algorithms, based on iterated scaled
cumulative sums, and iterated scaled differences---to be precise, each of these
algorithms requires at most $4nk$ flops ($kn$ additions, subtractions,
multiplications, and divisions). For convenience, we recap the details in  
Appendix \ref{app:fast_mult}. 

\section{Discrete B-splines}
\label{sec:discrete_bs}

We develop a local basis for \smash{$\cH^k_n$}, the space of $k$th degree
discrete splines with knots in $x_{(k+1):(n-1)}$. This basis bears similarities
to the B-spline basis for splines, and is hence called the {\it discrete
B-spline basis}. In this section (as we do throughout this paper), we consider
discrete splines with arbitrary design points $x_{1:n}$, defining the underlying  
discrete derivative operators \smash{$\Delta^\ell_n=\Delta^\ell(\cdot;
x_{1:n})$}, $\ell=1,\ldots,k-1$. For the construction of discrete B-splines, in
particular, this presents an interesting conceptual challenge (that is absent 
in the case of evenly-spaced design points).

To explain this, we note that a key to the construction of B-splines, 
reviewed in Appendix \ref{app:bs}, is a certain kind of symmetry possessed by
the truncated power functions. At its core, the $k$th degree B-spline with
knots $z_1 < \cdots < z_{k+2}$ is defined by a pointwise divided difference of a
truncated power function; this is given in \eqref{eq:bs_orig}, but for
convenience, we copy it here:
\begin{equation}
\label{eq:bs_copy}
P^k(x; z_{1:(k+2)}) = (\cdot - x)^k_+[z_1,\ldots,z_{k+2}].
\end{equation}
To be clear, here the notation \smash{$(\cdot - x)^k_+[z_1,\ldots,z_{k+2}]$} 
means that we are taking the divided difference of the function \smash{$z
  \mapsto (z - x)^k_+$} with respect to the centers $z_1,\ldots,z_{k+2}$. 
The following are two critical observations. First, for fixed $x$, the map
\smash{$z \mapsto (z - x)^k_+$} is a $k$th degree polynomial for $z>x$, and
thus if $z_1>x$, then the divided difference at centers $z_1,\ldots,z_{k+2}$ 
will be zero (this is a $(k+1)$st order divided difference of a $k$th degree
polynomial, recall \eqref{eq:deriv_match_poly}). Trivially, we also have that
the divided difference will be zero if $z_{k+2}<x$, because then we will be
taking a divided difference of all zeros. This shows that $P^k(\cdot;
z_{1:(k+2)})$ is supported on $[z_1,z_{k+2}]$ (see also
\eqref{eq:nbs_supp}). Second (and this is where the symmetry property is 
invoked), for fixed $z$, the map \smash{$x \mapsto (z - x)^k_+$} is a $k$th  
degree spline that has a single knot at $z$, and hence
\smash{$P^k(\cdot;z_{1:(k+2)})$}, a linear combination of such functions, is a
$k$th degree spline with knots $z_{1:(k+2)}$. 

For evenly-spaced design points, an analogous construction goes through for
discrete splines, replacing truncated power functions with truncated rising
factorial polynomials, as reviewed in Appendix \ref{app:discrete_bs_even}. The
key is again symmetry: now $(z-x)(z-x+v) \cdots (z-x+(k-1)v) \cdot 1\{z>x\}$,
for fixed $x$, acts as a polynomial in $z$ over $z>x$, giving the desired
support property (when we take divided differences); and for fixed $z$, it acts
as a truncated falling factorial function in $x$, giving the desired discrete
spline property (again after divided differences). 

But for arbitrary design points, there is no apparent way to view the argument
and the knots in a truncated Newton polynomial in a symmetric fashion.
Therefore it is unclear how to proceed in the usual manner as outlined above
(and covered in detail in Appendices \ref{app:bs} and
\ref{app:discrete_bs_even}). Our solution is to first define a discrete B-spline
at the design points only (which we can do in analogous way to the usual
construction), and then prove that the discrete spline interpolant of such
values has the desired support structure. For the latter step, the 
interpolation results in Theorem \ref{thm:ffb_interp} and Corollary
\ref{cor:ffb_interp_implicit} (especially the implicit result in Corollary
\ref{cor:ffb_interp_implicit}) end up being very useful. 

\subsection{Construction at the design points}
\label{sec:discrete_bs_evals}

Here we define discrete B-splines directly at the design points $x_{1:n}$. We
begin by defining boundary design points 
$$
x_{-(k-1)} < \cdots < x_{-1} < x_0 = a, \quad \text{and} \quad x_{n+1}=b. 
$$
(Any such values for $x_{-(k-1)},\ldots,x_{-1}$ will suffice for our ultimate
purpose of defining a basis.) For a degree $k \geq 0$, and for each
$j=1,\ldots,n$, now define evaluations of a function \smash{$Q^k_j$} at the
design points by  
\begin{equation}
\label{eq:discrete_bs_evals}
Q^k_j(x_i) = \eta_+(\cdot; x_{(i-k+1):i}) [x_{j-k},\ldots,x_{j+1}], 
\quad i = 1,\ldots,n.
\end{equation}
where recall \smash{$\eta_+(x; t_{1:r})=\eta(x; t_{1:r}) \cdot 1\{x >
\max(t_{1:r}) \}$} denotes a truncated Newton polynomial, and the notation
$\eta_+(\cdot; x_{(i-k+1):i}) [x_{j-k},\ldots,x_{j+1}]$ means that we are taking
the divided difference of the map $z \mapsto \eta_+(z; x_{(i-k+1):i})$ with
respect to the centers $x_{j-k},\ldots,x_{j+1}$. Comparing \eqref{eq:bs_copy}
and \eqref{eq:discrete_bs_evals}, we see that \smash{$Q^k_j$}, $j=1,\ldots,n$
are defined (over the design points) in a similar manner to $P^k(\cdot;
z_{1:(k+2)})$, using sliding sets of centers for the divided differences,
and with truncated Newton polynomials instead of truncated power
functions.\footnote{Moreover, our definition in \eqref{eq:discrete_bs_evals}  
  is in the same spirit (at the design points) as the standard definition of a
  discrete B-spline in the evenly-spaced case, as given in
  Appendix \ref{app:discrete_bs_even}. It is not exactly equivalent, as the
  standard definition \eqref{eq:discrete_bs_even_orig} uses a truncated rising
  factorial polynomial, whereas our preference is to use truncated Newton
  polynomial that more closely resembles a truncated falling factorial in the 
  evenly-spaced case. In the end, this just means that our discrete B-splines 
  look like those from Appendix \ref{app:discrete_bs_even} after reflection
  about the vertical axis; compare Figures \ref{fig:dbs} and \ref{fig:bs}.}     


It is often useful to deal with a normalized version of the function evaluations
in \eqref{eq:discrete_bs_evals}. Define, for $j=1,\ldots,n$, the function
\smash{$N^k_j$} at the design points by
\begin{equation}
\label{eq:discrete_nbs_evals}
N^k_j(x_i) = (x_{j+1}-x_{j-k}) \cdot \eta_+(\cdot; x_{(i-k+1):i}) 
[x_{j-k},\ldots,x_{j+1}], \quad i = 1,\ldots,n.
\end{equation}
Next we show a critical property of these normalized function evaluations. 

\begin{lemma}
\label{lem:discrete_nbs_evals_supp}
For any $k \geq 0$, the function evaluations in \eqref{eq:discrete_nbs_evals}
satisfy: 
\begin{equation}
\label{eq:discrete_nbs_evals_supp}
N^k_j(x_i) = \delta_{ij}, \quad i,j, = 1,\ldots,n,
\end{equation}
where $\delta_{ij}=1$ if $i=j$, and $\delta_{ij}=0$ otherwise. 
\end{lemma}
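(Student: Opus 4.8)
The plan is to prove the normalized statement by first computing the unnormalized divided difference
$$
Q^k_j(x_i) = \eta_+(\cdot; x_{(i-k+1):i})[x_{j-k},\ldots,x_{j+1}],
$$
from \eqref{eq:discrete_bs_evals}, and showing it equals $\delta_{ij}/(x_{j+1}-x_{j-k})$; multiplying by the normalizer $x_{j+1}-x_{j-k}$ appearing in \eqref{eq:discrete_nbs_evals} then gives $N^k_j(x_i)=\delta_{ij}$. Writing $g_i(z)=\eta(z;x_{(i-k+1):i})\cdot 1\{z>x_i\}$, so that $Q^k_j(x_i)=g_i[x_{j-k},\ldots,x_{j+1}]$ is a $(k+1)$st order divided difference over the $k+2$ centers $x_{j-k},\ldots,x_{j+1}$, I would organize everything around the position of the single breakpoint $x_i$ of $g_i$ relative to these centers. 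The two structural facts I would lean on are that $g_i$ vanishes for $z\le x_i$ and agrees with the degree-$k$ Newton polynomial $p=\eta(\cdot;x_{(i-k+1):i})$ for $z>x_i$, and that $p$ has its $k$ roots at exactly $x_{i-k+1},\ldots,x_i$.

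First, if $j<i$, then every center has index at most $j+1\le i$, so all centers lie at or below $x_i$, where $g_i=0$, and the divided difference is $0$. Second, if $j>i$, I would argue that $g_i$ in fact agrees with the single polynomial $p$ at all $k+2$ centers, so that its $(k+1)$st order divided difference vanishes (a $(k+1)$st order divided difference of a degree-$k$ polynomial is zero, recall \eqref{eq:deriv_match_poly}). On the centers above $x_i$ this is immediate since $g_i=p$ there; on the centers at or below $x_i$ (those of index between $j-k$ and $i$) I use $j\ge i+1\Rightarrow j-k\ge i-k+1$, so these indices all lie in $\{i-k+1,\ldots,i\}$ and are therefore roots of $p$, giving $p=0=g_i$ at each. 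Finally, for $j=i$ the only center exceeding $x_i$ is $x_{i+1}$, and $p$ vanishes at $x_{i-k+1},\ldots,x_i$ but not at $x_{i-k}$, so $g_i$ is supported on the single center $x_{i+1}$; the explicit linear-combination formula \eqref{eq:divided_diff_linear} then yields
$$
g_i[x_{i-k},\ldots,x_{i+1}]=\frac{\eta(x_{i+1};x_{(i-k+1):i})}{\prod_{\ell=i-k}^{i}(x_{i+1}-x_\ell)}=\frac{1}{x_{i+1}-x_{i-k}},
$$
because the numerator equals $\prod_{\ell=i-k+1}^{i}(x_{i+1}-x_\ell)$ and cancels all but the $\ell=i-k$ factor of the denominator.

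The main obstacle is the regime $i<j\le i+k$, where $x_i$ falls strictly inside the span of the centers and $g_i$ is genuinely piecewise (rather than a single) polynomial across the divided difference; the resolution is the observation that the below-breakpoint centers are precisely roots of $p$, which lets me replace $g_i$ by $p$ at no cost and reduce to the polynomial case. I would note two points for rigor. The argument is uniform in $i$: for small $i$ the Newton polynomial and the centers simply involve the boundary design points $x_{-(k-1)},\ldots,x_0$, which are distinct, so nothing in the case analysis changes. Moreover, for $i\ge k+1$ one could alternatively recognize $\eta_+(\cdot;x_{(i-k+1):i})=k!\,h^k_{i+1}$ from \eqref{eq:ffb} and read off the nonzero entry directly from Lemma \ref{lem:ffb_discrete_deriv_extra_pp}; but the direct computation above avoids splitting off the polynomial basis functions and handles all $i$ at once.
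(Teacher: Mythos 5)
Your proposal is correct and follows essentially the same route as the paper's proof: the identical three-way case split on $j$ versus $i$, the same truncation argument for $j<i$, the same replacement of the truncated Newton polynomial by the full polynomial (using that the below-breakpoint centers are its roots) for $j>i$, and the same explicit divided-difference coefficient computation for $j=i$. The only differences are cosmetic (working with the unnormalized $Q^k_j$ first, and the remark about boundary points).
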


\begin{proof}
Fix any $j=1,\ldots,n$. For $i \geq j+1$, we have $x_{j-k} < \cdots < x_{j+1}
\leq x_i$, hence \smash{$N^k_j(x_i)$} is defined by a divided difference of all 
zeros, and is therefore zero. For $j \geq i+1$, we claim that
$$
\eta_+(x_\ell; x_{(i-k+1):i}) = \eta(x_\ell; x_{(i-k+1):i}), \quad
\ell=j-k,\ldots,j+1.
$$
This is true because for $\ell=j-k,\ldots,i$, the left-hand side is zero (by
truncation), but the right-hand side is also zero, as $(j-k) \hspace{-2pt} :
\hspace{-2pt} i \subseteq (i-k+1) \hspace{-2pt} : \hspace{-2pt} i$. The above 
display implies  
$$
N^k_j(x_i) = (x_{j+1}-x_{j-k}) \cdot \eta(\cdot;
x_{(i-k+1):i})[x_{j-k},\ldots,x_{j+1}] = 0,   
$$
with the last equality due to the fact that a $(k+1)$st order divided
difference of a $k$th order polynomial is zero (recall, for example, 
\eqref{eq:deriv_match_poly}).  It remains to consider $j=i$. In this case, 
writing \smash{$f[x_{i-k}, \ldots, x_{i+1}] = \sum_{\ell=1}^{k+2} \omega_\ell  
  f(x_{i-k-1+\ell})$} by linearity of divided differences (recall
\eqref{eq:divided_diff_linear}), we have   
$$
N^k_i(x_i) =  \omega_{k+2} (x_{i+1}-x_{i-k}) \cdot \eta(x_{i+1};
x_{(i-k+1):i}) = 1,  
$$
where we have used the explicit form of $\omega_{k+2}$ from
\eqref{eq:divided_diff_linear}. 
\end{proof}

\subsection{Interpolation to $[a,b]$} 

We now interpolate the values defined in \eqref{eq:discrete_nbs_evals} to a
discrete spline defined on all $[a,b]$. In particular, for $j=1,\ldots,n$, let 
\begin{equation}
\label{eq:discrete_nbs}
\text{$N^k_j$ be the interpolant in $\cH^k_n$ passing through $\delta_{ij}$ at
  $x_i$, for $i=1,\ldots,n$}. 
\end{equation}
We refer to the resulting functions \smash{$N^k_j$}, $j=1,\ldots,n$ as $k$th 
degree normalized {\it discrete B-splines} or DB-splines. Since
\smash{$\cH^k_n$}, the space of $k$th degree discrete splines with knots 
$x_{(k+1):(n-1)}$, is an $n$-dimensional linear space, and each
\smash{$N^k_j$} is determined by interpolating $n$ values, it is well-defined.
We also note that \smash{$N^k_j$}, $j=1,\ldots,n$ are linearly independent
(this is clear from Lemma \ref{lem:discrete_nbs_evals_supp}), and thus they
form a basis for \smash{$\cH^k_n$}. 

Next we establish that key property the functions \smash{$N^k_j$},
$j=1,\ldots,n$ have local supports. 

\begin{lemma}
\label{lem:discrete_nbs_supp}
For any $k \geq 0$, the $k$th degree normalized DB-spline basis functions, as
defined in \eqref{eq:discrete_nbs}, have the following support structure:
\begin{equation}
\label{eq:discrete_nbs_supp}
\text{$N^k_j$ is supported on} \;
\begin{cases}
[a,x_{j+k}] & \text{if $j \leq k+1$} \\
[x_{j-1},x_{j+k}] & \text{if $k+2 \leq j \leq n-k-1$} \\
[x_{j-1},b] &  \text{if $j \geq n-k$}.
\end{cases}
\end{equation}
Furthermore, for each $j=1,\ldots,n$, we have the explicit expansion in the
falling factorial basis: 
\begin{equation}
\label{eq:discrete_nbs_expl}
N^k_j = \sum_{i=j}^{(j+k+1) \wedge n} (\Z^{k+1}_n \, \B^{k+1}_n)_{ij} 
\cdot h^k_i,   
\end{equation}
where \smash{$\B^{k+1}_n \in \R^{n \times n}$} is the $(k+1)$st order extended
discrete derivative matrix, as in \eqref{eq:discrete_deriv_mat_ext}, and
\smash{$\Z^{k+1}_n \in \R^{n \times n}$} is the $(k+1)$st order extended
diagonal weight matrix, as in \eqref{eq:weight_mat_ext}; also, we use the 
abbreviation $x \wedge y = \min\{x,y\}$.
\end{lemma}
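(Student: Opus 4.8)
The plan is to pin down the falling factorial coefficients of $N^k_j$ once and for all, and then extract each of the two support endpoints from a \emph{different} representation: the left endpoint from the explicit expansion \eqref{eq:discrete_nbs_expl}, and the right endpoint from the implicit interpolation characterization of Corollary \ref{cor:ffb_interp_implicit}.

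First I would establish the expansion \eqref{eq:discrete_nbs_expl}. Since $N^k_j \in \cH^k_n$, write $N^k_j = \sum_{i=1}^n \alpha_i h^k_i$; then $N^k_j(x_{1:n}) = \H^k_n \alpha$. By the dual-basis identity \eqref{eq:ffb_discrete_deriv_inv}, $(\H^k_n)^{-1} = \Z^{k+1}_n \B^{k+1}_n$, so $\alpha = \Z^{k+1}_n \B^{k+1}_n N^k_j(x_{1:n})$. By construction $N^k_j(x_{1:n}) = e_j$, whence $\alpha_i = (\Z^{k+1}_n \B^{k+1}_n)_{ij}$. To restrict the range of the sum, note that $\B^{k+1}_n$ is lower triangular and banded with bandwidth $k+2$: its $i$th row involves only columns $i-k-1,\ldots,i$, because $(\Delta^{k+1}_n f)(x_i)$ depends only on $f(x_{i-k-1}),\ldots,f(x_i)$. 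Since $\Z^{k+1}_n$ is diagonal, $(\Z^{k+1}_n \B^{k+1}_n)_{ij}$ vanishes unless $j \leq i \leq j+k+1$, which gives exactly the summation range $i=j,\ldots,(j+k+1)\wedge n$.

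Next I would read off the left end of the support. For $j \geq k+2$, every index $i$ in the expansion satisfies $i \geq j \geq k+2$, so each $h^k_i$ is a truncated Newton polynomial $\frac{1}{k!}\eta(\cdot; x_{(i-k):(i-1)})\,1\{\cdot>x_{i-1}\}$ supported on $(x_{i-1},b]$. The smallest knot appearing is $x_{j-1}$ (from the $i=j$ term), so every term vanishes on $[a,x_{j-1}]$ and hence $N^k_j = 0$ there. For $j \leq k+1$ the expansion also contains pure polynomial basis functions, which do not vanish near $a$; there is therefore no left cutoff and the support simply begins at $a$, matching the first line of \eqref{eq:discrete_nbs_supp}.

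The right end is the crux, and is where the implicit interpolation result does the real work. The difficulty is that the truncated Newton polynomials in the expansion all extend to $b$, so the expansion alone exhibits no right cutoff. Instead I would fix $j \leq n-k-1$ and take $x \in (x_{j+k},b]\setminus x_{1:n}$; letting $m$ be the smallest index with $x_m > x$ (with $m=n$ when $x>x_n$), we have $m \geq j+k+1$ and $x > x_{j+k} \geq x_{k+1}$, so Corollary \ref{cor:ffb_interp_implicit} gives that $N^k_j(x)$ is the unique solution of $N^k_j[x_{m-k},\ldots,x_m,x]=0$. The centers $x_{m-k},\ldots,x_m$ all carry index $\geq m-k \geq j+1 > j$, so $N^k_j$ vanishes at each of them; since the divided difference is a linear combination of $N^k_j(x_{m-k}),\ldots,N^k_j(x_m),N^k_j(x)$ with a nonzero coefficient on $N^k_j(x)$ (recall \eqref{eq:divided_diff_linear}), the equation forces $N^k_j(x)=0$. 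Together with the definitional values $N^k_j(x_m)=\delta_{jm}=0$ at design points $x_m$ with $m>j+k$, this shows $N^k_j=0$ on $(x_{j+k},b]$, so the support ends at $x_{j+k}$. For $j\geq n-k$ one has $j+k\geq n$, there is no interval to the right of $x_{j+k}$ inside $[a,b]$, and the support extends to $b$. Assembling the three cases (for $n$ large enough that they are nonvacuous) completes the proof. The main obstacle is exactly this right-boundary step: unlike the ordinary B-spline, whose compact support falls straight out of its divided-difference definition, here the Newton polynomials are not symmetric in their argument and their knots, so the right cutoff is invisible at the level of coefficients---it is the implicit characterization that upgrades ``vanishing at the design points to the right'' into ``vanishing on an entire interval.''
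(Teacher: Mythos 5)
Your proposal is correct and the decisive step---showing $N^k_j$ vanishes on an interval by invoking the implicit interpolation characterization of Corollary \ref{cor:ffb_interp_implicit} together with the fact that $N^k_j(x_i)=\delta_{ij}$ at the design points---is exactly the paper's argument, as is your derivation of \eqref{eq:discrete_nbs_expl} from the dual basis. The only (minor) deviation is that you obtain the left cutoff from the bandedness of the coefficients in the explicit expansion rather than from the implicit formula itself; both routes work and the paper's version simply treats the two endpoints in one unified case analysis.
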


\begin{proof}
We will apply the implicit interpolation result from Corollary 
\ref{cor:ffb_interp_implicit}. First consider the middle case, $k+2 \leq j \leq
n-k-1$. If $x > x_{k+1}$ and $i$ is the smallest index such that $x_i>x$, then
by \eqref{eq:ffb_interp_implicit1} we know that \smash{$N^k_j(x)$} is
determined by solving the linear system 
$$
N^k_j[x_{i-k}, \ldots, x_i, x] = 0.
$$
But Lemma \ref{lem:discrete_nbs_evals_supp} tells us that \smash{$N^k_j$},
restricted to the design points, is only nonzero at $x_j$. Therefore the above
linear system will have all \smash{$N^k_j(x_{i-k})=\cdots=N^k_j(x_i)=0$},
and thus trivially \smash{$N^k_j(x)=0$} as the solution, unless $i-k \leq j
\leq i$, that is, unless $x \in [x_{j-1},x_{j+k}]$. If $x < x_{k+1}$, then by 
\eqref{eq:ffb_interp_implicit2} we know that \smash{$N^k_j(x)$} is determined 
by solving the linear system  
$$
N^k_j[x_1, \ldots, x_{k+1}, x] = 0.
$$
But \smash{$N^k_j(x_{i-k})=\cdots=N^k_j(x_i)=0$}, and again
\smash{$N^k_j(x)=0$} is the solution, since $j \geq k+2$. This proves the
middle case in \eqref{eq:discrete_nbs_supp}. 

Now consider the first case, $j \leq k+1$. If $x > x_{k+1}$ and $i$ is the
smallest index such that $x_i>x$, then by \eqref{eq:ffb_interp_implicit1} we
know that \smash{$N^k_j(x)$} is determined by solving the linear system in the
second to last display, but this gives \smash{$N^k_j(x)=0$} unless $i-k \leq j 
\leq i$. As $i \geq k+2$ (since we are assuming $x>x_{k+1}$) and $j \leq
k+1$, the condition $j \leq i$ is always satisfied. The condition $i-k \leq j$
translates into $x \leq x_{j+k}$, as before, which proves the first case in
\eqref{eq:discrete_nbs_supp}.  The last case, $j \geq n-k$, is similar.  

Finally, the result in \eqref{eq:discrete_nbs_expl} is a direct consequence of
the explicit interpolation result in \eqref{eq:ffb_interp} from Theorem
\ref{thm:ffb_interp}. 
\end{proof}

\begin{remark}
Lemma \ref{lem:discrete_nbs_supp} shows the $k$th degree DB-spline basis
functions are supported on intervals that each contain at most $k+2$ knots: for
$j=k+2,\ldots,n-k-1$, \smash{$N^k_j$} is supported on $[x_{j-1}, x_{j+k}]$,
which contain knots $x_{(j-1):(j+k)}$; and for $j=1,\ldots,k+1$ or
$j=n-k,\ldots,n$, \smash{$N^k_j$} is supported on $[a,x_{j+k}]$ or
$[x_{j-1},b]$, respectively, which contain knots $x_{(k+1):(j+k)}$ or
$x_{(j-1):(n-1)}$, respectively. This matches the ``support width'' of the
usual B-splines: recall that the $k$th degree B-spline basis functions are also
supported on intervals containing at most $k+2$ knots, see \eqref{eq:nbsb}.
In fact, when $k=0$ or $k=1$, the normalized DB-spline basis \smash{$N^k_j$},
$j=1,\ldots,n$ is ``almost'' the same as the normalized B-spline basis
\smash{$M^k_j$}, $j=1,\ldots,n$ defined in \eqref{eq:nbsb}; it only differs in
the left side of the supports of the first $k+1$ basis functions, and the right
side of the supports of the last $k+1$ basis functions. This should not be a
surprise, as discrete splines of degrees $k=0$ and $k=1$ are simply splines.  
\end{remark}

\begin{remark}
A curious fact about DB-splines, as defined in \eqref{eq:discrete_nbs}, is that
they are not always positive on their support. This is in contrast to the
usual B-splines, which are always positive when nonzero, see \eqref{eq:nbs_supp}.
(However, it is consistent with the behavior of standard DB-splines for
evenly-spaced design points, see Appendix \ref{app:discrete_bs_even}.)  For $k
\geq 2$, DB-splines have a negative ``ripple'' close to their rightmost knot
point. See Figure \ref{fig:dbs} for examples of DB-splines of degree 2.   
\end{remark}

\begin{remark}
As DB-splines are discrete splines, in \smash{$\cH^k_n$} (by construction in
\eqref{eq:discrete_nbs}, via interpolation within this function space), they
have the property that their $k$th derivatives and $k$th discrete derivatives
match everywhere, by Corollary \ref{cor:deriv_match}. This means that for each
$j=1,\ldots,n$, the piecewise constant function \smash{$\Delta^k_n N^k_j$}
shares the local support of \smash{$N^k_j$}, as given by Lemma
\ref{lem:discrete_nbs_supp}. Figure \ref{fig:dbs} confirms this numerically.
For $k \geq 2$, B-splines---being splines and not discrete splines---do not
share their property, as also confirmed in the figure.  
\end{remark}

\begin{figure}[p]
\centering
\includegraphics[width=0.99\textwidth]{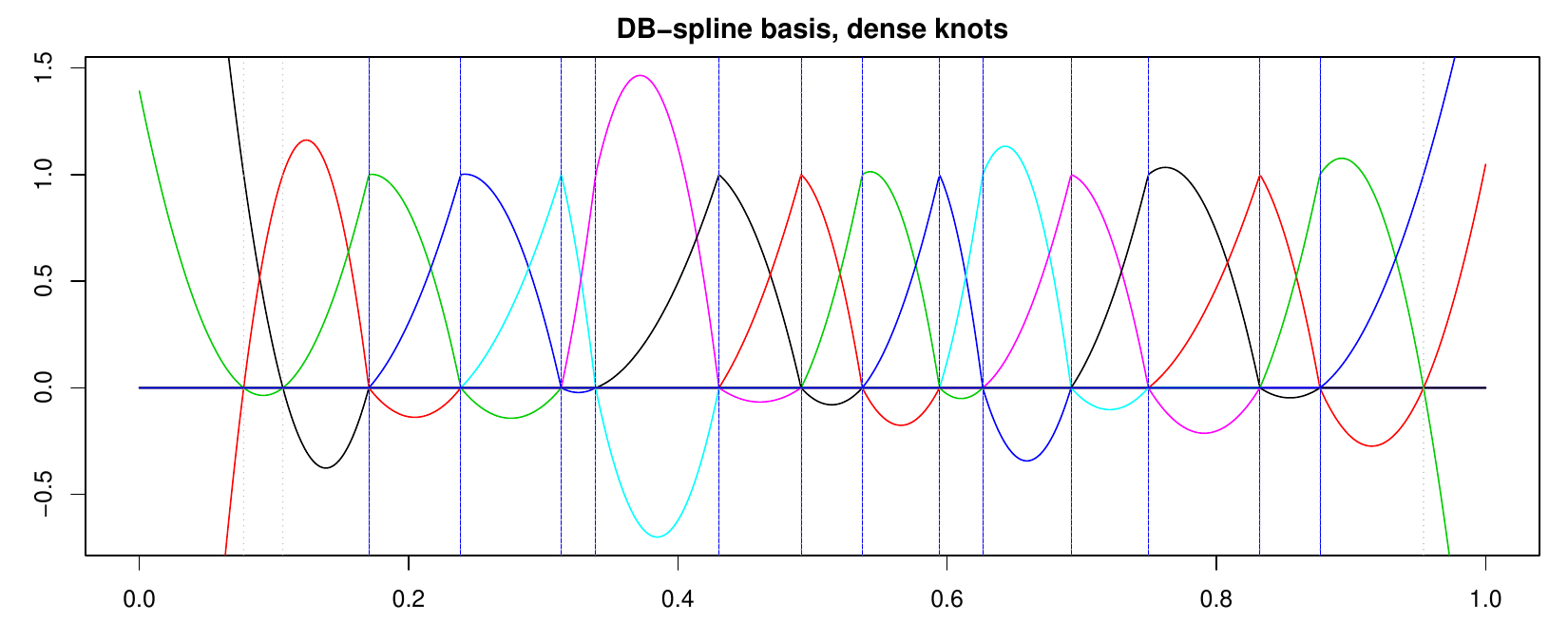}
\includegraphics[width=0.495\textwidth]{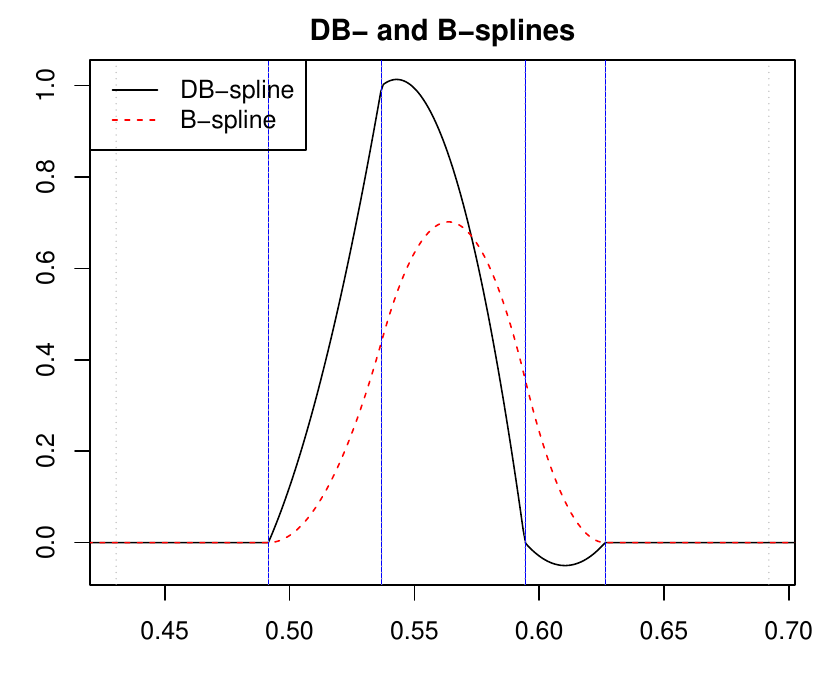}
\includegraphics[width=0.495\textwidth]{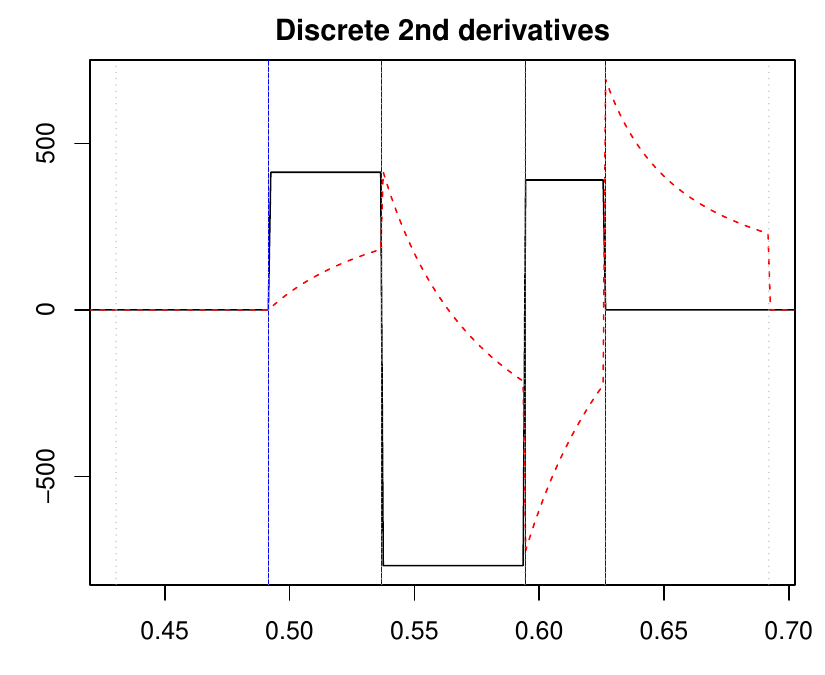}
\includegraphics[width=0.99\textwidth]{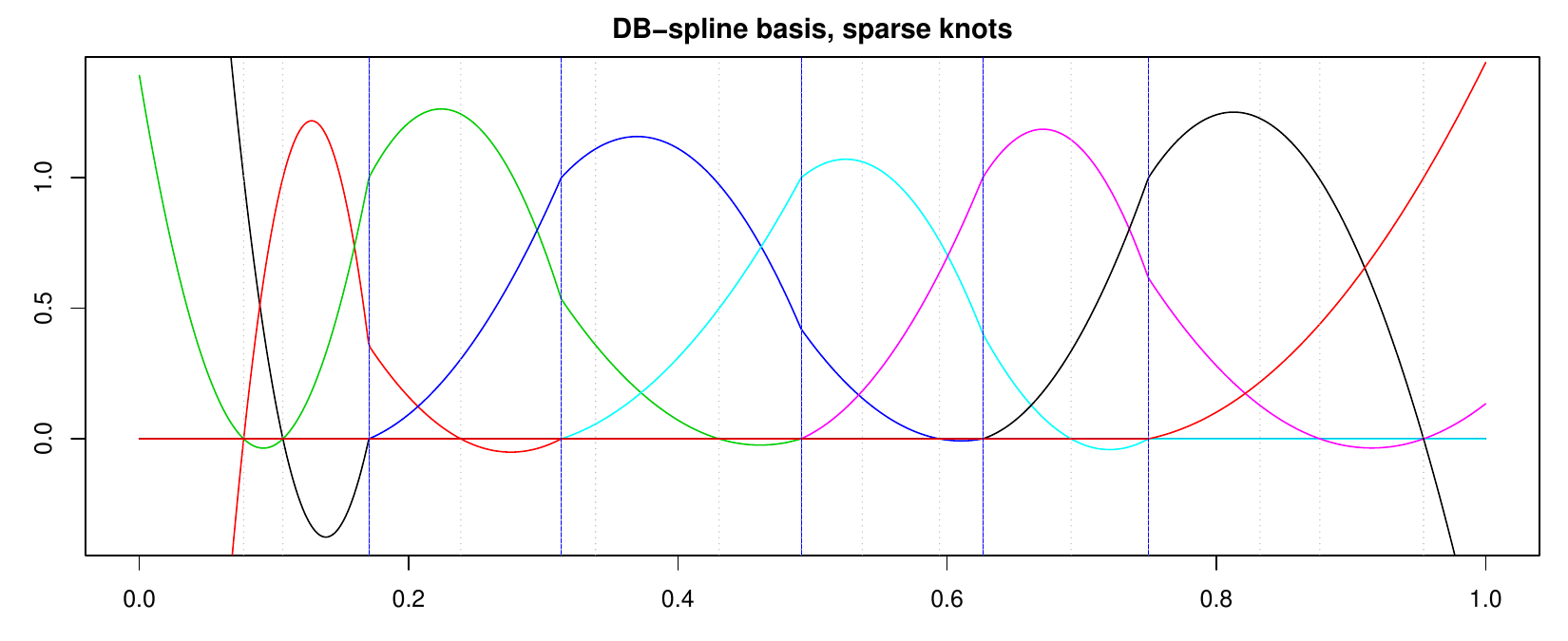}
\caption{\small Top row: normalized DB-spline basis for
  \smash{$\cH^k_n=\DS^k_n(x_{(k+1):(n-1)},[a,b])$}, where $k=2$, and the $n=16$  
  design points marked by dotted gray vertical lines. The knots are marked by
  blue vertical lines. We can see that the DB-splines have a negative
  ``ripple'' near their rightmost knot point. Middle row, left panel: comparison
  of a single DB-spline basis function in black, and its B-spline counterpart in
  dashed red. Right panel: their discrete 2nd derivatives; notice the discrete 
  2nd derivative matches the 2nd derivative for the DB-spline, but not for the
  B-spline. Bottom row: normalized DB-spline basis for 
  \smash{$\cH^k_n=\DS^k_n(t_{1:r},[a,b])$}, for a sparse subset $t_{1:r}$ of the
  design points of size $r=5$.}  
\label{fig:dbs}
\end{figure}

The discrete B-spline basis developed in this section finds two primary
applications in the remainder of this paper. First, it can be easily modified
to provide a basis for the space of discrete natural splines, which we describe
in the next subsection. Second, it provides a significantly more stable
(that is, better-conditioned) basis for solving least squares problems in
discrete splines, described later in Section \ref{sec:least_squares}. 

\subsection{Discrete natural splines}
\label{sec:discrete_natural_spline}

Similar to the usual definition of natural splines, we can modify the definition
of discrete splines to require lower-degree polynomial behavior on the
boundaries, as follows. 

\begin{definition}
\label{def:discrete_natural_spline}
As in Definition \ref{def:discrete_spline}, but with $k=2m-1 \geq 1$
constrained to be odd, we define the space of {\it $k$th degree discrete natural
  splines} on $[a,b]$ with knots $t_{1:r}$, denoted \smash{$\DNS^k_n(t_{1:r},
  [a,b])$}, to contain all functions $f$ on $[a,b]$ such that
\eqref{eq:discrete_spline} holds, and additionally,   
\begin{equation}
\label{eq:discrete_natural_spline}
\text{$(\Delta^\ell_n p_0)(t_1) = 0$ and $(\Delta^\ell_n p_r)(t_r) = 0$,
  $\ell=m,\ldots,k$}. 
\end{equation}
We note that this is equivalent to restricting $p_0$ and $p_r$ to be polynomials
of degree $m-1$.  
\end{definition}

As has been our focus thus far, we consider in this subsection the knot set 
$x_{(k+1):(n-1)}$, and study the $k$th degree discrete natural spline space 
\smash{$\cN^k_n= \DNS^k_n(x_{(k+1):(n-1)}, [a,b])$}. (In the next section, we
will discuss the case in which $t_{1:r}$ is an arbitrary subset of the design
points, in generality.)  On the one hand, since \smash{$\cN^k_n \subseteq
  \cH^k_n$} by construction, many properties of \smash{$\cH^k_n$} carry over
automatically to \smash{$\cN^k_n$}: for example, the matching derivatives
property in Corollary \ref{cor:deriv_match} and the interpolation results in
Theorem \ref{thm:ffb_interp} and Corollary \ref{cor:ffb_interp_implicit} all hold
for discrete natural splines. On the other hand, other aspects require some
work: for example, constructing a basis for \smash{$\cN^k_n$} is
nontrivial. Certainly, it seems to be highly nontrivial to modify the falling
factorial basis \smash{$h^k_j$}, $j=1,\ldots,n$ in \eqref{eq:ffb} for
\smash{$\cH^k_n$} in order to obtain a basis for \smash{$\cN^k_n$}. 
Fortunately, as we show in the next lemma, it is relatively easy to modify the
DB-spline basis \smash{$N^k_j$}, $j=1,\ldots,n$ in \eqref{eq:discrete_nbs}
(written explicitly in \eqref{eq:discrete_nbs_expl}) to form a basis for
\smash{$\cN^k_n$}. 

\begin{lemma}
\label{lem:discrete_natural_nbs}
For any odd $k = 2m-1 \geq 1$, the space
\smash{$\cN^k_n=\DNS^k_n(x_{(k+1):(n-1)}, [a,b])$} of $k$th degree discrete
natural splines on $[a,b]$ with knots $x_{(k+1):(n-1)}$ is spanned by the
following $n-k-1$ functions:  
\begin{equation}
\label{eq:discrete_natural_nbs} 
\begin{aligned}
&L^k_j = \sum_{i=1}^{k+1} x_i^{j-1} \cdot N^k_i, 
\quad j=1,\ldots,m, \\
&N^k_j, \quad j=k+2,\ldots,n-k-1, \\
&R^k_j = \sum_{i=n-k}^n (x_i- x_{n-k-1})^{j-1} \cdot N^k_i,   
\quad j=1,\ldots,m,
\end{aligned}
\end{equation}
where recall \smash{$N^k_j$}, $j=1,\ldots,n$ are the DB-spline basis functions
in \eqref{eq:discrete_nbs}. 
\end{lemma}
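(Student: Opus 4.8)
The plan is to prove the lemma by establishing the two inclusions $\spa\{L^k_j,N^k_j,R^k_j\} \subseteq \cN^k_n$ and $\cN^k_n \subseteq \spa\{L^k_j,N^k_j,R^k_j\}$ separately. The engine for both is a single observation drawn from the implicit interpolation result in Corollary \ref{cor:ffb_interp_implicit}: for any $f \in \cH^k_n$, the rightmost polynomial piece $p_r$ (on $(x_{n-1},b]$) is exactly the degree-$\le k$ polynomial interpolating the values $f(x_{n-k}),\ldots,f(x_n)$, since \eqref{eq:ffb_interp_implicit1} forces $f[x_{n-k},\ldots,x_n,x]=0$ for every $x$ in this piece; likewise the leftmost piece $p_0$ (on $[a,x_{k+1}]$) is the degree-$\le k$ interpolant through $f(x_1),\ldots,f(x_{k+1})$ (here directly, since $[a,x_{k+1}]$ contains exactly these $k+1$ design points). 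By Definition \ref{def:discrete_natural_spline}, the natural conditions are equivalent to $\deg p_0 \le m-1$ and $\deg p_r \le m-1$, so they are in turn equivalent to the statement that the boundary values $f(x_1),\ldots,f(x_{k+1})$ and $f(x_{n-k}),\ldots,f(x_n)$ each lie on a single polynomial of degree at most $m-1$. This is the device that circumvents the apparent asymmetry between the two ends, namely that the first piece contains $k+1$ design points while the last piece contains only $x_n$.

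For the inclusion ``$\subseteq$'', I would first treat the interior functions: by Lemma \ref{lem:discrete_nbs_supp}, for $k+2 \le j \le n-k-1$ the support $[x_{j-1},x_{j+k}]$ of $N^k_j$ lies inside $[x_{k+1},x_{n-1}]$, so $N^k_j$ vanishes on both boundary pieces and hence $p_0=p_r=0$, trivially satisfying \eqref{eq:discrete_natural_spline}. For $L^k_j=\sum_{i=1}^{k+1} x_i^{j-1} N^k_i$, the reproduction identity $N^k_i(x_\ell)=\delta_{i\ell}$ of Lemma \ref{lem:discrete_nbs_evals_supp} gives $L^k_j(x_\ell)=x_\ell^{j-1}$ for $\ell \le k+1$, so its leftmost piece, being the degree-$\le k$ interpolant through these values, must equal the monomial $x^{j-1}$, of degree $j-1 \le m-1$; and since each $N^k_i$ with $i \le k+1$ is supported in $[a,x_{2k+1}]$, $L^k_j$ vanishes on the last piece. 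Symmetrically, $R^k_j$ has zero leftmost piece and rightmost piece equal to $(x-x_{n-k-1})^{j-1}$, again of degree $\le m-1$. Thus every listed function lies in $\cN^k_n$.

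For the reverse inclusion, I would take $f \in \cN^k_n$ and expand it as $f=\sum_{i=1}^n f(x_i)\,N^k_i$, using $N^k_i(x_\ell)=\delta_{i\ell}$, then split the sum into the index blocks $\{1,\ldots,k+1\}$, $\{k+2,\ldots,n-k-1\}$, $\{n-k,\ldots,n\}$. The middle block is already in the span. By the equivalence from the first paragraph, the natural conditions force $f(x_1),\ldots,f(x_{k+1})$ to lie on some $\sum_{j=1}^m b_{j-1}x^{j-1}$ and $f(x_{n-k}),\ldots,f(x_n)$ on some $\sum_{j=1}^m \beta_j (x-x_{n-k-1})^{j-1}$; substituting $f(x_i)=\sum_j b_{j-1}x_i^{j-1}$ and $f(x_i)=\sum_j \beta_j (x_i-x_{n-k-1})^{j-1}$ collapses the two boundary blocks exactly into $\sum_{j=1}^m b_{j-1}L^k_j$ and $\sum_{j=1}^m \beta_j R^k_j$. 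Hence $f$ lies in the span, giving equality of the spaces.

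To confirm these $n-k-1$ functions form a genuine basis rather than merely a spanning set, I would note that the three blocks use disjoint DB-spline index sets, so independence reduces to independence within each block: the interior $N^k_j$ are independent by Lemma \ref{lem:discrete_nbs_evals_supp}, while the coefficient matrices of $\{L^k_j\}_{j=1}^m$ and $\{R^k_j\}_{j=1}^m$ are Vandermonde in the distinct nodes $x_1,\ldots,x_{k+1}$ and $x_{n-k}-x_{n-k-1},\ldots,x_n-x_{n-k-1}$, hence of full row rank since $m \le k+1$. Throughout I am assuming $n \ge 2k+2$, so that the three index blocks are disjoint and the boundary DB-splines do not reach the opposite end. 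I expect the main obstacle to be the first paragraph: recognizing that, unlike for ordinary splines where a reflection argument would suffice, the discrete derivative here is inherently one-sided, and the clean way to pin down $p_r$ (whose piece contains a single design point) is through the implicit interpolation formula, which identifies $p_r$ with the interpolant of the last $k+1$ sampled values. Everything afterward is bookkeeping with the $\delta_{i\ell}$ reproduction property and Vandermonde matrices.
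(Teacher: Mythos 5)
Your proof is correct, and it takes a genuinely different route from the paper's. The paper argues by dimension count: it asserts $\dim \cN^k_n = n-k-1$, verifies that each listed function satisfies \eqref{eq:discrete_natural_spline} (for the $L^k_j$ by writing the boundary conditions as a linear system $\F\alpha=0$ in the rows of the extended discrete derivative matrices and identifying $\nul(\F)$ with evaluations of degree-$(m-1)$ polynomials at $x_{1:(k+1)}$), and then concludes spanning from linear independence. You instead prove both inclusions directly, and your engine is a characterization the paper never makes explicit: via Corollary \ref{cor:ffb_interp_implicit}, the boundary polynomial pieces $p_0$ and $p_r$ of any $f \in \cH^k_n$ are exactly the degree-$\le k$ interpolants of $f(x_1),\ldots,f(x_{k+1})$ and $f(x_{n-k}),\ldots,f(x_n)$, so the natural conditions reduce to those two blocks of sampled values lying on polynomials of degree at most $m-1$. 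This is the right device for the rightmost piece, which contains only the single design point $x_n$ and which the paper treats somewhat loosely (``its restriction to $[x_{n-1},b]$ can also be taken to be a polynomial of degree $m-1$''). Your reverse inclusion---expanding $f=\sum_i f(x_i) N^k_i$ and substituting the polynomial form of the boundary values to collapse the two boundary blocks into combinations of the $L^k_j$ and $R^k_j$---makes the proof self-contained, avoiding the paper's unproven dimension claim; the Vandermonde argument for independence and the explicit hypothesis $n \ge 2k+2$ (needed for the three index blocks to be disjoint, and implicit in the paper) are both sound. What the paper's route buys is brevity once the dimension count is granted; what yours buys is a cleaner conceptual meaning of the discrete natural spline conditions and a constructive expansion of an arbitrary $f \in \cN^k_n$ in the proposed basis.
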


\begin{proof}
A dimensionality argument shows that the linear space \smash{$\cN^k_n$} has
dimension $n-k-1$. Clearly, the functions \smash{$N^k_j$}, $j=k+2,\ldots,n-k-1$
are $k$th degree discrete natural splines: each \smash{such $N^k_j$} is zero on
$[a, x_{j-1}] \supseteq [a, x_{k+1}]$ and is thus a polynomial of degree $m-1$
on this interval; further, it evaluates to zero over the points $x_{(j+1):n}
\supseteq x_{(n-k):(n-1)}$ and hence its restriction to $[x_{n-1},b]$ can also
be taken to be a polynomial of degree $m-1$.

It remains to show that the functions \smash{$L^k_j$}, $j=1,\ldots,m$ and
\smash{$R^k_j$}, $j=1,\ldots,m$ defined in the first and third lines of
\eqref{eq:discrete_natural_nbs} are discrete natural splines, since, given 
the linear independence of the $n-k-1$ functions in
\eqref{eq:discrete_natural_nbs} (an implication of the structure of their
supports), this would complete the proof.  Consider the ``left'' side functions 
\smash{$L^k_j$}, $j=1,\ldots,m$ (which will have local supports on the left 
side of the domain). Suppose we seek a linear combination
\smash{$\sum_{j=1}^{k+1} \alpha_j N^k_j$} of the first $k+1$ DB-splines that
meet the conditions in \eqref{eq:discrete_natural_spline}; since these
DB-splines will evaluate to zero on $x_{(n-k):(n-1)}$, we only need to check the  
first condition in \eqref{eq:discrete_natural_spline}, that is, 
$$
\bigg(\Delta^\ell_n \sum_{j=1}^{k+1} \alpha_j N^k_j \bigg)(x_{k+1}) = 0, \quad 
\ell=m,\ldots,k, 
$$
Using linearity of the discrete derivative operator, and recalling that 
\smash{$N^k_j(x_i)=\delta_{ij}$} by definition in \eqref{eq:discrete_nbs}, we
conclude the above condition is equivalent to $\F \alpha = 0$, where $\F \in 
\R^{m \times (k+1)}$ has entries  
\smash{$\F_{\ell-m+1, j} = (\B^\ell_n)_{k+1, j}$} for $\ell=m,\ldots,k$ and 
$j=1,\ldots,k+1$,
and where \smash{$\B^\ell_n \in \R^{n \times n}$} is the $\ell$th order extended
discrete derivative matrix, as in \eqref{eq:discrete_deriv_mat_ext}. 
The null space of $\F$ is simply given by evaluating all degree $m-1$
polynomials over the design points $x_{1:(k+1)}$ (each such vector is certainly in
the null space, because its $m$th through $k$th discrete derivatives are zero,
and there are $m$ such linearly independent vectors, which is the nullity
of $\F$). Thus with $\P \in \R^{(k+1) \times m}$ defined to have entries
\smash{$\P_{ij} = x_i^{j-1}$}, we may write any $\alpha \in \R^{k+1}$ such that 
$\F \alpha = 0$ as $\alpha = \P \beta$ for some $\beta \in \R^m$, and any linear 
combination satisfying the above condition (in the last display) must therefore
be of the form 
$$
\sum_{j=1}^{k+1} \alpha_j N^k_j = 
\sum_{j=1}^m \beta_j \sum_{i=1}^{k+1} \P_{ij} N^k_i,
$$
which shows that \smash{$L^k_j$}, $j=1,\ldots,m$ are indeed $k$th degree
natural splines. The argument for the ``right'' side functions \smash{$R^k_j$},
$j=1,\ldots,m$ follows similarly. 
\end{proof}

Later in Section \ref{sec:trend_filter}, we discuss restricting the domain in
trend filtering problem \eqref{eq:trend_filter} (equivalently,
\eqref{eq:trend_filter_cont}) to the space of discrete natural splines
\smash{$\cN^k_n$}, and give an empirical example where this improves its
boundary behavior. See  Figure \ref{fig:nat}, where we also plot the discrete
natural B-spline basis in \eqref{eq:discrete_natural_nbs} of degree 3.

\section{Sparse knot sets}
\label{sec:sparse_knots}

While our focus in this paper is the space
\smash{$\cH^k_n=\DS^k_n(x_{(k+1):(n-1)}, [a,b])$}, of $k$th degree discrete  
splines with knots in $x_{(k+1):(n-1)}$, all of our developments thus far can be
appropriately generalized to the space \smash{$\DS^k_n(t_{1:r}, [a,b])$},  
for arbitrary knots $t_{1:r} \subseteq x_{1:n}$ (this knot set could be a sparse 
subset of the design points, that is, with $r$ much smaller than $n$). We
assume (without a loss of generality) that $t_1 < \cdots < t_r$, where $t_1 \geq  
x_{k+1}$ (as in Definition \ref{def:discrete_spline}), and $t_r \leq x_{n-1}$
(for simplicity). Defining $i_j$, $j=1,\ldots,r$ such that 
$$
t_j = x_{i_j}, \quad j=1,\ldots,r,
$$
it is not hard to see that a falling factorial basis \smash{$h^k_j$},
$j=1,\ldots,r+k+1$ for \smash{$\DS^k_n(t_{1:r}, [a,b])$} is given by  
\begin{equation}
\label{eq:ffb_sk}
\begin{aligned}
h^k_j(x) &= \frac{1}{(j-1)!} \prod_{\ell=1}^{j-1}(x-x_\ell), 
\quad j=1,\ldots,k+1, \\
h^k_{j+k+1}(x) &= \frac{1}{k!} \prod_{\ell=i_j-k+1}^{i_j} (x-x_\ell) \cdot   
1\{x > x_{i_j} \}, \quad j=1,\ldots,r.
\end{aligned}
\end{equation}
(In the ``dense'' knot set case, we have $t_{1:r}=x_{(k+1):(n-1)}$, thus
$r=n-k-1$ and $i_j=j+k$, $j=1,\ldots,n-k-1$, in which case \eqref{eq:ffb_sk}
matches \eqref{eq:ffb}.)  Further, as \smash{$\DS^k_n(t_{1:r}, [a,b]) \subseteq 
  \cH^k_n$}, many results on \smash{$\cH^k_n$} carry over immediately to the
``sparse'' knot set case: we can still view the basis in \eqref{eq:ffb_sk}
from the same constructive lens (via discrete integration of step functions) as
in Theorem \ref{thm:ffb_discrete_integ}; functions in
\smash{$\DS^k_n(t_{1:r},[a,b])$} still exhibit the same matching derivatives
property as in Corollary \ref{cor:deriv_match}; a dual basis to
\eqref{eq:ffb_sk} is given by a subset of the functions in \eqref{eq:dual_basis}
from Lemma \ref{lem:dual_basis} (namely, the functions corresponding to the
indices $1,\ldots,k+1$ and $i_j$, $j=1,\ldots,r$); and interpolation within the
space \smash{$\DS^k_n(t_{1:r}, [a,b])$} can be done efficiently, precisely as in
Theorem \ref{thm:ffb_interp} or Corollary \ref{cor:ffb_interp_implicit}
(assuming we knew evaluations of $f$ at the design points, $f(x_i)$,
$i=1,\ldots,n$). 

Meanwhile, other developments---such as key matrix computations involving the 
falling factorial basis matrix, and the construction of discrete B-splines---do 
not carry over trivially, and require further explanation; we give the details
in the following subsections.   

\subsection{Matrix computations}

The fact that the dual basis result from Lemma \ref{lem:dual_basis} implies  
\smash{$\Z^{k+1}_n \, \B^{k+1}_n$} is the inverse of \smash{$\H^k_n$}, as in 
\eqref{eq:ffb_discrete_deriv_inv}, hinges critically on the fact that these 
matrices are square, which would not be the case for a general knot set
$t_{1:r}$, where the corresponding basis matrix would have dimension $n \times
(r+k+1)$. However, as we show next, this inverse result can be suitably and
naturally extended to a rectangular basis matrix. 

\begin{lemma}
\label{lem:ffb_discrete_deriv_pinv}
For any $k \geq 0$, and knots $t_1 < \cdots < t_r$ with $t_{1:r} \subseteq
x_{(k+1):(n-1)}$, let us abbreviate $T=t_{1:r}$ and let \smash{$\H^k_T \in \R^{n
    \times (r+k+1)}$} denote the $k$th degree falling factorial basis matrix
with entries     
$$
(\H^k_T)_{ij} = h^k_j(x_i), 
$$
where \smash{$h^k_j$}, $j=1,\ldots,r+k+1$ are the falling factorial basis
functions in \eqref{eq:ffb_sk} for \smash{$\DS^k_n(t_{1:r}, [a,b])$}.  

Let \smash{$\H^k_n \in \R^{n \times n}$} denote the ``usual'' $k$th degree
falling factorial basis matrix \eqref{eq:ffb_mat}, defined over
$x_{(k+1):(n-1)}$, and let $J = \{1,\ldots,k+1\} \cup \{i_j+1 : j=1,\ldots,r\}$, 
where \smash{$t_j = x_{i_j}$} for $j=1,\ldots,r$. Observe that  
\begin{equation}
\label{eq:ffb_submat}
\H^k_T = (\H^k_n)_J,
\end{equation}
where we write \smash{$(\H^k_n)_S$} to represent the submatrix defined by
retaining the columns of \smash{$\H^k_n$} in a set $S$. Furthermore, let  
\smash{$\A^{k+1}_n=\Z^{k+1}_n \, \B^{k+1}_n$}, where \smash{$\Z^{k+1}_n,
  \B^{k+1}_n \in \R^{n\times n}$} are the ``usual'' $(k+1)$st order extended  
weight and extended discrete derivative matrix, defined over the knot set 
$x_{(k+1):(n-1)}$, as in \eqref{eq:weight_mat_ext} and
\eqref{eq:discrete_deriv_mat_ext}, respectively. Then the (Moore-Penrose)
generalized inverse of \smash{$\H^k_T$} can be expressed as 
\begin{equation}
\label{eq:ffb_discrete_deriv_pinv} 
(\H^k_T)^\dagger = (\A^{k+1}_n)_J 
\big(\I_n - (\A^{k+1}_n)_{J^c}^\dagger (\A^{k+1}_n)_{J^c}\big),     
\end{equation}
where we use \smash{$(\A^{k+1}_n)_S$} to denote the submatrix formed by
retaining the rows of \smash{$\A^{k+1}_n$} in a set $S$, and recall we use
$\I_n$ for the $n \times n$ identity matrix. A direct consequence of the above
is 
\begin{equation}
\label{eq:ffb_discrete_deriv_spaces}
\col\big((\H^k_n)_J\big) = \nul\big((\A^{k+1}_n)_{J^c}\big),
\end{equation}
where we use $\col(\M)$ and $\nul(\M)$ to denote the column space and null space
of a matrix $\M$, respectively.
\end{lemma}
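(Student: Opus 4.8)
The plan is to leverage the fact, recorded in \eqref{eq:ffb_discrete_deriv_inv} together with the definition \(\A^{k+1}_n = \Z^{k+1}_n \B^{k+1}_n\), that the full (dense) matrices satisfy \(\A^{k+1}_n \H^k_n = \I_n\); that is, \(\A^{k+1}_n = (\H^k_n)^{-1}\). Combined with the submatrix identity \eqref{eq:ffb_submat}, \(\H^k_T = (\H^k_n)_J\), this makes \(\H^k_T\) a column submatrix of an invertible matrix, hence of full column rank \(r+k+1 = |J|\). First I would extract from \(\A^{k+1}_n \H^k_n = \I_n\) the two block relations obtained by keeping the rows of \(\A^{k+1}_n\) indexed by \(J\) (resp.\ \(J^c\)) and the columns of \(\H^k_n\) indexed by \(J\): since the off-diagonal block of the identity vanishes (as \(J \cap J^c = \emptyset\)), we get \((\A^{k+1}_n)_J (\H^k_n)_J = \I\) and \((\A^{k+1}_n)_{J^c} (\H^k_n)_J = 0\). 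Thus \((\A^{k+1}_n)_J\) is already a left inverse of \(\H^k_T\), and \(\col(\H^k_T) \subseteq \nul\big((\A^{k+1}_n)_{J^c}\big)\).

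The space identity \eqref{eq:ffb_discrete_deriv_spaces} then follows by a dimension count. Because \(\A^{k+1}_n\) is invertible, its row submatrix \((\A^{k+1}_n)_{J^c}\) has full row rank \(|J^c| = n - (r+k+1)\), so \(\dim \nul\big((\A^{k+1}_n)_{J^c}\big) = r+k+1 = \dim \col(\H^k_T)\); combined with the inclusion just established, this forces \(\col\big((\H^k_n)_J\big) = \nul\big((\A^{k+1}_n)_{J^c}\big)\).

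For the pseudoinverse formula \eqref{eq:ffb_discrete_deriv_pinv}, I would first identify the bracketed factor as a projection: for any matrix \(\M\), the product \(\M^\dagger \M\) is the orthogonal projection onto \(\row(\M)\), so \((\A^{k+1}_n)_{J^c}^\dagger (\A^{k+1}_n)_{J^c}\) projects onto \(\row\big((\A^{k+1}_n)_{J^c}\big)\) and hence \(P := \I_n - (\A^{k+1}_n)_{J^c}^\dagger (\A^{k+1}_n)_{J^c}\) is the orthogonal projection onto \(\nul\big((\A^{k+1}_n)_{J^c}\big)\), which by \eqref{eq:ffb_discrete_deriv_spaces} equals \(\col(\H^k_T)\). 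I would then verify that \(X := (\A^{k+1}_n)_J P\) satisfies the two properties that uniquely characterize the Moore--Penrose inverse of a full-column-rank matrix \(A\), namely that \(X\) is a left inverse whose rows lie in \(\col(A)\). The left-inverse property holds because \(P (\H^k_n)_J = (\H^k_n)_J\) (the columns of \(\H^k_T\) lie in \(\col(\H^k_T) = \col(P)\)), so \(X \H^k_T = (\A^{k+1}_n)_J (\H^k_n)_J = \I\); and the row condition holds because \(P = P^\T\), whence the rows of \(X = (\A^{k+1}_n)_J P\) lie in \(\col(P) = \col(\H^k_T)\). This identifies \(X = (\H^k_T)^\dagger\), which is exactly \eqref{eq:ffb_discrete_deriv_pinv}.

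The only genuinely delicate point is conceptual rather than computational: \((\A^{k+1}_n)_J\) by itself is merely \emph{a} left inverse of \(\H^k_T\), not the pseudoinverse, and the projection factor \(P\) is precisely the correction that pushes its rows into \(\col(\H^k_T)\). The proof therefore hinges on lining up two things correctly — the standard characterization of the pseudoinverse for full-column-rank matrices, and the identification (via \eqref{eq:ffb_discrete_deriv_spaces}) of the range of \(P\) with \(\col(\H^k_T)\) — after which the two defining properties fall out mechanically.
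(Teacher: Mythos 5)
Your proof is correct, and it takes a genuinely different route from the paper's. Both arguments rest on the same key identity $\A^{k+1}_n \H^k_n = \I_n$ (from \eqref{eq:ffb_discrete_deriv_inv}) and the resulting block relations $(\A^{k+1}_n)_J (\H^k_n)_J = \I$ and $(\A^{k+1}_n)_{J^c} (\H^k_n)_J = 0$, but from there the strategies diverge. The paper works variationally: it writes down the normal equations $\H_1^\T \H_1 \alpha = \H_1^\T y$ for an arbitrary $y$, embeds them into an $n \times n$ system with an auxiliary block $\beta$, inverts that system using $\A \H = \I_n$, and chooses the auxiliary right-hand side $z$ to force $\beta = 0$, which yields $\alpha = \A_1(\I_n - \A_2^\dagger \A_2)y$ and hence the pseudoinverse formula; the space identity \eqref{eq:ffb_discrete_deriv_spaces} is then asserted as a consequence. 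You instead prove \eqref{eq:ffb_discrete_deriv_spaces} first, by combining the inclusion $\col(\H^k_T) \subseteq \nul\big((\A^{k+1}_n)_{J^c}\big)$ with a rank/dimension count, and then verify the pseudoinverse formula directly against the characterization of $\M^\dagger$ for a full-column-rank $\M$ as the unique left inverse whose rows lie in $\col(\M)$, using that $\I_n - (\A^{k+1}_n)_{J^c}^\dagger (\A^{k+1}_n)_{J^c}$ is the orthogonal projector onto $\nul\big((\A^{k+1}_n)_{J^c}\big) = \col(\H^k_T)$. Your ordering has the advantage of making the logical dependence explicit (the space identity is an ingredient, not an afterthought) and of isolating the one conceptual point --- that $(\A^{k+1}_n)_J$ alone is merely \emph{a} left inverse and the projector is exactly the correction needed to land its rows in $\col(\H^k_T)$ --- whereas the paper's embedding trick is more mechanical and generalizes more readily to situations where one wants the full solution path of the augmented system. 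Both are complete proofs; no gap.
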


\begin{proof}
We abbreviate \smash{$\H=\H^k_n$}, \smash{$\A=\A^{k+1}_n$}, and further, 
\smash{$\H_1=(\H^k_n)_J$}, \smash{$\H_2=(\H^k_n)_{J^c}$},
\smash{$\A_1=(\A^{k+1}_n)_J$}, \smash{$\A_2=(\H^{k+2}_n)_{J^c}$} for notational 
simplicity. Let $y \in \R^n$ be arbitrary, and consider solving the linear
system 
$$
\H_1^\T \H_1 \alpha = \H_1^\T y. 
$$
We can embed this into a larger linear system 
$$
\begin{bmatrix}
\H_1^\T \H_1 & \H_1^\T \H_2 \\
\H_2^\T \H_1 & \H_2^\T \H_2
\end{bmatrix}
\begin{bmatrix} 
\alpha \\ \beta 
\end{bmatrix} 
= 
\begin{bmatrix} 
\H_1^\T y \\ z
\end{bmatrix},
$$
which will yield the same solution $\alpha$ as our original system provided we
choose $z$ so that we have $\beta=0$ at the solution in the larger system. Now
inverting (using $\A \H = \I_n$), the above system is equivalent to 
$$
\begin{bmatrix} 
\alpha \\ \beta 
\end{bmatrix} 
= 
\begin{bmatrix}
\A_1 \A_1^\T & \A_1 \A_2^\T \\ 
\A_2 \A_1^\T & \A_2 \A_2^\T
\end{bmatrix}
\begin{bmatrix} 
\H_1^\T y \\ z
\end{bmatrix},
$$
that is, 
\begin{align*}
\alpha &= \A_1 y + \A_1 \A_2^\T z, \\
\beta &= \A_2 y + \A_2 \A_2^\T z.
\end{align*}
Setting the second line equal to zero gives \smash{$z = - \A_2^\dagger \A_2
  y$}, and plugging this back into the first gives \smash{$\alpha = \A_1 (y -
  \A_2^\dagger \A_2 y)$}. As $y$ was arbitrary, this proves the desired
result. 
\end{proof}

\begin{remark}
An important implication of \eqref{eq:ffb_discrete_deriv_pinv} is that we can
reduce least squares problems in the falling factorial basis \smash{$\H^k_T$} to
linear systems involving discrete derivatives. This is important for two
reasons: first, these discrete derivative systems can be solved in linear-time,
due to the bandedness of the discrete derivative matrices; second, these
discrete derivative systems are typically much better-conditioned than falling
factorial systems. However, it should be noted that these discrete derivative
systems can still suffer from poor conditioning for large problem sizes, and
discrete B-splines, as developed in Section \ref{sec:discrete_bs_sk}, offer a
much more stable computational route. This is demonstrated in Section 
\ref{sec:least_squares}. 
\end{remark}

\begin{remark}
Given the relationship in \eqref{eq:ffb_submat}, it is clear that multiplication
by \smash{$\H^k_T$} and \smash{$(\H^k_T)^\T$} can be done in linear-time, using 
the specialized, in-place algorithms described in Appendix \ref{app:fast_mult}.
To see this, note that for any $\alpha \in \R^{r+k+1}$ we can write
\smash{$\H^k_T \alpha = \H^k_n \beta$}, where we set the entries of $\beta \in
\R^n$ on $J$ according to \smash{$\beta_J = \alpha$}, and we set
\smash{$\beta_{J^c}=0$}. Also, for any $y \in \R^n$ we can write
\smash{$(\H^k_T)^\T y = ((\H^k_n)^\T y)_J$}.

Owing to \eqref{eq:ffb_discrete_deriv_pinv}, multiplication by
\smash{$(\H^k_T)^\dagger$} and \smash{$((\H^k_T)^\dagger)^\T$} can also be done
in linear-time; but it is unclear if these can be done entirely with
specialized, in-place algorithms. For multiplication by
\smash{$(\H^k_T)^\dagger$}, we can see that this reduces to multiplication by 
\smash{$((\A^{k+1}_n)_J$}, \smash{$(\A^{k+1}_n)_{J^c}$}, and
\smash{$(\A^{k+1}_n)_{J^c}^\dagger$}; while the first two are handled by the
algorithms in Appendix \ref{app:fast_mult}, the third requires solving a linear
system in the banded matrix \smash{$(\A^{k+1}_n)_{J^c} (\A^{k+1}_n)_{J^c}^\T$},
which as far as we can tell, cannot be done in-place in
generality. Multiplication by \smash{$((\H^k_T)^\dagger)^\T$} is similar.
\end{remark}

\subsection{Discrete B-splines}
\label{sec:discrete_bs_sk}

To construct a discrete B-spline or DB-spline basis for \smash{$\DS^k_n(t_{1:r},
  [a,b])$}, we assume that $r \geq k+2$ (otherwise it would not be possible to
construct $k$th degree DB-splines that have local support). First, we define
boundary design points    
$$
b = x_{n+1} < x_{n+2} < \cdots < x_{n+k+2},
$$
a boundary endpoint \smash{$\tilde{b} > x_{n+k+2}$}, and boundary knots 
$$
t_{r+1} = x_{n+1}, t_{r+2} = x_{n+2}, \ldots, t_{r+k+1} = x_{n+k+1}. 
$$
(Any such choice of \smash{$x_{n+2},\ldots,x_{n+k+2},\tilde{b}$} will suffice;
though our construction may appear to have different boundary considerations
compared to the ``dense'' case in Section \ref{sec:discrete_bs_evals}, these
differences are only notational, and our construction in what follows will
reduce exactly to the previous DB-splines when $t_{1:r}=x_{(k+1):(n-1)}$.)  Now,
for $j=1,\ldots,k+1$, we define the normalized DB-spline \smash{$N^k_j$} as   
follows:  
\begin{equation}
\begin{gathered}
\label{eq:discrete_nbs_sk1}
\text{$N^k_j$ is the unique function $f \in \DS^k_n(t_{1:j},[a,b])$
  satisfying} \\ 
f(x_1) = \cdots = f(x_{j-1}) = 0, \quad f(x_j) = 1, \quad \text{and} \\ 
f(x_{i_j-k+1}) = \cdots = f(x_{i_j}) = f(x_{i_j+1}) = 0.
\end{gathered}
\end{equation}
(Note that the space \smash{$\DS^k_n(t_{1:j},[a,b])$} is $(j+k+1)$-dimensional,
and above there are $j+k+1$ linearly independent constraints, hence this
system has a unique solution.)  Moreover, for $j=1,\ldots,r$, we define the 
normalized DB-spline \smash{$N^k_{j+k+1}$} as follows: 
\begin{equation}
\begin{gathered}
\label{eq:discrete_nbs_sk2}
\text{$N^k_{j+k+1}=f|_{[a,b]}$ for the unique function $f \in
  \DS^k_{n+k+2}(t_{j:(j+k+1)},[a, \tilde{b}])$ satisfying} \\ 
f(x_{i_j-k}) = \cdots = f(x_{i_j-1}) = f(x_{i_j}) = 0, 
\quad f(x_{i_{j+1}}) = 1,  \quad \text{and} \\ 
f(x_{i_{j+k+1}-k+1}) = \cdots = f(x_{i_{j+k+1}-1}) = f(x_{i_{j+k+1}}) =
f(x_{i_{j+k+1}+1}) = 0,
\end{gathered}
\end{equation}
where \smash{$\DS^k_{n+k+2}(t_{j:(j+k+1)},[a, \tilde{b}])$} is defined over the
extended design points $x_{1:(n+k+2)}$. (Note again this is
$(2k+3)$-dimensional, and the above system has $2k+3$ linearly independent 
constraints, so it has a unique solution.)  

The normalized DB-splines \smash{$N^k_j$}, $j=1,\ldots,r+k+1$, defined above in
\eqref{eq:discrete_nbs_sk1}, \eqref{eq:discrete_nbs_sk2}, form a basis for
\smash{$\DS^k_n(t_{1:r}, [a,b])$} (they lie in this space by design; and their
defining evaluations imply linear independence). The next result establishes the
key local support property; we omit its proof, as it follows from arguments
similar to Lemma \ref{lem:discrete_nbs_supp}.  

\begin{lemma}
\label{lem:discrete_nbs_sk_supp}
For any $k \geq 0$, the $k$th degree normalized DB-spline basis functions, as 
defined in \eqref{eq:discrete_nbs_sk1}, \eqref{eq:discrete_nbs_sk2}, have
support structure: 
\begin{equation}
\label{eq:discrete_nbs_sk_supp}
\text{$N^k_j$ is supported on} \;
\begin{cases}
[a,t_j] & \text{if $j \leq k+1$} \\
[t_{j-k-1}, t_j \wedge b] & \text{if $j \geq k+2$},
\end{cases}
\end{equation}
where recall we abbreviate $x \wedge y = \min\{x,y\}$.
\end{lemma}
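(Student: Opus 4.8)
The plan is to mirror the proof of Lemma \ref{lem:discrete_nbs_supp}, reducing the support claim \eqref{eq:discrete_nbs_sk_supp} to two design-point vanishing statements and then invoking the implicit interpolation result of Corollary \ref{cor:ffb_interp_implicit} to fill in the continuum. Let $t_j = x_i$ denote the rightmost knot of the defining discrete spline $f$ in the indexing of \eqref{eq:discrete_nbs_sk_supp} (for $j \leq k+1$ the knots are $t_{1:j}$, from \eqref{eq:discrete_nbs_sk1}, and for $j \geq k+2$ the knots are $t_{(j-k-1):j}$, from \eqref{eq:discrete_nbs_sk2}). In each case the defining constraints pin $f$ to zero at the $k+1$ consecutive design points $x_{i-k+1},\dots,x_{i+1}$ straddling $t_j = x_i$, and for $j \geq k+2$ they additionally pin $f$ to zero at the $k+1$ points ending at the leftmost knot $t_{j-k-1}$. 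Since $f|_{[a,b]} \in \DS^k_n(t_{1:r},[a,b]) \subseteq \cH^k_n$, it obeys both the matching-derivatives property of Corollary \ref{cor:deriv_match} and the implicit interpolation of Corollary \ref{cor:ffb_interp_implicit}. It therefore suffices to prove that $f$ vanishes at every design point outside the claimed window; the non-design points are then handled verbatim as in Lemma \ref{lem:discrete_nbs_supp}, since the one-unknown system $f[x_{i'-k},\dots,x_{i'},x]=0$ from \eqref{eq:ffb_interp_implicit1} forces $f(x)=0$ as soon as all of its design-point centers vanish.

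For the left endpoint (needed only when $j \geq k+2$) I would argue exactly as in the polynomial case: $f$ restricted to the leftmost piece $[a,t_{j-k-1}]$ is a single $k$th degree polynomial that vanishes at the $k+1$ distinct points ending at $t_{j-k-1}$, hence is identically zero there. This disposes of all points to the left of the leftmost knot at once.

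The right endpoint is the crux, and this is where the naive polynomial argument breaks down for $k \geq 2$: only the two zeros $f(x_i)=f(x_{i+1})=0$ lie in the rightmost polynomial piece $[x_i,b]$, too few to force a degree-$k$ piece to vanish. I would instead exploit the matching-derivatives property. Computing the $k$th discrete derivative at $x_{i+1}$ gives $(\Delta^k_n f)(x_{i+1}) = k!\, f[x_{i-k+1},\dots,x_{i+1}]$, whose centers are precisely the $k+1$ consecutive zeros, so this divided difference is $0$; by Corollary \ref{cor:deriv_match}, $\Delta^k_n f = D^k f$ on $(x_k,b]$, and $D^k f$ is piecewise constant with breakpoints only at the knots, so the absence of knots in $(x_i,b]$ forces $\Delta^k_n f \equiv 0$ there. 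Reading $(\Delta^k_n f)(x_m) = k!\, f[x_{m-k},\dots,x_m] = 0$ as expressing $f(x_m)$ through its own nonzero coefficient in \eqref{eq:divided_diff_linear}, a short induction on $m > i$ propagates the zeros rightward, so $f$ vanishes at every design point in $(x_i,b]$; Corollary \ref{cor:ffb_interp_implicit} then finishes the non-design points, completing \eqref{eq:discrete_nbs_sk_supp}. The main obstacle, and the only genuine departure from Lemma \ref{lem:discrete_nbs_supp}, is exactly this right-side propagation: there the design-point values were handed to us as $\delta_{ij}$ by Lemma \ref{lem:discrete_nbs_evals_supp}, whereas here the far-field values must first be deduced from the matching-derivatives identity before the implicit interpolation machinery applies.
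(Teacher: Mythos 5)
Your proof is correct. There is no line-by-line comparison to make here, because the paper omits this proof entirely, remarking only that it ``follows from arguments similar to Lemma \ref{lem:discrete_nbs_supp}''; what you have done is identify exactly where that dense-knot argument does and does not transfer, and supply the missing pieces. In the dense case the values $N^k_j(x_i)=\delta_{ij}$ at \emph{all} design points are handed over by Lemma \ref{lem:discrete_nbs_evals_supp}, after which Corollary \ref{cor:ffb_interp_implicit} reduces every off-design evaluation to a one-unknown system whose known centers all vanish. In the sparse case the definitions \eqref{eq:discrete_nbs_sk1}, \eqref{eq:discrete_nbs_sk2} pin down only $O(k)$ values, and your two propagation steps are precisely what is needed: on the left, the $k+1$ zeros ending at $t_{j-k-1}$ all sit in the single leftmost polynomial piece, which kills it outright; on the right, the $k+1$ consecutive zeros straddling $t_j$ give $(\Delta^k_n f)(x_{i_j+1})=0$, the matching-derivatives property (Corollary \ref{cor:deriv_match}, which carries over since $\DS^k_n(t_{1:r},[a,b])\subseteq\cH^k_n$) together with the constancy of $D^k f$ beyond the last knot upgrades this to $\Delta^k_n f\equiv 0$ on $(t_j,b]$, and the expansion \eqref{eq:divided_diff_linear} then peels off $f(x_m)=0$ one design point at a time, since each new unknown enters its vanishing divided difference with a nonzero coefficient and all other centers are already known zeros. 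After that the implicit interpolation step is verbatim. Two harmless caveats worth recording: the right-side argument is vacuous for the last $k+1$ basis functions, whose rightmost knots are boundary knots with $t_j\geq b$ (consistent with the $t_j\wedge b$ in the statement); and for the functions defined via \eqref{eq:discrete_nbs_sk2} the matching-derivatives and implicit-interpolation results must be invoked over the extended design points $x_{1:(n+k+2)}$ on $[a,\tilde{b}]$ before restricting to $[a,b]$.
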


Just as before, in the ``dense'' knot set case, we see that each $k$th degree
DB-spline is supported on at most $k+2$ knot points. Furthermore, all of the
other remarks following Lemma \ref{lem:discrete_nbs_supp} carry over
appropriately to the current setting. We refer back to Figure
\ref{fig:dbs} for examples of DB-splines of degree 2, with a ``sparse''
knot set. 

\subsection{Evaluation at the design points}

This subsection covers a critical computational development: starting from the
definitions \eqref{eq:discrete_nbs_sk1}, \eqref{eq:discrete_nbs_sk2}, we can
fill in the evaluations of each basis function \smash{$N^k_j$} at the design
points $x_{1:n}$ using an entirely ``local'' scheme involving discrete
derivative systems. This ``local'' scheme is both numerically stable (much more
stable than solving \eqref{eq:discrete_nbs_sk1}, \eqref{eq:discrete_nbs_sk2}
using say the falling factorial basis) and linear-time. 

Fix $j \geq k+2$, and consider the following ``local'' strategy for computing  
\smash{$N^k_j(x_{1:n})$}. First recall that \smash{$N^k_j(x_i)=0$} for $x_i
\leq t_{j-k-1}$ and $x_i \geq t_j \wedge b$, by \eqref{eq:discrete_nbs_sk_supp}
in Lemma \ref{lem:discrete_nbs_sk_supp}, so we only need to calculate
\smash{$N^k_j(x_i)$} for $t_{j-k-1} < x_i < t_j \wedge b$. For notational
simplicity, and without a loss of generality, set $j=k+2$, and abbreviate
\smash{$f = N^k_{k+2}$}. Between the first knot and second knot, $t_1$ and 
$t_2$, note that we can compute the missing evaluations by solving the linear
system:       
\begin{equation}
\label{eq:discrete_nbs_local1}
\begin{aligned}
f[x_{i_1-k+1}, \ldots, x_{i_1+1}, x_{i_1+2}] &= 0, \\
f[x_{i_1-k+2}, \ldots, x_{i_1+2}, x_{i_1+3}] &= 0, \\
& \;\; \vdots \\
f[x_{i_2-k-1}, \ldots, x_{i_2-1}, x_{i_2}] &= 0.
\end{aligned}
\end{equation}
This has $i_2-i_1-1$ equations and the same number of unknowns,
\smash{$f(x_{(i_1+1):(i_2-1)})$}. Between the second and last knot, $t_2$ and
$t_{k+2}$, we can set up a similar linear system in order to perform
interpolation. From \eqref{eq:discrete_nbs_sk2}, recall that \smash{$f(x_i)=0$}
for \smash{$x_i \geq x_{i_{k+2}-k+1}$}, and thus we only need to interpolate
from \smash{$x_{i_2+1}$} to \smash{$x_{i_{k+2}-k}$}. Our linear system of
discrete derivatives is comprised of the equations:   
\begin{equation}
\label{eq:discrete_nbs_local2}
f[x_{\ell-k},\ldots,x_{\ell+1}] = 0, \quad \text{for $i_2+1 \leq \ell \leq
  i_{k+2}-1$, $\ell \notin \{i_m : m=3,\ldots,k+2\}$}.
\end{equation}
These are discrete derivatives at each $x_{\ell+1}$ such that $x_\ell$ is {\it 
  not} a knot point. There are exactly $i_{k+2}-i_2-1-(k-1)=i_{k+2}-i_2-k$ such
equations and the same number of unknowns,
\smash{$f(x_{(i_2+1):(i_{k+2}-k)})$}. Hence, putting this all together, we have
shown how to compute all of the unknown evaluations of \smash{$f = N^k_{k+2}$}.  

For $j \leq k+1$, the ``local'' strategy for computing \smash{$N^k_j(x_{1:n})$}
is similar but even simpler. Abbreviating \smash{$f = N^k_j$}, we solve the
linear system:   
\begin{equation}
\label{eq:discrete_nbs_local3}
f[x_{\ell-k},\ldots,x_{\ell+1}] = 0, \quad \text{for $k+1 \leq \ell \leq i_j-1$,
  $\ell \notin \{i_m : m=1,\ldots,j-1\}$}.
\end{equation}
This has $i_j-k-1-(j-1)=i_j-k-j$ equations and the same number of unknowns, 
\smash{$f(x_{(j+1):(i_j-k)})$}.

A critical feature of the linear systems \eqref{eq:discrete_nbs_local1},
\eqref{eq:discrete_nbs_local2}, \eqref{eq:discrete_nbs_local3} that we must
solve in order to calculate the evaluations of the DB-spline basis functions is
that they are ``local'', meaning that they are defined by discrete derivatives
over a local neighborhood of $O(k)$ design points. Therefore these systems will
be numerically stable to solve, as the conditioning of the discrete derivative
matrices of such a small size (just $O(k)$ rows) will not be an
issue. Furthermore, since each design point $x_i$ appears in the support of at
most $k+2$ basis functions, computing all evaluations of all basis functions,
$N^k_j(x_i)$ for $j=1,\ldots,r+k+1$ and $i=1,\ldots,n$, takes linear-time.

\subsection{Least squares problems}
\label{sec:least_squares}

Finally, we investigate solving least squares problems in the falling factorial
basis, of the form 
$$
\minimize_\alpha \; \|y - \H^k_T \alpha\|_2^2
$$
In particular, suppose we are interested in the least squares projection 
\begin{equation}
\label{eq:ffb_system}
\hy = \H^k_T (\H^k_T)^\dagger y.
\end{equation}
By \eqref{eq:ffb_discrete_deriv_spaces} in Lemma
\ref{lem:ffb_discrete_deriv_pinv}, we know that we can alternatively compute 
this by projecting onto \smash{$\nul((\A^{k+1}_n)_{J^c})$},   
\begin{equation}
\label{eq:discrete_deriv_system}
\hy = \big(\I_n - (\A^{k+1}_n)_{J^c}^\dagger (\A^{k+1}_n)_{J^c}\big) y.    
\end{equation}
Another alternative is to use the DB-spline basis constructed in the last
subsection. Denoting by \smash{$\N^k_T \in \R^{n \times (r+k+1)}$} the matrix
with entries \smash{$(\N^k_T)_{ij} = N^k_j(x_i)$}, where \smash{$N^k_j$},
$j=1,\ldots,r+k+1$ are defined in \eqref{eq:discrete_nbs_sk1}, 
\eqref{eq:discrete_nbs_sk2} (recall from the last subsection that their
evaluations can be computed in linear-time), we have   
\begin{equation}
\label{eq:discrete_nbs_system}
\hy = \N^k_T (\N^k_T)^\dagger y.
\end{equation}

Naively, solving the falling factorial linear system \eqref{eq:ffb_system}
requires $O(n(r+k)^2)$ operations. A larger issue is that this system will be 
typically very poorly-conditioned. The discrete derivative linear system
\eqref{eq:discrete_deriv_system} gives an improvement in both computation
time and conditioning: it requires $O(nk^2)$ operations (because it requires us
to solve a linear system in the banded matrix \smash{$(\A^{k+1}_n)_{J^c}
((\A^{k+1}_n)_{J^c})^\T$}), and will typically be better-conditioned than the
falling factorial system. Finally, the DB-spline linear system
\eqref{eq:discrete_nbs_system} is computationally the same but improves
conditioning even further: it again takes $O(nk^2)$ operations (as it requires
us to solve a linear system in the banded matrix \smash{$(\N^k_T)^\T
  (\N^k_T)$}), and will typically be much better-conditioned than the discrete
derivative system. 

To substantiate these claims about conditioning, we ran an empirical experiment
with the following setup. For each problem size $n=100,200,500,1000,2000,5000$,
we considered both a fixed evenly-spaced design $x_{1:n}$ on $[0,1]$, and a
random design given by sorting i.i.d.\ draws from the uniform distribution
$[0,1]$. In each case (fixed or random design), we then selected $r=n/10$
points to serve as knots, drawing these uniformly at random from the allowable
set of design points $x_{(k+1):(n-1)}$, where $k=3$. Next we formed the key
matrices \smash{$\H^k_T, (\A^{k+1}_n)_{J^c}, \N^k_n$} appearing in the 
linear systems \eqref{eq:ffb_system}, \eqref{eq:discrete_deriv_system},
\eqref{eq:discrete_nbs_system}, and computed their condition numbers, where
we define the condition number of a matrix $\M$ by
$$
\kappa(\M) = \frac{\lambda_{\max}(\M^\T \M)}{\lambda_{\min}(\M^\T \M)}
$$
(with $\lambda_{\max}(\cdot)$ and $\lambda_{\min}(\cdot)$ returning the maximum
and minimum eigenvalues of their arguments). We set $\kappa(\M)=\infty$ when
$\lambda_{\min}(\M^\T \M)<0$ due to numerical inaccuracy. Figure
\ref{fig:sim_dbs} plots the condition numbers for these systems versus the
problem size $n$, where the results are aggregated over multiple repetitions:
for each $n$, we took the median condition number over 30 repetitions of forming
the design points and choosing a subset of knots. We see that for evenly-spaced
design points (fixed design case), the falling factorial systems degrade quickly
in terms of conditioning, with an infinite median condition number after
$n=500$; the discrete derivative and DB-spline systems are much more stable, and
the latter marks a huge improvement over the former (for example, its median
condition is more than 2000 times smaller for $n=5000$). For unevenly-spaced
design points (random design case), the differences are even more dramatic: now
both the falling factorial and discrete derivative systems admit an infinite
median condition number at some point (after $n=200$ and $n=1000$,
respectively), yet the DB-spline systems remain stable throughout.

\begin{figure}[tb]
\centering
\includegraphics[width=0.495\textwidth]{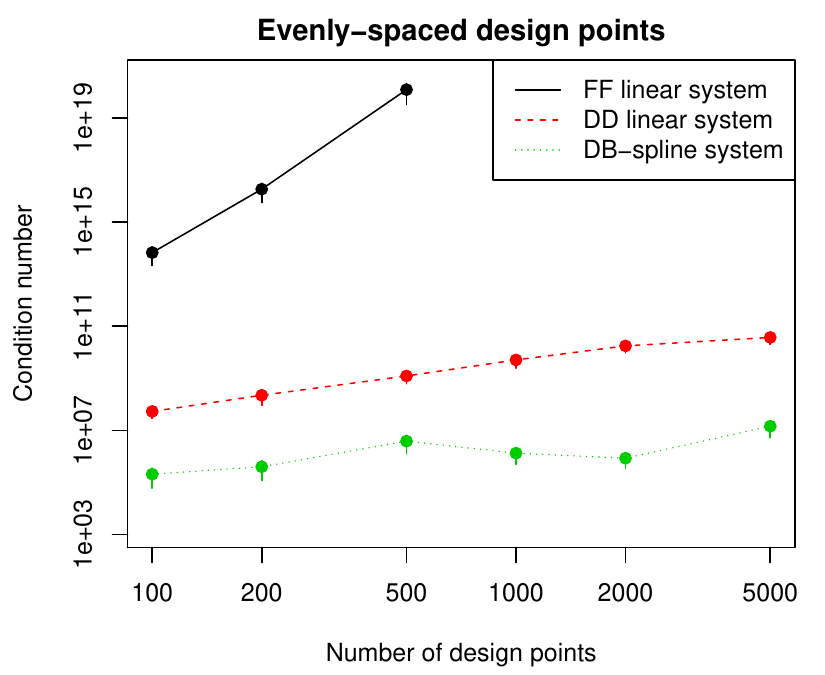}
\includegraphics[width=0.495\textwidth]{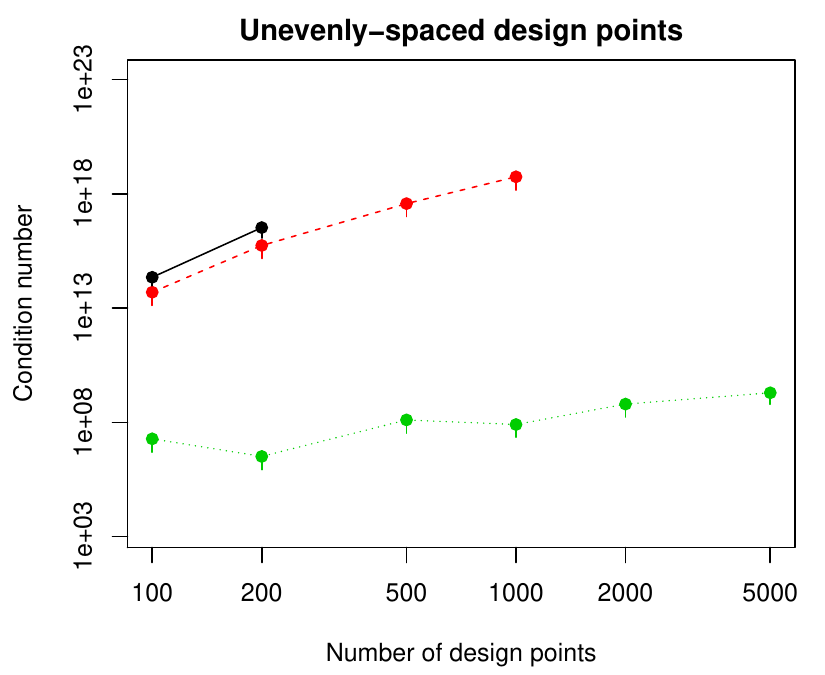}
\caption{\small Comparison of condition numbers for three equivalent linear
  systems, based on falling factorials (FF), discrete derivatives (DD), and
  discrete B-splines (DB-splines), for projecting onto a space of cubic discrete
  splines with $n$ design points and $r=n/10$ knots, as $n$ varies from 100 to
  5000. The left panel shows a case where the design points are evenly-spaced
  on $[0,1]$, and the right shows a case where they are sorted i.i.d.\ draws
  from the uniform distribution on $[0,1]$. In each case, the knots are assigned
  to be a random subset of the design points, and the median condition number is
  computed over 30 repetitions. Median absolute deviations are also shown as
  vertical lines. We can see that the FF systems, in solid black, are by far
  the worse in either case; the DD systems, in dashed red, provide a   marked
  improvement over FF systems in the evenly-spaced case, but just a moderate
  improvement in the unevenly-spaced case; and the DB-spline systems, in dotted
  green, provide a considerable improvement over both, in either case.} 
\label{fig:sim_dbs}
\end{figure}

\section{Representation}
\label{sec:representation}

In this section, we study the representational properties of discrete splines
with respect to two smoothness functionals: total variation and Sobolev
seminorms, which serve as the penalty functionals in the variational
optimization problems for locally adaptive regression splines and smoothing
splines, respectively. In particular, we show that such smoothness functionals,
for a discrete spline $f$, have exact representations in terms of discrete
derivatives of $f$ at the design points. We recall that for a $k$th degree
discrete spline $f$, and $\ell=1,\ldots,k$, we use $(D^\ell f)(x)$ to denotes
the $\ell$th derivative at $x$ when this exists, and the $\ell$th left
derivative when it does not (when $x$ is one of the knot points supporting
$f$). 

\subsection{Total variation functionals}
\label{sec:ffb_tv}

Below we show that for a $k$th degree discrete spline, the total variation of
its $k$th derivative can be written in terms of a weighted $\ell_1$ norm of its
$(k+1)$st discrete derivatives at the design points. Recall that the total
variation of a function $f$ on an interval $[a,b]$ is defined by
$$
\TV(f) = \sup_{a=z_0<z_1<\cdots<z_N=b} \; \sum_{i=1}^N |f(z_i) - f(z_{i-1})|. 
$$
The next result is an implication of Corollary
\ref{cor:deriv_match}. It serves as one of the main motivating points behind
trend filtering (as an approximation to locally adaptive regression splines);
essentially the same result can be found in Lemma 5 of 
\citet{tibshirani2014adaptive} (for evenly-spaced design points), and Lemma 2 of       
\citet{wang2014falling} (for arbitrary design points). 

\begin{theorem}
\label{thm:ffb_tv}
For any $k \geq 0$, and any $k$th degree discrete spline $f \in \cH^k_n$ (with   
knots in \smash{$x_{(k+1):(n-1)}$}), as defined in \eqref{eq:ffb_span}, it holds 
that \begin{equation} 
\label{eq:ffb_tv}
\TV(D^k f) =  \sum_{i=k+2}^n \big| (\Delta^{k+1}_n f)(x_i) \big| \cdot 
\frac{x_i - x_{i-k-1}}{k+1}.
\end{equation}
Equivalently, with $f(x_{1:n})=(f(x_1),\ldots,f(x_n)) \in \R^n$ denoting the
vector of evaluations of $f$ at the design points,
\begin{equation}
\label{eq:ffb_tv_mat}
\TV(D^k f) =  \big\|\W^{k+1}_n \D^{k+1}_n f(x_{1:n}) \big\|_1,  
\end{equation}
where \smash{$\D^{k+1}_n \in \R^{(n-k-1) \times n}$} is the $(k+1)$st
order discrete derivative matrix, as in \eqref{eq:discrete_deriv_mat}, and
\smash{$\W^{k+1}_n \in \R^{(n-k-1) \times (n-k-1)}$} is the $(k+1)$st order
diagonal weight matrix, as in \eqref{eq:weight_mat}.
\end{theorem}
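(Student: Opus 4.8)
The plan is to exploit the fact that $D^k f$ is piecewise constant together with the matching-derivatives identity from Corollary \ref{cor:deriv_match}. Since $f \in \cH^k_n$ is a $k$th degree piecewise polynomial with knots confined to $x_{(k+1):(n-1)}$, its $k$th derivative $D^k f$ is constant on each of the segments $[a,x_{k+1}]$, $(x_{k+1},x_{k+2}],\ldots,(x_{n-1},b]$, with jumps possible only at the knots $x_{k+1},\ldots,x_{n-1}$. For such a piecewise constant function, the total variation is simply the sum of the absolute heights of its jumps, so I would first reduce the claim to computing the jump of $D^k f$ at each knot $x_{j-1}$, for $j=k+2,\ldots,n$ (noting that the boundary segment $[a,x_{k+1}]$ carries a single degree $k$ polynomial and hence contributes no jump).

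Next, I would convert each knot jump of $D^k f$ into a single $(k+1)$st discrete derivative evaluated at a design point. By Corollary \ref{cor:deriv_match}, $(\Delta^k_n f)(x) = (D^k f)(x)$ for all $x>x_k$; in particular, under the left-derivative convention, $(\Delta^k_n f)(x_j)$ equals the constant value of $D^k f$ on $(x_{j-1},x_j]$ and $(\Delta^k_n f)(x_{j-1})$ equals its value on $(x_{j-2},x_{j-1}]$, so their difference is exactly the jump of $D^k f$ across the knot $x_{j-1}$. Evaluating the recursive definition \eqref{eq:discrete_deriv_rec2} of $\Delta^{k+1}_n$ at $x=x_j \in (x_{j-1},x_j]$ gives
$$(\Delta^{k+1}_n f)(x_j) = \frac{(\Delta^k_n f)(x_j) - (\Delta^k_n f)(x_{j-1})}{(x_j - x_{j-k-1})/(k+1)},$$
whence the jump equals $(\Delta^{k+1}_n f)(x_j)\cdot (x_j - x_{j-k-1})/(k+1)$. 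Summing the absolute jumps over $j=k+2,\ldots,n$ then yields \eqref{eq:ffb_tv} directly.

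Finally, I would obtain the matrix form \eqref{eq:ffb_tv_mat} purely by relabeling. Using \eqref{eq:discrete_deriv_conn1}, the $i$th entry of $\D^{k+1}_n f(x_{1:n})$ is $(\Delta^{k+1}_n f)(x_{k+1+i})$, while the $i$th diagonal entry of $\W^{k+1}_n$ is $(x_{k+1+i}-x_i)/(k+1)$; substituting $i \mapsto i-k-1$ makes $\|\W^{k+1}_n \D^{k+1}_n f(x_{1:n})\|_1$ coincide term-for-term with the right-hand side of \eqref{eq:ffb_tv}.

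The main obstacle is the bookkeeping at the knots: I must verify that the left-derivative convention for $D^k f$ (inherited by $\Delta^k_n f$ via Corollary \ref{cor:deriv_match}) lines up $(\Delta^k_n f)(x_j)$ and $(\Delta^k_n f)(x_{j-1})$ with the correct one-sided limits straddling $x_{j-1}$, and that $D^k f$ really is constant on each $(x_{j-1},x_j]$ (which holds precisely because every knot is a design point). Everything else is routine; an equally short alternative would expand $f$ in the falling factorial basis and read the jumps of $D^k f$ off from $D^k h^k_j = 1\{x>x_{j-1}\}$ together with the dual-basis coefficient formula $\alpha_j = (\Delta^{k+1}_n f)(x_j)\cdot(x_j - x_{j-k-1})/(k+1)$ from Lemma \ref{lem:dual_basis}.
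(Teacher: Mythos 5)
Your proposal is correct and follows essentially the same route as the paper's own proof: write $\TV(D^k f)$ as the sum of absolute jumps of the piecewise constant function $D^k f$ at the design points, convert derivatives to discrete derivatives via Corollary \ref{cor:deriv_match}, and identify each scaled difference $(\Delta^k_n f)(x_i)-(\Delta^k_n f)(x_{i-1})$ with $(\Delta^{k+1}_n f)(x_i)\cdot(x_i-x_{i-k-1})/(k+1)$ through the recursion \eqref{eq:discrete_deriv_rec2}. The paper merely states this last step tersely, whereas you spell it out together with the bookkeeping of the left-continuity convention; the substance is identical.
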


\begin{proof}
As $D^k f$ is piecewise constant with knots in \smash{$x_{(k+1):(n-1)}$} (and
our convention is to treat it as left-continuous),
\begin{align}
\label{eq:ffb_tv_sum1}
\TV(D^k f) &= \sum_{i=k+2}^n \big|(D^k f)(x_i) - (D^k f)(x_{i-1})\big| \\
\label{eq:ffb_tv_sum2}
&= \sum_{i=k+2}^n \big| (\Delta^k_n f)(x_i) - (\Delta^k_n f)(x_{i-1})\big|,
\end{align}
where in the second line we used the matching derivatives result from
Corollary \ref{cor:deriv_match}. Recalling the recursive formulation for
\smash{$\Delta^{k+1}_n$} from \eqref{eq:discrete_deriv_rec2} establishes the   
result.  
\end{proof}

\begin{remark}
\label{rem:ffb_tv_mat_old}
As discussed previously, recall that
\citet{tibshirani2014adaptive,wang2014falling} defined the discrete derivative
operators differently, specifically, they defined the operators according the
recursion \eqref{eq:discrete_deriv_mat_old} (compare this to the recursion
\eqref{eq:discrete_deriv_mat} in the current paper). These papers also expressed
the total variation result in \eqref{eq:ffb_tv_mat} differently, recall
\eqref{eq:ffb_tv_mat_old}, where the modified operator \smash{$\C^{k+1}_n =
  \W^{k+1}_n \D^{k+1}_n$} results from the construction in
\eqref{eq:discrete_deriv_mat_old}. While the results \eqref{eq:ffb_tv_mat} and 
\eqref{eq:ffb_tv_mat_old} are equivalent, the latter is arguably a more natural
presentation of the same result, as it invokes the more natural notion of
discrete differentiation from this paper (recall Remark
\ref{rem:discrete_deriv_mat_old}). Using this notion, it then represents the
total variation functional via differences of discrete derivatives (which equal
differences of derivatives, recall \eqref{eq:ffb_tv_sum2} in the proof of
Theorem \ref{thm:ffb_tv}).
\end{remark}

\begin{remark}
\label{rem:ffb_tv_def}
Once we assume $f$ lies in an $n$-dimensional linear space of $k$th degree 
piecewise polynomials with knots in $x_{1:n}$, the fact that the
representation \eqref{eq:ffb_tv_mat} holds for {\it some} matrix
\smash{$\W^{k+1}_n$} is essentially definitional. To see this, we can expand  
$f$ in a basis for this linear space, \smash{$f=\sum_{j=1}^n \alpha_j
  g_j$}, then observe that, for some matrix $\Q \in \R^{n \times n}$ (that
depends on this basis, but not on $f$),
\begin{align*}
\TV(D^k f) &= \TV\bigg( \sum_{j=1}^n \alpha_j D^k g_j \bigg) \\
&= \|\Q \alpha\|_1 \\
&= \big\|\Q \, \G^{-1} f(x_{1:n})\big\|_1.
\end{align*}
In the second line we used the fact that each $D^k g_j$ is a piecewise constant
function (with knots in $x_{1:n}$), and in the third line we simply multiplied
by $\G \in \R^{n \times n}$ and its inverse, which has entries $\G_{ij} =
g_j(x_i)$. Now in the last line above, if we multiplied by \smash{$\D^m_n$} and
its ``inverse'' (in quotes, since this matrix is not square, thus strictly
speaking, not invertible), then this would yield a result as in
\eqref{eq:ffb_tv_mat} for a particular matrix \smash{$\W^{k+1}_n$} (defined in
terms of \smash{$\Q,\G^{-1}$}, and the ``inverse'' of $\D^m_n$). But to be
clear, the fact that \eqref{eq:ffb_tv_mat} holds for a {\it diagonal} matrix
\smash{$\W^{k+1}_n$} is what makes the result special, and is tied to the
matching derivatives property that is uniquely satisfed $k$th degree discrete
splines. For example, the corresponding matrix \smash{$\W^{k+1}_n$} would not 
be diagonal for $k$th degree splines.
\end{remark}

\subsection{$L_2$-Sobolev functionals}

Now we show that for a $k$th degree discrete spline, where $k=2m-1$, the
integral of the square of its $m$th derivative can be written in terms of a
certain quadratic form of its $m$th discrete derivatives at the design
points. This integral is (the square of) the seminorm naturally associated with
the $L_2$-Sobolev space $\cW^{m,2}([a,b])$.

\begin{theorem}
\label{thm:ffb_sobolev}
For any odd $k = 2m-1 \geq 1$, and any $k$th degree discrete spline $f \in
\cH^k_n$ (with knots in \smash{$x_{(k+1):(n-1)}$}), as defined in
\eqref{eq:ffb_span}, it holds that 
\begin{equation}
\label{eq:ffb_sobolev}
\int_a^b (D^m f)(x)^2 \, dx =
\big\|(\V^m_n)^{\hspace{-1pt}\frac{1}{2}} \D^m_n f(x_{1:n}) \big\|_2^2,     
\end{equation}
where $f(x_{1:n})=(f(x_1),\ldots,f(x_n)) \in \R^n$ is the vector of evaluations
of $f$ at the design points, and \smash{$\D^m_n \in \R^{(n-m) \times n}$} is
the $m$th order discrete derivative matrix, as in \eqref{eq:discrete_deriv_mat}. 
Moreover, \smash{$\V^m_n \in \R^{(n-m) \times (n-m)}$} is a symmetric banded 
matrix (that depends only on $x_{1:n}$) of bandwidth $2m-1$. 
\end{theorem}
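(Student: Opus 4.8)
The plan is to reduce the integral on the left of \eqref{eq:ffb_sobolev} to a Gram-type quadratic form by expanding $D^m f$ in a locally supported basis whose coefficients are exactly the entries of the discrete-derivative vector $\gamma := \D^m_n f(x_{1:n})$. Because $f$ has degree $k = 2m-1$, its $m$th derivative $D^m f$ is a piecewise polynomial of degree $m-1$ with knots in $x_{(k+1):(n-1)}$; note that for $m \geq 2$ this piecewise polynomial is genuinely discontinuous at the knots, by \eqref{eq:ffb_deriv_lim}. Recall also that, by \eqref{eq:discrete_deriv_mat_act}, the entries of $\gamma$ are the sliding $m$th divided differences, $\gamma_i = m! \, f[x_i, \ldots, x_{i+m}]$ for $i = 1, \ldots, n-m$.

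The heart of the argument is a representation lemma: for every $f \in \cH^{2m-1}_n$,
\[
(D^m f)(x) = \sum_{i=1}^{n-m} \gamma_i \, \psi_i(x),
\]
where each $\psi_i$ is a degree $m-1$ piecewise polynomial supported on the $m$ consecutive design intervals spanning $[x_i, x_{i+m}]$. This is the discrete-spline analogue of the Peano/B-spline representation underlying Theorem \ref{thm:nsp_sobolev}, but with a crucial difference. In the natural-spline case $D^m f$ is continuous and must be expanded in the continuous B-splines $P^{m-1}_i$; the divided differences are then related to the expansion coefficients through the banded B-spline Gram matrix, which is why the matrix appearing there is the \emph{inverse} of a banded matrix (hence dense). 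For discrete splines, the matching-derivatives property of Corollary \ref{cor:deriv_match} instead lets $D^m f$ be written with the divided differences \emph{themselves} as coefficients, at the cost of allowing the local functions $\psi_i$ to be discontinuous. To establish the displayed identity I would fix a non-knot point $x$ and argue locally: when the window $[x_i, x_{i+m}]$ avoids all knots, the identity is a classical statement about a single degree $2m-1$ polynomial, expressing its $m$th derivative at $x$ through its sliding $m$th divided differences (a differentiated Newton form); when the window straddles a knot, the discrete-spline continuity conditions in \eqref{eq:discrete_spline}, together with Corollary \ref{cor:deriv_match}, are used to stitch the adjacent polynomial pieces so that the same local formula persists. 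Equivalently, one may take the $\psi_i$ to be the functions dual to the divided-difference functionals $f \mapsto f[x_i, \ldots, x_{i+m}]$ on $\cH^{2m-1}_n$, and read off their local supports from the support structure of discrete B-splines in Lemma \ref{lem:discrete_nbs_supp}.

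Granting the representation, the conclusion is immediate:
\[
\int_a^b (D^m f)(x)^2 \, dx = \sum_{i,j=1}^{n-m} \gamma_i \gamma_j \int_a^b \psi_i(x)\psi_j(x)\,dx = \gamma^\T \V^m_n \gamma, \qquad (\V^m_n)_{ij} = \int_a^b \psi_i \psi_j \, dx.
\]
The matrix $\V^m_n$ is symmetric and positive semidefinite by construction, and it is banded of bandwidth $2m-1$: since $\psi_i$ is supported on $[x_i, x_{i+m}]$, the supports of $\psi_i$ and $\psi_j$ overlap on a set of positive measure only when $|i-j| \leq m-1$, so $(\V^m_n)_{ij} = 0$ whenever $|i-j| \geq m$. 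As a sanity check, the case $m = 1$ recovers the base case transparently: there $\psi_i = 1_{(x_i, x_{i+1}]}$, giving the diagonal matrix $\V^1_n = \diag(x_{i+1} - x_i)$, consistent with \eqref{eq:nsp_sobolev_kmat_m1}.

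The main obstacle is the representation lemma, and within it the knot-straddling case. Unlike in Theorem \ref{thm:nsp_sobolev}, we cannot simply invoke the Peano kernel theorem for the divided difference $f[x_i, \ldots, x_{i+m}]$, because that requires $f \in C^{m-1}$, whereas $D^{m-1} f$ jumps at the knots for $m \geq 2$; applying Peano naively would introduce extra boundary (jump) terms at the knots that the smooth theory never sees. The genuinely new content is therefore showing that the discrete-spline smoothness conditions force these jumps to organize so that $D^m f$ admits the purely local expansion above with divided-difference coefficients. I would also need to handle the boundary functions $\psi_i$ for $i$ near $1$ and $n-m$ (whose windows meet $[a, x_{k+1}]$ or $[x_{n-1}, b]$) separately, exactly as boundary B-splines and boundary behavior are treated in Appendix \ref{app:bs} and in Lemma \ref{lem:discrete_nbs_supp}, but these are routine once the interior representation is in place.
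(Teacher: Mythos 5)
Your reduction of the theorem to a representation lemma $(D^m f)(x) = \sum_{i=1}^{n-m}\gamma_i\,\psi_i(x)$, with $\gamma_i = (\D^m_n f(x_{1:n}))_i = m!\,f[x_i,\ldots,x_{i+m}]$ and $\psi_i$ supported on the convex hull $[x_i,x_{i+m}]$ of the centers, has the right general shape, but the support claim is false for $m\geq 2$, and it cannot be repaired within the Peano-kernel analogy you are drawing. To see this, fix an interior interval $(x_j,x_{j+1})$ and test the identity against the basis functions $f=h^k_{j+1+r}$, $r=1,\ldots,m-1$, whose single knot is $x_{j+r}$: each vanishes identically on $(x_j,x_{j+1})$, so the left-hand side is $0$ there, yet its first nonvanishing sliding divided difference is $\gamma_{j+r+1-m}\neq 0$. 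The resulting triangular system of relations forces $\psi_i\equiv 0$ on $(x_j,x_{j+1})$ for every $i=j+2-m,\ldots,j$, leaving only the single term $\gamma_{j+1-m}\psi_{j+1-m}$ available to represent $D^m f$ on that interval; since $D^m f|_{(x_j,x_{j+1})}$ ranges over the full $m$-dimensional space of degree $m-1$ polynomials as $f$ varies over $\cH^k_n$ (already over the degree $k$ polynomials), a rank-one formula is impossible for $m\geq 2$. Your $m=1$ sanity check passes only because in that case the two support conventions coincide, which masks the error.

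The correct local functions do exist, but they are supported on $(x_{i+m-1},x_{i+2m-1}]$: a window of $m$ design intervals that contains only the \emph{last} interval of the centers' convex hull and extends $m-1$ intervals beyond $x_{i+m}$. (This still yields bandwidth $2m-1$, since two such windows overlap in positive measure only when $|i-j|\leq m-1$, so your final counting is unaffected.) This asymmetric, forward-shifted support is exactly where the analogy with Theorem \ref{thm:nsp_sobolev} breaks down, and identifying it is the actual content of the proof: the paper writes $\int_a^b (D^mf)^2 = f(x_{1:n})^\T(\B^{k+1}_n)^\T\Z^{k+1}_n\,\Q\,\Z^{k+1}_n\B^{k+1}_nf(x_{1:n})$ via the dual relation \eqref{eq:ffb_discrete_deriv_inv}, pushes the scaled-difference factors connecting $\B^{k+1}_n$ to $\B^m_n$ through the integral and the $m$-fold differentiation, and then collapses the resulting iterated scaled differences of consecutive $h^k_i$'s using the lateral recursion (Lemma \ref{lem:newton_poly_diff}, in the form $h^k_i - h^k_{i+1} = h^{k-1}_i\cdot(x_i-x_{i-k})/k$ away from $(x_{i-1},x_i]$), which is what reveals the supports. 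Your sketch defers precisely this step (``stitching the adjacent polynomial pieces''), and Corollary \ref{cor:deriv_match} will not supply it, since that corollary matches derivatives of order $k$, not order $m$; likewise Lemma \ref{lem:discrete_nbs_supp} concerns the functions dual to point evaluations, not to the divided-difference functionals. Once the supports are corrected and the representation is actually established, the concluding Gram-matrix argument and the symmetry and positive semidefiniteness of $\V^m_n$ are fine and agree with the paper's endgame.
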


The proof of Theorem \ref{thm:ffb_sobolev} is somewhat intricate and is
deferred to Appendix \ref{app:ffb_sobolev}. It relies on several key properties 
underlying discrete splines, specifically, the recursive property of the falling
factorial basis in Lemma \ref{lem:ffb_lateral_rec}, and the dual relationship in
Lemma \ref{lem:dual_basis}. 

\begin{remark}
As before (similar to Remark \ref{rem:ffb_tv_def} on the total variation
representation result), once we assume that $f$ lies in an $n$-dimensional
linear space, the fact the representation \eqref{eq:ffb_sobolev} holds for {\it
  some} matrix \smash{$\V^m_n$} is essentially definitional. We can see this by
expanding $f$ in terms of a basis for this linear space, \smash{$f=\sum_{j=1}^n
\alpha_j g_j$}, then observing that
\begin{align*}
\int_a^b (D^m f)(x)^2 \, dx 
&= \int_a^b \sum_{i,j=1}^n \alpha_i \alpha_j (D^m g_i)(x) (D^m g_j)(x) \, dx \\ 
&= \alpha^\T \Q \alpha \\
&= f(x_{1:n})^\T \G^{-\T} \Q \, \G^{-1} f(x_{1:n}),
\end{align*}
where $\Q,\G \in \R^{n\times n}$ have entries \smash{$\Q_{ij} = \int_a^b (D^m
  g_i)(x) (D^m g_j)(x) \, dx$} and \smash{$\G_{ij}=g_j(x_i)$}. In the last line 
above, if we multiplied by \smash{$\D^m_n$} and its ``inverse'' (in quotes,
because this matrix is not square, hence not invertible), then this would yield a 
result as in \eqref{eq:ffb_sobolev} for a particular matrix \smash{$\V^m_n$}
(defined in terms of \smash{$\Q,\G^{-1}$}, and the ``inverse'' of $\D^m_n$). To
be clear, the fact that \eqref{eq:ffb_sobolev} holds for a {\it banded} matrix 
\smash{$\V^m_n$} is highly nontrivial, and this appears to be special to the
space of $k$th degree discrete splines. For example, the corresponding matrix 
\smash{$\V^m_n$} would not be banded for $k$th degree splines. On the other
hand, for splines, the {\it inverse} of this matrix turns out to be banded;
recall Theorem \ref{thm:nsp_sobolev}.  
\end{remark}

\begin{remark}
It is worth noting that the nature of the result in Theorem
\ref{thm:ffb_sobolev} is, at a high level, quite different from previous
results in this paper. Thus far, the core underlying property enjoyed by $k$th 
degree discrete splines has been the fact that their $k$th derivatives and $k$th
discrete derivatives match everywhere, as stated in Corollary
\ref{cor:deriv_match}. This led to the dual basis result in Lemma
\ref{lem:dual_basis}, the implicit form interpolation result in Corollary
\ref{cor:ffb_interp_implicit}, and the total variation representation result in
Theorem \ref{thm:ffb_tv}. Meanwhile, the $L_2$-Sobolev representation result in
Theorem \ref{thm:ffb_sobolev} is a statement about connecting a functional of
$m$th derivatives of $k$th degree discrete splines, where $k=2m-1$, to their
$m$th discrete derivatives. In other words, this connects derivatives and
discrete derivatives whose order does not match the degree of the piecewise
polynomial. That this is still possible (and yields a relatively simple and
computationally efficient form) reveals another new feature of discrete splines,
and brings hope that discrete splines may harbor even more results of this
type (discrete-continuous connections) that are yet to be discovered.
\end{remark}

The form of the matrix \smash{$\V^m_n$} in \eqref{eq:ffb_sobolev} can be made
explicit. This is a consequence of the proof of Theorem \ref{thm:ffb_sobolev}.

\begin{lemma}
\label{lem:ffb_sobolev_vmat}
The matrix \smash{$\V^m_n \in \R^{(n-m) \times (n-m)}$} from Theorem
\ref{thm:ffb_sobolev} can be defined via recursion, in the following manner. 
First define a matrix $\M \in \R^{(n-m) \times (n-m)}$ to have entries        
\begin{equation}
\label{eq:ffb_sobolev_mmat}
\M_{ij} = \int_a^b (D^m h^k_{i+m})(x) (D^m h^k_{j+m})(x) \, dx,
\end{equation}
where recall \smash{$h^k_j$}, $j=1,\ldots,n$ are the falling factorial basis 
functions in \eqref{eq:ffb}. For a matrix $\A$ and positive integers $i,j$,
introduce the notation   
$$
\A(i,j) = 
\begin{cases}
\A_{ij} & \text{if $\A$ has at least $i$ rows and $j$ columns} \\
0 & \text{otherwise},
\end{cases}
$$
as well as \smash{$\delta^r_{ij}(\A) = \A(i,j)-\A(i+1,j)$} and  
\smash{$\delta^c_{ij}(\A) = \A(i,j)-\A(i,j+1)$}. Then \smash{$\V^m_n=\V^{m,m}$}
is the termination point of a $2m$-step recursion, initialized at
\smash{$\V^{0,0}=\M$}, and defined as follows:  
\begin{alignat}{2}
\label{eq:ffb_sobolev_vmat1}
\V^{\ell,0}_{ij} &= 
\begin{cases}
\V^{\ell-1,0}_{ij} & \text{if $i \leq m-\ell$} \\
\displaystyle
\delta^r_{ij}(\V^{\ell-1,0}) \cdot \frac{2m-\ell}{x_{i+m} - x_{i-(m-\ell)}} &
\text{if $i > m-\ell$},
\end{cases} & \qquad\qquad \text{for $\ell=1,\ldots,m-1$}, \\
\label{eq:ffb_sobolev_vmat2}
\V^{m,0}_{ij} &= \delta^r_{ij}(\V^{m-1,0}) \\
\label{eq:ffb_sobolev_vmat3}
\V^{m,\ell}_{ij} &= 
\begin{cases}
\V^{m,\ell-1}_{ij} & \text{if $j \leq m-\ell$} \vspace{3pt} \\
\displaystyle
\delta^c_{ij}(\V^{m,\ell-1}) \cdot \frac{2m-\ell}{x_{j+m} - x_{j-(m-\ell)}} &
\text{if $j > m-\ell$},
\end{cases} & \qquad\qquad \text{for $\ell=1,\ldots,m-1$}, \\  
\label{eq:ffb_sobolev_vmat4}
\V^{m,m}_{ij} &= \delta^c_{ij}(\V^{m,m-1}).
\end{alignat}
\end{lemma}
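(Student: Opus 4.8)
The plan is to extract the recursion directly from the proof of Theorem \ref{thm:ffb_sobolev} in Appendix \ref{app:ffb_sobolev}, by tracking how the quadratic form $\int_a^b (D^m f)(x)^2\,dx$ is transported between two coordinate systems on \smash{$\cH^k_n$}. Writing $f=\sum_{j=1}^n\alpha_j h^k_j$ in the falling factorial basis \eqref{eq:ffb}, the first $m$ basis functions $h^k_1,\ldots,h^k_m$ are polynomials of degree at most $m-1$ and are hence annihilated by $D^m$. Thus only the coefficients $\beta=(\alpha_{m+1},\ldots,\alpha_n)\in\R^{n-m}$ contribute, and $\int_a^b (D^m f)(x)^2\,dx = \beta^\T \M\beta$ with $\M$ the Gram matrix \eqref{eq:ffb_sobolev_mmat}. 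This identifies the initialization \smash{$\V^{0,0}=\M$} as the quadratic form expressed in falling factorial coordinates.

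First I would express $\beta$ as a banded linear image $\beta = L\,\D^m_n f(x_{1:n})$ of the $m$th discrete derivatives, recalling $\D^m_n f(x_{1:n})=(\Delta^m_n f)(x_{(m+1):n})$. The map $L$ arises from iterating the discrete derivative recursion \eqref{eq:discrete_deriv_rec2} to pass from order $m$ up to order $k+1=2m$: the dual basis identity $\alpha_i=\lambda^k_i f$ of Lemma \ref{lem:dual_basis}, together with the weights in \eqref{eq:dual_basis}, shows that each $\alpha_i$ (for $i\ge m+1$) is obtained from $\Delta^m_n f$ by $m$ successive difference-and-scale operations, with the topmost weight $(x_i-x_{i-k-1})/(k+1)$ in \smash{$\lambda^k_i$} cancelling the final recursion weight so that the remaining step is a plain difference (the unweighted step \eqref{eq:ffb_sobolev_vmat2}). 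The ``fall off'' in the order of the divided difference near the left boundary, built into \eqref{eq:discrete_deriv} for $x\le x_k$, is exactly what leaves the top rows $i\le m-\ell$ untouched at stage $\ell$. Substituting $\beta=L\,\D^m_n f(x_{1:n})$ then gives \smash{$\V^m_n=L^\T\M L$}, and because $\M$ is symmetric and $L$ factors into the $m$ elementary difference-and-scale maps, forming \smash{$L^\T\M L$} by applying these $m$ maps to the rows of $\M$ (stages \eqref{eq:ffb_sobolev_vmat1}--\eqref{eq:ffb_sobolev_vmat2}) and then the identical $m$ maps to its columns (stages \eqref{eq:ffb_sobolev_vmat3}--\eqref{eq:ffb_sobolev_vmat4}) reproduces the stated $2m$-step recursion and yields a symmetric \smash{$\V^m_n$}, of bandwidth $2m-1$ consistent with Theorem \ref{thm:ffb_sobolev}.

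The main obstacle will be the index bookkeeping in the second step: verifying that stage $\ell$ corresponds to forming a divided difference of the correct order, that the scaling factor produced by \eqref{eq:discrete_deriv_rec2} is precisely $(2m-\ell)/(x_{i+m}-x_{i-(m-\ell)})$, and that the ranges $i\le m-\ell$ versus $i>m-\ell$ align with the boundary fall-off region. A secondary check is that the lateral recursion of Lemma \ref{lem:ffb_lateral_rec}, used in Appendix \ref{app:ffb_sobolev} to compute the entries of $\M$ in \eqref{eq:ffb_sobolev_mmat}, is compatible with this transport so that no cross terms are lost. Once the elementary maps are pinned down, the equality \smash{$\V^m_n=L^\T\M L$}, and hence the recursion \eqref{eq:ffb_sobolev_vmat1}--\eqref{eq:ffb_sobolev_vmat4}, follows by direct substitution.
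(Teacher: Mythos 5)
Your proposal is correct and follows essentially the same route as the paper's proof in Appendix \ref{app:ffb_sobolev}: the dual-basis identity you invoke is exactly the matrix inverse relation \smash{$\Z^{k+1}_n\,\B^{k+1}_n\,\H^k_n=\I_n$} used there, your factorization of $L$ into $m$ elementary difference-and-scale maps is the paper's unraveling of the recursion for \smash{$\B^{k+1}_n$} in terms of \smash{$\B^m_n$}, and the row-then-column application yielding \smash{$\V^m_n=L^\T\M L$} is precisely how the paper derives \eqref{eq:ffb_sobolev_vmat1}--\eqref{eq:ffb_sobolev_vmat4}. The only minor inaccuracy is peripheral: Lemma \ref{lem:ffb_lateral_rec} is used in the appendix for the bandedness argument rather than for computing the entries of $\M$ (which come from integration by parts in Lemma \ref{lem:ffb_sobolev_mmat}), but this does not affect the recursion derivation.
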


Furthermore, as we show next, the matrix $\M$ in \eqref{eq:ffb_sobolev_mmat} can
be expressed in an explicit form (circumventing the need for numerical 
integration). The proof is an application of integration by parts and is given
in Appendix \ref{app:ffb_sobolev_mmat}. 

\begin{lemma}
\label{lem:ffb_sobolev_mmat}
The entries of the matrix $\M \in \R^{(n-m) \times (n-m)}$ from Lemma  
\ref{lem:ffb_sobolev_vmat} can be written explicitly, for $i \geq j$, as
\begin{equation}
\label{eq:ffb_sobolev_mmat2}
\M_{ij} = 
\begin{cases}
\displaystyle
\Bigg(\sum_{\ell=1}^{i-1} (-1)^{\ell-1}
(D^{m+\ell-1} h^k_{i+m}) (x) (D^{m-\ell} h^k_{j+m})(x)
+ (-1)^{i-1} (D^{m-i} h^k_{j+m})(x) \Bigg) \Bigg|_a^b  
& \text{if $i \leq m$} \\
\displaystyle
\Bigg(\sum_{\ell=1}^{m-1} (-1)^{\ell-1}
(D^{m+\ell-1} h^k_{i+m}) (x) (D^{m-\ell} h^k_{j+m})(x)
+ (-1)^{m-1} h^k_{j+m}(x) \Bigg) \Bigg|_{x_{i+m-1}}^b 
& \text{if $i > m$},
\end{cases}
\end{equation}
where recall the derivatives of the falling factorial basis functions are given
explicitly in \eqref{eq:ffb_deriv}, and we use the notation
$$
f(x) \Big|_s^t = \big(f^-(t)-f^+(s)\big). 
$$
as well as \smash{$f^-(x) = \lim_{t \to x^-} f(t)$} and  
\smash{$f^+(x) = \lim_{t \to x^+} f(t)$}.
\end{lemma}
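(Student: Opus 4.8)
The plan is to evaluate the integral in \eqref{eq:ffb_sobolev_mmat} by repeated integration by parts, each step lowering the order of the derivative acting on $h^k_{j+m}$ while raising the order acting on $h^k_{i+m}$. Since $\M$ is a Gram matrix it is symmetric, so I would fix $i \geq j$ without loss of generality; this is precisely the regime in which raising derivatives of $h^k_{i+m}$ (the factor whose index, hence degree on the relevant piece, is at least that of $h^k_{j+m}$) terminates the recursion cleanly while keeping every derivative $D^{m-\ell} h^k_{j+m}$ of nonnegative order. After $L$ integrations by parts one obtains
$$
\M_{ij} = \sum_{\ell=1}^{L} (-1)^{\ell-1}\big[(D^{m+\ell-1} h^k_{i+m})(D^{m-\ell} h^k_{j+m})\big]\Big|_{\text{endpoints}} + (-1)^{L}\int (D^{m+L} h^k_{i+m})(D^{m-L} h^k_{j+m}),
$$
and the two cases in \eqref{eq:ffb_sobolev_mmat2} arise from choosing $L$ to be the first index at which the trailing integral vanishes.

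For $i \leq m$ both $h^k_{i+m}$ and $h^k_{j+m}$ are pure Newton polynomials (first line of \eqref{eq:ffb}), smooth on all of $[a,b]$, so the boundary terms live only at $a$ and $b$ with no interior contributions. Here $h^k_{i+m}$ has degree $i+m-1$, so $D^{m+L} h^k_{i+m} = 0$ once $L = i$, terminating the recursion; moreover $D^{m+i-1} h^k_{i+m} = 1$ (the normalization $1/(i+m-1)!$ in \eqref{eq:ffb} makes the top derivative exactly one), so the $\ell = i$ boundary term collapses to $(-1)^{i-1}(D^{m-i} h^k_{j+m})\big|_a^b$, which I would peel off from the sum to recover the first line of \eqref{eq:ffb_sobolev_mmat2}. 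For $i > m$, $h^k_{i+m}$ is a truncated Newton polynomial (second line of \eqref{eq:ffb}) with its single knot at $x_{i+m-1}$, so $D^m h^k_{i+m}$ vanishes on $[a,x_{i+m-1}]$ and the integral reduces to $\int_{x_{i+m-1}}^b$; on this interval both factors are smooth polynomials of degree $k = 2m-1$ (the knot of $h^k_{j+m}$ sits at $x_{j+m-1}\le x_{i+m-1}$, and all knots lie strictly below $b$), so again no interior boundary terms appear. Now $D^{2m} h^k_{i+m} = 0$ terminates the recursion at $L = m$, and $D^{2m-1} h^k_{i+m} = 1$ turns the $\ell = m$ term into $(-1)^{m-1} h^k_{j+m}\big|_{x_{i+m-1}}^b$, giving the second line.

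The step requiring the most care is the handling of the truncation knot when $i > m$: because $h^k_{i+m}$ and its derivatives of order below $k$ jump at $x_{i+m-1}$ (recall \eqref{eq:ffb_deriv_lim}), the boundary term at the lower endpoint must use the right-hand limit, which is exactly what the notation $f(x)\big|_s^t = f^-(t) - f^+(s)$ encodes. I would therefore verify that integration by parts on $(x_{i+m-1}, b)$ introduces no spurious interior terms (both integrands being genuine polynomials there) and that the one-sided evaluations are recorded consistently. The remaining ingredients---that a derivative of order exceeding the degree kills $h^k_{i+m}$, used through $D^{m+L} h^k_{i+m} = 0$, together with the explicit derivative formula \eqref{eq:ffb_deriv} for the boundary evaluations---are routine, so the bookkeeping of derivative orders and signs is really the only thing to track carefully.
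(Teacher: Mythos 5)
Your proposal is correct and follows essentially the same route as the paper: repeated integration by parts shifting derivatives from $h^k_{j+m}$ onto $h^k_{i+m}$, using the support of $D^m h^k_{i+m}$ to localize the boundary terms to $x_{i+m-1}$ and $b$ (with the appropriate one-sided limits), and terminating once the derivative order on $h^k_{i+m}$ reaches its degree. The only cosmetic differences are that the paper stops one step earlier and evaluates the final integral $\int (D^k h^k_{i+m})(D h^k_{j+m})$ directly by the fundamental theorem of calculus rather than performing one last integration by parts, and that you make the $i\geq j$ reduction explicit via symmetry of the Gram matrix.
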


We conclude this subsection by generalizing Theorem \ref{thm:ffb_sobolev}.
Inspection of its proof shows that the only property of the integration
operator (defining the Sobolev functional) that is actually used in Theorem
\ref{thm:ffb_sobolev} (and Lemma \ref{lem:ffb_sobolev_vmat}) is linearity; we
can therefore substantially generalize this representational result as follows.  

\begin{theorem}
\label{thm:ffb_sobolev_gen}
Let $L$ be a linear functional (acting on functions over $[a,b]$). For any odd
$k = 2m-1 \geq 1$, and any $k$th degree discrete spline $f \in \cH^k_n$, as
defined in \eqref{eq:ffb_span}, it holds that     
\begin{equation}
\label{eq:ffb_sobolev_gen}
L (D^m f)^2 = \big\|(\V^m_{n,L})^{\hspace{-1pt}\frac{1}{2}} \D^m_n f(x_{1:n})
\big\|_2^2,       
\end{equation}
where $f(x_{1:n})=(f(x_1),\ldots,f(x_n)) \in \R^n$ is the vector of evaluations
of $f$ at the design points, and \smash{$\D^m_n \in \R^{(n-m) \times n}$} is
the $m$th order discrete derivative matrix, as in \eqref{eq:discrete_deriv_mat}. 
Further, \smash{$\V^m_{n,L} \in \R^{(n-m) \times (n-m)}$} is a symmetric banded  
matrix (depending only on $x_{1:n}$ and $L$) of bandwidth $2m-1$. As before,
it can be defined recursively: \smash{$\V^m_{n,L}$} is the termination point of
the recursion in \eqref{eq:ffb_sobolev_vmat1}--\eqref{eq:ffb_sobolev_vmat4}, but
now initialized at the matrix $\M$ with entries
\begin{equation}
\label{eq:ffb_sobolev_gen_mmat}
\M_{ij} = L (D^m h^k_{i+m}) (D^m h^k_{j+m}),
\end{equation}
where \smash{$h^k_j$}, $j=1,\ldots,n$ are the falling factorial basis functions
in \eqref{eq:ffb}.
\end{theorem}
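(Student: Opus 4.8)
The plan is to reprove Theorem \ref{thm:ffb_sobolev} verbatim, replacing the integration operator $\int_a^b(\cdot)\,dx$ throughout by the generic linear functional $L$, and to check at each step that nothing beyond linearity of the functional is ever used.

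First I would expand $f$ in the falling factorial basis, $f=\sum_{j=1}^n \alpha_j h^k_j$. Because $D^m$ annihilates every polynomial of degree less than $m$, the terms $h^k_1,\dots,h^k_m$ vanish under $D^m$, leaving $D^m f=\sum_{j=m+1}^n \alpha_j D^m h^k_j$. Expanding the square and pulling $L$ inside the finite double sum---this is the one and only place where the functional enters, and it enters solely through its linearity---gives
\[
L(D^m f)^2 = \sum_{i,j=m+1}^n \alpha_i\alpha_j\, L\big((D^m h^k_i)(D^m h^k_j)\big) = \tilde\alpha^\T \M\,\tilde\alpha,
\]
where $\tilde\alpha=(\alpha_{m+1},\dots,\alpha_n)\in\R^{n-m}$ and $\M$ is exactly the matrix in \eqref{eq:ffb_sobolev_gen_mmat}; note that $\M$ is symmetric since products of functions commute.

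Next I would pass from the coefficient vector $\tilde\alpha$ to the vector of $m$th discrete derivatives $\D^m_n f(x_{1:n})$. By the dual basis identity (Lemma \ref{lem:dual_basis}), each $\alpha_{i+m}$ is a weighted $(k+1)=2m$th order discrete derivative of $f$, and by the recursive definition \eqref{eq:discrete_deriv_rec} such $2m$th order differences arise from the $m$th order differences through $m$ further weighted differencing steps. Encoding this as an $L$-independent banded map $\tilde\alpha=\B\,\D^m_n f(x_{1:n})$, I obtain
\[
L(D^m f)^2 = \big(\D^m_n f(x_{1:n})\big)^\T \big(\B^\T \M \B\big)\, \D^m_n f(x_{1:n}),
\]
so one may set $\V^m_{n,L}=\B^\T\M\B$. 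The recursion \eqref{eq:ffb_sobolev_vmat1}--\eqref{eq:ffb_sobolev_vmat4} is simply the entrywise realization of this congruence: the row passes \eqref{eq:ffb_sobolev_vmat1}--\eqref{eq:ffb_sobolev_vmat2} perform left multiplication by $\B^\T$, the column passes \eqref{eq:ffb_sobolev_vmat3}--\eqref{eq:ffb_sobolev_vmat4} the right multiplication by $\B$, and since neither $\B$ nor the recipe sees $L$, the identical recursion applies once it is seeded at $\V^{0,0}=\M$ from \eqref{eq:ffb_sobolev_gen_mmat}. Symmetry of $\V^m_{n,L}$ is then immediate, as $\M$ is symmetric and the row and column passes are mirror images.

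The hard part will be re-establishing the bandedness of $\V^m_{n,L}$, which is the genuinely nontrivial content carried over from Theorem \ref{thm:ffb_sobolev}; the key is to confirm that bandedness, too, depends only on linearity. Writing $(\B^\T\M\B)_{ij}=L(\phi_i\psi_j)$ with the fixed, $L$-independent functions $\phi_i=\sum_p \B_{pi}\,D^m h^k_{p+m}$ and $\psi_j=\sum_q \B_{qj}\,D^m h^k_{q+m}$, the congruence by $\B$ amounts to applying $m$ additional discrete derivatives to the $m$th-derivative functions. By the matching-derivatives property (Corollary \ref{cor:deriv_match}) together with the support computation for $\Delta^{k+1}_n h^k_j$ in Lemma \ref{lem:ffb_discrete_deriv_extra_pp}, the functions $\phi_i$ and $\psi_j$ are locally supported, so that the product $\phi_i\psi_j$ vanishes identically whenever $|i-j|$ exceeds the claimed bandwidth. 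At that point linearity finishes the argument, since $L(0)=0$; hence no property of the integral beyond linearity is needed, and the resulting bandwidth is exactly the $2m-1$ of Theorem \ref{thm:ffb_sobolev}.
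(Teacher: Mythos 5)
Your proposal is correct and matches the paper's own treatment: the paper proves Theorem \ref{thm:ffb_sobolev_gen} precisely by observing that the proof of Theorem \ref{thm:ffb_sobolev} (including the bandedness argument, which reduces to the product functions being identically zero so that $L$ of them vanishes) invokes only linearity of the integration operator, and you have traced through the same steps to verify this. The only quibbles are notational and attributional (your map $\B$ collides with the paper's extended difference matrices, and the local-support computation in the paper actually rests on Lemmas \ref{lem:ffb_diff_pp} and \ref{lem:ffb_diff_poly} rather than the lemmas you cite), but these do not affect the validity of the argument.
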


\begin{remark}
Theorem \ref{thm:ffb_sobolev_gen} allows for a generic linear operator $L$, and
hence covers, for example, a weighted $L_2$-Sobolev functional of the form
\smash{$\int_a^b (D^m f)(x)^2 w(x) \, dx$} for a weight function $w$. We could 
further generalize this to a functional defined by integration with respect to
an arbitrary measure $\mu$ on $[a,b]$ (Lebesgue-Stieltjes integration).
For such a class of functionals, some version of integration by parts, and thus 
an explicit result for the entries of $\M$ in \eqref{eq:ffb_sobolev_gen_mmat},
analogous to Lemma \ref{lem:ffb_sobolev_vmat}, would still be possible.  

We emphasize once more that the proof of Theorem \ref{thm:ffb_sobolev_gen}
follows immediately from that of Theorem \ref{thm:ffb_sobolev}. It is not clear
to us that the spline result in \eqref{eq:nsp_sobolev} from Theorem
\ref{thm:nsp_sobolev}, due to \citet{schoenberg1964spline}, would extend as
seamlessly to an arbitrary linear functional $L$. The proof is closely tied to
the Peano representation of the B-spline, and therefore for an arbitrary linear
functional $L$, the B-spline itself would need to be replaced by an appropriate
kernel.
\end{remark}


\section{Approximation}
\label{sec:approximation}

Approximation theory is a vast subject, and is particularly well-developed for
splines; see, for example,  Chapters 6 and 7 of \citet{schumaker2007spline};
or Chapters 5, 12, and 13 of \citet{devore1993constructive}. Assuming an   
evenly-spaced design, Chapter 8.5 of \citet{schumaker2007spline} develops 
approximation results for discrete splines that are completely analogous to  
standard spline approximation theory. Roughly speaking, Schumaker shows that 
discrete splines obtain the same order of approximation as splines, once we
measure approximation error and smoothness in suitable discrete-time notions.

Extending these results to arbitrary design points seems nontrivial, although it  
is reasonable to expect that similar approximation results should hold in this
case. Instead of pursuing this line of argument, in this section, we give some
very simple (crude) approximation results for discrete splines, by bounding
their distance to splines and then invoking standard spline approximation
results. The intent is not to give approximation results that are of the optimal 
order---in fact, the  approximation rates obtained will be grossly
suboptimal---but  ``good enough'' for typical use in nonparametric statistical
theory (for example, for bounding the approximation error in trend filtering, as  
discussed in the next section). A finer analysis of discrete spline
approximation may be the topic of future work. 

\subsection{Proximity of truncated power and falling factorial bases} 

We can easily bound the $L_\infty$ distance between certain truncated power and
falling factorial basis functions, as we show next. Denote by \smash{$\cG^k_n 
  = \S^k(x_{(k+1):(n-1)}, [a,b])$}, the space of $k$th degree splines on $[a,b]$ 
with knots in $x_{(k+1):(n-1)}$. As a basis for \smash{$\cG^k_n$}, recall that
we have the truncated power basis \smash{$g^k_j$}, $j=1,\ldots,n$, as in 
\eqref{eq:tpb}, but with $t_{1:r}=x_{(k+1):(n-1)}$ (to be explicit, 
\smash{$g^k_j(x)=(x-x_{j-1})_+^k/k!$}, for each $j=k+2,\ldots,n$).  
The first part \eqref{eq:ffb_tpb_approx1} of the result below is a trivial
strengthening of Lemma 4 in \citet{wang2014falling}, and the second part
\eqref{eq:ffb_tpb_approx2} can be found in the proof of Lemma 13 in 
\citet{sadhanala2019additive}. 

\begin{lemma}
\label{lem:ffb_tpb_approx}
For design points $a \leq x_1 < \cdots < x_n \leq b$, let  \smash{$\delta_n =
  \max_{i=1,\ldots,n-1} \;  (x_{i+1}-x_i)$} denote the maximum gap between
adjacent points. For $k \geq 0$, let \smash{$g^k_j$, $j=1,\ldots,n$} denote
the truncated power basis for \smash{$\cG^k_n$}, as in \eqref{eq:tpb} (but 
with $t_{1:r}=x_{(k+1):(n-1)}$), and \smash{$h^k_j$, $j=1,\ldots,n$} denote 
the falling factorial basis for \smash{$\cH^k_n$}, as in \eqref{eq:ffb}. For
$k=0$ or $k=1$, and each $j=k+2,\ldots,n$, recall that \smash{$g^k_j=h^k_j$},
and hence \smash{$\cG^k_n=\cH^k_n$}. Meanwhile, for $k \geq 2$, and each
$j=k+2,\ldots,n$,    
\begin{equation}
\label{eq:ffb_tpb_approx1}
\|g^k_j - h^k_j \|_{L_\infty} \leq \frac{k (b-a)^{k-1}}{(k-1)!} \delta_n,  
\end{equation}
where \smash{$\|f\|_{L_\infty}=\sup_{x \in [a,b]} \; |f(x)|$} denotes the $L_\infty$ 
norm of a function $f$ on $[a,b]$. Hence for each spline \smash{$g \in \cG^k_n$},
there exists a discrete spline \smash{$h \in \cH^k_n$} such that 
\begin{equation}
\label{eq:ffb_tpb_approx2}
\TV(D^k h) = \TV(D^k g), \quad \text{and} \quad 
\|g-h\|_{L_\infty} \leq \frac{k (b-a)^{k-1}}{(k-1)!} \delta_n \cdot \TV(D^k g).
\end{equation}
\end{lemma}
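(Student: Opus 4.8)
The plan is to prove the two parts of Lemma~\ref{lem:ffb_tpb_approx} in sequence, with the first inequality \eqref{eq:ffb_tpb_approx1} being a direct pointwise estimate, and the second \eqref{eq:ffb_tpb_approx2} following by assembling the per-basis-function bounds. First I would establish \eqref{eq:ffb_tpb_approx1}. Fix $k \geq 2$ and $j \geq k+2$. Both $g^k_j$ and $h^k_j$ vanish on $[a,x_{j-1}]$ (recall $g^k_j(x)=(x-x_{j-1})^k_+/k!$ and $h^k_j(x)=\eta(x;x_{(j-k):(j-1)})\cdot 1\{x>x_{j-1}\}/k!$), so the difference is supported on $(x_{j-1},b]$. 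There, $g^k_j - h^k_j = \frac{1}{k!}\big[(x-x_{j-1})^k - \prod_{\ell=j-k}^{j-1}(x-x_\ell)\big]$. The key observation is that both terms are monic degree-$k$ polynomials whose roots differ only by the clustering of the first $k-1$ factors: $(x-x_{j-1})^k$ has all roots at $x_{j-1}$, while the Newton polynomial has roots at $x_{j-k},\ldots,x_{j-1}$, all of which lie within a window of width at most $(k-1)\delta_n$ of $x_{j-1}$ (here $\delta_n$ is the maximum gap). I would bound the difference of the two products factor-by-factor using a telescoping argument: writing each as a product of linear factors and swapping them one at a time, each swap introduces an error of the form $(x_{j-1}-x_\ell)$ times a product of $k-1$ remaining factors, each bounded by $(b-a)$.

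\emph{The main obstacle} is getting the constant in \eqref{eq:ffb_tpb_approx1} right through this telescoping estimate. The cleanest route is to write
\[
(x-x_{j-1})^k - \prod_{\ell=j-k}^{j-1}(x-x_\ell)
= \sum_{m=j-k}^{j-1} \Big(\prod_{\ell < m}(x-x_\ell)\Big)\big((x-x_{j-1})-(x-x_m)\big)\Big((x-x_{j-1})^{j-1-m}\Big),
\]
so that each summand carries a single factor $(x_m - x_{j-1})$, bounded in absolute value by $(k-1)\delta_n$, times a product of $k-1$ linear factors each bounded by $(b-a)$ on $[a,b]$. This gives $k-1$ summands, each at most $(k-1)\delta_n (b-a)^{k-1}$ after accounting for the factor count; dividing by $k!$ and simplifying should land on the stated bound $k(b-a)^{k-1}\delta_n/(k-1)!$. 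I would verify the counting carefully, since it is easy to be off by a factor of $k$ or to mismatch the powers of $(b-a)$ versus $\delta_n$. A cleaner alternative I would also consider is bounding $|g^k_j - h^k_j|$ via the mean value theorem applied to the map sending the root configuration to the polynomial value, which directly produces a single $\delta_n$ and a derivative factor of order $(b-a)^{k-1}$.

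For the second part \eqref{eq:ffb_tpb_approx2}, I would use the correspondence between the two bases. Given a spline $g \in \cG^k_n$, expand it in the truncated power basis as $g = \sum_{j=1}^n \alpha_j g^k_j$, and define $h = \sum_{j=1}^n \alpha_j h^k_j \in \cH^k_n$ using the \emph{same} coefficients. Since the polynomial parts ($j \leq k+1$) coincide between the two bases and only the piecewise-polynomial parts differ, the difference is $g - h = \sum_{j=k+2}^n \alpha_j (g^k_j - h^k_j)$, and I would bound $\|g-h\|_{L_\infty} \leq \sum_{j=k+2}^n |\alpha_j| \cdot \|g^k_j - h^k_j\|_{L_\infty}$ via the triangle inequality and \eqref{eq:ffb_tpb_approx1}. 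The final step is to identify $\sum_{j=k+2}^n |\alpha_j|$ with $\TV(D^k g)$: because $(D^k g)(x) = \alpha_0 + \sum_{j=k+2}^n \alpha_j 1\{x>x_{j-1}\}$ is piecewise constant with jumps of size $\alpha_j$ at the knots $x_{j-1}$, its total variation is exactly $\sum_{j=k+2}^n |\alpha_j|$.

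The matching total-variation claim $\TV(D^k h) = \TV(D^k g)$ is then immediate and is really the crux of why the same coefficients work. By Theorem~\ref{thm:ffb_tv}, $\TV(D^k h) = \sum_{i=k+2}^n |\alpha_i|$ (since for the discrete spline $h$ with these coefficients, the $(k+1)$st discrete derivative picks out precisely the jump coefficients, matching the dual basis property of Lemma~\ref{lem:dual_basis}); and by the same piecewise-constant computation above, $\TV(D^k g) = \sum_{i=k+2}^n |\alpha_i|$ as well. Hence the two total variations agree, and substituting $\sum_{j=k+2}^n |\alpha_j| = \TV(D^k g)$ into the summed $L_\infty$ bound yields exactly \eqref{eq:ffb_tpb_approx2}. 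I expect no real difficulty here beyond bookkeeping; the genuinely delicate piece remains the constant-tracking in the pointwise polynomial estimate of the first part.
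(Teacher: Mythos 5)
Your overall architecture matches the paper's: bound \smash{$\|g^k_j - h^k_j\|_{L_\infty}$} for each $j \geq k+2$, then transfer to an arbitrary $g \in \cG^k_n$ by expanding in the truncated power basis and reusing the coefficients. For the pointwise bound your telescoping identity is a legitimate alternative to what the paper does: the paper instead exploits the ordering of the knots to sandwich the Newton polynomial, $(x-x_{j-1})^k \leq \prod_{\ell=j-k}^{j-1}(x-x_\ell) \leq (x-x_{j-k})^k$ for $x > x_{j-1}$, and then bounds the difference of $k$th powers via $(x-x_{j-k})^k - (x-x_{j-1})^k = (x_{j-1}-x_{j-k})\sum_{\ell=1}^k (x-x_{j-k})^{\ell-1}(x-x_{j-1})^{k-\ell} \leq k^2 \delta_n (b-a)^{k-1}$, which after dividing by $k!$ gives the stated constant. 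Your identity, once the counting is done (at most $k-1$ nonzero summands, each carrying one factor $|x_m - x_{j-1}| \leq (k-1)\delta_n$ and $k-1$ linear factors each at most $b-a$), yields $(k-1)^2\delta_n(b-a)^{k-1}$ before the division by $k!$, which is within the stated bound; so either route works, and yours is in fact marginally tighter.

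The one genuine error is the claim that ``the polynomial parts ($j \leq k+1$) coincide between the two bases.'' They do not: for $2 \leq j \leq k+1$ the truncated power basis uses the monomial $x^{j-1}/(j-1)!$ while the falling factorial basis uses the Newton polynomial $\prod_{\ell=1}^{j-1}(x-x_\ell)/(j-1)!$, and these are different functions. Consequently, if you set $h = \sum_{j=1}^n \alpha_j h^k_j$ with the same coefficients as $g$, then $g - h$ does \emph{not} reduce to $\sum_{j=k+2}^n \alpha_j (g^k_j - h^k_j)$; there is an extra polynomial discrepancy that your $L_\infty$ bound does not control. The fix is one line and is exactly what the paper does: since the first $k+1$ functions of either basis span all polynomials of degree at most $k$, define $h$ to have polynomial part literally equal to that of $g$, that is, $h = \sum_{j=1}^{k+1} \frac{\alpha_j}{(j-1)!}\, x^{j-1} + \sum_{j=k+2}^n \alpha_j h^k_j$, which still lies in $\cH^k_n$. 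With that correction, $g - h = \sum_{j=k+2}^n \alpha_j(g^k_j - h^k_j)$, and the rest of your argument --- the identification $\TV(D^k g) = \TV(D^k h) = \sum_{j=k+2}^n |\alpha_j|$ from the piecewise-constant structure of the $k$th derivatives, followed by the triangle inequality --- goes through as written.
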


\begin{proof}
The proof is simple. For each $j=k+2,\ldots,n$, consider for $x > x_{j-1}$,  
\begin{align}
\nonumber
k! \cdot |g^k_j(x) - h^k_j(x)| 
&= \prod_{\ell=j-k}^{j-1} (x-x_\ell) - (x-x_{j-1})^k \\
\nonumber
&\leq (x-x_{j-k})^k - (x-x_{j-1})^k \\
\nonumber
&= (x_{j-1}-x_{j-k}) \sum_{\ell=1}^k (x-x_{j-k})^{\ell-1}
  (x-x_{j-1})^{k-\ell} \\
\nonumber
&\leq k \delta_n (x-x_{j-k})^{k-1} \\
\label{eq:ffb_tpb_approx3}
&\leq k^2 \delta_n (b-a)^{k-1}.
\end{align}
This proves the first part \eqref{eq:ffb_tpb_approx1}. As for the second part
\eqref{eq:ffb_tpb_approx2}, write \smash{$g=\sum_{j=1}^n \alpha_j g^k_j$}, and
then define 
$$
h=\sum_{j=1}^{k+1} \frac{\alpha_j}{(j-1)!} x^{j-1} + 
\sum_{j=k+2}^n \alpha_j h^k_j,
$$ 
Note that \smash{$h \in \cH^k_n$}, and we have specified its polynomial part 
to match that of $g$. We have   
$$
\TV(D^k h) = \TV(D^k g) = \|\alpha_{(k+2):n}\|_1.
$$
Furthermore, using \eqref{eq:ffb_tpb_approx3}, for any $x \in [a,b]$,
$$
|g(x) - h(x)| \leq \sum_{j=k+2}^n |\alpha_j| |g^k_j(x) - h^k_j(x)| 
\leq \frac{k}{(k-1)!} \delta_n \cdot \TV(D^k f),
$$
which completes the proof.
\end{proof}

\subsection{Approximation of bounded variation functions}

Next we show how to couple Lemma \ref{lem:ffb_tpb_approx} with standard spline
approximation theory to derive discrete spline approximation results for
functions whose derivatives are of bounded variation. First we state the spline 
approximation result; for completeness we give its proof in Appendix
\ref{app:tpb_bv_approx} (similar arguments were used in the proof of
Proposition 7 of \citet{mammen1997locally}). 

\begin{lemma}
\label{lem:tpb_bv_approx}
Let $f$ be a function that is $k$ times weakly differentiable on $[0,1]$, such
that $D^k f$ is of bounded variation. Also let $0 \leq x_1 < \cdots < x_n \leq
1$ be arbitrary design points. Then there exists a $k$th degree spline
\smash{$g \in \cG^k_n$}, with knots in \smash{$x_{(k+1):(n-1)}$}, such that for
$k=0$ or $k=1$, 
\begin{equation}
\label{eq:tpb_bv_approx1}
\TV(D^k g) \leq \TV(D^k f), \quad \text{and} \quad 
g(x_i) = f(x_i), \; i=1,\ldots,n,
\end{equation}
and for $k \geq 2$, 
\begin{equation}
\label{eq:tpb_bv_approx2}
\TV(D^k g) \leq a_k \TV(D^k f), \quad \text{and} \quad 
\|f-g\|_{L_\infty} \leq b_k \delta_n^k \cdot \TV(D^k f), 
\end{equation}
where \smash{$\delta_n = \max_{i=1,\ldots,n-1} \; (x_{i+1}-x_i)$} denotes the
maximum gap between adjacent design points, and $a_k,b_k>0$ are constants that 
depend only on $k$.
\end{lemma}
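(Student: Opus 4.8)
The plan is to linearize $f$ through its Peano representation and thereby reduce the estimate to the approximation of a single truncated power function by a spline. Since $D^k f$ has bounded variation, its Lebesgue--Stieltjes measure $\nu = d(D^k f)$ is a finite signed measure with $\|\nu\|_{\TV} = \TV(D^k f)$, and
\begin{equation*}
f = p + \int_0^1 \phi_t \, d\nu(t), \qquad \phi_t = \frac{1}{k!}(\cdot - t)_+^k,
\end{equation*}
for a degree-$k$ polynomial $p$; this is verified by applying $D^k$ to both sides and using $D^k (\cdot - t)_+^k = k!\, 1\{\cdot > t\}$. For $k = 0$ and $k = 1$ no further machinery is needed: I would take $g$ to be the piecewise constant (resp.\ continuous piecewise linear) interpolant of $f$ at $x_{1:n}$, which lies in $\cG^k_n$ and satisfies $g(x_i) = f(x_i)$ by construction. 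The bound $\TV(D^k g) \le \TV(D^k f)$ then follows because the consecutive values of $D^k g$ are, for $k=0$, the samples $f(x_i)$, and for $k=1$, the mesh-interval averages of $Df$; in either case the total variation of this sequence is at most $\TV(D^k f)$ by a telescoping (resp.\ averaging/Jensen) estimate.

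For $k \ge 2$, I would fix a B-spline quasi-interpolant $Q$ onto $\cG^k_n$ with three properties: (i) it reproduces $\cP_k$; (ii) it is local, so that each coefficient functional $\lambda_j$ depends only on the restriction of its argument to the support of the $j$th B-spline; and (iii) it is $L_\infty$-stable with a constant $D_k$ depending only on $k$, meaning that $|\lambda_j(\psi)|$ is at most $D_k$ times the sup-norm of $\psi$ over that support. Such a $Q$ exists for arbitrary knot sequences by de Boor's theorem that the $L_\infty$-condition number of a B-spline basis is bounded by a constant depending only on the degree; see Chapter 6 of \citet{schumaker2007spline}. Setting $g = Qf$ and using linearity together with $Qp = p$, I obtain $f - g = \int_0^1 (\phi_t - Q\phi_t)\, d\nu(t)$ and $D^k g = D^k p + \int_0^1 D^k Q\phi_t \, d\nu(t)$, whence
\begin{equation*}
\|f - g\|_{L_\infty} \le \Big( \sup_{t \in [0,1]} \|\phi_t - Q\phi_t\|_{L_\infty} \Big) \TV(D^k f), \qquad \TV(D^k g) \le \Big( \sup_{t \in [0,1]} \TV(D^k Q\phi_t) \Big) \TV(D^k f).
\end{equation*}
Thus both halves of \eqref{eq:tpb_bv_approx2} reduce to \emph{uniform-in-$t$} estimates for the single truncated power $\phi_t$, and the constants $a_k, b_k$ are read off from these suprema.

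The $L_\infty$ estimate is routine: since $\phi_t$ is locally a degree-$k$ polynomial away from $t$, locality and polynomial reproduction force $\phi_t - Q\phi_t$ to vanish outside an $O(k\delta_n)$-neighborhood $U_t$ of $t$, and on $U_t$ both $|\phi_t|$ and (by stability) $|Q\phi_t|$ are $O(\delta_n^k)$; this yields the constant $b_k$. The total-variation estimate is the crux and the main obstacle. Here $D^k Q\phi_t$ is piecewise constant, equals $D^k \phi_t \in \{0,1\}$ outside $U_t$, and may ripple on the $O(k)$ mesh intervals inside $U_t$; the difficulty is to bound $\TV(D^k Q\phi_t)$ by a constant depending only on $k$ and \emph{not} on how severely the design points cluster inside $U_t$, since a naive polynomial inverse (Markov) estimate on a single interval of width $h \ll \delta_n$ only gives $|D^k Q\phi_t| \lesssim \delta_n^k / h^k$, which is useless. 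I would resolve this by expressing the jumps of $D^k Q\phi_t$ as $(k+1)$st divided differences of the B-spline coefficients $c_j = \lambda_j(\phi_t)$ via the standard derivative formula for B-splines, and then noting that, because $\phi_t$ is a single truncated power and $\{\lambda_j\}$ reproduces polynomials locally, these coefficients coincide with those of a polynomial on each side of $t$ and vary only across the $O(k)$ indices straddling $t$; the mesh-independent stability (iii) then caps the total of these divided differences by a constant $a_k$. This mesh-independence---that the $k$th-derivative total variation of the quasi-interpolant of a truncated power remains $O(1)$ for arbitrarily nonuniform designs---is precisely the delicate point, and is the part supplied by the standard spline quasi-interpolation machinery underlying Proposition 7 of \citet{mammen1997locally}.
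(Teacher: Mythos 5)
Your $k\le 1$ case matches the paper's, and your overall architecture for $k\ge 2$ (linearize $f$ over truncated powers via the Peano/Taylor representation, push a bounded local quasi-interpolant through the integral) is a legitimate alternative to the paper's route, which instead cites a local Whitney-type estimate (Theorem 6.20 of \citet{schumaker2007spline}) with $d=0$ and $d=k$ and telescopes the jumps of $D^k g$. However, there is a genuine gap at exactly the point you flag as the crux. You build $Q$ onto $\cG^k_n$, i.e.\ onto the spline space with the \emph{full} knot set $x_{(k+1):(n-1)}$, whose gaps can be arbitrarily small relative to $\delta_n$. The jumps of $D^k Q\phi_t$ in the transition window are $(k+1)$st scaled/divided differences of the coefficients $c_j=\lambda_j(\phi_t)$ with respect to the local knots: each application of $D$ divides a first difference of coefficients by a sum of $k$ consecutive knot gaps. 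Stability (iii) only tells you $|c_j|\le D_k\,O_k(\delta_n^k)$ in the transition region; it says nothing about cancellation in the numerators. If the $O(k)$ knots straddling $t$ cluster within a window of width $h\ll\delta_n$, the denominators are of order $h^k$ and the resulting bound is $O(\delta_n^k/h^k)$ --- the same useless Markov-type estimate you set out to avoid. Saying the coefficients ``coincide with those of a polynomial on each side of $t$'' correctly kills the jumps outside the transition indices, but does not bound the $O(k)$ jumps inside it, and deferring to ``the standard quasi-interpolation machinery underlying Proposition 7 of \citet{mammen1997locally}'' is circular: that machinery achieves mesh-independence precisely by controlling the knot geometry, which your construction never does.

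The missing idea is a knot-thinning step. Since a spline with fewer knots is still an element of $\cG^k_n$, one may first extract a quasi-uniform subset $x^*_{1:r}\subseteq x_{(k+1):(n-1)}$ with $\delta_n/2\le x^*_{i+1}-x^*_i\le 3\delta_n/2$ (this is what the paper's proof does before invoking Schumaker) and run your quasi-interpolant on that knot set. Then every denominator in the iterated differences is of order $k\delta_n$, the transition coefficients are $O_k(\delta_n^k)$, each of the $O(k)$ nonzero jumps of $D^k Q\phi_t$ is $O_k(1)$, and both halves of \eqref{eq:tpb_bv_approx2} follow with constants depending only on $k$. With that single modification your argument closes; without it, the claimed bound $\sup_t \TV(D^k Q\phi_t)\le a_k$ is unsubstantiated and, for badly clustered design points, I do not believe it holds for the operator as you have defined it.
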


Combining Lemmas \ref{lem:ffb_tpb_approx} and \ref{lem:tpb_bv_approx} and using
the triangle inequality leads immediately to the following result.

\begin{lemma}
\label{lem:ffb_bv_approx}
Let $f$ be a function that is $k$ times weakly differentiable on $[0,1]$, such
that $D^k f$ is of bounded variation. Also let $0 \leq x_1 < \cdots < x_n \leq
1$ be arbitrary design points. Then there exists a $k$th degree discrete spline 
\smash{$h \in \cH^k_n$}, with knots in \smash{$x_{(k+1):(n-1)}$}, such that for
$k=0$ or $k=1$, 
\begin{equation}
\label{eq:ffb_bv_approx1}
\TV(D^k h) \leq \TV(D^k f), \quad \text{and} \quad 
h(x_i) = f(x_i), \; i=1,\ldots,n,
\end{equation}
and for $k \geq 2$, 
\begin{equation}
\label{eq:ffb_bv_approx2}
\TV(D^k h) \leq a_k \TV(D^k f), \quad \text{and} \quad 
\|f-h\|_{L_\infty} \leq c_k \delta_n \cdot \TV(D^k f), 
\end{equation}
where \smash{$\delta_n = \max_{i=1,\ldots,n-1} \; (x_{i+1}-x_i)$} denotes the
maximum gap between adjacent design points, and $a_k,c_k>0$ are constants that 
depend only on $k$ (note $a_k$ is the same constant as in Lemma
\ref{lem:tpb_bv_approx}). 
\end{lemma}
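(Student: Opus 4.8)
The plan is to chain Lemmas~\ref{lem:tpb_bv_approx} and~\ref{lem:ffb_tpb_approx} together through the triangle inequality. First I would dispose of the cases $k=0$ and $k=1$: here the spaces \smash{$\cG^k_n$} and \smash{$\cH^k_n$} coincide (as noted in the remark following Definition~\ref{def:discrete_spline}, and recalled in the statement of Lemma~\ref{lem:ffb_tpb_approx}), so the spline $g$ furnished by Lemma~\ref{lem:tpb_bv_approx} is already a discrete spline. Taking $h=g$ then yields \eqref{eq:ffb_bv_approx1} directly, since the properties $\TV(D^k g) \leq \TV(D^k f)$ and $g(x_i)=f(x_i)$ transfer verbatim.

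For $k \geq 2$, I would first invoke Lemma~\ref{lem:tpb_bv_approx} to obtain a $k$th degree spline \smash{$g \in \cG^k_n$} with knots in \smash{$x_{(k+1):(n-1)}$} satisfying $\TV(D^k g) \leq a_k \TV(D^k f)$ and $\|f-g\|_{L_\infty} \leq b_k \delta_n^k \cdot \TV(D^k f)$. Then I would apply Lemma~\ref{lem:ffb_tpb_approx} to this particular $g$, producing a discrete spline \smash{$h \in \cH^k_n$} with $\TV(D^k h) = \TV(D^k g) \leq a_k \TV(D^k f)$ (which is already the first half of \eqref{eq:ffb_bv_approx2}) and $\|g-h\|_{L_\infty} \leq \frac{k(b-a)^{k-1}}{(k-1)!} \delta_n \cdot \TV(D^k g)$. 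The triangle inequality then gives
\[
\|f-h\|_{L_\infty} \leq \|f-g\|_{L_\infty} + \|g-h\|_{L_\infty} \leq b_k \delta_n^k \cdot \TV(D^k f) + \frac{k}{(k-1)!} \delta_n \cdot \TV(D^k g),
\]
where I have used $b-a=1$ on $[0,1]$ to eliminate the factor $(b-a)^{k-1}$.

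The remaining step is purely bookkeeping. I would bound $\TV(D^k g) \leq a_k \TV(D^k f)$ in the second term, and absorb the stray power $\delta_n^k$ in the first term using $\delta_n^k \leq \delta_n$, which is valid because the design points lie in $[0,1]$, forcing $\delta_n \leq 1$, together with $k \geq 1$. This collapses both terms to the common factor $\delta_n \cdot \TV(D^k f)$, and setting $c_k = b_k + k a_k / (k-1)!$ gives the claimed bound $\|f-h\|_{L_\infty} \leq c_k \delta_n \cdot \TV(D^k f)$, with $a_k$ the very same constant appearing in Lemma~\ref{lem:tpb_bv_approx}. There is no genuine obstacle here; the only points worth verifying are that the constant $a_k$ is legitimately shared between the two statements (it passes through unchanged, since Lemma~\ref{lem:ffb_tpb_approx} preserves total variation exactly), and the elementary inequality $\delta_n^k \leq \delta_n$ that lets me report a clean linear-in-$\delta_n$ rate in place of the mismatched $\delta_n^k$ and $\delta_n$ exponents coming from the two source lemmas.
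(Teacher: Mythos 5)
Your proposal is correct and matches the paper's approach exactly: the paper's proof is literally the one-line remark that the result follows by combining Lemmas \ref{lem:ffb_tpb_approx} and \ref{lem:tpb_bv_approx} with the triangle inequality, and your write-up supplies precisely the details that remark leaves implicit (including the valid use of $\delta_n^k \leq \delta_n$ on $[0,1]$ and the exact preservation of $\TV(D^k g)$, hence of the constant $a_k$).
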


\begin{remark}
The approximation bound for discrete splines in \eqref{eq:ffb_bv_approx2} scales
with $\delta_n$, which is weaker than the order $\delta_n^k$ approximation we
can obtain with splines, in \eqref{eq:tpb_bv_approx2}. It is reasonable to
believe that discrete splines can also obtain an order $\delta_n^k$
approximation, with a finer analysis. Before we discuss this further, we
emphasize once more that an order $\delta_n$ approximation is ``good enough''
for our eventual statistical purposes, as discussed in the next section, because
it will be on the order of $\log{n}/n$ with high probability when the design
points are sorted i.i.d.\ draws from a continous distribution on $[0,1]$ (for
example, Lemma 5 in \citet{wang2014falling}), and this is of (much) smaller
order than the sought estimation error rates, which (on the $L_2$ scale, not
squared $L_2$ scale) will always be of the form $n^{-r}$ for $r<1/2$. 

Now, the culprit---the reason that \eqref{eq:ffb_bv_approx2} ``suffers'' a rate
of $\delta_n$ and not $\delta_n^k$---is the use of truncated power and falling
factorial bases in Lemma \ref{lem:ffb_tpb_approx}. Fixing any $j \geq k+2$, the
fact \smash{$g^k_j,h^k_j$} do not have local support means that the factor of 
\smash{$(x-x_{j-k})^{k-1}$} in the line preceding \eqref{eq:ffb_tpb_approx3} can 
grow to a large (constant) order, as $x$ moves away from the shared knot point
$x_{j-k}$, and thus in a uniform sense over all $x > x_{j-k}$ (and all $j \geq 
k+2$), we can only bound it by $(b-a)^{k-1}$, as done in
\eqref{eq:ffb_tpb_approx3}. A way to fix this issue would be to instead
consider locally-supported bases, that is, to switch over to comparing B-splines
and discrete B-splines: with the appropriate pairing, each basis function
(B-spline and DB-spline) would be supported on the same interval containing
$k+2$ design points, which would have width at most $(k+2)\delta_n$. This
should bring the $L_\infty$ distance between pairs of basis functions down to
the desired order of $\delta_n^k$. 

However, a better way forward, to refining approximation results, seems to be to
analyze discrete splines directly (not just analyze their approximation
capacity via their proximity to splines). For this, we imagine DB-splines
should also play a prominent role: for example, it is not hard to see that the
map $P$ defined by \smash{$Pf = \sum_{i=1}^n f(x_i) N^k_i$}, where
\smash{$N^k_i$}, $i=1,\ldots,n$ is the DB-spline basis in
\eqref{eq:discrete_nbs} (written explicitly in \eqref{eq:discrete_nbs_expl}), is
a bounded linear projector onto the space \smash{$\cH^k_n$}.  (We
mean bounded with respect to the $L_\infty$ norm, that is, \smash{$\|P\| =   
  \sup_{\|g\|_{L_\infty}  \leq 1} \; \|Pg\|_{L_\infty} < \infty$}.)  Thus it
achieves within a global constant factor of the optimal approximation error 
(pointwise for each function $f$): for any $h \in \cH^k_n$, we have 
\smash{$\|f-Pf\|_{L_\infty} \leq \|f-h\|_{L_\infty} +   \|Pf-Ph\|_{L_\infty}   
  \leq (1+\|P\|) \|f-h\|_{L_\infty}$}, which implies  
$$
\|f-Pf\|_{L_\infty} \leq (1+\|P\|) \cdot \inf_{h \in \cH^k_n} \;
\|f-h\|_{L_\infty}. 
$$
\end{remark}

\section{Trend filtering}
\label{sec:trend_filter}

In this section, we revisit trend filtering, in light of our developments on
discrete splines in the previous sections. The following subsections outline
some computational improvements, and then introduce a variant of trend filtering
based on discrete natural splines (which often shows better boundary behavior).
Before this, we briefly revisit some aspects of its interpretation and
estimation theory, to highlight the application of the matching derivatives
result (from Corollary \ref{cor:deriv_match}) and approximation guarantees
(from Lemma \ref{lem:ffb_bv_approx}). 

\paragraph{Penalizing differences of $k$th discrete derivatives.}  

In the trend filtering problem \eqref{eq:trend_filter}, where \smash{$\D^{k+1}_n
  \in \R^{(n-k-1) \times n}$} is the $(k+1)$st order discrete derivative matrix,
as in \eqref{eq:discrete_deriv_mat}, and \smash{$\W^{k+1}_n \in \R^{(n-k-1)
    \times (n-k-1)}$} the $(k+1)$st order diagonal weight matrix, as in 
\eqref{eq:weight_mat}, note that its penalty can be written as
\begin{align}
\label{eq:trend_filter_wpen2a}
\big\|\W^{k+1}_n \D^{k+1}_n \theta \big\|_1
&= \sum_{i=1}^{n-k-1} \big| (\D^{k+1}_n \theta)_i \big| \cdot
\frac{x_{i+k+1} - x_i}{k+1} \\
\label{eq:trend_filter_wpen2b}
&= \sum_{i=1}^{n-k-1} \big| (\D^k_n \theta)_{i+1} - (\D^k_n\theta)_i \big|. 
\end{align}
The first line was given previously in \eqref{eq:trend_filter_wpen}, and we copy
it here for convenience; the second line is due to the recursive definition
\eqref{eq:discrete_deriv_mat} of the discrete derivative matrices. In other
words, we can precisely interpret the trend filtering penalty as an absolute sum
of {\it differences} of $k$th discrete derivatives of $\theta$ at adjacent
design points. This provides the most direct path to the continuous-time 
formulation of trend filtering: for the unique $k$th degree discrete spline $f$
with $f(x_{1:n})=\theta$, it is immedate that \eqref{eq:trend_filter_wpen2b} is
an absolute sum of its $k$th derivatives at adjacent design points, once we
recall the matching derivatives property from Corollary \ref{cor:deriv_match}; 
and as $D^k f$ is piecewise constant with knots at the design points, it is easy
to see that this equals $\TV(D^k f)$. That is, it is easy to work backwards
from \eqref{eq:trend_filter_wpen2b} through the steps \eqref{eq:ffb_tv_sum2},
\eqref{eq:ffb_tv_sum1}, and \eqref{eq:ffb_tv}. The conclusion is, of course, as 
before: the trend filtering problem \eqref{eq:trend_filter} is equivalent the
variational problem \eqref{eq:trend_filter_cont}, where we restrict the
optimization domain in the locally adaptive regression spline problem to the 
space \smash{$\cH^k_n$} of $k$th degree splines with knots in $x_{(k+1):(n-1)}$.  

\paragraph{Estimation theory via oracle inequalities.}

\citet{tibshirani2014adaptive} established estimation error bounds for trend
filtering by first proving that the trend filtering and (restricted) locally
adaptive regression spline estimators, in \eqref{eq:trend_filter_cont} and
\eqref{eq:local_spline_rest}, are ``close'' (in the $\ell_2$ distance defined
with respect to the design points $x_{1:n}$), and then invoking existing
estimation results for the (restricted) locally adaptive regression spline
estimator from \citet{mammen1997locally}. These bounds were refined for
arbitrary design points in \citet{wang2014falling}. The conclusion is that the
trend filtering estimator \smash{$\hf$} in \eqref{eq:trend_filter_cont} achieves
(under mild conditions on the design points) the minimax error rate in
\eqref{eq:minimax_rate}, over the class of functions $\cV^k$ whose $k$th weak
derivative has total variation bounded by a constant $C<\infty$.

It was later shown in \citet{sadhanala2019additive} that the same result could
be proved more directly, without a need to bound the distance between the trend    
filtering and (restricted) locally adaptive spline estimators. The setting in
\citet{sadhanala2019additive} is more general (additive models, where the 
dimension of the design points is allowed to grow with $n$); here we relay the 
implication of their results, namely, Theorem 1 and Corollary 1, for (univariate)
trend filtering, and explain where the approximation result from Lemma
\ref{lem:ffb_bv_approx} enters the picture. If $x_i$, $i=1,\ldots,n$ are sorted
i.i.d.\ draws from a continuous distribution $[0,1]$, and $y_i =
f_0(x_i)+\epsilon_i$, $i=1,\ldots,n$ for uniformly sub-Gaussian errors
$\epsilon_i$, $i=1,\ldots,n$ with mean zero and variance-proxy $\sigma^2>0$,
then there are constants $c_1,c_2,c_3,n_0>0$ depending only on $k,\sigma$ such
that for all $c \geq c_1$, $n \geq n_0$, and \smash{$\lambda \geq c
n^{\frac{1}{2k+3}}$}, the solution \smash{$\hf$} in the trend filtering problem 
\eqref{eq:trend_filter_cont} satisfies\footnote{To be clear, the result in
  \eqref{eq:trend_filter_err_bound} is of a somewhat classical 
  oracle-inequality-type flavor, and similar results can be found in many other  
  papers; the theoretical novelty in \citet{sadhanala2019additive} lies in the
  analysis of additive models with growing dimension, which is given in their
  Theorem 2 and Corollary 2.}
\begin{equation}
\label{eq:trend_filter_err_bound}
\frac{1}{n} \big\| \hf(x_{1:n}) - f_0(x_{1:n}) \big\|_2^2 \leq 
\frac{1}{n} \big\| h(x_{1:n}) - f_0(x_{1:n}) \big\|_2^2 + 
\frac{6\lambda}{n} \max\{1, \TV(D^k h)\}.
\end{equation}
with probability at least \smash{$1-\exp(-c_2c)-\exp(-c_3\sqrt{n})$},
simultaneously over all \smash{$h \in \cH^k_n$} such that 
$(1/n) \| h(x_{1:n}) - f_0(x_{1:n}) \|_2^2 \leq C$. The first term on the
right-hand side in \eqref{eq:trend_filter_err_bound} is the approximation error,
and can be controlled using Lemma \ref{lem:ffb_bv_approx}. When $k=0$ or $k=1$,
we can see from \eqref{eq:ffb_bv_approx1} that we can set it exactly to zero.
When $k \geq 2$, assuming the underlying regression function $f_0$ satisfies
\smash{$\TV(D^k    f_0) \leq 1$}, we can see from \eqref{eq:ffb_bv_approx2} that
we can choose $h$ so that  
$$
\frac{1}{n} \big\| h(x_{1:n}) - f_0(x_{1:n}) \big\|_2^2 \leq 
\big\| h(x_{1:n}) - f_0(x_{1:n}) \big\|_{L_\infty}^2 \leq c_k^2 \delta_n^2.
$$
When the density of the design points is bounded below by a positive constant,
it can be shown (see Lemma 5 of \citet{wang2014falling}) that $\delta_n$ is on
the order of $\log{n}/n$ with high probability. The right-hand side in the
display above is thus on the order of $(\log{n}/n)^2$ with high probability, and
so the first term in \eqref{eq:trend_filter_err_bound} is negligible compared to 
the second. All in all, for any $k \geq 0$, we get that for \smash{$\TV(D^k
  f_0) \leq 1$} and \smash{$\lambda=cn^{\frac{1}{2k+3}}$}, we can choose $h$ 
so that the first term in \eqref{eq:trend_filter_err_bound} is negligible and
the second term is on the order of \smash{$n^{-\frac{2k+2}{2k+3}}$} (where we
have used the bound on $\TV(D^k h)$ from \eqref{eq:ffb_bv_approx1} or 
\eqref{eq:ffb_bv_approx2}). This establishes that trend filtering achieves the
desired minimax estimation error rate.

\subsection{Computational improvements}
\label{sec:trend_filter_comp} 

We discuss computational implications of our developments on discrete splines
for trend filtering.

\paragraph{Efficient interpolation.} 

To state the obvious, both the explicit and implicit interpolation formulae,
from Theorem \ref{thm:ffb_interp} and Corollary \ref{cor:ffb_interp_implicit},
respectively, can be applied directly to trend filtering. Starting with the
discrete-time solution \smash{$\htheta$} from \eqref{eq:trend_filter}, we can
efficiently compute the unique $k$th degree discrete spline interpolant
\smash{$\hf$} to these values, that is, efficiently evaluate \smash{$\hf(x)$} at
any point $x$. The two different perspectives each have their strengths,
explained below.

\begin{itemize}
\item  To use the explicit formula \eqref{eq:ffb_interp}, note that we only need
  to store the $k+1$ polynomial coefficients, \smash{$(\Delta^{k+1}_n
    \hf)(x_i)$}, $i=1,\ldots,k+1$, and the coefficients corresponding to the
  active knots, \smash{$(\Delta^{k+1}_n \hf)(x_i)$}, $i \in I$, where 
  $$
  I = \Big\{i \geq k+2 : (\Delta^{k+1}_n \hf)(x_i) \not= 0\Big\}.
  $$
  As for the design points, in order to use \eqref{eq:ffb_interp}, we
  similarly only need to store $x_{1:(k+1)}$ as well as $x_{(i-k-1):i}$, $i
  \in I$. Thus for $r=|I|$ active knots, we need $O(r+k)$ memory 
  and $O((r+k)k)$ operations to compute \smash{$\hf(x)$} via
  \eqref{eq:ffb_interp}.  

\item To use the implicit formulae \eqref{eq:ffb_interp_implicit1},
  \eqref{eq:ffb_interp_implicit2}, we need to store all evaluations
  \smash{$\htheta=\hf(x_{1:n})$}, and all design points $x_{1:n}$, that is, we
  require $O(n)$ memory. Given this, to compute \smash{$\hf(x)$} we then need 
  to locate $x$ among the design points, which is at most $O(\log{n})$
  operations (via binary search), and solve a single linear system in one
  unknown, which costs $O(k)$ operations to set up. Hence the total cost of 
  finding \smash{$\hf(x)$} via \eqref{eq:ffb_interp_implicit1},
  \eqref{eq:ffb_interp_implicit2} is $O(\log{n}+k)$ operations (or even smaller,
  down to $O(k)$ operations if the design points are evenly-spaced, because
  then locating $x$ among the design points could be done with integer
  division). The implicit interpolation strategy is therefore more efficient
  when memory is not a concern and the number of active knots $r$ is large (at
  least $r=\Omega(\log{n})$). 
\end{itemize}

\paragraph{DB-spline polishing.}   Given the trend filtering solution
\smash{$\htheta$} in \eqref{eq:trend_filter}, let \smash{$\C^{k+1}_n=\W^{k+1}_n
  \, \D^{k+1}_n$}, and define the set of active coordinates \smash{$I=\{i : 
  (\C^{k+1}_n \htheta)_i \not= 0\}$} and vector of active signs
\smash{$s=\sign((\C^{k+1}_n \htheta)_I)$}. Based on the Karush-Kuhn-Tucker (KKT)
conditions for \eqref{eq:trend_filter} (see \citet{tibshirani2011solution} or
\citet{tibshirani2012degrees}), it can be shown that     
\begin{equation}
\label{eq:trend_filter_sol1}
\htheta = \big(\I_n - (\C^{k+1}_n)_{I^c}^\dagger
(\C^{k+1}_n)_{I^c}\big) \big(y - (\C^{k+1}_n)_I^\T s\big),
\end{equation}
where \smash{$(\C^{k+1}_n)_S$} denotes the submatrix formed by retaining the
rows of \smash{$\C^{k+1}_n$} in a set $S$. Recall the extended version of
\smash{$\C^{k+1}_n$}, namely, \smash{$\A^{k+1}_n=\Z^{k+1}_n \, \B^{k+1}_n$} from 
Lemma \ref{lem:ffb_discrete_deriv_pinv}, and define a set $J = \{1,\ldots,k+1\}
\cup \{i+k+1 : i \in I\}$. Then \smash{$(\C^{k+1}_n)_{I^c} =
  (\A^{k+1}_n)_{J^c}$}, and \eqref{eq:trend_filter_sol1} is the projection of
\smash{$y - (\C^{k+1}_n)_I^\T s$} onto \smash{$\nul((\A^{k+1}_n)_{J^c})$}. 
Thus, by the same logic as that in Section \ref{sec:least_squares} (recall the 
equivalence of \eqref{eq:discrete_deriv_system} and
\eqref{eq:discrete_nbs_system}), we can rewrite \eqref{eq:trend_filter_sol1} as      
\begin{equation}
\label{eq:trend_filter_sol2}
\htheta = \N^k_T (\N^k_T)^\dagger \big(y - (\C^{k+1}_n)_I^\T s\big), 
\end{equation}
where $T=\{t_j: j=1,\ldots,r\}$ is the active knot set, with $r=|I|$ and 
$t_j=x_{j+k}$, $j \in I$, and where \smash{$\N^k_T \in \R^{n \times (r+k+1)}$} 
is the DB-spline basis matrix with entries \smash{$(\N^k_T)_{ij} =
  N^k_j(x_i)$}, for \smash{$N^k_j$}, $j=1,\ldots,n$ as defined in
\eqref{eq:discrete_nbs_sk1}, \eqref{eq:discrete_nbs_sk2}. We argued in
Section \ref{sec:least_squares} that linear systems in DB-splines (like
\eqref{eq:trend_filter_sol2}) have the same computational cost yet a
significantly better degree of stability than linear systems in discrete 
derivatives (like \eqref{eq:trend_filter_sol1}). Hence, a very simple idea
for improving the numerical accuracy in trend filtering solutions is as follows:   
form a candidate solution \smash{$\htheta$}, keep only the active set $I$ and 
active signs $s$, and then {\it polish} the solution using DB-splines
\eqref{eq:trend_filter_sol2} (note that this requires $O(nk^2)$ operations, due 
to the bandedness of \smash{$\N^k_T$}). 

\paragraph{DB-spline ADMM.}

Instead of just using DB-splines post-optimization (to polish an
already-computed trend filtering solution), a more advanced idea would be to use
DB-splines to improve stability over the course of optimization directly. As an
example, we consider a specialized augmented Lagrangian method of multipliers
(ADMM) for trend filtering due to \citet{ramdas2016fast}. To derive this
algorithm, we first rewrite \eqref{eq:trend_filter}, using the recursion 
\eqref{eq:discrete_deriv_mat}, as
\begin{equation}
\label{eq:trend_filter_admm_prob}
\minimize_{\theta,z} \; \frac{1}{2} \|y - \theta\|_2^2 + 
\lambda \big\|\widebar\D_{n-k} z \big\|_1 
\quad \st \quad z = \D^k_n \theta,
\end{equation}
and define the augmented Lagrangian, for a parameter $\rho>0$,
\begin{equation}
\label{eq:aug_lagrangian}
L(\theta,z,u) = \frac{1}{2} \|y - \theta\|_2^2 + 
\lambda \big\|\widebar\D_{n-k} z \big\|_1 +
\frac{\rho}{2} \big\| z - \D^k_n \theta + u \big\|_2^2
-\frac{\rho}{2} \|u\|_2^2.
\end{equation}
Minimizing over $\theta$, then $z$, then taking a gradient ascent step with 
respect to the dual variable $u$, gives the updates
\begin{align}
\label{eq:trend_filter_admm_theta}
\theta^+ &= \big(\I_n + \rho (\D^k_n)^\T \D^k_n \big)^{-1} 
\big(y + (\D^k_n)^\T (z + u)\big), \\
\label{eq:trend_filter_admm_z}
z^+ &= \argmin_z \bigg\{ \frac{1}{2} \big\| z - \D^k_n \theta + u \big\|_2^2 + 
\frac{\lambda}{\rho} \big\|\widebar\D_{n-k} \, z \big\|_1 \bigg\}, \\
\label{eq:trend_filter_admm_u}
u^+ &= u + z - \D^k_n \theta.
\end{align}
The $z$-update in \eqref{eq:trend_filter_admm_z} may look at first like the most
expensive step, but it can be done with super-efficient, linear-time algorithms
for total variation denoising (such algorithms take advantage of the simple
pairwise difference structure in the $\ell_1$ penalty), for example, based on
dynamic programming \citep{johnson2013dynamic}. The $\theta$-update in
\eqref{eq:trend_filter_admm_theta} is just a banded linear system solve, which
is again linear-time, but it is (perhaps surprisingly) the more problematic
update in practice due to poor conditioning of the discrete derivative matrices. 

Recalling the notable empirical benefits in using DB-splines for similar systems
(see Figure \ref{fig:sim_dbs} in Section \ref{sec:least_squares}), it is
reasonable to believe that DB-splines could provide a big improvement in
stability if used within this ADMM algorithm as well. The trick is to first
define a {\it working active set} based on an intermediate value of $z$, namely,
$$
I = \{ i : (\widebar{D}_{n-k} \, z)_i \not= 0 \}.
$$  
This could, for example, be computed after running a handful of the ADMM
iterations in
\eqref{eq:trend_filter_admm_theta}--\eqref{eq:trend_filter_admm_u}. We then
restrict our attention to optimization over 
\smash{$z \in  \nul((\widebar{D}_{n-k})_{I^c}$} and 
\smash{$\theta \in \nul((\D^{k+1}_n)_{I^c})$} 
(and upon convergence, we check the KKT conditions for the full problem
\eqref{eq:trend_filter_admm_prob}; if not satisfied then we increase the
working active set appropriately and repeat). With this restriction, the
$z$-update \eqref{eq:trend_filter_admm_z} just becomes a lower-dimensional total
variation denoising problem that can still be solved by dynamic
programming. More importantly, the $\theta$-update
\eqref{eq:trend_filter_admm_theta} can be now rewritten as
\begin{equation}
\label{eq:trend_filter_admm_theta_new}
\theta^+ = \N^k_T \Big((\N^k_T)^\T\big(\I_n + \rho (\D^k_n)^\T
\D^k_n \big) \N^k_T \Big)^{-1} (\N^k_T)^\T \big(y + \rho (\D^k_n)^\T 
(z + u)\big), 
\end{equation}
Here \smash{$\N^k_T \in \R^{n \times (r+k)}$} is the DB-spline basis matrix
defined with respect to the active knots $T=\{t_j: j=1,\ldots,r\}$, with
$r=|I|$ and $t_j=x_{j+k}$, $j \in I$. The step
\eqref{eq:trend_filter_admm_theta_new} is still a banded linear system solve, 
and thus still linear-time, but is much better-conditioned (the DB-spline basis 
matrix \smash{$\N^k_T$} acts something like a rectangular preconditioner).
Careful implementation and comparisons are left to future work. 

\subsection{Natural trend filtering}

For odd $k=2m-1 \geq 1$, consider further restricting the domain in the
continuous-time trend filtering problem \eqref{eq:trend_filter_cont} to the 
space \smash{$\cN^k_n$} of $k$th degree discrete natural splines on $[a,b]$ with
knots $x_{(k+1):(n-1)}$ (as defined in Section
\ref{sec:discrete_natural_spline}):  
\begin{equation}
\label{eq:trend_filter_nat_cont}
\minimize_{f \in \cN^k_n} \; \frac{1}{2} \sum_{i=1}^n \big(y_i - f(x_i)\big)^2 +  
\lambda \, \TV(D^k f).
\end{equation}
As motivation for this, recall the smoothing spline problem
\eqref{eq:smooth_spline} inherently gives rise to a natural spline as its
solution, which can have better boundary behavior (than a normal spline without
any boundary constraints). In fact, looking back at Figure \ref{fig:adapt}, we
can see evidence of this: despite deficiencies in coping with heterogeneous 
smoothness, the smoothing spline estimates (bottom row) have better boundary 
behavior than trend filtering (top right)---see, in particular, the very right
side of the domain. 

The estimator defined by \eqref{eq:trend_filter_nat_cont}, which we call {\it
  natural trend filtering}, can be recast in a familiar discrete-time form:  
\begin{equation}
\label{eq:trend_filter_nat}
\begin{alignedat}{2}
&\minimize_\theta &&\frac{1}{2} \|y - \theta\|_2^2 + 
\lambda \big\|\W^{k+1}_n \D^{k+1}_n \theta \big\|_1 \\
&\st \quad && \theta_{1:m} = \P^m_1 \theta_{(m+1):(k+1)} \\
&&& \theta_{(n-m+1):n} = \P^m_2 \theta_{(n-k):(n-m)},
\end{alignedat}
\end{equation}
Here $\P^m_1 \in \R^{m \times m}$ is a matrix that performs polynomial
extrapolation from function values on $x_{(m+1):(k+1)}$ to values on $x_{1:m}$,
that is, for any polynomial $p$ of degree $m-1$,   
$$
p(x_{1:m}) = \P^m_1 p(x_{(m+1):(k+1)}),
$$
and similarly, $\P^m_2 \in \R^{m \times m}$ performs polynomial extrapolation 
from $x_{(n-k):(n-m)}$ to $x_{(n-m+1):n}$. Observe that
\eqref{eq:trend_filter_nat} is just a standard trend filtering problem where the
first $m$ and last $m$ coordinates of $\theta$ are just linear combinations of
the second $m$ and second-to-last $m$, respectively. Computationally, this is
only a small variant on trend filtering (that is, it would require only a small
tweak on existing optimization approaches for trend filtering). Thanks to
the development of the DB-spline basis for discrete natural splines (see
\eqref{eq:discrete_natural_nbs} in Lemma \ref{lem:discrete_natural_nbs}), the
stability advantages of using DB-splines for trend filtering, as outlined in the
last subsection, should carry over here as well.  Finally, Figure
\ref{fig:nat} displays natural trend filtering fitted to the same data as in
Figure \ref{fig:adapt}, where we can indeed see that the boundary behavior
improves on the right side of the domain. 

\begin{figure}[tb]
\centering
\includegraphics[width=0.495\textwidth]{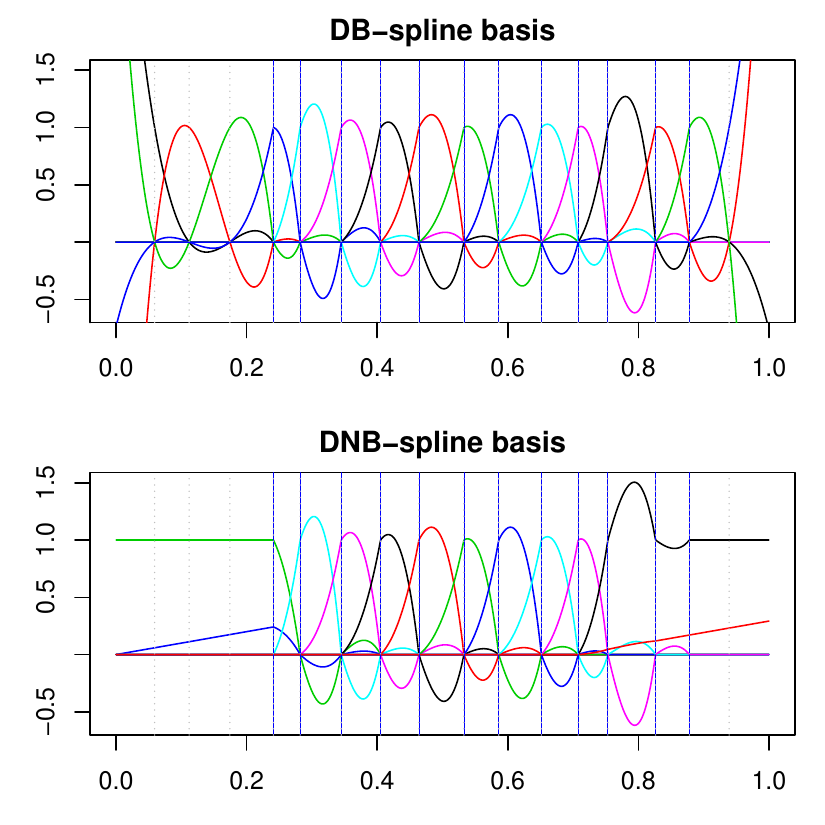}
\includegraphics[width=0.495\textwidth]{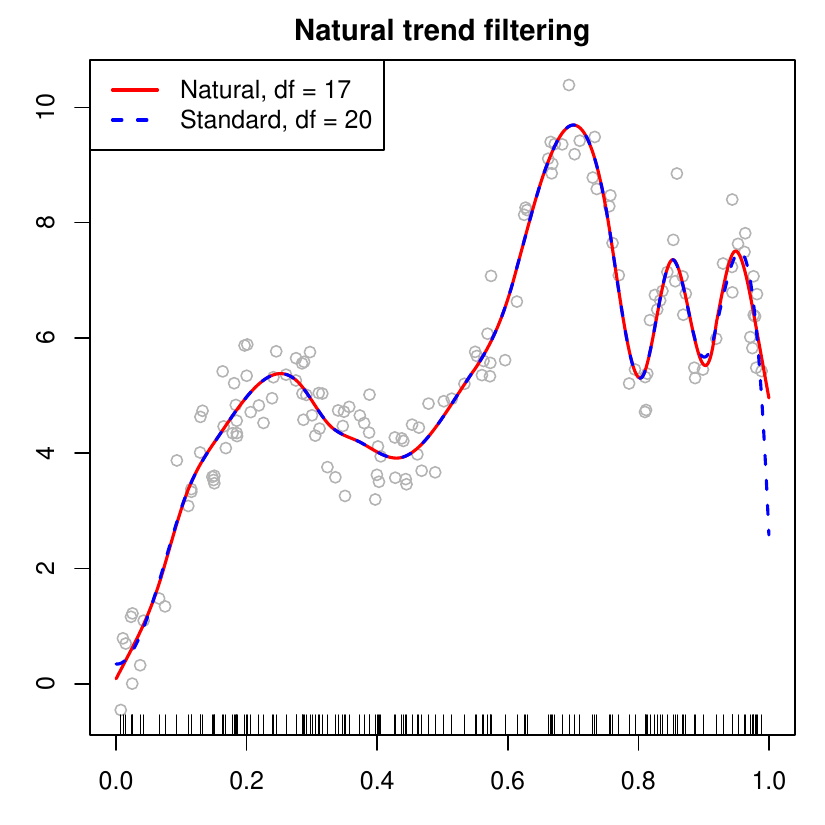}
\caption{\small Left panel: comparison of discrete B-spline (DB-spline) and
  discrete natural B-spline (DNB-spline) bases, for \smash{$\cH^k_n$} and 
  \smash{$\cN^k_n$}, respectively, where $k=3$ and the $n=16$ design points
  marked by dotted gray vertical lines. The knots are marked by blue vertical
  lines. We can see that the middle $n-4$ DB-splines are also DNB-splines; and
  the key difference is that the two leftmost and rightmost DNB-splines are
  linear beyond the boundary knots. Right panel: comparison of natural trend
  filtering and trend filtering on the same data example as in Figure
  \ref{fig:adapt}. They are computed using the same value of $\lambda$; we can 
  see that natural trend filtering, in solid red, essentially matches trend 
  filtering on the entirety of the domain, but it has better boundary behavior
  and uses fewer degrees of freedom (informally, it does not ``waste'' degrees
  of freedom in fitting at the extreme ends of the domain)).}     
\label{fig:nat}
\end{figure}

\section{BW filtering}
\label{sec:bw_filter}

We revisit Bohlmann-Whittaker (BW) filtering, focusing on the case of arbitrary 
design points. We first define a (slight) variant of the classical BW filter
with a weighted penalty, then develop connections to the smoothing spline.

\subsection{Weighted BW filtering}

Recall that for a unit-spaced design, the BW filter is defined in terms of an
quadratic program with a squared $\ell_2$ penalty on forward differences, as
given in \eqref{eq:bw_filter_unit}. For an arbitrary design $x_{1:n}$,
Whittaker proposed to replace forward differences by divided differences, in a
footnote of his famous 1923 paper \citep{whittaker1923new}, resulting in
\eqref{eq:bw_filter_old}. As we alluded to in Section
\ref{sec:bw_filter_review}, we will argue in what follows that it is in several
ways more natural to replace the penalty in \eqref{eq:bw_filter_old} with the 
weighted version \eqref{eq:bw_filter_wpen}, so that the problem becomes   
\begin{equation}
\label{eq:bw_filter}
\minimize_\theta \; \|y - \theta\|_2^2 + \lambda 
\big\| (\W^m_n)^{\hspace{-1pt}\frac{1}{2}} \D^m_n \theta\big\|_2^2.
\end{equation}
Here, \smash{$\D^m_n \in \R^{(n-m) \times n}$} is the $m$th order 
discrete derivative matrix, as in \eqref{eq:discrete_deriv_mat}, and 
\smash{$\W^m_n \in \R^{(n-m) \times (n-m)}$} is the $m$th order diagonal weight
matrix, as in \eqref{eq:weight_mat}. For convenience, we copy over
\eqref{eq:bw_filter_wpen}, to emphasize once again that the form of the 
penalty in \eqref{eq:bw_filter} is    
\begin{equation}
\label{eq:bw_filter_wpen2} 
\big\| (\W^m_n)^{\hspace{-1pt}\frac{1}{2}} \D^m_n \theta\big\|_2^2 = 
\sum_{i=1}^{n-m} (\D^m_n \theta)_i^2 \cdot \frac{x_{i+m} - x_i}{m}. 
\end{equation}
We note once again the strong similarity between the weighted BW filter in
\eqref{eq:bw_filter} and trend filtering in \eqref{eq:trend_filter}, that is,
the strong similarity between their penalties in \eqref{eq:bw_filter_wpen2}
and \eqref{eq:trend_filter_wpen2a}, respectively---the latter uses a weighted 
squared $\ell_2$ norm of discrete derivatives (divided differences), while the 
former uses a weighted $\ell_1$ norm. 

We now list three reasons why the weighted BW problem \eqref{eq:bw_filter} may
be preferable to the classical unweighted one \eqref{eq:bw_filter_old} for
arbitrary designs (for evenly-spaced design points, the two penalties are equal
up to a global constant, which can be absorbed into the tuning parameter; that
is, problems \eqref{eq:bw_filter} and \eqref{eq:bw_filter_old} are equivalent
modulo a rescaling of $\lambda$).       

\begin{enumerate}
\item For $m=1$ and $k=1$, Theorem \ref{thm:nsp_sobolev} tells us for  
  any natural linear spline $f$ on $[a,b]$ with knots at the design points
  $x_{1:n}$, we have the exact representation  
  \begin{equation}
    \label{eq:ffb_sobolev_m1}
  \int_a^b (Df)(x)^2 \, dx = \sum_{i=1}^{n-1} \big(\D_n f(x_{1:n})\big)_i^2
  \cdot (x_{i+1}-x_i). 
  \end{equation}
  This means that for $m=1$, the smoothing spline problem
  \eqref{eq:smooth_spline} is equivalent to the weighted BW problem
  \eqref{eq:bw_filter}. That is, to be perfectly explicit (and to emphasize the
  appealing simplicity of the conclusion), the following two problems are
  equivalent:  
  \begin{gather*}
    \minimize_f \; \sum_{i=1}^n \big(y_i-f(x_i)\big)^2 + \lambda \int_a^b
  (Df)(x)^2 \, dx,  \\
  \minimize_\theta \; \|y-\theta\|_2^2 + \lambda \sum_{i=1}^{n-1}
  (\theta_i-\theta_{i+1})^2 \cdot (x_{i+1}-x_i),
  \end{gather*}
  in the sense that their solutions satisfy \smash{$\htheta=\hf(x_{1:n})$}. 

\item For $m=2$ and $k=3$, we prove in Theorem \ref{thm:ss_bw_bound}
  in the next subsection that the weighted BW filter and smoothing spline
  are ``close'' in $\ell_2$ distance (for enough large values of their tuning 
  parameters). This enables the weighted BW filter to inherit the favorable
  estimation properties of the smoothing spline (over the appropriate
  $L_2$-Sobolev classes), as we show in Corollary \ref{cor:ss_bw_bound}. 

\item Empirically, the weighted BW filter seems to track the smoothing spline
  more closely than the unweighted BW filter does, for arbitrary design
  points. The differences here are not huge (both versions of the
  discrete-time BW filter are typically quite close to the smoothing spline), but
  still, the differences can be noticeable. Figure \ref{fig:bw} gives an
  example. 
\end{enumerate}

\begin{figure}[tb]
\includegraphics[width=0.99\textwidth]{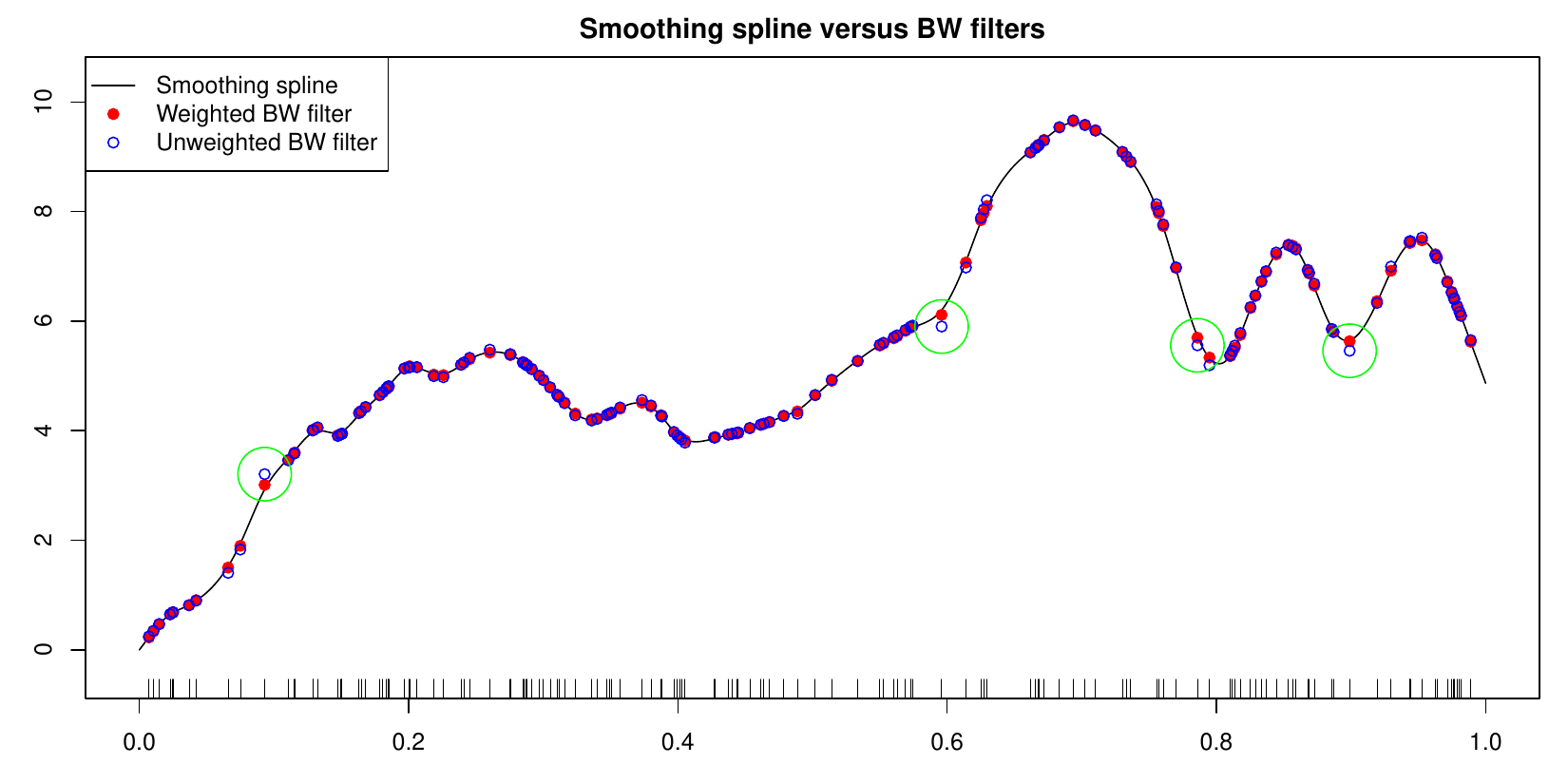}
\caption{\small Comparison of spline smoothing and BW filtering, both weighted
  (see \eqref{eq:bw_filter}) and unweighted (see \eqref{eq:bw_filter_old})
  variants, on the same example data as in Figure \ref{fig:adapt}, with $m=2$ in
  all cases. Note that the $n=150$ design points, marked by ticks on the
  horizontal axis, are not evenly-spaced, and hence there is a meaningful
  difference between the two versions of the BW filter. The smoothing spline
  solution, plotted as a black line, was fit at the same tuning parameter value
  $\lambda$ as in the bottom right plot of Figure \ref{fig:adapt} (where it had
  30 df). Both BW filter solutions are shown as a sequence of discrete points,
  at \smash{$(x_i,\htheta_i)$}, $i=1,\ldots,n$; being truly discrete-time
  estimators, this is the purest representation of their behavior. The weighted
  BW filter, plotted as solid red points, was fit at the same tuning parameter
  value $\lambda$ as the smoothing spline; the unweighted BW filter, plotted as    
  hollow blue points, was fit at the value $\lambda/n$ (which seems to be the
  single best choice of tuning parameter to adjust for the difference in scale
  of its penalty). We can see that, for the most part, the hollow blue points
  are essentially directly on top of the solid red points, which are on top of
  the smoothing spline curve. However, in a few locations (specifically, around
  0.1, 0.6, 0.8, and 0.9 on the x-axis, highlighted by green circles), the
  hollow blue points are noticeably far from the solid red points, and in these
  locations the former is farther than the latter from the smoothing spline
  curve.} 
\label{fig:bw}
\end{figure}

\subsection{Bounds on the distance between solutions} 
\label{sec:bw_err_bound}

To study the distance between smoothing spline and weighted BW filtering
solutions, it helps to first recall a notion of similarity between matrices:
positive semidefinite matrices $\A,\B \in \R^{n \times n}$ are said to be {\it 
  $(\sigma,\tau)$-spectrally-similar}, for $0 < \tau \leq 1 \leq \sigma$,
provided that
\begin{equation}
\label{eq:spec_sim}
\tau u^\T \B u \leq u^\T \A u \leq \sigma u^\T \B u, 
\quad \text{for all $u \in \R^n$}. 
\end{equation}
Spectral similarity is commonly studied in certain areas of theoretical computer  
science, specifically in the literature on graph sparsification (see, for example, 
\citet{batson2013spectral} for a nice review). The next result is both a
simplification and sharpening of Theorem 1 in \citet{sadhanala2016graph}. Its
proof follows from direct examination of the stationarity conditions for
optimality and application of \eqref{eq:spec_sim}, and is given in Appendix
\ref{app:spec_sim_bound}.  

\begin{lemma}
\label{lem:spec_sim_bound}
Let $\A,\B$ be $(\sigma,\tau)$-spectrally-similar, and let $\htheta_a,\htheta_b$ 
denote solutions in the quadratic problems  
\begin{gather}
\label{eq:quad_opt1}
\minimize_\theta \; \|y-\theta\|_2^2 + \lambda_a \theta^\T \A \theta, \\
\label{eq:quad_opt2}
\minimize_\theta \; \|y-\theta\|_2^2 + \lambda_b \theta^\T \B \theta, 
\end{gather}
respectively. Then for any $\lambda_a,\lambda_b \geq 0$, it holds that
\begin{equation}
\label{eq:quad_sol_bound1}
\|\htheta_a - \htheta_b\|_2^2 \leq 
\frac{1}{2} (\lambda_b/\tau - \lambda_a) \htheta_a^\T \A \htheta_a +  
\frac{1}{2} (\sigma \lambda_a - \lambda_b) \htheta_b^\T \B \htheta_b. 
\end{equation}
In particular, for any $\lambda_b \geq \sigma \lambda_a$, it holds that 
\begin{equation}
\label{eq:quad_sol_bound2}
\|\htheta_a - \htheta_b\|_2^2 \leq 
\frac{1}{2} (1/\tau - 1/\sigma) \lambda_b \htheta_a^\T \A \htheta_a. 
\end{equation}
\end{lemma}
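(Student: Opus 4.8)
The plan is to work entirely from the first-order (stationarity) conditions of the two strongly convex quadratics and then feed the mismatched quadratic forms into the spectral-similarity inequality \eqref{eq:spec_sim}. The objective in \eqref{eq:quad_opt1} has gradient $2(\theta-y)+2\lambda_a\A\theta$ and that in \eqref{eq:quad_opt2} has gradient $2(\theta-y)+2\lambda_b\B\theta$, so the minimizers satisfy $(\I_n+\lambda_a\A)\htheta_a=y$ and $(\I_n+\lambda_b\B)\htheta_b=y$. Subtracting gives the exact relation $\htheta_a-\htheta_b=\lambda_b\B\htheta_b-\lambda_a\A\htheta_a$, which is essentially the only identity needed.

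First I would derive \eqref{eq:quad_sol_bound1}. The cleanest route is to expand each objective about its own minimizer: since each is quadratic with Hessian $2(\I_n+\lambda_a\A)$ (resp.\ $2(\I_n+\lambda_b\B)$) and vanishing gradient at the optimum, one gets the exact expansion $\phi_a(\htheta_b)-\phi_a(\htheta_a)=\|\htheta_a-\htheta_b\|_2^2+\lambda_a(\htheta_a-\htheta_b)^\T\A(\htheta_a-\htheta_b)$ and symmetrically for $\phi_b$. Adding these two and cancelling the common $\|y-\cdot\|_2^2$ pieces yields the identity $2\|\htheta_a-\htheta_b\|_2^2+\lambda_a(\htheta_a-\htheta_b)^\T\A(\htheta_a-\htheta_b)+\lambda_b(\htheta_a-\htheta_b)^\T\B(\htheta_a-\htheta_b)=\lambda_a\htheta_b^\T\A\htheta_b-\lambda_a\htheta_a^\T\A\htheta_a+\lambda_b\htheta_a^\T\B\htheta_a-\lambda_b\htheta_b^\T\B\htheta_b$. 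Discarding the two nonnegative (PSD) quadratic terms on the left bounds $2\|\htheta_a-\htheta_b\|_2^2$ by the right-hand side. I then apply \eqref{eq:spec_sim} to exactly the two ``mismatched'' forms, namely $\htheta_b^\T\A\htheta_b\le\sigma\,\htheta_b^\T\B\htheta_b$ and $\htheta_a^\T\B\htheta_a\le\tfrac1\tau\,\htheta_a^\T\A\htheta_a$; both substitutions carry the nonnegative multipliers $\lambda_a,\lambda_b$, so the inequality direction is preserved, and collecting terms gives precisely \eqref{eq:quad_sol_bound1} after dividing by two.

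The second claim \eqref{eq:quad_sol_bound2} is then a specialization: once $\lambda_b\ge\sigma\lambda_a$, the coefficient $\sigma\lambda_a-\lambda_b$ multiplying $\htheta_b^\T\B\htheta_b$ in \eqref{eq:quad_sol_bound1} is nonpositive, and because $\B\succeq0$ that term may be dropped, leaving $\|\htheta_a-\htheta_b\|_2^2\le\tfrac12(\lambda_b/\tau-\lambda_a)\htheta_a^\T\A\htheta_a$. At the matched scaling $\lambda_b=\sigma\lambda_a$ the coefficient $\tfrac12(\lambda_b/\tau-\lambda_a)$ collapses to $\tfrac12(1/\tau-1/\sigma)\lambda_b$, which is the stated form.

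The main obstacle — really the only delicate point — is the sign bookkeeping in the last step of the second paragraph: \eqref{eq:spec_sim} is two-sided, so one must attach the correct side to each mismatched form (the factor $\sigma$ to the $\A$-form at $\htheta_b$, and $1/\tau$ to the $\B$-form at $\htheta_a$) and confirm that the nonnegative constants $\lambda_a,\lambda_b$ do not flip any directions. A secondary point worth flagging is the passage to \eqref{eq:quad_sol_bound2}: the clean coefficient $\tfrac12(1/\tau-1/\sigma)\lambda_b$ emerges at the boundary $\lambda_b=\sigma\lambda_a$ (for strictly larger $\lambda_b$, dropping the nonpositive term only gives the larger coefficient $\tfrac12(\lambda_b/\tau-\lambda_a)$), so I would record it for that matched choice, which is in any case the natural one for the intended comparison of the weighted BW filter and the smoothing spline in Theorem \ref{thm:ss_bw_bound}.
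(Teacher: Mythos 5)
Your derivation of \eqref{eq:quad_sol_bound1} is correct and is, up to packaging, the same as the paper's: the paper writes $\|\htheta_a-\htheta_b\|_2^2=(y-\htheta_a)^\T(\htheta_b-\htheta_a)+(y-\htheta_b)^\T(\htheta_a-\htheta_b)$, substitutes the stationarity identities $y-\htheta_a=\lambda_a\A\htheta_a$ and $y-\htheta_b=\lambda_b\B\htheta_b$, and bounds each cross term via $u^\T\A v\le \tfrac12 u^\T\A u+\tfrac12 v^\T\A v$; the slack in that inequality is exactly the positive semidefinite difference quadratic $\tfrac12\lambda_a(\htheta_a-\htheta_b)^\T\A(\htheta_a-\htheta_b)$ (and its $\B$ analogue) that you discard from your symmetrized objective-value identity, and both arguments then apply \eqref{eq:spec_sim} to the same two mismatched forms with the same choice of sides. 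Your closing remark about \eqref{eq:quad_sol_bound2} is also well taken, and in fact flags a genuine wrinkle in the lemma as stated rather than a defect of your argument: since $\lambda_b\ge\sigma\lambda_a$ gives $\lambda_a\le\lambda_b/\sigma$, the surviving coefficient obeys $\tfrac12(\lambda_b/\tau-\lambda_a)\ge\tfrac12(1/\tau-1/\sigma)\lambda_b$, so \eqref{eq:quad_sol_bound2} follows from \eqref{eq:quad_sol_bound1} only at the matched value $\lambda_b=\sigma\lambda_a$, not for all $\lambda_b\ge\sigma\lambda_a$. The paper's own proof derives only \eqref{eq:quad_sol_bound1} and asserts the rest by ``grouping terms,'' so it is silent on exactly this point; the downstream use in Theorem \ref{thm:ss_bw_bound} is unaffected provided one applies the bound with $\lambda_a=\lambda_b/\sigma$ (or accepts the weaker coefficient $\tfrac12(\lambda_b/\tau-\lambda_a)$ for larger $\lambda_b$).
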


We now show that the matrices featured in the quadratic penalties in the
smoothing spline and weighted BW filtering problems,
\eqref{eq:smooth_spline_discrete} and \eqref{eq:bw_filter}, are spectrally
similar for $m=2$, and then apply Lemma  \ref{lem:spec_sim_bound} to bound the
$\ell_2$ distance between the corresponding solutions. The proof is given in
Appendix \ref{app:ss_bw_bound}.\footnote{We thank Yining Wang for his help with
  the spectral similarity result.}   

\begin{theorem}
\label{thm:ss_bw_bound}
For $m=2$, and any (distinct) set of design points $x_{1:n}$, the tridiagonal 
matrix \smash{$\K^2_n$} defined in \eqref{eq:nsp_sobolev_kmat_m2} and the
diagonal matrix \smash{$\W^2_n=\diag((x_3-x_1)/2, \ldots, (x_n-x_{n-2})/2)$} are 
$(3,1)$-spectrally-similar. Thus Lemma \ref{lem:spec_sim_bound} gives the
following conclusion: if \smash{$\hf$} is the solution in the cubic smoothing
spline problem \eqref{eq:smooth_spline} with tuning parameter $\lambda_a$, and
\smash{$\htheta$} is the solution in the weighted cubic BW filtering problem
\eqref{eq:bw_filter} with tuning parameter $\lambda_b \geq 3\lambda_a$, then       
\begin{equation}
\label{eq:ss_bw_bound1}
\big\| \hf(x_{1:n}) - \htheta \big\|_2^2 \leq \frac{\lambda_b}{3}
\int_a^b (D^2 \hf)(x)^2 \, dx.
\end{equation}
\end{theorem}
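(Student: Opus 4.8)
The plan is to establish the spectral similarity claim first, and then feed it into Lemma \ref{lem:spec_sim_bound} together with the representation from Theorem \ref{thm:nsp_sobolev} to obtain \eqref{eq:ss_bw_bound1}. Since $\W^2_n$ is diagonal with positive entries and $\K^2_n$ is positive definite (its inverse is, by \eqref{eq:nsp_sobolev_kmat}, $4$ times a Gram matrix of linearly independent functions), the definition \eqref{eq:spec_sim} of $(3,1)$-spectral-similarity, namely $u^\T \W^2_n u \leq u^\T \K^2_n u \leq 3 u^\T \W^2_n u$, is equivalent by inverting to the sandwich $\frac{1}{3}(\W^2_n)^{-1} \preceq (\K^2_n)^{-1} \preceq (\W^2_n)^{-1}$ in the Loewner order. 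I would work entirely with this inverted form, which is far more tractable because both inverses are explicit.

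The key device is to read $u^\T (\K^2_n)^{-1} u$ as an honest $L_2$ norm. By \eqref{eq:nsp_sobolev_kmat} with $m=2$ we have $(\K^2_n)^{-1}_{ij} = 4\int_a^b P^1_i P^1_j\,dx$, where $P^1_i = P^1(\cdot; x_{i:(i+2)})$ is the degree-$1$ B-spline (the hat function) supported on $[x_i,x_{i+2}]$ with peak value $1/(x_{i+2}-x_i)$ at $x_{i+1}$ (this peak value is forced by consistency with the diagonal entries in \eqref{eq:nsp_sobolev_kmat_m2}). Hence $u^\T(\K^2_n)^{-1} u = 4\int_a^b g(x)^2\,dx$ for $g = \sum_i u_i P^1_i$, a piecewise linear function vanishing at $x_1$ and $x_n$ whose interior nodal values are $w_i := g(x_{i+1}) = u_i/(x_{i+2}-x_i)$. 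In these nodal coordinates a short computation gives $u^\T(\W^2_n)^{-1} u = \sum_i 2 w_i^2 (x_{i+2}-x_i)$, while the exact quadrature $\int_{x_j}^{x_{j+1}} g^2 = \frac{x_{j+1}-x_j}{3}\big(g(x_j)^2 + g(x_j)g(x_{j+1}) + g(x_{j+1})^2\big)$ turns $\int_a^b g^2$ into a sum over adjacent nodal values.

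With this setup the two bounds become termwise elementary. Writing $g_j = g(x_j)$ and $\ell_j = x_{j+1}-x_j$, a reindexing identity (using $g_1 = g_n = 0$) shows that $\sum_j \ell_j(g_j^2 + g_{j+1}^2)$ equals the weight sum $\sum_i w_i^2(x_{i+2}-x_i)$; then the upper bound $(\K^2_n)^{-1}\preceq(\W^2_n)^{-1}$ reduces to the per-interval inequality $2 g_j g_{j+1} \leq g_j^2 + g_{j+1}^2$, i.e.\ $(g_j-g_{j+1})^2 \geq 0$, and the lower bound $\frac{1}{3}(\W^2_n)^{-1}\preceq(\K^2_n)^{-1}$ reduces to $(g_j+g_{j+1})^2 \geq 0$. (As a sanity check, both collapse to equalities for evenly-spaced designs, confirming that the constant $3$ is sharp.) This is the crux of the argument, and essentially the only place any computation happens. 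I expect the main obstacle to be purely bookkeeping---correctly pinning down the nodal-value change of variables and the boundary conventions so that the weight matrix $\W^2_n$ and the quadrature weights align---rather than any genuine analytic difficulty. A more pedestrian alternative would verify the same two facts by checking diagonal dominance of $(\W^2_n)^{-1}-(\K^2_n)^{-1}$ and $(\K^2_n)^{-1}-\frac{1}{3}(\W^2_n)^{-1}$ directly from the explicit tridiagonal formula \eqref{eq:nsp_sobolev_kmat_m2}, but the Gram-matrix route is cleaner and makes the sharp constant transparent.

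For the second part, I would transfer the similarity from $\K^2_n,\W^2_n$ to the penalty matrices. Setting $u = \D^2_n\theta$ in the sandwich shows that $\A := (\D^2_n)^\T\K^2_n\D^2_n$ and $\B := (\D^2_n)^\T\W^2_n\D^2_n$ are themselves $(3,1)$-spectrally-similar (both being positive semidefinite). These are exactly the penalty matrices in the discrete smoothing spline problem \eqref{eq:smooth_spline_discrete} (solution $\htheta_a = \hf(x_{1:n})$, tuning $\lambda_a$) and the weighted BW problem \eqref{eq:bw_filter} (solution $\htheta_b = \htheta$, tuning $\lambda_b$), which are precisely the problems \eqref{eq:quad_opt1}, \eqref{eq:quad_opt2} of Lemma \ref{lem:spec_sim_bound} with $\sigma = 3$, $\tau = 1$. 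Under the hypothesis $\lambda_b \geq 3\lambda_a = \sigma\lambda_a$, the bound \eqref{eq:quad_sol_bound2} gives $\|\hf(x_{1:n})-\htheta\|_2^2 \leq \frac{1}{2}(1-\frac{1}{3})\lambda_b\,\htheta_a^\T\A\htheta_a = \frac{\lambda_b}{3}\,\hf(x_{1:n})^\T(\D^2_n)^\T\K^2_n\D^2_n\,\hf(x_{1:n})$. Finally, since the smoothing spline solution $\hf$ is a natural cubic spline in $\NS^3(x_{1:n},[a,b])$, Theorem \ref{thm:nsp_sobolev} (equation \eqref{eq:nsp_sobolev}) identifies this last quadratic form with $\int_a^b (D^2\hf)(x)^2\,dx$, yielding \eqref{eq:ss_bw_bound1} exactly.
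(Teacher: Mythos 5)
Your proposal is correct, and it reaches the same termwise inequality as the paper by a somewhat different route. The paper works with the congruent matrix $\W^2_n (\K^2_n)^{-1} \W^2_n$, reads off its tridiagonal entries directly from the explicit formula \eqref{eq:nsp_sobolev_kmat_m2}, and applies $2st \leq s^2+t^2$ and $2st \geq -s^2-t^2$ to the cross terms, using the identities $a_i+b_{i-1}+b_i = q_i$ and $a_i - b_{i-1}-b_i = q_i/3$ to recover the weights $(x_{i+2}-x_i)/2$; this is essentially the ``more pedestrian alternative'' you mention at the end of your second paragraph. Your main route instead goes through the Gram-matrix representation \eqref{eq:nsp_sobolev_kmat}, writes $u^\T (\K^2_n)^{-1} u = 4\int_a^b g^2$ for the hat-function expansion $g=\sum_i u_i P^1_i$, and uses the exact quadrature $\int_{x_j}^{x_{j+1}} g^2 = \frac{\ell_j}{3}(g_j^2+g_jg_{j+1}+g_{j+1}^2)$ so that both bounds become $(g_j \mp g_{j+1})^2 \geq 0$ per interval. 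The two arguments are algebraically equivalent (the cross terms $b_i u_i u_{i+1}$ in the paper are exactly your $\frac{\ell_j}{3}g_jg_{j+1}$ after the change of variables $w_i = u_i/(x_{i+2}-x_i)$), but yours is more conceptual: it explains where the tridiagonal structure and the constant $3$ come from, namely the $1/3$ in the quadrature of a product of adjacent hat functions. I verified the bookkeeping you flagged as the main risk (the nodal change of variables, $g(x_1)=g(x_n)=0$, and the reindexing $\sum_j \ell_j(g_j^2+g_{j+1}^2)=\sum_i w_i^2(x_{i+2}-x_i)$), and it all checks out. The second half of your argument---transferring the similarity to $(\D^2_n)^\T\K^2_n\D^2_n$ versus $(\D^2_n)^\T\W^2_n\D^2_n$, invoking \eqref{eq:quad_sol_bound2} with $\sigma=3$, $\tau=1$, and identifying $\htheta_a^\T\A\htheta_a$ with $\int_a^b(D^2\hf)^2$ via Theorem \ref{thm:nsp_sobolev}---is exactly the intended application and is what the theorem statement itself delegates to Lemma \ref{lem:spec_sim_bound}. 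One small quibble: your parenthetical sanity check is wrong as stated. For evenly-spaced designs the per-interval inequalities $(g_j\mp g_{j+1})^2\geq 0$ do not collapse to equalities for generic $u$; sharpness of the constants is instead an asymptotic statement about the extremal ratio (approached by slowly varying, respectively alternating, nodal values), and since $g_1=g_n=0$ the extremes are not attained. This aside does not affect the proof.
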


\begin{remark}
To achieve the bound in \eqref{eq:ss_bw_bound1} in Theorem
\ref{thm:ss_bw_bound}, we take the weighted BW filter tuning parameter
$\lambda_b$ to be at least three times the smoothing spline tuning parameter
$\lambda_a$. This is the result of applying \eqref{eq:quad_sol_bound2} in Lemma 
\ref{lem:spec_sim_bound}. Of course, empirically, and conceptually, we are more
likely to believe that taking $\lambda_a=\lambda_b$ will lead to the most
similar solutions; with this choice, the result in \eqref{eq:quad_sol_bound1}
translates to (in the context of the smoothing spline and weighted BW
filtering): 
\begin{equation}
\label{eq:ss_bw_bound2}
\big\| \hf(x_{1:n}) - \htheta \big\|_2^2 \leq \lambda_a
\big\| (\W^2_n)^{\hspace{-1pt}\frac{1}{2}} \D^2_n \htheta \big\|_2^2, 
\end{equation}
which might also be a useful bound. However, the reason we chose to state
\eqref{eq:ss_bw_bound1} in the theorem, rather than \eqref{eq:ss_bw_bound2}, is 
that the former has the $L_2$-Sobolev penalty of \smash{$\hf$} on the right-hand
side, which can be controlled by leveraging classical nonparametric regression 
theory, as we show next.
\end{remark}

Our next result uses known bounds on the estimation error of the cubic smoothing
spline over $L_2$-Sobolev classes, along with \eqref{eq:ss_bw_bound2} and the
triangle inequality, to establish a similar result for the weighted cubic BW
filter. 

\begin{corollary}
\label{cor:ss_bw_bound}
Assume that the design points $x_i$, $i=1,\ldots,n$ are drawn from a continuous
distribution on $[0,1]$, and that the responses follow the model
$$
y_i = f_0(x_i) + \epsilon_i, \quad i=1,\ldots,n,
$$
for uniformly sub-Gaussian errors $\epsilon_i$, $i=1,\ldots,n$ with mean zero
and unit variance, independent of the design points. Further assume that $f_0$
has two weak derivatives, and that \smash{$\int_0^1 (D^2 f_0)(x)^2 \, dx \leq
  C_n^2$} for $C_n \geq 1$. Recall that there are universal constants
$c_1,c_2,c_3,n_0>0$ such that for all $c \geq c_1$ and $n \geq n_0$, the cubic
smoothing spline solution in \eqref{eq:smooth_spline} (that is, $m=2$) with 
\smash{$\lambda \geq cn^{\frac{1}{5}} C_n^{-\frac{8}{5}}$} satisfies   
\begin{gather}
\label{eq:ss_err_bound}
\frac{1}{n} \big\| \hf(x_{1:n}) - f_0(x_{1:n}) \big\|_2^2 \leq
\frac{8\lambda}{n} C_n^2, \\
\label{eq:ss_pen_bound}
\int_0^1 (D^2 \hf)(x)^2 \, dx \leq 5C_n^2,
\end{gather}
with probability at least \smash{$1-\exp(-c_2 c) - \exp(-c_3 \sqrt{n})$}.
Setting \smash{$\lambda = cn^{\frac{1}{5}} C_n^{-\frac{8}{5}}$}, the right-hand
side in \eqref{eq:ss_err_bound} becomes \smash{$8c n^{-\frac{4}{5}}
  C_n^{\frac{2}{5}}$} which matches the minimax optimal error rate (in squared
$L_2$ norm) for estimation over the space 
$$
\cW^{2,2}(C_n; [0,1]) = \bigg\{ f : [0,1] \to \R : \int_0^1 (D^2 f_0)(x)^2 \, dx
\leq C_n^2 \bigg\}.  
$$

A consequence of the above result and Theorem \ref{thm:ss_bw_bound} is as
follows: for all $c \geq c_1$ and $n \geq n_0$, the weighted cubic BW filtering
solution in \eqref{eq:bw_filter} (that is, $m=2$) with \smash{$\lambda = 3
  cn^{\frac{1}{5}} C_n^{-\frac{8}{5}}$} satisfies     
\begin{equation}
\label{eq:bw_err_bound}
\frac{1}{n} \big\| \theta(x_{1:n}) - f_0(x_{1:n}) \big\|_2^2 \leq
26 c n^{-\frac{4}{5}} C_n^{\frac{2}{5}},
\end{equation}
with probability at least \smash{$1-\exp(-c_2 c) - \exp(-c_3 \sqrt{n})$},
again matching the minimax error rate over \smash{$\cW^{2,2}(C_n; [0,1])$}. 
\end{corollary}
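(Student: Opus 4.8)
The plan is to bound the weighted BW filter's estimation error by routing through the cubic smoothing spline, combining the deterministic proximity bound of Theorem~\ref{thm:ss_bw_bound} with the two stated smoothing spline guarantees \eqref{eq:ss_err_bound} and \eqref{eq:ss_pen_bound}. First I would fix the smoothing spline tuning parameter at $\lambda_a = c n^{\frac{1}{5}} C_n^{-\frac{8}{5}}$ and set the weighted BW tuning parameter to $\lambda_b = 3\lambda_a = 3 c n^{\frac{1}{5}} C_n^{-\frac{8}{5}}$; this is precisely the value prescribed in the statement, and it satisfies with equality the hypothesis $\lambda_b \geq 3\lambda_a$ required to invoke Theorem~\ref{thm:ss_bw_bound}. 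All of the stochastic content is inherited from the smoothing spline, so I would work throughout on the single event---of probability at least $1-\exp(-c_2 c)-\exp(-c_3\sqrt{n})$---on which both \eqref{eq:ss_err_bound} and \eqref{eq:ss_pen_bound} hold simultaneously.

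The central step is a triangle inequality on the design-point $\ell_2$ scale. Writing \smash{$\htheta$} for the weighted BW solution of \eqref{eq:bw_filter} and \smash{$\hf$} for the smoothing spline solution of \eqref{eq:smooth_spline}, I would split
$$
\big\|\htheta - f_0(x_{1:n})\big\|_2 \leq
\big\|\htheta - \hf(x_{1:n})\big\|_2 + \big\|\hf(x_{1:n}) - f_0(x_{1:n})\big\|_2,
$$
and then use the elementary bound $(u+v)^2 \leq 2u^2 + 2v^2$ to obtain
$$
\big\|\htheta - f_0(x_{1:n})\big\|_2^2 \leq
2\big\|\htheta - \hf(x_{1:n})\big\|_2^2 + 2\big\|\hf(x_{1:n}) - f_0(x_{1:n})\big\|_2^2.
$$
The second term is controlled immediately by \eqref{eq:ss_err_bound} with the chosen $\lambda_a$, which gives \smash{$(1/n)\|\hf(x_{1:n})-f_0(x_{1:n})\|_2^2 \leq 8c n^{-\frac{4}{5}} C_n^{\frac{2}{5}}$}.

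For the first (cross) term I would apply Theorem~\ref{thm:ss_bw_bound} with $a=0$, $b=1$, yielding \smash{$\|\htheta - \hf(x_{1:n})\|_2^2 \leq (\lambda_b/3)\int_0^1 (D^2\hf)(x)^2\,dx$}, and then bound the $L_2$-Sobolev penalty of the smoothing spline by $5C_n^2$ using \eqref{eq:ss_pen_bound}. Substituting $\lambda_b = 3\lambda_a = 3 c n^{\frac{1}{5}} C_n^{-\frac{8}{5}}$ collapses the prefactor to $\lambda_a$, giving \smash{$\|\htheta - \hf(x_{1:n})\|_2^2 \leq 5\lambda_a C_n^2 = 5 c n^{\frac{1}{5}} C_n^{\frac{2}{5}}$}, that is \smash{$(1/n)\|\htheta-\hf(x_{1:n})\|_2^2 \leq 5 c n^{-\frac{4}{5}} C_n^{\frac{2}{5}}$}. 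Feeding both pieces into the factor-of-two inequality yields \smash{$(1/n)\|\htheta - f_0(x_{1:n})\|_2^2 \leq (2\cdot 5 + 2\cdot 8)\, c\, n^{-\frac{4}{5}} C_n^{\frac{2}{5}} = 26\, c\, n^{-\frac{4}{5}} C_n^{\frac{2}{5}}$}, which is exactly \eqref{eq:bw_err_bound}; the claim of minimax optimality over \smash{$\cW^2(C_n)$} then follows from the matching rate already recorded for the smoothing spline.

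Since each inequality used is either Theorem~\ref{thm:ss_bw_bound} or one of the two cited smoothing spline bounds, there is no genuinely difficult analytic step in this corollary---the real work lies upstream, in Theorem~\ref{thm:ss_bw_bound} (the spectral similarity of \smash{$\K^2_n$} and \smash{$\W^2_n$}) and in the classical smoothing spline theory supplying \eqref{eq:ss_err_bound} and \eqref{eq:ss_pen_bound}. The only care required here is constant bookkeeping (matching the $2+2$ from the triangle inequality against the factors $8$ and $5$) and verifying that the two smoothing spline guarantees can be taken to hold on a common high-probability event, which the statement already grants.
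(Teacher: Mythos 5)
Your proposal is correct and is exactly the argument the paper has in mind: the paper omits the proof, stating that \eqref{eq:bw_err_bound} follows immediately from \eqref{eq:ss_bw_bound1}, \eqref{eq:ss_err_bound}, \eqref{eq:ss_pen_bound}, and the inequality $\|u+v\|_2^2 \leq 2\|u\|_2^2 + 2\|v\|_2^2$, which is precisely your decomposition, and your constant bookkeeping ($2\cdot 5 + 2\cdot 8 = 26$) checks out.
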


We omit the proof of Corollary \ref{cor:ss_bw_bound}, as 
\eqref{eq:bw_err_bound} follows immediately from \eqref{eq:ss_bw_bound1}, 
\eqref{eq:ss_err_bound}, \eqref{eq:ss_pen_bound}, and the simple inequality
\smash{$\|u+v\|_2^2 \leq 2\|u\|_2^2 + 2\|v\|_2^2$}. To be clear, the smoothing
spline error bound \eqref{eq:ss_err_bound}, penalty bound
\eqref{eq:ss_pen_bound}, and claims of minimax optimality are well-known (and
are not intended to be portrayed as original contributions in the corollary);
for example, see Chapter 10.1 of \citet{vandegeer2000empirical} for a statement
of \eqref{eq:ss_err_bound}, \eqref{eq:ss_pen_bound} in $O_P$ (bounded in 
probability) form; the results in Corollary \ref{cor:ss_bw_bound}, written in 
finite-sample form, are a consequence of Theorem 1 in
\citet{sadhanala2019additive}. 
For the minimax lower bound over the $L_2$-Sobolev class 
\smash{$\cW^{2,2}(C_n; [0,1])$}, see, for example, Chapter 2.6.1 of
\citet{tsybakov2009introduction}. It is not really suprising that the weighted 
BW filter achieves minimax optimal error rates over the appropriate Sobolev
classes, however it is of course reassuring to know that this is the case. As
far as we can tell, this seems to be a new result, despite the fact that the BW
filter has a very long history.

\subsection{Connections to discrete splines}
\label{sec:bw_discrete_spline}

Unlike trend filtering, which bears a very clear connection to discrete
splines, the connections between the (weighted) BW filter and discrete splines
appear to be more subtle. Recall that the $\ell_1$ case, for a $k$th degree
discrete spline $f$, the total variation penalty \smash{$\TV(D^k f)$} is simply
the trend filtering penalty \eqref{eq:trend_filter_wpen2a} acting on
$\theta=f(x_{1:n})$ (Theorem \ref{thm:ffb_tv}). In the $\ell_2$ case, for a
$k$th degree discrete spline $f$, with $k=2m-1$, the $L_2$-Sobolev penalty
\smash{$\int_a^b (D^m f)(x)^2 \, dx$} is a quadratic form of the $m$th discrete
derivatives of $\theta=f(x_{1:n})$ (Theorem \ref{thm:ffb_sobolev}), but this
quadratic form is {\it not} the BW penalty, either unweighted \smash{$\|\D^m_n
  \theta\|_2^2$}, or weighted \eqref{eq:bw_filter_wpen2}. It is instead
\smash{$\|(\V^m_n)^{\hspace{-1pt}\frac{1}{2}} \D^m_n \theta \|_2^2$}, where
\smash{$\V^m_n \in \R^{(n-m) \times (n-m)}$} is a banded matrix (a function of
$x_{1:n}$ only), of bandwidth $2m-1$.  

For completeness, recall that for that a $k$th degree spline, the penalty
\smash{$\int_a^b (D^m f)(x)^2 \, dx$} is also a quadratic form of the 
$m$th discrete derivatives of $\theta=f(x_{1:n})$ (Theorem
\ref{thm:nsp_sobolev}), of the form 
\smash{$\|(\K^m_n)^{\hspace{-1pt}\frac{1}{2}} \D^m_n \theta \|_2^2$}, where
\smash{$\K^m_n \in \R^{(n-m) \times (n-m)}$} is a matrix (a function of
$x_{1:n}$ only) with a banded {\it inverse}, and is therefore itself dense. 

One way to roughly interpret and compare these penalties on discrete derivatives
is as follows. Both can be seen as
\begin{equation}
\label{eq:sobolev_conv}
\big\|\A^\frac{1}{2} \D^m_n \theta \big\|_2^2 = 
\sum_{i,\tau=-\infty}^\infty \A_{i,i-\tau} (\D^m_n \theta)_i (\D^m_n
\theta)_{i-\tau}.  
\end{equation}
for a symmetric matrix $\A \in \R^{(n-m) \times (n-m)}$, where for notational
convenience we simply set the entries of $\A$ or \smash{$\D^m_n \theta$} to zero
when we index beyond their inherent ranges. That is, when the continuous-time
penalty \smash{$\int_a^b (D^m f)(x)^2 \, dx$} gets translated into
discrete-time, we see that the discrete-time equivalent \eqref{eq:sobolev_conv}
``blurs'' the derivatives before it aggregates them; more precisely, the
discrete-time equivalent \eqref{eq:sobolev_conv} measures the weighted $\ell_2$
norm of the product of \smash{$\D^m_n \theta$} and its convolution, weighted
here by a (two-dimensional) kernel $\A$. The weighted BW penalty
\smash{$\|(\W^m_n)^{\hspace{-1pt}\frac{1}{2}} \D^m_n \theta \|_2^2$} performs
no such ``blurring'' (it measures the weighted $\ell_2$ norm of \smash{$\D^m_n  
\theta$} times itself). Therefore we might view the discrete spline
discretization of the Sobolev penalty, 
\smash{$\|(\V^m_n)^{\hspace{-1pt}\frac{1}{2}} \D^m_n \theta \|_2^2$}, as being 
``closer'' to the weighted BW penalty, as its kernel \smash{$\V^m_n$} performs
less ``blurring'' (it has bandwidth $2m-1$), versus the spline discretization,
\smash{$\|(\K^m_n)^{\hspace{-1pt}\frac{1}{2}} \D^m_n \theta \|_2^2$}, whose
kernel \smash{$\K^m_n$} performs more ``blurring'' (it is supported everywhere).

An important exception is the linear case, $m=1$, in which all three
penalties---from the weighted BW filter, spline discretization, and discrete
spline discretization---coincide. The equivalence of the first two was already
noted in \eqref{eq:ffb_sobolev_m1}. The next lemma gives the equivalence of 
the third, by calculating the explicit form of \smash{$\V^m_n$} for
$m=1$.\footnote{This should not be a surprise: for degree $k=1$, discrete 
  splines are splines, and Lemma \ref{lem:ffb_sobolev_vmat_m1} is really just a
  sanity check.}      
Its proof is elementary and is deferred until Appendix
\ref{app:ffb_sobolev_vmat_m1}.      

\begin{lemma}
\label{lem:ffb_sobolev_vmat_m1}
For $m=1$, the matrix $\V_n \in \R^{(n-1) \times (n-1)}$ from Theorem
\ref{thm:ffb_sobolev} has entries
\begin{equation}
\label{eq:ffb_sobolev_vmat_m1}
(\V_n)_{ii} = 
\begin{cases}
x_2-a & \text{if $i=1$} \\
x_{i+1}-x_i & \text{if $i \geq 2$}.
\end{cases}
\end{equation}
Thus when $a=x_1$, we see that the matrix $\V_n$ in
\eqref{eq:ffb_sobolev_vmat_m1} is the same as the matrix $\K_n$ in
\eqref{eq:nsp_sobolev_kmat_m1}, which is the same as $\W_n$ in
\eqref{eq:weight_mat} with $m=1$.
\end{lemma}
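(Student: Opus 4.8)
The plan is to exploit the fact that for $m=1$ the degree is $k=1$, so that a discrete spline $f \in \cH^1_n$ is just an ordinary continuous piecewise-linear function with knots at $x_{2:(n-1)}$, and to read the diagonal of $\V_n$ off the integral directly, rather than grinding through the recursion of Lemma \ref{lem:ffb_sobolev_vmat}. Two facts make this work. By Theorem \ref{thm:ffb_sobolev}, $\V_n$ has bandwidth $2m-1=1$, i.e.\ it is diagonal, and $\int_a^b (Df)(x)^2\,dx = \sum_{i=1}^{n-1}(\V_n)_{ii}\,(\D_n f(x_{1:n}))_i^2$ for every $f \in \cH^1_n$. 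Moreover $\D_n \in \R^{(n-1)\times n}$ has full row rank and $f \mapsto f(x_{1:n})$ maps $\cH^1_n$ bijectively onto $\R^n$, so $\D_n f(x_{1:n})$ ranges over all of $\R^{n-1}$; taking $\D_n f(x_{1:n})=e_i$ then shows that the diagonal entries realizing this representation are uniquely pinned down. It therefore suffices to identify the coefficient of each $(\D_n f(x_{1:n}))_i^2$.

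First I would compute that coefficient directly from the piecewise-linear structure. Such an $f$ is affine on each of the $n-1$ segments $[a,x_2],(x_2,x_3],\ldots,(x_{n-1},b]$, so $Df$ is a step function and $\int_a^b (Df)^2 = \sum_{\mathrm{segments}}(\mathrm{slope})^2\cdot(\mathrm{length})$. Because no knot lies strictly between consecutive design points, $f$ is affine across each $[x_i,x_{i+1}]$ with slope equal to the divided difference $(\D_n f(x_{1:n}))_i = f[x_i,x_{i+1}]$; in particular the slope on $[a,x_2]$ equals $(\D_n f(x_{1:n}))_1$ and the slope on $(x_{n-1},b]$ equals $(\D_n f(x_{1:n}))_{n-1}$. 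Reading off the segment lengths then gives $(\V_n)_{11}=x_2-a$, $(\V_n)_{ii}=x_{i+1}-x_i$ for interior $i$, and $(\V_n)_{n-1,n-1}=b-x_{n-1}$, which is exactly \eqref{eq:ffb_sobolev_vmat_m1}; the reduction to $\K_n$ in \eqref{eq:nsp_sobolev_kmat_m1} and to $\W_n$ in \eqref{eq:weight_mat} is then immediate once $a=x_1$ (and, at the right end, $b=x_n$), since every diagonal entry collapses to a consecutive gap.

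As a cross-check, and to stay faithful to the appendix proof, I would also run the recursion \eqref{eq:ffb_sobolev_vmat1}--\eqref{eq:ffb_sobolev_vmat4} specialized to $m=1$: the loops \eqref{eq:ffb_sobolev_vmat1} and \eqref{eq:ffb_sobolev_vmat3} are empty, leaving the mixed second difference $(\V_n)_{ij}=\M(i,j)-\M(i+1,j)-\M(i,j+1)+\M(i+1,j+1)$ of the Gram matrix $\M_{ij}=\int_a^b (Dh^1_{i+1})(Dh^1_{j+1})$ from \eqref{eq:ffb_sobolev_mmat}. Using $Dh^1_2\equiv 1$ and $Dh^1_{j+1}=1\{x>x_j\}$ for $j\ge 2$ from \eqref{eq:ffb}, one finds the ``staircase'' form $\M_{ij}=b-x_{\max(i,j)}$, with the single exception $\M_{11}=b-a$; since this depends only on $\max(i,j)$, the second difference annihilates all off-diagonal entries and telescopes on the diagonal to the same gaps. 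The only delicate point in the whole argument is the boundary bookkeeping: the exceptional value $\M_{11}=b-a$ (coming from $Dh^1_2\equiv 1$ rather than an indicator) is what produces the distinguished first coefficient $x_2-a$, while the out-of-range convention $\M(n,\cdot)=\M(\cdot,n)=0$ produces the last coefficient $b-x_{n-1}$ and preserves diagonality in the final row and column.
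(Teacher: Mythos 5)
Your proof is correct, and your primary argument takes a genuinely different (and more elementary) route than the paper's. The paper proves this lemma by computing the Gram matrix $\Q$ of the basis derivatives entrywise ($\Q_{ij}=b-x_{\max(i,j)-1}$ up to the boundary convention) and then mechanically running the $m=1$ specialization of the recursion from Lemma \ref{lem:ffb_sobolev_vmat}, i.e.\ exactly your ``cross-check'' paragraph. Your main argument instead bypasses the recursion entirely: since $k=1$ discrete splines are just continuous piecewise linear functions with knots $x_{2:(n-1)}$, you read $\int_a^b(Df)^2$ off as $\sum(\text{slope})^2\cdot(\text{length})$, identify the slopes with the divided differences $(\D_n f(x_{1:n}))_i$, and pin down $\V_n$ by the surjectivity of $f\mapsto \D_n f(x_{1:n})$ together with the diagonality guaranteed by Theorem \ref{thm:ffb_sobolev}. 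This buys transparency---it makes clear \emph{why} the three penalties coincide for $m=1$---at the cost of being special to $m=1$, whereas the paper's calculation is the template for the (much harder) $m\geq 2$ cases. One further point in your favor: both your direct computation and the paper's own appendix calculation give $(\V_n)_{n-1,n-1}=b-x_{n-1}$, which matches the displayed formula $x_n-x_{n-1}$ only when $b=x_n$; you flag this boundary condition explicitly, while the lemma statement (and the final line of the paper's proof) glosses over it.
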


The next case to consider would of course be the cubic case, $m=2$. As it turns
out, deriving the explicit form of \smash{$\V^m_n$} for $m=2$ requires a
formidable calculation. The recursion in Lemma
\ref{lem:ffb_sobolev_vmat}---though conceptually straightforward---is
practically challenging to carry out, since it involves some rather complicated
algebraic calculations. However, it can be done for evenly-spaced design points
$x_{i+1}-x_i=v>0$, $i=1,\ldots,n-1$, with $a=x_1$ and $b=x_n$: \footnote{We 
  thank Pratik Patil for his help in checking the result
  \eqref{eq:ffb_sobolev_vmat_m2_even}.} 
\renewcommand\arraystretch{1.2}
\begin{equation}
\label{eq:ffb_sobolev_vmat_m2_even}
\V^2_n = \left[\begin{array}{rrrrrrrr}
3 & -\nfrac{3}{2} & 0 & 0 & \ldots & 0 & 0 & 0 \\
-\nfrac{3}{2} & \nfrac{10}{3} & -\nfrac{5}{6} & 0 & \ldots & 0 & 0 & 0 \\
0 & -\nfrac{5}{6} & \nfrac{8}{3} & -\nfrac{5}{6} & \ldots & 0 & 0 & 0 \\
\vdots & & & & & & & \\
0 & 0 & 0 & 0 & \ldots & -\nfrac{5}{6} & \nfrac{8}{3} & -\nfrac{5}{6} \\
0 & 0 & 0 & 0 & \ldots & 0 & -\nfrac{5}{6} & \nfrac{7}{3} 
\end{array}\right] \cdot v.
\end{equation}
\renewcommand\arraystretch{1}
For comparison, in this case, we have from \eqref{eq:nsp_sobolev_kmat_m2}:  
\renewcommand\arraystretch{1.2}
\begin{equation}
\label{eq:nsp_sobolev_kmat_m2_even}
\K^2_n = \left[\begin{array}{rrrrrrrr}
\nfrac{2}{3} & \nfrac{1}{6} & 0 & 0 & \ldots & 0 & 0 & 0 \\
\nfrac{1}{6} & \nfrac{2}{3} & \nfrac{1}{6} & 0 & \ldots & 0 & 0 & 0 \\
0 & \nfrac{1}{6} & \nfrac{2}{3} & \nfrac{1}{6} & \ldots & 0 & 0 & 0 \\
\vdots & & & & & & & \\
0 & 0 & 0 & 0 & \ldots & \nfrac{1}{6} & \nfrac{2}{3} & \nfrac{1}{6} \\
0 & 0 & 0 & 0 & \ldots & 0 & \nfrac{1}{6} & \nfrac{2}{3} 
\end{array}\right]^{-1} \cdot v.
\end{equation}
\renewcommand\arraystretch{1}

For the case of arbitrary design points, we can carry out the recursion defining
\smash{$\V^m_n$} in Lemma \ref{lem:ffb_sobolev_vmat} with symbolic computation
software. Our current attempts have resulted in somewhat compact expressions for
the elements of \smash{$\V^m_n$}, but they do not appear simple enough to be
useful (amenable to further interpretation or analysis). We may report on this
in more detail at a future time.


\section{Discussion}

This paper began as an attempt to better understand the connections between 
trend filtering and discrete splines, and it grew into something broader: an
attempt to better understand some fundamental properties of discrete splines,
and offer some new perspectives on them. Though discrete splines were first 
studied 50 years ago, there still seems to be some fruitful directions left to 
explore. For example, the approximation results in Section
\ref{sec:approximation} are weak (though recall, they are sufficient for the 
intended statistical applications) and could most certainly be improved. The
use of discrete B-splines within trend filtering optimization algorithms,
described in Section \ref{sec:trend_filter_comp}, should be investigated
thoroughly, as it should improve their stability. As for more open directions,
it may be possible to use discrete splines to approximately (and efficiently) 
solve certain differential equations. Lastly, the multivariate case is of great
interest and importance. 

\subsection*{Acknowledgements}

We are grateful to Yu-Xiang Wang for his many, many insights and inspiring
conversations over the years. His lasting enthusiasm helped fuel our own
interest in ``getting to the bottom'' of the falling factorial basis, and
putting this paper together. We also thank our other collaborators on trend
filtering papers: Aaditya Ramdas, Veeranjaneyulu Sadhanala, James Sharpnack, and
Alex Smola. Finally, we are grateful to Addison Hu, Alden Green, Pratik Patil,
Veeranjaneyulu Sadhanala, and Yu-Xiang Wang for their helpful comments and
feedback on this paper as a whole; to Yining Wang for his help with Section
\ref{sec:bw_err_bound}; and to Pratik Patil for his help with Section
\ref{sec:bw_discrete_spline}. This paper is based upon work supported by the
National Science Foundation under Grant No.\ DMS-1554123.  

\appendix
\newpage
\section{Notation table}
\label{app:notation}

\renewcommand\arraystretch{1.115}
\begin{table}[H]
\hspace{-45pt}
\begin{tabular}{|p{0.26\textwidth}|
p{0.13\textwidth}|p{0.26\textwidth}|
p{0.13\textwidth}|p{0.28\textwidth}|} 
\hline
\multicolumn{1}{|c|}{Discrete object} & 
\multicolumn{1}{|c|}{Reference} & 
\multicolumn{1}{|c|}{Continuum object} &
\multicolumn{1}{|c|}{Reference} & 
\multicolumn{1}{|c|}{Notes}
\\ \hline \hline
\multicolumn{5}{|c|}{\textbf{Operators} \vphantom{$\Big($}} 
\\ \hline 
$\Delta^k_n = \Delta^k(\cdot; x_{1:n})$, $k$th order discrete differentiation
with respect to design point $x_{1:n}$ & 
\eqref{eq:discrete_deriv} &   
$D^k$, $k$th order differentiation & -- & 
$(\Delta^k_n f)(x) = (D^k f)(x)$, for $f \in \cH^k_n$ and $x > x_k$ (Corollary
\ref{cor:deriv_match})  
\\ \hline
$S^k_n = S^k(\cdot; x_{1:n})$, $k$th order discrete integration &
\eqref{eq:cum_sum}, \eqref{eq:discrete_integ_rec} & 
$I^k$, $k$th order integration & -- & 
$S^k_n = (D^k_n)^{-1}$ (Lemma  \ref{lem:discrete_deriv_integ_inv}) 
\\ \hline \hline
\multicolumn{5}{|c|}{\textbf{Spaces} \vphantom{$\Big($}} 
\\ \hline 
$\DS^k_n(t_{1:r})$, $k$th degree discrete splines with knots $t_{1:r}$ and   
  design points $x_{1:n}$ &    
Definition \ref{def:discrete_spline} & 
$\S^k(t_{1:r})$, $k$th degree splines with knots $t_{1:r}$ & 
Definition \ref{def:spline} &  
These spaces coincide for $k=0$ and $k=1$ 
\\ \hline \hline
$\cH^k_n=\DS^k_n (x_{(k+1):(n-1)})$ & -- & 
$\cG^k_n=\S^k(x_{(k+1):(n-1)})$ & -- & 
Abbreviations for the ``canonical'' spaces, with knots $x_{(k+1):(n-1)}$   
\\ \hline \hline
\multicolumn{5}{|c|}{\textbf{Bases} \vphantom{$\Big($}} 
\\ \hline 
$h^k_j$, $j=1,\ldots,n$, $k$th degree falling factorial basis for $\cH^k_n$ &
\eqref{eq:ffb} &   
$g^k_j$, $j=1,\ldots,n$, $k$th degree truncated power basis for $\cG^k_n$ & 
\eqref{eq:tpb} &   
Falling factorials can be seen as truncated Newton polynomials, and have dual 
relationship to discrete differentiation (Lemma \ref{lem:dual_basis}) 
\\ \hline
$Q^k_j$ and $N^k_j$, $j=1,\ldots,n$, unnormalized and normalized $k$th degree
DB-spline basis for $\cH^k_n$ &
\eqref{eq:discrete_bs_evals}, \eqref{eq:discrete_nbs_evals}, 
\eqref{eq:discrete_nbs} &   
$P^k_j$ and $M^k_j$, $j=1,\ldots,n$, unnormalized and normalized $k$th degree
B-spline basis for $\cG^k_n$ &   
\eqref{eq:bs}, \eqref{eq:nbs}, \newline \eqref{eq:nbsb} &
The basis in \eqref{eq:nbsb} is actually defined for an arbitrary knot set
$t_{1:r}$; for arbitrary knots in the DB-spline setting, see
\eqref{eq:discrete_nbs_sk1}, \eqref{eq:discrete_nbs_sk2}  
\\ \hline \hline
\multicolumn{5}{|c|}{\textbf{Matrices} \vphantom{$\Big($}} 
\\ \hline
$\D^k_n$, $k$th order discrete derivative matrix with respect to design points
$x_{1:n}$ & \eqref{eq:diff_mat}, \eqref{eq:weight_mat},
\eqref{eq:discrete_deriv_mat} & 
-- & -- &
Multiplying by a vector of evaluations gives discrete derivatives at design 
points $x_{(k+1):n}$, as in \eqref{eq:discrete_deriv_conn1}
\\ \hline
$\B^k_n$, $k$th order extended discrete derivative matrix with respect to design
points $x_{1:n}$ & 
\eqref{eq:diff_mat_ext}, \eqref{eq:weight_mat_ext}, 
\eqref{eq:discrete_deriv_mat_ext} &
-- & -- & 
Multiplying by a vector of evaluations gives discrete derivatives at all
design points $x_{1:n}$, as in \eqref{eq:discrete_deriv_conn2} 
\\ \hline
$\H^k_n$, $k$th degree falling factorial basis matrix with respect to design
points $x_{1:n}$ & 
Basis in $k$th degree trend filter \eqref{eq:trend_filter_basis} &   
$\G^k_n$, $k$th degree truncated pow- er basis matrix with respect to design 
points $x_{1:n}$ & 
Basis in $k$th degree restricted locally adaptive regression spline 
\eqref{eq:local_spline_basis} 
& $\H^k_n = (\Z^{k+1}_n \, \B^{k+1}_n)^{-1}$, see
\eqref{eq:ffb_discrete_deriv_inv}; results in fast algorithms for matrix
computations in $\H^k_n$, see Appendix \ref{app:fast_mult} 
\\ \hline \hline
\multicolumn{5}{|c|}{\textbf{Smoothness functionals} \vphantom{$\Big($}}  
\\ \hline
$\sum_{i=1}^{n-k-1} | (\D^k_n \theta)_{i+1} - (\D^k_n\theta)_i|$ 
$= \|\W^{k+1}_n \D^{k+1}_n \theta\|_1$, $k$th order discrete total variation of
vector $\theta$ &
Penalty in $k$th degree trend filter \eqref{eq:trend_filter} & 
$\TV(D^k f)$, $k$th order total variation of function $f$ &
Penalty in $k$th degree locally adaptive regres- sion spline
\eqref{eq:local_spline} & 
Equal for $\theta = f(x_{1:n})$ and $f \in \cH^k_n$ (Theorem \ref{thm:ffb_tv}) 
\\ \hline  
$\sum_{i=1}^{n-m} (\D^m_n \theta)_i^2 (x_{i+m} - x_i)/m$
$= \|(\W^m_n)^{\hspace{-1pt}\frac{1}{2}} \D^m_n \theta\|_2^2$, $m$th order 
discrete Sobolev seminorm of vector $\theta$ &  
Penalty in $k$th degree BW filter \eqref{eq:bw_filter}, for $k=2m-1$ &
$\int_a^b (D^m f)(x)^2 \, dx$, $m$th order Sobolev seminorm of $f$ & 
Penalty in $k$th degree smoothing spline \eqref{eq:smooth_spline}, for $k=2m-1$
& Equal for $\theta = f(x_{1:n})$ and $m = 1$ (Lemma
\ref{lem:ffb_sobolev_vmat_m1}), but not in general; see also Theorem
\ref{thm:ffb_sobolev} 
\\ \hline 
\end{tabular}
\caption{Main notation, and discrete-continuum analogies/equivalences in this
  paper. We omit notational dependence on the domain $[a,b]$ for simplicity.}     
\label{tab:notation}
\end{table}
\renewcommand\arraystretch{1}

\newpage
\section{Proofs}
\label{app:proofs}

\subsection{Proof of Theorem \ref{thm:nsp_sobolev}}
\label{app:nsp_sobolev}

Since $f$ is a natural spline of degree $2m-1$ with knots in $x_{1:n}$, we know
that $D^m f$ is a spline of degree $m-1$ with knots in $x_{1:n}$, and moreover,
it is supported on $[x_1,x_n]$. Thus we can expand \smash{$D^m f =
\sum_{i=1}^{n-m} \alpha_i P^{m-1}_i$} for coefficients $\alpha_i$,
$i=1,\ldots,n-m$, and
\begin{equation}
\label{eq:nsp_sobolev1}
\int_a^b (D^m f)(x)^2 \, dx = \alpha^\T \Q \alpha,
\end{equation}
where $\Q \in \R^{(n-m) \times (n-m)}$ has entries \smash{$\Q_{ij} = \int_a^b 
  P^{m-1}_i(x) P^{m-1}_j(x) \, dx$}. But we can also write 
\begin{align}
\nonumber
\int_a^b (D^m f)(x)^2 \, dx 
&= \int_a^b (D^m f)(x) \sum_{i=1}^{n-m} \alpha_i P^{m-1}_i(x) \, dx \\
\nonumber
&= \sum_{i=1}^{n-m} \alpha_i \int_a^b (D^m f)(x) P^{m-1}_i(x) \, dx \\ 
\nonumber
&= \frac{1}{m} \sum_{i=1}^{n-m} \alpha_i \big(\D^m_n f(x_{1:n})\big)_i \\
\label{eq:nsp_sobolev2}
&= \frac{1}{m} \alpha^\T \D^m_n f(x_{1:n}),
\end{align}
where in the third line, we used the Peano representation for B-splines, as
described in \eqref{eq:peano} in Appendix \ref{app:bs}, which implies that for 
$i=1,\ldots,n-m$, 
$$
(m-1)! \cdot f[x_i,\ldots,x_{i+m}] = \int_a^b (D^m f)(x) P^{m-1}_i(x) \,
dx. 
$$ 
Comparing \eqref{eq:nsp_sobolev1} and \eqref{eq:nsp_sobolev2}, we learn that
\smash{$\Q \alpha = \D^m_n f(x_{1:n})/m$}, that is, \smash{$\alpha = \Q^{-1}
  \D^m_n f(x_{1:n})/m$}, and therefore    
\begin{align*}
\int_a^b (D^m f)(x)^2 \, dx 
&= \frac{1}{m^2} \big(\Q^{-1} \D^m_n f(x_{1:n})\big)^\T \Q \, \Q^{-1} 
\D^m_n f(x_{1:n}) \\  
&= \frac{1}{m^2} \big(\D^m_n f(x_{1:n})\big)^\T \Q^{-1} \D^m_n f(x_{1:n}), 
\end{align*}
which establishes \eqref{eq:nsp_sobolev}, \eqref{eq:nsp_sobolev_kmat} with
\smash{$\K^m_n = (1/m^2)\Q^{-1}$}, that is, \smash{$(\K^m_n)^{-1} = m^2 \Q$}. 

When $m=1$, for each $i=1,\ldots,n-1$, we have the simple form for the constant
B-spline:  
$$
P^0_i(x) = 
\begin{cases}
\displaystyle
\frac{1}{x_{i+1}-x_i} & \text{if $x \in I_i$} \\
0 & \text{otherwise}.
\end{cases}
$$
where $I_1=[x_1,x_2]$, and $I_i=(x_i,x_{i+1}]$ for $i=2,\ldots,n-1$. The result
\eqref{eq:nsp_sobolev_kmat_m1} comes from straightforward calculation of
\smash{$\int_a^b P^0_i(x)^2 \, dx$}. Lastly, when $m=2$, for each
$i=1,\ldots,n-2$, we have the linear B-spline:   
$$
P^1_i(x) = 
\begin{cases}
\displaystyle
\frac{x-x_i}{(x_{i+2}-x_i)(x_{i+1}-x_i)} & \text{if $x \in I_i^-$} \\ 
\displaystyle
\frac{x_{i+2}-x}{(x_{i+2}-x_i)(x_{i+2}-x_{i+1})} & \text{if $x \in I_i^+$} \\  
0 & \text{otherwise},
\end{cases}
$$
where \smash{$I_1^-=[x_1,x_2]$}, \smash{$I_i^-=(x_i,x_{i+1}]$} for
$i=2,\ldots,n-2$, and \smash{$I_i^+=(x_{i+1},x_{i+2}]$} for $i=1,\ldots,n-2$.
The two cases in  \eqref{eq:nsp_sobolev_kmat_m2} again come from
straightforward calculation of the integrals \smash{$\int_a^b P^1_i(x)^2 \,
  dx$} and \smash{$\int_a^b P^1_i(x) P^1_{i-1}(x) \, dx$}, which completes 
the proof.     


\subsection{Proof of the linear combination formulation
  \eqref{eq:discrete_integ_linear}} 
\label{app:discrete_integ_linear}

Denote by $g(x)$ the right-hand side of \eqref{eq:discrete_integ_linear}. We
will show that \smash{$\Delta^k_n g=f$}. Note by Lemma
\ref{lem:discrete_deriv_integ_inv}, this would imply \smash{$g=S^k_n f$},
proving \eqref{eq:discrete_integ_linear}. An inductive argument similar to that
in the proof of Lemma \ref{lem:ffb_lateral_rec} shows that, for $x \in
(x_i,x_{i+1}]$ and $i \geq k$,  
\begin{multline*}
(\Delta^k_n g)(x) = 
\sum_{j=1}^k (\Delta^k_n h^{k-1}_j)(x) \cdot f(x_j) \;+
\sum_{j=k+1}^i (\Delta^k_n h^{k-1}_j)(x) \cdot \frac{x_j-x_{j-k}}{k} \cdot
f(x_j) \\
+\, (\Delta^k_n h^{k-1}_{i+1})(x) \cdot \frac{x-x_{i-k+1}}{k} \cdot f(x). 
\end{multline*}
By Lemmas \ref{lem:ffb_discrete_deriv_extra_pp} and
\ref{lem:ffb_discrete_deriv_extra_poly}, all discrete derivatives here are 
zero except the last, which is \smash{$(\Delta^k_n h^{k-1}_{i+1})(x) 
  (x-x_{i-k+1})/k = 1$}. Thus we have shown \smash{$(\Delta^k_n g)(x) =
  f(x)$}. Similarly, for $x \in (x_i,x_{i+1}]$ and $i < k$, 
$$
(\Delta^k_n g)(x) = 
\sum_{j=1}^i (\Delta^k_n h^{k-1}_j)(x) \cdot f(x_j) 
\,+\, (\Delta^k_n h^{k-1}_{i+1})(x) \cdot f(x),
$$
and by Lemma \eqref{eq:ffb_discrete_deriv_extra_poly}, all discrete derivatives
here are zero except the last, which is \smash{$(\Delta^k_n h^{k-1}_{i+1})(x) =
  1$}. For $x \leq x_1$, we have $g(x) = f(x)$ by definition. This establishes
the desired claim and completes the proof.  

\subsection{Proof of Lemma \ref{lem:discrete_deriv_integ_inv}}
\label{app:discrete_deriv_integ_inv}

We use induction, beginning with $k=1$. Using \eqref{eq:cum_sum},
\eqref{eq:weight_map}, we can express the first order discrete integral operator 
$S_n$ more explicitly as 
\begin{equation}
\label{eq:discrete_integ_rec1}
(S_n f)(x) =
\begin{cases}
\displaystyle
f(x_1) + \sum_{j=2}^i f(x_j) (x_j-x_{j-1}) + f(x) (x-x_i) 
& \text{if $x \in (x_i,x_{i+1}]$} \\
f(x) & \text{if $x \leq x_1$}.
\end{cases}
\end{equation}
Compare \eqref{eq:discrete_deriv_rec1} and \eqref{eq:discrete_integ_rec1}. For
$x \leq x_1$, clearly $(\Delta_n S_n f)(x) = f(x)$ and $(S_n \Delta_n f)(x) =
f(x)$, and for $x \in (x_i, x_{i+1}]$,   
\begin{align*}
(\Delta_n S_n f)(x) &= \frac{(S_n f)(x) - (S_n f)(x_i)}{x - x_i} \\
&= \frac{f(x_1) + \sum_{j=2}^i f(x_j) (x_j-x_{j-1}) + f(x) (x-x_i) -  
\big(f(x_1) + \sum_{j=2}^i f(x_j) (x_j-x_{j-1})\big)}{x-x_i} \\
&= f(x),
\end{align*}
and also
\begin{align*}
(S_n \Delta_n f)(x) &= f(x_1) + \sum_{j=2}^i (\Delta_n f)(x_j) \cdot
  (x_j-x_{j-1}) + (\Delta_n f)(x) \cdot (x-x_i) \\ 
&= f(x_1) + \sum_{j=2}^i\big(f(x_j)-f(x_{j-1})\big) + f(x)-f(x_i) \\
&= f(x).
\end{align*}
Now assume the result is true for the order $k-1$ operators. Then, we have from 
\eqref{eq:discrete_deriv_rec}, \eqref{eq:discrete_integ_rec},
$$
\Delta^k_n \circ S^k_n = 
(W^k_n)^{-1} \circ \widebar\Delta_{n-k+1} \circ \Delta^{k-1}_n \circ S^{k-1}_n
\circ \widebar{S}_{n-k+1} \circ W^k_n = \Id,
$$
and also 
$$
S^k_n \circ \Delta^k_n =
S^{k-1}_n \circ \widebar{S}_{n-k+1} \circ W^k_n \circ (W^k_n)^{-1} \circ 
\widebar\Delta_{n-k+1} \circ \Delta^{k-1}_n = \Id, 
$$
where $\Id$ denotes the identity operator. This completes the proof.

\subsection{Proof of Lemma \ref{lem:ffb_lateral_rec}}
\label{app:ffb_lateral_rec}

\paragraph{The case $d=0$.}  

Beginning with the case $d=0$, the desired result in \eqref{eq:ffb_lateral_rec}
reads  
$$
\frac{1}{k!} \prod_{m=j-k}^{j-1} (x-x_m) = \sum_{\ell=j}^i
\frac{1}{(k-1)!} \prod_{m=\ell-k+1}^{\ell-1} (x-x_m) 
\frac{x_\ell-x_{\ell-k}}{k} +
\frac{1}{(k-1)!} \prod_{m=i-k+2}^i (x-x_m) \frac{x-x_{i-k+1}}{k},
$$
or more succintly, 
$$
\eta(x; x_{(j-k):(j-1)}) = \sum_{\ell=j}^i \eta(x; x_{(\ell-k+1):(\ell-1)})
(x_\ell-x_{\ell-k}) + \eta(x; x_{(i-k+2):i}), 
$$
The above display is a consequence of an elementary result
\eqref{eq:newton_poly_diff} on Newton polynomials. We state and prove this
result next, which we note completes the proof for the case $d=0$. 

\begin{lemma}
\label{lem:newton_poly_diff}
For any $k \geq 1$, and points $t_{1:r}$ with $r \geq k$, the Newton polynomials
defined in \eqref{eq:newton_poly} satisfy, at any $x$,      
\begin{equation}
\label{eq:newton_poly_diff}
\eta(x; t_{1:k}) - \eta(x; t_{(r-k+1):r}) = 
\sum_{\ell=k+1}^r \eta(x; t_{(\ell-k+1):(\ell-1)}) (t_\ell-t_{\ell-k}).  
\end{equation}
\end{lemma}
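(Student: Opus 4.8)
The plan is to prove this by a telescoping argument over sliding windows of $k$ consecutive centers. For each $\ell = k, k+1, \ldots, r$, introduce the degree-$k$ Newton polynomial on the length-$k$ window ending at $t_\ell$, namely
$$
P_\ell = \eta(x; t_{(\ell-k+1):\ell}) = \prod_{m=\ell-k+1}^{\ell} (x - t_m).
$$
Then $P_k = \eta(x; t_{1:k})$ and $P_r = \eta(x; t_{(r-k+1):r})$, so the left-hand side of \eqref{eq:newton_poly_diff} is exactly $P_k - P_r$. The idea is that sliding the window from $\{t_{\ell-k},\ldots,t_{\ell-1}\}$ to $\{t_{\ell-k+1},\ldots,t_\ell\}$ changes $P$ in a controlled, factorable way, so that the full difference $P_k - P_r$ collapses to the desired sum.

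The key step is the one-window identity. Both $P_{\ell-1}$ and $P_\ell$ share the common factor $\eta(x; t_{(\ell-k+1):(\ell-1)}) = \prod_{m=\ell-k+1}^{\ell-1}(x-t_m)$, differing only in a single endpoint factor: $P_{\ell-1}$ carries the extra factor $(x - t_{\ell-k})$, while $P_\ell$ carries $(x - t_\ell)$. Factoring this common part out gives
$$
P_{\ell-1} - P_\ell = \big[(x - t_{\ell-k}) - (x - t_\ell)\big]\, \eta(x; t_{(\ell-k+1):(\ell-1)})
= (t_\ell - t_{\ell-k})\, \eta(x; t_{(\ell-k+1):(\ell-1)}).
$$
This is precisely the $\ell$-th summand appearing on the right-hand side of \eqref{eq:newton_poly_diff}.

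Summing this identity over $\ell = k+1, \ldots, r$, the left side telescopes, $\sum_{\ell=k+1}^r (P_{\ell-1} - P_\ell) = P_k - P_r$, which matches the left-hand side of the lemma, while the right side is the claimed sum. I do not anticipate a genuine obstacle here; the only thing to watch is the bookkeeping on the index ranges and the boundary case $r = k$, where both sides vanish (the left-hand side is $\eta(x;t_{1:k}) - \eta(x;t_{1:k}) = 0$ and the right-hand side is the empty sum), which serves as the natural base of the telescope.
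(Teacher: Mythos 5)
Your proof is correct and is essentially the same as the paper's: both rest on the one-window identity $\eta(x;t_{(\ell-k):(\ell-1)}) - \eta(x;t_{(\ell-k+1):\ell}) = (t_\ell - t_{\ell-k})\,\eta(x;t_{(\ell-k+1):(\ell-1)})$, obtained by factoring out the shared middle product, followed by telescoping. The only difference is cosmetic indexing (you index windows by their right endpoint throughout, while the paper states the identity for the first window and then relabels via $\ell = i+k$).
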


\begin{proof}
Observe that
\begin{equation}
\label{eq:newton_poly_diff0}
\eta(x; t_{1:k}) - \eta(x; t_{2:(k+1)}) = \eta(x; t_{2:k}) \big((x-t_1) -
(x-t_{k+1})\big) = \eta(x; t_{2:k}) (t_{k+1}-t_1). 
\end{equation}
Therefore
\begin{multline*}
\eta(x; t_{1:k}) - \eta(x; t_{(r-k+1):r}) = 
\underbrace{\eta(x; t_{1:k}) - \eta(x; t_{2:(k+1)})}_{a_1} +  
\underbrace{\eta(x; t_{2:(k+1)}) - \eta(x; t_{3:(k+2)})}_{a_2} + \cdots \\ 
+ \underbrace{\eta(x; t_{(r-k):r-1}) - \eta(x; t_{(r-k+1):r})}_{a_{r-k}}. 
\end{multline*}
In a similar manner to \eqref{eq:newton_poly_diff0}, for each $i=1,\ldots,k$,
we have $a_i=\eta(x;t_{(i+1):(i+k-1)}) (t_{i+k}-t_i)$, and the result follows,
after making the substitution $\ell=i+k$. 
\end{proof}

\paragraph{The case $d \geq 1$.}  

We now prove the result \eqref{eq:ffb_lateral_rec} for $d \geq 1$ by induction.
The base case was shown above, for $d=0$. Assume the result holds for discrete
derivatives of order $d-1$. If $x \leq x_d$ (or $d>n$), then
\smash{$(\Delta^d_n f)(x)=(\Delta^{d-1}_n f)(x)$} for all functions $f$ and thus
the desired result holds trivially. Hence assume $x>x_d$ (which implies that $i
\geq d$). By the inductive hypothesis,
\begin{align*}
&(\Delta^{d-1}_n h^k_j)(x) - (\Delta^{d-1}_n h^k_j)(x_i) \\
&= \sum_{\ell=j}^i (\Delta^{d-1}_n h^{k-1}_\ell)(x)
  \cdot \frac{x_\ell-x_{\ell-k}}{k} + (\Delta^{d-1}_n h^{k-1}_{i+1})(x) 
  \cdot \frac{x-x_{i-k+1}}{k} - \sum_{\ell=j}^i (\Delta^{d-1}_n
  h^{k-1}_\ell)(x_i) \cdot \frac{x_\ell-x_{\ell-k}}{k} \\ 
&=\sum_{\ell=j}^i \big((\Delta^{d-1}_n h^{k-1}_\ell)(x) -
 (\Delta^{d-1}_n h^{k-1}_\ell)(x_i)\big) \cdot
  \frac{x_\ell-x_{\ell-k}}{k} + \big((\Delta^{d-1}_n h^{k-1}_{i+1})(x) -  
  (\Delta^{d-1}_n h^{k-1}_{i+1})(x_i)\big) \cdot \frac{x-x_{i-k+1}}{k},
\end{align*}
where in the last line we used the fact that \smash{$h^{k-1}_{i+1}=0$} on
$[a,x_i]$, and thus \smash{$(\Delta^{d-1}_n h^{k-1}_{i+1})(x_i)=0$}. This
means, using \eqref{eq:discrete_deriv_rec2},  
\begin{align*}
&(\Delta^d_n h^k_j)(x) = \frac{(\Delta^{d-1}_n h^k_j)(x) - (\Delta^{d-1}_n 
  h^k_j)(x_i)} {(x-x_{i-d+1})/d} \\
&=\sum_{\ell=j}^i \frac{(\Delta^{d-1}_n h^{k-1}_\ell)(x) -
 (\Delta^{d-1}_n h^{k-1}_\ell)(x_i)} {(x-x_{i-d+1})/d} \cdot
  \frac{x_\ell-x_{\ell-k}}{k} + \frac{(\Delta^{d-1}_n h^{k-1}_{i+1})(x) -  
  (\Delta^{d-1}_n h^{k-1}_{i+1})(x_i)} {(x-x_{i-d+1})/d} \cdot
  \frac{x-x_{i-k+1}}{k} \\
&= \sum_{\ell=j}^i (\Delta^d_n h^{k-1}_\ell)(x) \cdot
  \frac{x_\ell-x_{\ell-k}}{k} + (\Delta^d_n h^{k-1}_{i+1})(x) \cdot 
  \frac{x-x_{i-k+1}}{k},
\end{align*}
as desired. This completes the proof.

\subsection{Lemma \ref{lem:poly_implicit} (helper result for the proof of
  Corollary \ref{cor:ffb_interp_implicit_conv})}   
\label{app:poly_implicit}

\begin{lemma}
\label{lem:poly_implicit}
Given distinct points $t_i \in [a,b]$, $i=1,\ldots,r$ and evaluations $f(t_i)$,  
$i=1,\ldots,r$, if $f$ satisfies  
$$
f[t_1,\ldots,t_r,x] = 0, \quad \text{for $x \in [a,b]$},
$$
then $f$ is a polynomial of degree $r$. 
\end{lemma}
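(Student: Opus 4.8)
The plan is to induct on the number of points $r$, peeling off one center at a time and reducing to a smaller instance of the same statement. The conceptual engine behind the argument is the Newton form of the interpolation remainder: if $p$ denotes the unique polynomial of degree at most $r-1$ interpolating $f$ at $t_1,\ldots,t_r$, then for every $x$ one has $f(x)-p(x)=f[t_1,\ldots,t_r,x]\cdot\eta(x;t_{1:r})$, so that the hypothesis forces $f\equiv p$. I will, however, phrase the argument inductively (as anticipated in the text), since this keeps all the bookkeeping within the language of divided differences already developed in Section~\ref{sec:background}. I note that what the argument actually delivers is the sharper statement that $f$ is a polynomial of degree at most $r-1$, which is of course a polynomial of degree at most $r$ as claimed (and is exactly the form in which it is applied in Corollary~\ref{cor:ffb_interp_implicit_conv}, with $r=k+1$).

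For the base case $r=1$, the hypothesis $f[t_1,x]=(f(x)-f(t_1))/(x-t_1)=0$ for all $x\neq t_1$ gives $f(x)=f(t_1)$, so $f$ is constant. For the inductive step, assume the result for $r-1$ centers and suppose $f[t_1,\ldots,t_r,x]=0$ for all $x\in[a,b]$. Expanding the leading divided difference by its recursive definition,
\begin{equation*}
f[t_1,\ldots,t_r,x]=\frac{f[t_2,\ldots,t_r,x]-f[t_1,\ldots,t_r]}{x-t_1},
\end{equation*}
the hypothesis is equivalent to $f[t_2,\ldots,t_r,x]=c$ for all $x$, where $c=f[t_1,\ldots,t_r]$ is a fixed constant (independent of $x$).

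The crux is to convert this ``constant divided difference'' statement into a ``zero divided difference'' statement to which the inductive hypothesis applies. To do so I would subtract off a suitable multiple of a Newton polynomial: set $h=f-c\,\eta(\cdot;t_{2:r})$, where $\eta(\cdot;t_{2:r})$ is monic of degree $r-1$. By \eqref{eq:deriv_match_poly}, its $(r-1)$st order divided difference at the $r$ distinct centers $t_2,\ldots,t_r,x$ equals its leading coefficient, namely $1$; hence, by linearity of divided differences, $h[t_2,\ldots,t_r,x]=c-c\cdot 1=0$ for all $x$. Thus $h$ satisfies the hypothesis of the lemma for the $r-1$ centers $t_2,\ldots,t_r$, so by induction $h$ is a polynomial of degree at most $r-2$; consequently $f=h+c\,\eta(\cdot;t_{2:r})$ is a polynomial of degree at most $r-1$, completing the induction.

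The only step requiring genuine care is the reduction in the inductive step: matching the order of the divided difference to the degree of the Newton polynomial when invoking \eqref{eq:deriv_match_poly} (here the $r$ centers $t_2,\ldots,t_r,x$ correctly realize an $(r-1)$st order divided difference of the degree-$(r-1)$ polynomial $\eta(\cdot;t_{2:r})$), and verifying that $c$ is a single constant rather than an $x$-dependent quantity. Everything else is routine manipulation of divided differences, and no new machinery beyond the recursive definition, linearity, and \eqref{eq:deriv_match_poly} is needed.
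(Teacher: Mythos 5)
Your proof is correct, and it shares the paper's overall skeleton (induction on $r$ driven by the recursive definition of the divided difference, peeling off the center $t_1$) but resolves the inductive step by a genuinely different mechanism. After peeling off $t_1$, both arguments face the same obstruction: the hypothesis becomes $f[t_2,\ldots,t_r,x]=c$ for the constant $c=f[t_1,\ldots,t_r]$, which is no longer an instance of the original statement. The paper handles this by strengthening the induction hypothesis---it proves the more general claim that $f[t_1,\ldots,t_r,x]=p_\ell(x)$ for a degree-$\ell$ polynomial forces $f$ to be a polynomial of degree $r+\ell$, so that the constant (and, one level down, the factor $(x-t_1)p_\ell(x)+c$) is simply absorbed into a higher-degree right-hand side. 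You instead keep the right-hand side at zero and modify the function, subtracting $c\,\eta(\cdot;t_{2:r})$ and using linearity together with \eqref{eq:deriv_match_poly} (the $(r-1)$st order divided difference of the monic degree-$(r-1)$ Newton polynomial at the $r$ centers $t_2,\ldots,t_r,x$ equals $1$) to restore the hypothesis for $r-1$ centers. Your route buys the sharper conclusion that $f$ has degree at most $r-1$ (the paper's bookkeeping, taking the zero polynomial to have degree $0$, only delivers degree $r$, which is all the lemma claims and all that Corollary \ref{cor:ffb_interp_implicit_conv} needs); the paper's route buys a strictly more general statement (polynomial right-hand sides) with no extra machinery beyond the recursion itself. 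Both are complete; your invocation of \eqref{eq:deriv_match_poly} is correctly matched in order and degree, and the base case $r=1$ is handled properly.
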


\begin{proof}
We will actually prove a more general result, namely, that if $f$ satisfies 
\begin{equation}
\label{eq:poly_implicit_gen}
f[t_1,\ldots,t_r,x] = p_\ell(x), \quad \text{for $x \in [a,b]$},
\end{equation}
where $p_\ell$ is a polynomial of degree $\ell$, then $f$ is a polynomial of
degree $r+\ell$. We use induction on $r$. For $r=0$, the statement  
\eqref{eq:poly_implicit_gen} clearly holds for all $\ell$, because $f[x]=f(x)$
(a zeroth order divided difference is simply evaluation). Now assume
\eqref{eq:poly_implicit_gen} holds for any $r-1$ centers and all degrees $\ell$. 
Then 
$$
p_\ell(x) = f[t_1,\ldots,t_r,x] = \frac{f[t_2,\ldots,t_r,x] - f[t_1,\ldots,t_r]}
{x-t_1},  
$$ 
which means $f[t_2,\ldots,t_r,x] = (x-t_1) p_\ell(x) + f[t_1,\ldots,t_r]$. As
the right-hand side is a polynomial of degree $\ell+1$, the inductive hypothesis
implies that $f$ is a polynomial of degree $r-1+\ell+1 = r+\ell$, completing 
the proof.
\end{proof}

\subsection{Proof of Theorem \ref{thm:ffb_sobolev}}
\label{app:ffb_sobolev}

Let \smash{$h^k_j$}, $j=1,\ldots,n$ denote the falling factorial basis, as in
\eqref{eq:ffb}. Consider expanding $f$ in this basis, \smash{$f=\sum_{j=1}^n 
  \alpha_j  h^k_j$}. Define $\Q \in \R^{n \times n}$ to have entries        
\begin{equation}
\label{eq:ffb_sobolev_qmat}
\Q_{ij} = \int_a^b (D^m h^k_i)(x) (D^m h^k_j)(x) \, dx.
\end{equation}
Observe
\begin{align}
\nonumber
\int_a^b (D^m f)(x)^2 \, dx 
&= \int_a^b \sum_{i,j=1}^n \alpha_i \alpha_j (D^m h^k_i)(x) 
  (D^m h^k_j)(x) \, dx \\  
\nonumber
&= \alpha^\T \Q \alpha \\
\nonumber
&= f(x_{1:n})^\T (\H^k_n)^{-\T} \Q (\H^k_n)^{-1} f(x_{1:n}) \\ 
\label{eq:ffb_sobolev1}
&= f(x_{1:n})^\T (\B^{k+1}_n)^\T \Z^{k+1}_n \, \Q \, \Z^{k+1}_n \, 
 \B^{k+1}_n f(x_{1:n}). 
\end{align}
In the third line above we used the expansion \smash{$f(x_{1:n}) = \H^k_n
  \alpha$}, where \smash{$\H^k_n$} is the $k$th degree falling factorial basis
with entries \smash{$(\H^k_n)_{ij} = h^k_j(x_i)$}, and in the fourth line we
applied the inverse relationship in \eqref{eq:ffb_discrete_deriv_inv}, where
\smash{$\B^{k+1}_n$} is the $(k+1)$st order extended discrete derivative
matrix in \eqref{eq:discrete_deriv_mat_ext} and \smash{$\Z^{k+1}_n$} is the
extended weight matrix in \eqref{eq:weight_mat_ext}. Now note that we can
unravel the recursion in \eqref{eq:discrete_deriv_mat_ext} to yield
\begin{equation}
\label{eq:ffb_sobolev2}
\B^{k+1}_n = (\Z^{k+1}_n)^{-1} \underbrace{\widebar\B_{n,k+1} (\Z^k_n)^{-1} 
  \widebar\B_{n,k} \cdots (\Z^{m+1}_n)^{-1} \widebar\B_{n,m+1}}_{\F} \B^m_n,  
\end{equation}
and returning to \eqref{eq:ffb_sobolev1}, we get
\begin{equation}
\label{eq:ffb_sobolev3}
\int_a^b (D^m f)(x)^2 \, dx =
 f(x_{1:n})^\T (\B^m_n)^\T  \F^\T \Q \, \F \, \B^m_n f(x_{1:n}).  
\end{equation}
We break up the remainder of the proof up into parts for readability.

\paragraph{Reducing \eqref{eq:ffb_sobolev3} to involve only discrete
  derivatives.} 

First we show that the right-hand side in \eqref{eq:ffb_sobolev3} really depends
on the discrete derivatives \smash{$\D^m_n f(x_{1:n})$} only (as opposed to
extended discrete derivatives \smash{$\B^m_n f(x_{1:n})$}). As the first $m$
basis functions \smash{$h^k_1,\ldots,h^k_m$} are polynomials of degree at most
$m-1$, note that their $m$th derivatives are zero, and hence we can write
$$
\Q = \left[\begin{array}{cc}
0 & 0 \\
0 & \M
\end{array}\right],
$$
where $\M \in \R^{(n-m) \times (n-m)}$ has entries as in
\eqref{eq:ffb_sobolev_mmat}. Furthermore, note that $\F$ as defined in
\eqref{eq:ffb_sobolev2} can be written as 
$$
\F = \left[\begin{array}{cc}
\I_m & 0 \\
0 & \G
\end{array}\right],
$$
for a matrix $\G \in \R^{(n-m) \times (n-m)}$. Therefore 
\begin{equation}
\label{eq:ffb_sobolev_fqf1}
\F^\T \Q \, \F = \left[\begin{array}{cc} 
0 & 0 \\
0 & \G^\T \M \, \G 
\end{array}\right],
\end{equation}
and hence \eqref{eq:ffb_sobolev3} reduces to
\begin{equation}
\label{eq:ffb_sobolev4}
\int_a^b (D^m f)(x)^2 \, dx = f(x_{1:n})^\T (\D^m_n)^\T  
 \underbrace{\G^\T \M \, \G}_{\V^m_n} \D^m_n f(x_{1:n}), 
\end{equation}
recalling that \smash{$\D^m_n$} is exactly given by the last $n-m$ rows of
\smash{$\B^m_n$}. 

\paragraph{Casting $\F^\T \Q \, \F$ in terms of scaled differences.} 

Next we prove that \smash{$\V^m_n=\G^\T \M \, \G$}, as defined in
\eqref{eq:ffb_sobolev4}, is a banded matrix. To prevent unnecessary indexing 
difficulties, we will actually just work directly with \smash{$\F^\T \Q \, \F$},
and then in the end, due to \eqref{eq:ffb_sobolev_fqf1}, we will be able to read
off the desired result according to the lower-right submatrix of \smash{$\F^\T
  \Q \, \F$}, of dimension  $(n-m) \times (n-m)$.  Observe that 
\begin{equation}
\label{eq:ffb_sobolev_fqf2}
\F^\T \Q \, \F = (\widebar\B_{n,m+1})^\T (\Z^{m+1}_n)^{-1} \cdots  
(\widebar\B_{n,k})^\T (\Z^k_n)^{-1} (\widebar\B_{n,k+1})^\T \Q \,
\widebar\B_{n,k+1} (\Z^k_n)^{-1} \widebar\B_{n,k} \cdots (\Z^{m+1}_n)^{-1} 
\widebar\B_{n,m+1}. 
\end{equation}
To study this, it helps to recall the notation introduced in Lemma
\ref{lem:ffb_sobolev_vmat}: for a matrix $\A$ and positive integers $i,j$, let  
$$
\A(i,j) = 
\begin{cases}
\A_{ij} & \text{if $\A$ has at least $i$ rows and $j$ columns} \\
0 & \text{otherwise},
\end{cases}
$$
as well as 
\begin{align*}
\delta^r_{ij}(\A) &= \A(i,j)-\A(i+1,j), \\
\delta^c_{ij}(\A) &= \A(i,j)-\A(i,j+1).
\end{align*}
Now to compute \eqref{eq:ffb_sobolev_fqf2}, we first compute the product 
$$
\F^\T \Q  = (\widebar\B_{n,m+1})^\T (\Z^{m+1}_n)^{-1} \cdots  
(\widebar\B_{n,k})^\T (\Z^k_n)^{-1} (\widebar\B_{n,k+1})^\T \Q.
$$
We will work ``from right to left''. From \eqref{eq:diff_mat_ext}, we have
$$
(\widebar\B_{n,k+1})^\T = 
\left[\begin{array}{rrrrrrrrr}
1 & 0 & \ldots & 0 & \multicolumn{5}{c}{\multirow{4}{*}{0}} \\
0 & 1 & \ldots & 0 & \multicolumn{5}{c}{} \\
\vdots & & & & \multicolumn{5}{c}{} \\
0 & 0 & \ldots & 1 & \multicolumn{5}{c}{} \\
\multicolumn{3}{c}{\multirow{5}{*}{0}} & 1 & -1 & 0 & \ldots & 0 & 0 \\ 
\multicolumn{3}{c}{} & 0 & 1 & -1 & \ldots & 0 & 0 \\
\multicolumn{3}{c}{} & \vdots & & & & & \\
\multicolumn{3}{c}{} & 0 & 0 & 0 & \ldots & 1 & -1 \\
\multicolumn{3}{c}{} & 0 & 0 & 0 & \ldots & 0 & 1 
\end{array}\right]
\renewcommand\arraystretch{1.1}
\begin{array}{ll}
\left.\vphantom{\begin{array}{c} 1 \\ 0 \\ \cdots \\ 0 \end{array}}
\right\} & \hspace{-5pt} \text{$k$ rows} \\
\left.\vphantom{\begin{array}{c} 1 \\ 0 \\ \cdots \\ 0 \\ 0 \end{array}}
\right\} & \hspace{-5pt} \text{$n-k$ rows}
\end{array}
\renewcommand\arraystretch{1}
$$
This shows left multiplication by \smash{$(\widebar\B_{n,k+1})^\T$} gives
row-wise differences, \smash{$((\widebar\B_{n,k+1})^\T \A)_{ij} =
\delta^r_{ij}(\A)$}, for $i > k$. Further, from \eqref{eq:weight_mat_ext}, we
can see that left multiplication by \smash{$(\Z^k_n)^{-1}$} applies a row-wise 
scaling, \smash{$(\Z^k_n)^{-1} \A = \A_{ij} \cdot k/(x_i-x_{i-k})$}, for $i >
k$. Thus letting \smash{$\U^{1,0}=(\Z^k_n)^{-1} (\widebar\B_{n,k+1})^\T \Q$},
its entries are:
$$
\U^{1,0}_{ij} = 
\begin{cases}
\Q_{ij} & \text{if $i \leq k$} \vspace{3pt} \\
\displaystyle
\delta^r_{ij}(\Q) \cdot \frac{k}{x_i - x_{i-k}} & \text{if $i > k$}. 
\end{cases}
$$
The next two products to consider are left multiplication by
\smash{$(\widebar\B_{n,k})^\T$} and by \smash{$(\Z^{k-1}_n)^{-1}$}, which act 
similarly (they again produce row-wise differencing and scaling, respectively).
Continuing on in this same manner, we get that $\F^\T \Q = \U^{m,0}$,
where $\U^{\ell,0}$, $\ell=1,\ldots,m-1$ satisfy the recursion relation (setting 
$\U^{0,0}=\Q$ for convenience):  
\begin{equation}
\label{eq:ffb_sobolev_umat1}
\U^{\ell,0}_{ij} = 
\begin{cases}
\U^{\ell-1,0}_{ij} & \text{if $i \leq k+1-\ell$} \vspace{3pt} \\
\displaystyle
\delta^r_{ij}(\U^{\ell-1,0}) \cdot \frac{k+1-\ell}{x_i - x_{i-(k+1-\ell)}} & 
\text{if $i > k+1-\ell$},
\end{cases}
\end{equation}
and where (using $k+1-m=m$):
\begin{equation}
\label{eq:ffb_sobolev_umat2}
\U^{m,0}_{ij} = 
\begin{cases}
\U^{m-1,0}_{ij} & \text{if $i \leq m$} \\
\delta^r_{ij}(\U^{m-1,0}) & \text{if $i > m$}.
\end{cases}
\end{equation}
The expressions \eqref{eq:ffb_sobolev_umat1}, \eqref{eq:ffb_sobolev_umat2} are  
equivalent to \eqref{eq:ffb_sobolev_vmat1}, \eqref{eq:ffb_sobolev_vmat2}, the
row-wise recursion in Lemma \ref{lem:ffb_sobolev_vmat} (the main difference is
that Lemma \ref{lem:ffb_sobolev_vmat} is concerned with the lower-right $(n-m)
\times (n-m)$ submatrices of these matrices, and so these recursive expressions
are written with $i,j$ replaced by $i+m,j+m$, respectively).

The other half of computing \eqref{eq:ffb_sobolev_fqf2} is of course to compute
the product  
$$
\F^\T \Q \, \F = \F^\T \Q \, \widebar\B_{n,k+1} (\Z^k_n)^{-1} \widebar\B_{n,k}  
\cdots (\Z^{m+1}_n)^{-1} \widebar\B_{n,m+1}.
$$
Working now ``from left to right'', this calculation proceeds analogously to the
case just covered, but with column-wise instead of row-wise updates, and we get 
$\F^\T \Q \, \F = \U^{m,m}$, where $\U^{m,\ell}$, $\ell=1,\ldots,m-1$ satisfy
the recursion: 
\begin{equation}
\label{eq:ffb_sobolev_umat3}
\U^{m,\ell}_{ij} = 
\begin{cases}
\U^{m,\ell-1}_{ij} & \text{if $j \leq k+1-\ell$} \vspace{3pt} \\
\displaystyle
\delta^c_{ij}(\U^{m,\ell-1}) \cdot \frac{k+1-\ell}{x_j - x_{j-(k+1-\ell)}} & 
\text{if $j > k+1-\ell$},
\end{cases}
\end{equation}
and where: 
\begin{equation}
\label{eq:ffb_sobolev_umat4}
\U^{m,m}_{ij} = 
\begin{cases}
\U^{m,m-1}_{ij} & \text{if $j \leq m$} \\
\delta^c_{ij}(\U^{m,m-1}) & \text{if $j > m$}.
\end{cases}
\end{equation}
Similarly, \eqref{eq:ffb_sobolev_umat3}, \eqref{eq:ffb_sobolev_umat4} are
equivalent to \eqref{eq:ffb_sobolev_vmat3}, \eqref{eq:ffb_sobolev_vmat4}, 
the column-wise recursion in in Lemma \ref{lem:ffb_sobolev_vmat} (again, the
difference is that Lemma \ref{lem:ffb_sobolev_vmat} is written in terms of the
lower-right $(n-m) \times (n-m)$ submatrices). This establishes the result in
Lemma \ref{lem:ffb_sobolev_vmat}. 
 
\paragraph{Exchanging the order of scaled differencing with integration and
  differentiation.}    

Now that we have shown how to explicitly write the entries of $\F^\T \Q \,
\F$ via recursion, it remains to prove bandedness. To this end, for each $x \in
[a,b]$, define $\Q^x \in \R^{n \times n}$ to have entries \smash{$\Q^x_{ij} =
  (D^m h^k_i)(x) (D^m h^k_j)(x)$}, and note that by linearity of integration,
$$
\F^\T \Q \, \F = \int_a^b \F^\T \Q^x \, \F \, dx,
$$
where the integral on the right-hand side above is meant to be interpreted
elementwise. Furthermore, defining $a^x \in \R^n$ to have entries 
\smash{$a^x_i = (D^m h^k_i)(x)$}, we have \smash{$\Q^x = a^x (a^x)^\T$},
and defining $b^x \in \R^n$ to have entries \smash{$b^x_i = h^k_i(x)$}, note
that by linearity of differentiation, 
$$
\F^\T a^x = D^m \F^\T b^x, 
$$
where again the derivative on the right-hand side is meant to be interpreted
elementwise. This means that
$$
\F^\T \Q^x \, \F = (D^m \F^\T b^x) (D^m \F^\T b^x)^\T.
$$

By the same logic as that given above (see the development of
\eqref{eq:ffb_sobolev_umat1}, \eqref{eq:ffb_sobolev_umat2}), we can view $\F^\T 
b^x$ as the endpoint of an $m$-step recursion. First initialize $u^{x,0}=b^x$,
and define for $\ell=1,\ldots,m-1$,  
\begin{equation}
\label{eq:ffb_sobolev_uvec1}
u^{x,\ell}_i = 
\begin{cases}
u^{x,\ell-1}_i & \text{if $i \leq k+1-\ell$} \vspace{3pt} \\
\displaystyle
(u^{x,\ell-1}_i - u^{x,\ell-1}_{i+1}) \cdot \frac{k+1-\ell}{x_i -
  x_{i-(k+1-\ell)}} & \text{if $i > k+1-\ell$},   
\end{cases}
\end{equation}
as well as
\begin{equation}
\label{eq:ffb_sobolev_uvec2}
u^{x,m}_i = 
\begin{cases}
u^{x,m-1}_i& \text{if $i \leq m$} \\
u^{x,m-1}_i - u^{x,m-1}_{i+1} & \text{if $i > m$}.
\end{cases}
\end{equation}
Here, we set \smash{$u^{x,\ell}_{n+1}=0$}, $\ell=1,\ldots,m$, for convenience.
Then as before, this recursion terminates at $u^{x,m}=\F^\T b^x$. 

In what follows, we will show that 
\begin{equation}
\label{eq:ffb_sobolev_banded}
(D^m u^{x,m}_i) (D^m u^{x,m}_j) = 0, 
\quad \text{for $x\in [a,b]$ and $|i-j|>m$}.  
\end{equation}
Clearly this would imply that $(\F^\T \Q^x \, \F)_{ij} = 0$ for $x \in
[a,b]$ and $|i-j|>m$, and so $(\F^\T \Q \, \F)_{ij} = 0$ for $|i-j|>m$;
focusing on the lower-right submatrix of dimension $(n-m) \times (n-m)$,
this would mean \smash{$(\G^\T \M \, \G)_{ij} = (\V^m_n)_{ij} = 0$} for
$|i-j|>m$, which is the claimed bandedness property of \smash{$\V^m_n$}. 

\paragraph{Proof of the bandedness property \eqref{eq:ffb_sobolev_banded} for
  $i > k+1, j > k+1$.}  

Consider $i > k+1$. At the first iteration of the recursion 
\eqref{eq:ffb_sobolev_uvec1}, \eqref{eq:ffb_sobolev_uvec2}, we get
\begin{equation}
\label{eq:ffb_sobolev_uvec3}
u^{x,1}_i = \big(h^k_i(x) - h^k_{i+1}(x)\big) \cdot \frac{k}{x_i-x_{i-k}},  
\end{equation}
where we set \smash{$h^k_{n+1}=0$} for notational convenience. Next we present     
a helpful lemma, which is an application of the elementary result in
Lemma \ref{lem:newton_poly_diff}, on differences of Newton polynomials (recall
this serves as the main driver behind the proof of Lemma
\ref{lem:ffb_lateral_rec}). Since \eqref{eq:ffb_diff_pp} is a direct consequence
of \eqref{eq:newton_poly_diff} (more specifically, a direct consequence of the
special case highlighted in \eqref{eq:newton_poly_diff0}), we state the lemma 
without proof. 

\begin{lemma}
\label{lem:ffb_diff_pp}
For any $k \geq 1$, the piecewise polynomials in the $k$th degree falling
factorial basis, given in the second line of \eqref{eq:ffb}, satisfy for each
$k+2 \leq i \leq n-1$,  
\begin{equation}
\label{eq:ffb_diff_pp}
h^k_i(x) - h^k_{i+1}(x) = h^{k-1}_i(x) \cdot \frac{x_i - x_{i-k}}{k}, \quad  
\text{for $x \notin (x_{i-1},x_i]$}.
\end{equation}
\end{lemma}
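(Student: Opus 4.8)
The plan is to reduce \eqref{eq:ffb_diff_pp} directly to the special-case Newton-polynomial identity \eqref{eq:newton_poly_diff0}, exactly as the text preceding the lemma anticipates. First I would expand the three functions involved using the second line of \eqref{eq:ffb}. For $i \geq k+2$ these read
\[
h^k_i(x) = \frac{1}{k!}\,\eta(x; x_{(i-k):(i-1)})\cdot 1\{x>x_{i-1}\}, \qquad
h^k_{i+1}(x) = \frac{1}{k!}\,\eta(x; x_{(i-k+1):i})\cdot 1\{x>x_i\},
\]
while the function on the right-hand side is $h^{k-1}_i(x) = \frac{1}{(k-1)!}\,\eta(x; x_{(i-k+1):(i-1)})\cdot 1\{x>x_{i-1}\}$. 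All three are well-defined piecewise polynomials precisely because $k+2\leq i\leq n-1$.

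The key observation I would make is that the two truncation indicators $1\{x>x_{i-1}\}$ and $1\{x>x_i\}$ differ only on the interval $(x_{i-1},x_i]$, which is exactly the region excluded by the hypothesis $x\notin (x_{i-1},x_i]$. So on the admissible set I can set $c = 1\{x>x_{i-1}\} = 1\{x>x_i\}$ (checking the two cases $x\leq x_{i-1}$ and $x>x_i$ separately: $c=0$ and $c=1$ respectively), factor this common $c$ out of the difference $h^k_i(x)-h^k_{i+1}(x)$, and thereby treat the two truncated Newton polynomials as genuine (untruncated) Newton polynomials.

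Having pulled out $c/k!$, I would apply \eqref{eq:newton_poly_diff0} with the identification $(t_1,\dots,t_{k+1})=(x_{i-k},\dots,x_i)$, which gives
\[
\eta(x; x_{(i-k):(i-1)}) - \eta(x; x_{(i-k+1):i}) = \eta(x; x_{(i-k+1):(i-1)})\cdot (x_i - x_{i-k}).
\]
Multiplying through by $c/k!$ and rewriting $\tfrac{1}{k!}=\tfrac{1}{k}\cdot\tfrac{1}{(k-1)!}$ turns the right-hand side into $\tfrac{c}{(k-1)!}\,\eta(x; x_{(i-k+1):(i-1)})\cdot \tfrac{x_i-x_{i-k}}{k}$; since the same indicator $c$ also sits in front of $h^{k-1}_i$ on the excluded region, this is precisely $h^{k-1}_i(x)\cdot (x_i-x_{i-k})/k$, completing the identity.

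Since the entire argument is an algebraic substitution followed by one invocation of an already-proved identity, I do not expect a genuine obstacle. The only substantive point — and the one I would state explicitly to keep the proof airtight — is the indicator bookkeeping: verifying that restricting to $x\notin(x_{i-1},x_i]$ is exactly what makes the two truncations coincide, so that the difference of truncated Newton polynomials collapses to $c$ times a difference of ordinary Newton polynomials amenable to \eqref{eq:newton_poly_diff0}.
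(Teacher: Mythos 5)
Your proof is correct and is exactly the argument the paper has in mind: the paper states this lemma without proof, remarking only that it is a direct consequence of the special case \eqref{eq:newton_poly_diff0} of Lemma \ref{lem:newton_poly_diff}, and your write-up simply fills in the (correct) indicator bookkeeping and the substitution $(t_1,\ldots,t_{k+1})=(x_{i-k},\ldots,x_i)$ that make that reduction explicit.
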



Fix $i \leq n-m$. Applying Lemma \ref{lem:ffb_diff_pp} to
\eqref{eq:ffb_sobolev_uvec3}, we see that for $x \notin (x_{i-1},x_i]$, we
have simply \smash{$u^{x,1}_i = h^{k-1}_i(x)$}. By the same argument, for $x
\notin (x_{i-1},x_{i+1}]$,    
\begin{align*}
u^{x,2}_i &= (u^{x,1}_i - u^{x,1}_{i+1}) 
\cdot \frac{k-1}{x_i - x_{i-(k-1)}} \\
&= \big(h^{k-1}_i(x) - h^{k-1}_{i+1}(x)\big) 
\cdot \frac{k-1}{x_i -  x_{i-(k-1)}} \\
&= h^{k-2}_i(x).
\end{align*}
Iterating this argument over \smash{$u^{x,\ell}_i$}, $\ell=3,\ldots,m$, we get
that for $x \notin (x_{i-1},x_{i+m-1}]$,  
\begin{align*}
u^{x,m}_i &= u^{x,m-1}_i - u^{x,m-1}_{i+1} \\
&= h^m_i(x) - h^m_{i+1}(x) \\
&= h^{m-1}_i(x) \cdot \frac{x_i-x_{i-m}}{m}.
\end{align*}
As \smash{$h^{m-1}_i=0$} on $[a,x_{i-1}]$ and it is a polynomial of degree $m-1$
on $(x_{i-1},b]$, we therefore conclude that \smash{$D^m u^{x,m}_i =0$} for $x
\notin (x_{i-1},x_{i+m-1}]$. 

For $i \geq n-m+1$, note that we can still argue \smash{$u^{x,m}_i=0$} for $x 
\leq x_{i-1}$, as \smash{$u^{x,m}_i$} is just a linear combination of the 
evaluations \smash{$h^k_i(x),h^k_{i+1}(x),\ldots,h^k_n(x)$}, each of which are
zero. Thus, introducing the convenient notation \smash{$\bar{x}_i = x_i$}
for $i \leq n-1$ and \smash{$\bar{x}_i=b$} for $i \geq n$, we can still write 
\smash{$D^m u^{x,m}_i =0$} for \smash{$x \notin (x_{i-1},\bar{x}_{i+m-1}]$}. 

Putting this together, we see that for $i>k+1,j>k+1$, the product 
\smash{$(D^m u^{x,m}_i) (D^m u^{x,m}_j)$} can only be nonzero if 
\smash{$x \notin (x_{i-1},\bar{x}_{i+m-1}] \cap (x_{j-1},\bar{x}_{j+m-1}]$},
which can only happen (this intersection is only nonempty) if $|i-j| \leq  
m$. This proves \eqref{eq:ffb_sobolev_banded} for $i>k+1, j>k+1$.    

\paragraph{Proof of the bandedness property \eqref{eq:ffb_sobolev_banded} for
  $i \leq k+1, j > k+1$.}  

Consider $i=k+1$. At the first iteration of the recursion
\eqref{eq:ffb_sobolev_uvec1}, \eqref{eq:ffb_sobolev_uvec2}, we get 
\begin{equation}
\label{eq:ffb_sobolev_uvec4}
u^{x,1}_{k+1} = \big(h^k_{k+1}(x) - h^k_{k+2}(x)\big) \cdot
\frac{k}{x_{k+1}-x_1}.
\end{equation}
We give another helpful lemma, similar to Lemma \ref{lem:ffb_diff_pp}. As
\eqref{eq:ffb_diff_poly} is again a direct consequence of
\eqref{eq:newton_poly_diff} from Lemma \ref{lem:newton_poly_diff} (indeed a
direct consequence of the special case in \eqref{eq:newton_poly_diff0}), we
state the lemma without proof.

\begin{lemma}
\label{lem:ffb_diff_poly}
For any $k \geq 1$, the last of the pure polynomials and the first of
the piecewise polynomials in the $k$th degree falling factorial basis, given in
\eqref{eq:ffb}, satisfy  
\begin{equation}
\label{eq:ffb_diff_poly}
h^k_{k+1}(x) - h^k_{k+2}(x) = h^{k-1}_{k+1}(x) \cdot \frac{x_{k+1} - x_1}{k},
\quad \text{for $x > x_{k+1}$}.
\end{equation}
\end{lemma}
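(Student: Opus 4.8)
The plan is to reduce the identity to the elementary two–term Newton polynomial difference \eqref{eq:newton_poly_diff0}, exactly as the lemma's statement anticipates. First I would restrict attention to the stated range $x > x_{k+1}$, where both truncation indicators are active: since $x > x_{k+1} > x_k$, we have $1\{x > x_{k+1}\} = 1\{x > x_k\} = 1$, so on this range $h^k_{k+2}$ and $h^{k-1}_{k+1}$ coincide with their untruncated Newton polynomial parts. This lets me drop the indicators and write all three functions as scaled Newton polynomials.

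Next I would record the compact forms, reading off the index ranges directly from \eqref{eq:ffb}. For $j = k+1$ (the last pure polynomial) we have $h^k_{k+1}(x) = \frac{1}{k!}\eta(x; x_{1:k})$; for $j = k+2$ (the first piecewise polynomial, whose product runs over $\ell = 2,\ldots,k+1$) we have $h^k_{k+2}(x) = \frac{1}{k!}\eta(x; x_{2:(k+1)})$ on $x > x_{k+1}$; and for the degree-$(k-1)$ basis at $j = k+1$ (product over $\ell = 2,\ldots,k$) we have $h^{k-1}_{k+1}(x) = \frac{1}{(k-1)!}\eta(x; x_{2:k})$ on $x > x_k$. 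The main care here is purely index bookkeeping---confirming that the three center sets $x_{1:k}$, $x_{2:(k+1)}$, $x_{2:k}$ are the ones that appear, which is where any slip would occur.

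With these forms in hand, the difference becomes $h^k_{k+1}(x) - h^k_{k+2}(x) = \frac{1}{k!}\big(\eta(x; x_{1:k}) - \eta(x; x_{2:(k+1)})\big)$, and I would apply \eqref{eq:newton_poly_diff0} (with centers $t_i = x_i$) to collapse the bracket to $\eta(x; x_{2:k})\,(x_{k+1} - x_1)$. The only remaining step is to reconcile the factorial normalizations via $\frac{1}{k!} = \frac{1}{k}\cdot\frac{1}{(k-1)!}$, which turns $\frac{1}{k!}\eta(x; x_{2:k})$ into $\frac{1}{k}\,h^{k-1}_{k+1}(x)$ and produces the claimed factor $(x_{k+1} - x_1)/k$.

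There is no genuine obstacle here: the result is a one-line specialization of Lemma \ref{lem:newton_poly_diff}, and the only points requiring attention are the alignment of the product ranges and the verification that both truncation indicators are switched on throughout $x > x_{k+1}$. This is precisely why the statement can be recorded without a separate proof.
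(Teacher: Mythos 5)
Your proof is correct and follows exactly the route the paper intends: the paper states this lemma without proof precisely because it is, as you show, a direct specialization of the two-term Newton polynomial identity \eqref{eq:newton_poly_diff0}, after checking that both truncation indicators equal $1$ on $x > x_{k+1}$ and reconciling the factorials. Your index bookkeeping for the three center sets $x_{1:k}$, $x_{2:(k+1)}$, $x_{2:k}$ is accurate.
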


Applying Lemma \ref{lem:ffb_diff_poly} to \eqref{eq:ffb_sobolev_uvec4}, we see
that for $x > x_{k+1}$, it holds that \smash{$u^{x,2}_{k+1} = h^{k-1}_{k+1}(x)$}.
Combined with our insights from the recursion for the case $i>k+1$ developed
previously, at the next iteration we see that for $x > x_{k+2}$,   
\begin{align*}
u^{x,2}_{k+1} &= (u^{x,1}_{k+1} - u^{x,1}_{k+2}) 
\cdot \frac{k-1}{x_{k+1} - x_2} \\
&= \big(h^{k-1}_i(x) - h^{k-1}_{i+1}(x)\big) 
\cdot \frac{k-1}{x_{k+1} -  x_2} \\
&= h^{k-2}_{k+1}(x).
\end{align*}
Iterating this argument over \smash{$u^{x,\ell}_i$}, $\ell=3,\ldots,m$, we get
that for $x > x_{k+m}$,  
\begin{align*}
u^{x,m}_{k+1} &= u^{x,m-1}_{k+1} - u^{x,m-1}_{k+2} \\
&= h^m_{k+1}(x) - h^m_{k+2}(x) \\
&= h^{m-1}_{k+1}(x) \cdot \frac{x_{k+1}-x_{k+1-m}}{m}.
\end{align*}
and as before, we conclude that \smash{$D^m u^{x,m}_{k+1} =0$} for $x > 
x_{k+m}$. 

For $i<k+1$, the same argument applies, but just lagged by some number of
iterations (for $\ell=1,\ldots,k+1-i$, we stay at \smash{$u^{x,\ell}_i =
  h^k_i(x)$}, then for $\ell=k+2-i$, we get \smash{$u^{x,\ell}_i = (h^k_i(x) -
  h^k_{i+1}(x)) \cdot (i-1)/(x_i-x_1)$}, so Lemma \ref{lem:ffb_diff_poly} can 
be applied, and so forth), which leads us to \smash{$D^m u^{x,m}_i =0$} for $x >
x_{i+m-1}$.    

Finally, for $i \leq k+1$ and $|i-j|>m$, we examine the product \smash{$(D^m
u^{x,m}_i) (D^m u^{x,m}_j)$}. As $|i-j|>m$, we must have either $j<m$ or
$j>k+1$. For $j<m$, we have already shown $(\F^\T \Q\, \F)_{ij}=0$, and so for
our ultimate purpose (of establishing \eqref{eq:ffb_sobolev_banded} to establish
bandedness of $\F^\T \Q\, \F$), we only need to consider the case $j>k+1$. But
then (from our analysis in the last part) we know \smash{$(D^m u^{x,m}_j)=0$}
for $x \leq x_{j-1}$, whereas (from our analysis in the current part)
\smash{$(D^m u^{x,m}_i)=0$} for $x > x_{i+m-1}$, and since $x_{j-1}>x_{i+m-1}$,
we end up with \smash{$(D^m u^{x,m}_i) (D^m u^{x,m}_j) = 0$} for all $x$. This
establishes the desired property \eqref{eq:ffb_sobolev_banded} over all $i,j$,
and completes the proof of the theorem.

\subsection{Proof of Lemma \ref{lem:ffb_sobolev_mmat}}
\label{app:ffb_sobolev_mmat}

To avoid unnecessary indexing difficulties, we will work directly on the
entries of $\Q$, defined in \eqref{eq:ffb_sobolev_qmat}, and then we will be
able to read off the result for the entries of $\M$, defined in
\eqref{eq:ffb_sobolev_mmat}, by inspecting the lower-right submatrix of
dimension $(n-m) \times (n-m)$. Fix $i \geq j$, with $i>2m$. Applying 
integration by parts on each subinterval of $[a,b]$ in which the product
\smash{$(D^m h^k_i) (D^m h^k_j)$} is continuous, we get  
$$
\int_a^b (D^m h^k_i)(x) (D^m h^k_j)(x) \, dx = 
(D^m h^k_i)(x) (D^{m-1} h^k_j)(x) \Big|_{a,x_{j-1},x_{i-1}}^{x_{j-1},x_{i-1},b}  
- \int_a^b (D^{m+1} h^k_i)(x) (D^{m-1} h^k_j)(x) \, dx, 
$$
where we use the notation 
$$
f(x) \Big|_{a_1,\ldots,a_r}^{b_1,\ldots,b_r} = \sum_{i=1}^r
\big(f^-(b_i)-f^+(a_i)\big). 
$$
as well as \smash{$f^-(x) = \lim_{t \to x^-} f(t)$} and  
\smash{$f^+(x) = \lim_{t \to x^+} f(t)$}. As \smash{$h^k_i$} and \smash{$h^k_j$}
are supported on $(x_{i-1},b]$ and $(x_{j-1},b]$, respectively, so are there
derivatives, and as $x_{i-1} \geq x_{j-1}$ (since $i \geq j$) the second to last
display reduces to  
$$
\int_a^b (D^m h^k_i)(x) (D^m h^k_j)(x) \, dx = 
(D^m h^k_i)(x) (D^{m-1} h^k_j)(x) \Big|_{x_{i-1}}^b 
- \int_a^b (D^{m+1} h^k_i)(x) (D^{m-1} h^k_j)(x) \, dx, 
$$
Applying integration by parts $m-2$ more times (and using $k=2m-1$) yields 
\begin{align}
\nonumber
\int_a^b (D^m h^k_i)(x) &(D^m h^k_j)(x) \, dx \\
&= \sum_{\ell=1}^{m-1} (-1)^{\ell-1} (D^{m+\ell-1} h^k_i)(x) (D^{m-\ell}
  h^k_j)(x) \Big|_{x_{i-1}}^b + (-1)^{m-1} \int_a^b (D^k h^k_i)(x) (Dh^k_j)(x)
  \, dx \\    
\label{eq:ffb_sobolev_qmat2a}
&= \sum_{\ell=1}^{m-1} (-1)^{\ell-1} (D^{m+\ell-1} h^k_i)(x) (D^{m-\ell}
  h^k_j)(x) \Big|_{x_{i-1}}^b + (-1)^{m-1} \big(h^k_j(b) - h^k_j(x_{i-1})\big), 
\end{align}
where in the second line we used $(D^k h^k_i)(x) = 1\{x > x_{i-1}\}$ and the
fundamental theorem of calculus. The result for the case $i \leq 2m$ is 
similar, the only difference being that we apply integration by parts a total of
$i-m-1$ (rather than $m-1$ times), giving
\begin{equation}
\label{eq:ffb_sobolev_qmat2b}
\int_a^b (D^m h^k_i)(x) (D^m h^k_j)(x) \, dx 
= \sum_{\ell=1}^{i-m-1} (-1)^{\ell-1} (D^{m+\ell-1} h^k_i)(x) (D^{m-\ell}
  h^k_j)(x) \Big|_a^b + (-1)^{i-m-1} \big(h^k_j(b) - h^k_j(a)\big).
\end{equation}
Putting together \eqref{eq:ffb_sobolev_qmat2a}, \eqref{eq:ffb_sobolev_qmat2b}
establishes the desired result \eqref{eq:ffb_sobolev_mmat2} (recalling that
the latter is cast in terms of the lower-right $(n-m) \times
(n-m)$ submatrix of $\Q$, and is hence given by replacing $i,j$ with $i+m,j+m$,
respectively).   

\subsection{Proof of Lemma \ref{lem:tpb_bv_approx}}
\label{app:tpb_bv_approx}

For $k=0$ or $k=1$, we can use elementary piecewise constant 
or continous piecewise linear interpolation. For $k=0$, we set $g$ to be the
piecewise constant function that has knots in $x_{1:(n-1)}$, and
$g(x_i)=f(x_i)$, $i=1\ldots,n$; note clearly, $\TV(g) \leq \TV(f)$. For $k=1$,
we again set $g$ to be the continous piecewise linear function with knots in   
$x_{2:(n-1)}$, and $g(x_i)=f(x_i)$, $i=1\ldots,n$; still clearly, $\TV(Dg) \leq
\TV(Df)$. This proves \eqref{eq:tpb_bv_approx1}. 

For $k \geq 2$, we can appeal to well-known approximation results for $k$th
degree splines, for example, Theorem 6.20 of \citet{schumaker2007spline}.
First we construct a quasi-uniform partition from $x_{(k+1):(n-1)}$, call it
\smash{$x^*_{1:r} \subseteq x_{(k+1):(n-1)}$}, such that \smash{$\delta_n/2 \leq
  \max_{i=1,\ldots,r-1} \; (y_{i+1}-y_i) \leq 3\delta_n/2$}, and an extended
partition $y_{1:(r+2k+2)}$,   
$$
y_1 = \cdots = y_{k+1} = a, 
\quad y_{k+2} = x^*_1 < \cdots < y_{r+k+1} = x^*_r, \quad   
y_{r+k+2} = \cdots = y_{r+2k+2} = b.
$$
Now for each $\ell=k+1,\ldots,r+k+1$, define $I_\ell=[y_\ell,y_{\ell+1}]$ and 
\smash{$\bar{I}_\ell=[y_{\ell-k},y_{\ell+k+1}]$}. Then there exists a $k$th
degree spline $g$ with knots in \smash{$x^*_{1:r}$}, such that, for any
$d=0,\ldots,k$, and a constant $b_k>0$ that depends only on $k$,
\begin{equation}
\label{eq:local_approx}
\|D^d (f-g)\|_{L_\infty(\bar{I}_\ell)} \leq b_k \delta_n^{k-d} 
\omega(D^k f; \delta_n)_{L_\infty(\bar{I}_\ell)},
\end{equation}
Here \smash{$\|h\|_{L_\infty(I)}=\sup_{x\in I} \; |f(x)|$} denotes the
$L_\infty$ norm of a function $h$ an interval $I$, and 
$$
\omega(h; v) _{L_\infty(I)}=\sup_{x,y \in I, \, |x-y| \leq v} \; |h(x)-h(y)|
$$ 
denotes the modulus of continuity of $h$ on $I$. Note that \smash{$\omega(D^k
  f; \delta_n)_{L_\infty(\bar{I}_\ell)} \leq \TV(D^k f)$}. Thus setting $d=0$
in \eqref{eq:local_approx}, and taking a maximum over $\ell=k+1,\ldots,r+k+1$,
we get \smash{$\|f-g\|_{L_\infty} \leq b_k \delta_n^k \cdot \TV(D^k f)$}.
Further, the importance of the result in \eqref{eq:local_approx} is that it is
{\it local} and hence allows us to make statements about total variation as
well. Observe
\begin{align*}
\TV(D^k g) &= \sum_{i=k+2}^{r+k+2} |D^k g(y_i) - D^k g(y_{i-1})| \\
&\leq \sum_{i=k+2}^{r+k+2} \Big(|D^k f(y_i) - D^k g(y_i)| + 
  |D^k f(y_{i-1}) - D^k g(y_{i-1})| + |D^k f(y_i) - D^k f(y_{i-1})|\Big) \\
&\leq \underbrace{\big(2(k+2)b_k +1 \big)}_{a_k} \cdot \, \TV(D^k f),  
\end{align*}
In the last step above, we applied \eqref{eq:local_approx} with $d=k$, and the
fact that each interval \smash{$\bar{I}_\ell$} can contain at most $k+2$ of the
points $y_i$, $i=k+1,\ldots,r+k+2$. This proves  \eqref{eq:tpb_bv_approx2}. 

\subsection{Proof of Lemma \ref{lem:spec_sim_bound}}
\label{app:spec_sim_bound}

Observe that, by adding and subtracting $y$ and expanding, 
\begin{equation}
\label{eq:quad_sol_expand}
\|\htheta_a - \htheta_b\|_2^2 = 
(y - \htheta_a)^\T (\htheta_b - \htheta_a) + 
(y - \htheta_b)^\T (\htheta_a - \htheta_b).
\end{equation}
By the stationarity condition for problem \eqref{eq:quad_opt1}, we have
\smash{$y - \htheta_a = \lambda_a \A \htheta_a$}, so that 
\begin{align*}
(y - \htheta_a)^\T (\htheta_b - \htheta_a) 
&\leq \lambda_a \htheta_a^\T \A \htheta_b - 
\lambda_a \htheta_a^\T \A \htheta_a \\   
&\leq \frac{1}{2} \lambda_a \htheta_b^\T \A \htheta_b -   
\frac{1}{2} \lambda_a \htheta_a^\T \A \htheta_a,
\end{align*}
where in the second line we used the inequality $u^\T \A v \leq u^\T \A u / 2 + 
v^\T \A v / 2$. By the same logic, 
$$
(y - \htheta_b)^\T (\htheta_a - \htheta_b) \leq 
\frac{1}{2} \lambda_b \htheta_a^\T \B \htheta_a -  
\frac{1}{2} \lambda_b \htheta_b^\T \B \htheta_b.
$$
Applying the conclusion in the last two displays to \eqref{eq:quad_sol_expand}, 
\begin{align*}
\|\htheta_a - \htheta_b\|_2^2 
&\leq \frac{1}{2} \lambda_a \htheta_b^\T \A \htheta_b -  
\frac{1}{2} \lambda_a \htheta_a^\T \A \htheta_a +
\frac{1}{2} \lambda_b \htheta_a^\T \B \htheta_a -  
\frac{1}{2} \lambda_b \htheta_b^\T \B \htheta_b \\
&\leq \frac{1}{2} \sigma \lambda_a \htheta_b^\T \B \htheta_b -   
\frac{1}{2} \lambda_a \htheta_a^\T \A \htheta_a +
\frac{1}{2} (\lambda_b/\tau) \htheta_a^\T \A \htheta_a -  
\frac{1}{2} \lambda_b \htheta_b^\T \B \htheta_b, 
\end{align*}
where in the second line we twice used the spectral similarity property 
\eqref{eq:spec_sim}. The desired result follows by grouping terms. 

\subsection{Proof of Theorem \ref{thm:ss_bw_bound}}
\label{app:ss_bw_bound}

Note that 
\begin{align*}
\text{$\K^2_n, \W^2_n$ are $(\sigma,\tau)$-spectrally-similar} 
&\iff \text{$(\K^2_n)^{-1}, (\W^2_n)^{-1}$ are
 $(1/\sigma,1/\tau)$-spectrally-similar} \\ 
&\iff \text{$\W^2_n (\K^2_n)^{-1} \W^2_n, \W^2_n$ are
 $(1/\sigma,1/\tau)$-spectrally-similar}.
\end{align*}
Set \smash{$\A = \W^2_n (\K^2_n)^{-1} \W^2_n$}. From
\eqref{eq:nsp_sobolev_kmat_m2}, we can see that  
$$
\A_{ij} = 
\begin{cases}
\displaystyle
\frac{x_{i+2}-x_i}{3} & \text{if $i=j$} \\
\displaystyle
\frac{x_{i+1}-x_i}{6} & \text{if $i=j+1$}. 
\end{cases}
$$
Now define $a_i=(x_{i+2}-x_i)/3$ and $b_i=(x_{i+2}-x_{i+1})/6$, for
$i=1,\ldots,n-2$. Also denote $q_i=(x_{i+2}-x_i)/2$, for
$i=1,\ldots,n-2$. Fix $u \in \R^n$. For notational convenience, set
$b_0=u_0=0$ and $u_{n-1}=0$. Then 
\begin{align*}
u^\T \A u 
&= \sum_{i=1}^{n-2} \Big( a_iu_i^2 + b_{i-1}u_{i-1}u_i + b_iu_iu_{i+1} \Big) \\   
&\leq \sum_{i=1}^{n-2} \bigg( a_iu_i^2 + \frac{b_{i-1}}{2}(u_{i-1}^2 + u_i^2) +  
\frac{b_i}{2}(u_i^2 + u_{i+1}^2) \bigg) \\
&= \sum_{i=1}^{n-2} (a_i+b_{i-1}+b_i) u_i^2 \\
&= \sum_{i=1}^{n-2} q_i u_i^2 - \frac{x_2-x_1}{6} u_1^2 -
 \frac{x_{n-1}-x_{n-2}}{6} u_{n-2}^2 \\
&\leq \sum_{i=1}^{n-2} q_i u_i^2.
\end{align*}
In the second line above, we used $2st \leq s^2 + t^2$, and in the fourth we
used $a_i+b_{i-1}+b_i = q_i$, for $i=1,\ldots,n-2$. This shows that we can take 
$1/\tau = 1$, that is, $\tau=1$.  

As for the other direction, using $2st \geq -s^2 - t^2$, we have
\begin{align*}
u^\T Wu 
&\geq \sum_{i=1}^n\bigg( a_iu_i^2 - \frac{b_{i-1}}{2}(u_{i-1}^2+u_i^2) -
  \frac{b_i}{2}(u_i^2+u_{i+1}^2) \bigg) \\ 
&= \sum_{i=1}^{n-2} (a_i-b_{i-1}-b_i) u_i^2 \\
&= \frac{1}{2} \sum_{i=1}^{n-2} q_i u_i^2 + \frac{x_2-x_1}{6} u_1^2 +
 \frac{x_{n-1}-x_{n-2}}{6} u_{n-2}^2 \\
&\geq \frac{1}{3} \sum_{i=1}^{n-2} q_i u_i^2,
\end{align*}
where in the third line we used the fact that $a_i - b_{i-1} - b_i = q_i/3$, for
$i=1,\ldots,n-2$. This shows that we can take $1/\sigma=1/3$, that is,
$\sigma=3$, which completes the proof.

\subsection{Proof of Lemma \ref{lem:ffb_sobolev_vmat_m1}}
\label{app:ffb_sobolev_vmat_m1}

To keep indexing simple in the current case of $m=1$, we will compute the
entries of the matrix $\Q$ in \eqref{eq:ffb_sobolev_qmat}, then carry out
the recursion \eqref{eq:ffb_sobolev_umat1}--\eqref{eq:ffb_sobolev_umat4}, and
the desired matrix $\V_n$ will be given be reading off the lower-right $(n-1)
\times (n-1)$ submatrix of the result. Consider $i \geq j$. For $i \geq 3$, 
observe that      
\begin{align*}
\Q_{ij} &= \int_a^b (Dh^1_i)(x) (Dh^1_j)(x) \, dx \\
&= \int_a^b 1\{x > x_{i-1}\} \, dx \\
&= b-x_{i-1}.
\end{align*}
Meanwhile, for $i = 2$, by a similar calculation, $\Q_{ij}=b-a$. Therefore, 
introducing the convenient notation \smash{$\bar{x}_i = x_i$}
for $i \geq 3$ and \smash{$\bar{x}_i=a$} for $i = 2$, we get 
$$
\Q_{ij} = b - \bar{x}_{i-1},
$$
for all $i \geq 2$. We know that the result of the recursion in
\eqref{eq:ffb_sobolev_umat1}--\eqref{eq:ffb_sobolev_umat4} will be diagonal.
As $m=1$, this recursion reduces to simply \eqref{eq:ffb_sobolev_umat2},
\eqref{eq:ffb_sobolev_umat4}, which together give
\begin{align*}
\U^{1,1}_{ii} &= (\Q_{ii}-\Q_{i+1,i}) - (\Q_{i,i+1}-\Q_{i+1,i+1}) \\
&= \big((b - \bar{x}_{i-1}) - (b - \bar{x}_i)\big) - 
\big((b - \bar{x}_i) - (b - \bar{x}_i)\big) \\
&= \bar{x}_i - \bar{x}_{i-1}.
\end{align*}
This proves \eqref{eq:ffb_sobolev_vmat_m1} (recalling that this is written in
terms of \smash{$\V_n=\V^{1,1}$}, the lower-right $(n-1) \times (n-1)$ submatrix
of  \smash{$\U^{1,1}$}, and so for \eqref{eq:ffb_sobolev_vmat_m1} we simply
replace $i$ with $i+1$).   

\newpage
\section{B-splines and discrete B-splines}
\label{app:bs_dbs}

\subsection{B-splines}
\label{app:bs}

Though the truncated power basis \eqref{eq:tpb} is the simplest basis for
splines, the {\it B-spline basis} is just as fundamental, as it was ``there at
the very beginning'', appearing in Schoenberg's original paper on splines  
\citep{schoenberg1946contributions1}. Here we are quoting
\citet{deboor1976splines}, who gives a masterful survey of the history and
properties of B-splines (and points out that the name ``B-spline'' is derived
from Schoenberg's use of the term ``basic spline'', to further advocate for the
idea that B-splines can be seen as {\it the} basis for splines). A key feature
of B-splines is that they have local support, and are thus extremely useful for
computational purposes.

\paragraph{Peano representation.}

There are different ways to construct B-splines; here we cover a construction
based on what is called the {\it Peano representation} for B-splines (see, for 
example, Theorem 4.23 in \citet{schumaker2007spline}). If $f$ is a $k+1$ times
differentiable function $f$ on an interval $[a,b]$ (and its $(k+1)$st derivative
is integrable), then by Taylor expansion
$$
f(z) = \sum_{i=0}^k \frac{1}{i!} (D^i f)(a) (z-a)^i + 
\int_a^z \frac{1}{k!} (D^{k+1} f)(x) (z-x)^k \, dx.
$$
Note that we can rewrite this as
$$
f(z) = \sum_{i=0}^k \frac{1}{i!} (D^i f)(a) (z-a)^i + 
\int_a^b \frac{1}{k!} (D^{k+1} f)(x) (z-x)^k_+ \, dx. 
$$
Next we take a divided difference with respect to arbitrary centers 
$z_1,\ldots,z_{k+2} \in [a,b]$, where we assume without a loss of generality
that $z_1 < \cdots < z_{k+2}$. Then by linearity we can exchange divided
differentiation with integration, yielding 
\begin{equation}
\label{eq:peano}
k! \cdot f[z_1,\ldots,z_{k+2}] = \int_a^b (D^{k+1} f)(x)
\underbrace{(\cdot-x)^k_+[z_1,\ldots,z_{k+2}]}_{P^k(x; z_{1:(k+2)})} \, dx, 
\end{equation}
where we have also used the fact that a $(k+1)$st order divided difference (with
respect to any $k+2$ centers) of a $k$th degree polynomial is zero (for example,
see \eqref{eq:deriv_match_poly}), and lastly, we multiplied both sides by  
$k!$. To be clear, the notation \smash{$(\cdot - x)^k_+[z_1,\ldots,z_{k+2}]$}
means that we are taking the divided difference of the function \smash{$z
  \mapsto (z - x)^k_+$} with respect to centers $z_1,\ldots,z_{k+2}$.  

\paragraph{B-spline definition.}  

The result in \eqref{eq:peano} shows that the $(k+1)$st divided difference of
any (smooth enough) function $f$ can be written as a weighted average of
its $(k+1)$st derivative, in a local neighborhood around the corresponding
centers, where the weighting is given by a universal kernel \smash{$P^k(\cdot;
  z_{1:(k+2)})$} (that does not depend on $f$), which is called the {\it Peano 
  kernel} formulation for the B-spline; to be explicit, this is
\begin{equation}
\label{eq:bs_orig}
P^k(x; z_{1:(k+2)}) = (\cdot - x)^k_+[z_1,\ldots,z_{k+2}].
\end{equation}
Since 
$$
(z-x)^k_+ - (-1)^{k+1} (x-z)^k_+ =  (z-x)^k,
$$
and any $(k+1)$st order divided difference of the $k$th degree polynomial $z
\mapsto (z-x)^k$ is zero, we can rewrite the above \eqref{eq:bs_orig} as:  
\begin{equation}
\label{eq:bs}
P^k(x; z_{1:(k+2)}) = (-1)^{k+1} (x - \cdot)^k_+[z_1,\ldots,z_{k+2}].
\end{equation}
The function \smash{$P^k(\cdot; z_{1:(k+2)})$} is called a $k$th degree {\it
  B-spline} with knots $z_{1:(k+2)}$. It is a linear combination of $k$th
degree truncated power functions and is hence indeed a $k$th degree spline.  

It is often more convenient to deal with the {\it normalized B-spline}:
\begin{equation}
\label{eq:nbs}
M^k(x; z_{1:(k+2)}) = (-1)^{k+1} (z_{k+2}-z_1) 
(x - \cdot)^k_+[z_1,\ldots,z_{k+2}]. 
\end{equation}
It is easy to show that 
\begin{equation}
\label{eq:nbs_supp}
\text{$M^k(\cdot; z_{1:(k+2)})$ is supported on $[z_1,z_{k+2}]$, and  
$M^k(x; z_{1:(k+2)})>0$ for $x \in (z_1,z_{k+2})$}.  
\end{equation}
To see the support result, note that for $x > z_{k+2}$, we are taking a divided
difference of all zeros, which of course zero, and for $x < z_1$, we are taking 
a $(k+1)$st order divided difference of a polynomial of degree $k$, which is
again zero. To see the positivity result, we can, for example, appeal to 
induction on $k$ and the recursion to come later in \eqref{eq:nbs_rec}.  


\paragraph{B-spline basis.}  

To build a local basis for $\S^k(t_{1:r}, [a,b])$, the space of $k$th degree
splines with knots $t_{1:r}$, where we assume $a < t_1 < \cdots < t_r < b$, 
we first define boundary knots  
$$
t_{-k} < \cdots < t_{-1} < t_0 = a, \quad \text{and} \quad 
b = t_{r+1} < t_{r+2} < \cdots < t_{r+k+1}. 
$$
(Any such values for $t_{-k},\ldots,t_0$ and $t_{r+1},\ldots,t_{r+k+1}$ will
suffice to produce a basis; in fact, setting $t_{-k}=\cdots=t_0$ and
$t_{r+1}=\cdots=t_{r+k+1}$ would suffice, though this would require us to 
understand how to properly interpret divided differences with repeated centers;
as in Definition 2.49 of \citet{schumaker2007spline}.)  We then define the
normalized B-spline basis \smash{$M^k_j$}, $j=1,\ldots,r+k+1$ for  
$\S^k(t_{1:r},[a,b])$ by   
\begin{equation}
\label{eq:nbsb}
M^k_j = M^k(\cdot ; t_{(j-k-1):j}) \Big|_{[a,b]}, 
\quad j=1,\ldots,r+k+1. 
\end{equation}
It is clear that each \smash{$M^k_j$}, $j=1,\ldots,r+k+1$ is a $k$th degree    
spline with knots in $t_{1:r}$; hence to verify that they are a basis for
\smash{$\S^k(t_{1:r},[a,b])$}, we only need to show their linear independence, 
which is straightforward using the structure of their supports (for example, see  
Theorem 4.18 of \citet{schumaker2007spline}).

For concreteness, we note that the 0th degree normalized B-splines basis for
$\S^0(t_{1:r}, [a,b])$ is simply  
\begin{equation}
\label{eq:nbsb_k0}
M^0_j = 1_{I_j}, \quad j=1,\ldots,r+1.
\end{equation}
Here $I_0=[t_0,t_1]$ and $I_i=(t_i,t_{i+1}]$, $i=1,\ldots,r$, and we use
$t_{r+1}=b$ for notational convenience. We note that this particular choice for
the half-open intervals (left- versus right-side open) is arbitrary, but
consistent with our definition of the truncated power basis \eqref{eq:tpb}
when $k=0$. Figure \ref{fig:bs} shows example normalized B-splines of degrees 0 
through 3.   

\paragraph{Recursive formulation.} 

B-splines satisfy a recursion relation that can be seen directly from the
recursive nature of divided differences: for any $k \geq 1$ and centers $z_1 <
\cdots < z_{k+2}$,  
\begin{align*}
(x - \cdot)^k_+ [z_1,\ldots,z_{k+2}]
&= \frac{(x - \cdot)^k_+[z_2,\ldots,z_{k+2}] - 
(x -\cdot)^k_+[z_1,\ldots,z_{k+1}]}{z_{k+2} - z_1} \\
&= \frac{(x-z_{k+2})(x - \cdot)^{k-1}_+[z_2,\ldots,z_{k+2}] 
- (x-z_1) (x - \cdot)^{k-1}_+[z_1,\ldots,z_{k+1}] }{z_{k+2} - z_1},  
\end{align*} 
where in the second line we applied the Leibniz rule for divided differences 
(for example, Theorem 2.52 of \citet{schumaker2007spline}),
\smash{$fg [z_1,\ldots,z_{k+1}] = \sum_{i=1}^{k+1} f[z_1,\ldots,z_i] 
g[z_i,\ldots,z_{k+1}]$}, to conclude that 
\begin{align*}
(x - \cdot)^k_+ [z_1,\ldots,z_{k+1}] &= (x-z_1) \cdot 
(x - \cdot)^{k-1}_+ [z_1,\ldots,z_{k+1}] \\
(x - \cdot)^k_+ [z_2,\ldots,z_{k+2}] &= (x - \cdot)^{k-1}_+ 
  [z_2,\ldots,z_{k+2}] \cdot (x-z_{k+2}). 
\end{align*}
Translating the above recursion over to normalized B-splines, we get 
\begin{equation}
\label{eq:nbs_rec}
M^k(x; z_{1:(k+2)}) = \frac{x-z_1}{z_{k+1}-z_1} \cdot 
M^{k-1}(x; z_{1:(k+1)}) + \frac{z_{k+2}-x}{z_{k+2}-z_2} \cdot 
M^{k-1}(x; z_{2:(k+2)}),  
\end{equation}
which means that for the normalized basis, 
\begin{equation}
\label{eq:nbsb_rec}
M^k_j(x) = \frac{x-t_{j-k-1}}{t_{j-1}-t_{j-k-1}} \cdot
M^{k-1}_{j-1}(x) + \frac{t_j-x}{t_j-t_{j-k}} \cdot M^{k-1}_j(x), 
\quad j=1,\ldots,r+k+1.  
\end{equation}
Above, we naturally interpret \smash{$M^{k-1}_0 = M^{k-1}(\cdot; 
  t_{-k:0})|_{[a,b]}$} and \smash{$M^{k-1}_{r+k+1} = M^{k-1}(\cdot;   
  t_{(r+1):(r+k+1)})|_{[a,b]}$}. 

The above recursions are very important, both for verifying numerous properties
of B-splines and for computational purposes. In fact, many authors prefer to
use recursion to define a B-spline basis in the first place: they start with
\eqref{eq:nbsb_k0} for $k=0$, and then invoke \eqref{eq:nbsb_rec} for all  
$k \geq 1$. 

\subsection{Discrete B-splines}
\label{app:discrete_bs_even}

Here we will assume the design points are evenly-spaced, taking the form
$[a,b]_v=\{a,a+v,\ldots,b\}$ for $v>0$ and $b=a+Nv$. As covered in Chapter 8.5
of \citet{schumaker2007spline}, in this evenly-spaced case, discrete B-splines
can be developed in a similar fashion to B-splines. Below we will jump directly
into defining the discrete B-spline, which is at face value just a small 
variation on the definition of the usual B-spline given above. 
Chapter 8.5 of \citet{schumaker2007spline} develops several properties 
for discrete B-splines (for evenly-spaced design points)---such as a Peano
kernel result for the discrete B-spline, with respect to a discrete
integral---that we do not cover here, for simplicity.

\paragraph{Discrete B-spline definition.}

Let $z_{1:(k+2)} \subseteq [a,b]_v$. Assume without a loss of generality   
that $z_1 < \cdots < z_{k+2}$, and also $z_{k+2} \leq b-kv$. We define 
the $k$th degree {\it discrete B-spline} or DB-spline with knots
$z_1,\ldots,z_{k+2}$ by  
\begin{equation}
\label{eq:discrete_bs_even_orig}
U^k(x; z_{1:(k+2)}) = \Big((\cdot - x)^{k,v} \cdot 1\{\cdot > x\}\Big) 
[z_1,\ldots,z_{k+2}],   
\end{equation}
where now we denote by $(z)^{k,v} = z (z+v) \cdots (z+(k-1)v)$ the rising 
factorial polynomial of degree $k$ with gap $v$, which we take to be equal
to 1 when $k=0$. To be clear, the notation $((\cdot - x)^{k,v} \cdot 
1\{\cdot > x\}) [z_1,\ldots,z_{k+2}]$ means that we are taking the divided
difference of the function $z \mapsto (z - x)^{k,v} \cdot 1\{z>x\}$ with 
respect to the centers $z_1,\ldots,z_{k+2}$.
Since 
$$
(z-x)^{k,v} \cdot 1\{z>x\} - (-1)^{k+1} (x-z)_{k,v} \cdot 1\{x>z\}  =
(z-x)^{k,v}, 
$$
and any $(k+1)$st order divided difference of the $k$th degree polynomial $z
\mapsto (z-x)^{k,v}$ is zero, we can equivalently rewrite
\eqref{eq:discrete_bs_even_orig} as:    
\begin{equation}
\label{eq:discrete_bs_even}
U^k(x; z_{1:(k+2)}) = (-1)^{k+1} \Big((x - \cdot)_{k,v} \cdot 1\{x >
\cdot\}\Big) [z_1,\ldots,z_{k+2}].  
\end{equation}
We see \eqref{eq:discrete_bs_even} is just as in the usual B-spline definition 
\eqref{eq:bs}, but with a truncated falling factorial polynomial instead of a
truncated power function. Also, note \smash{$U^k(\cdot; z_{1:(k+2)})$} is a
linear combination of $k$th degree truncated falling factorial polynomials and
is hence a $k$th degree discrete spline. 

\begin{figure}[p]
\centering
\includegraphics[width=0.495\textwidth]{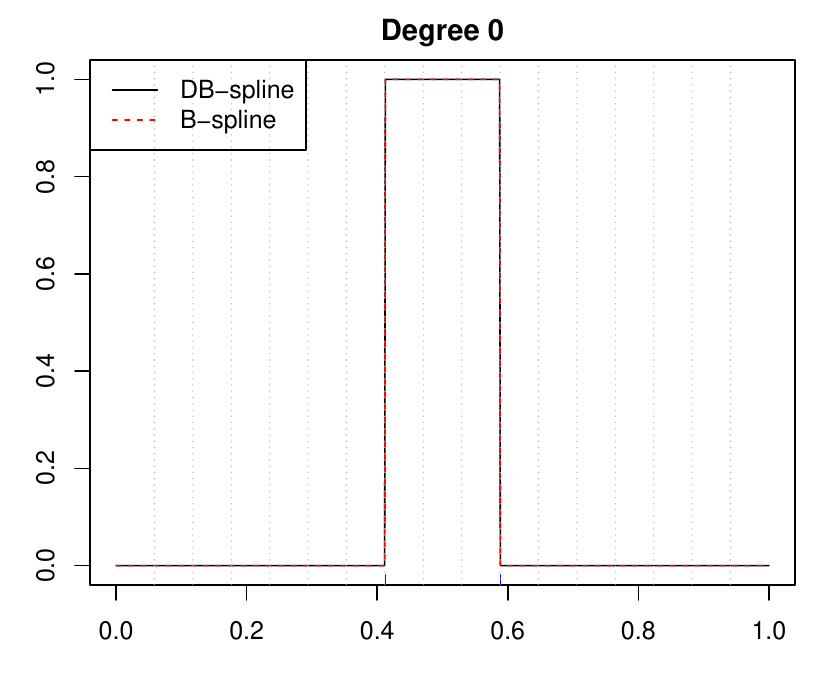} 
\includegraphics[width=0.495\textwidth]{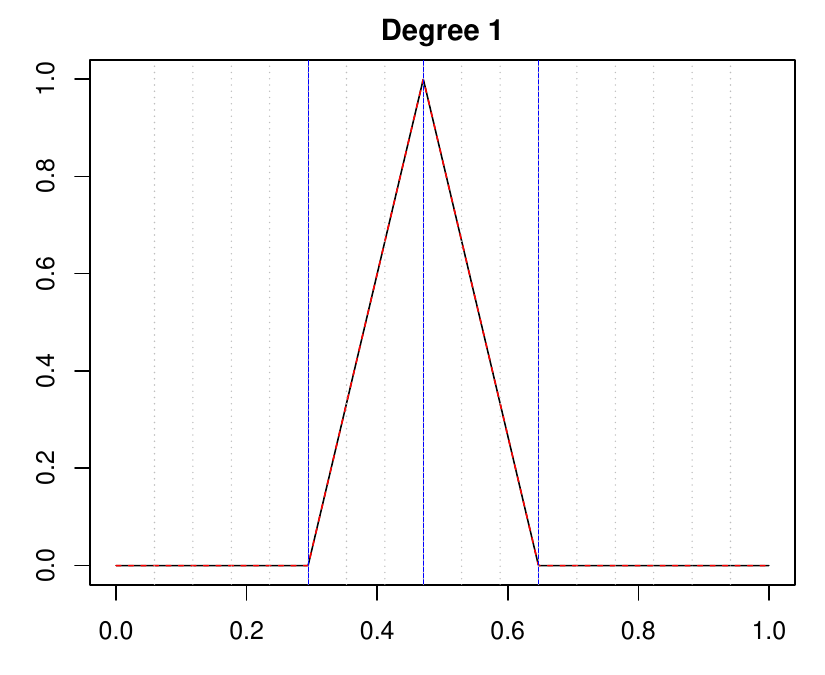} 
\includegraphics[width=0.495\textwidth]{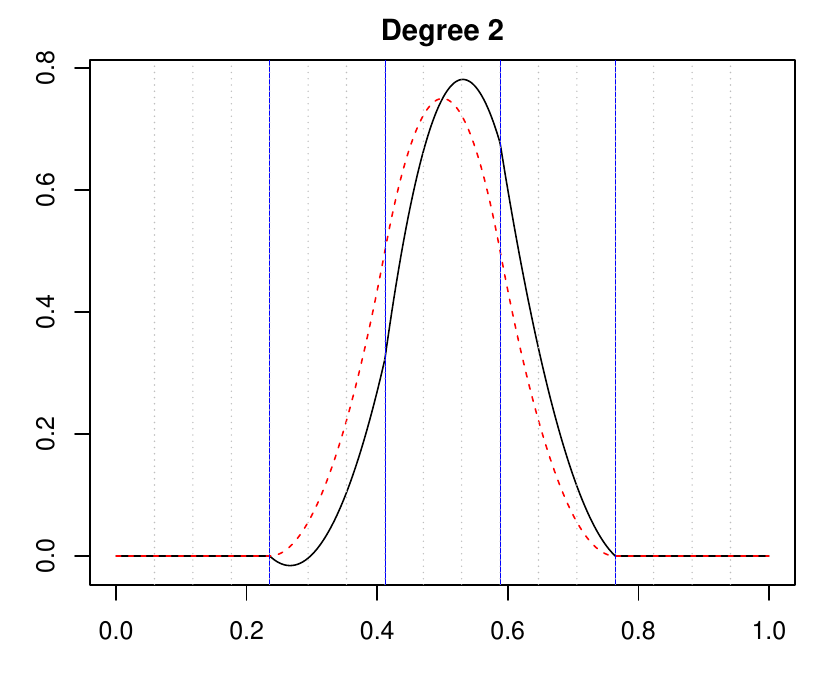} 
\includegraphics[width=0.495\textwidth]{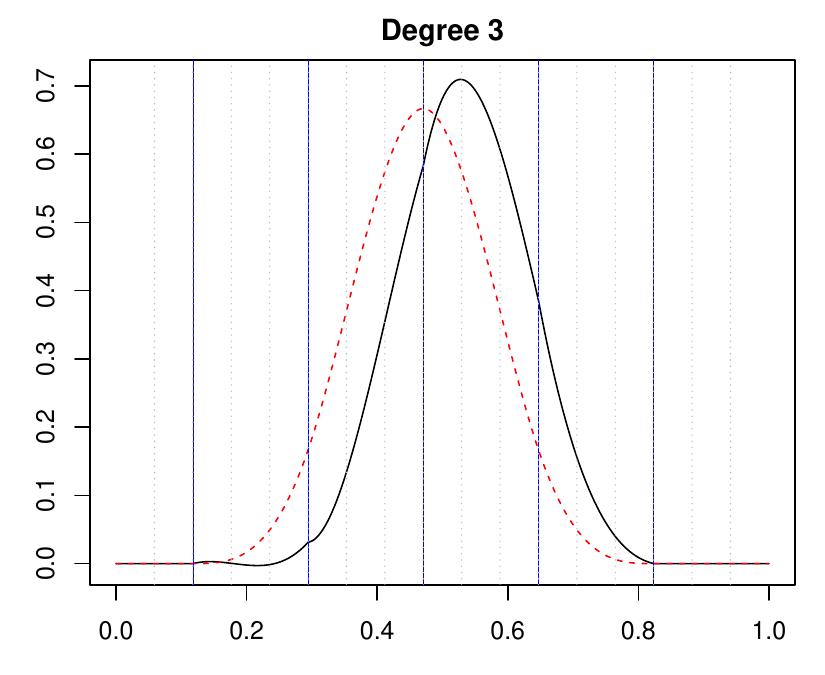} 
\includegraphics[width=0.495\textwidth]{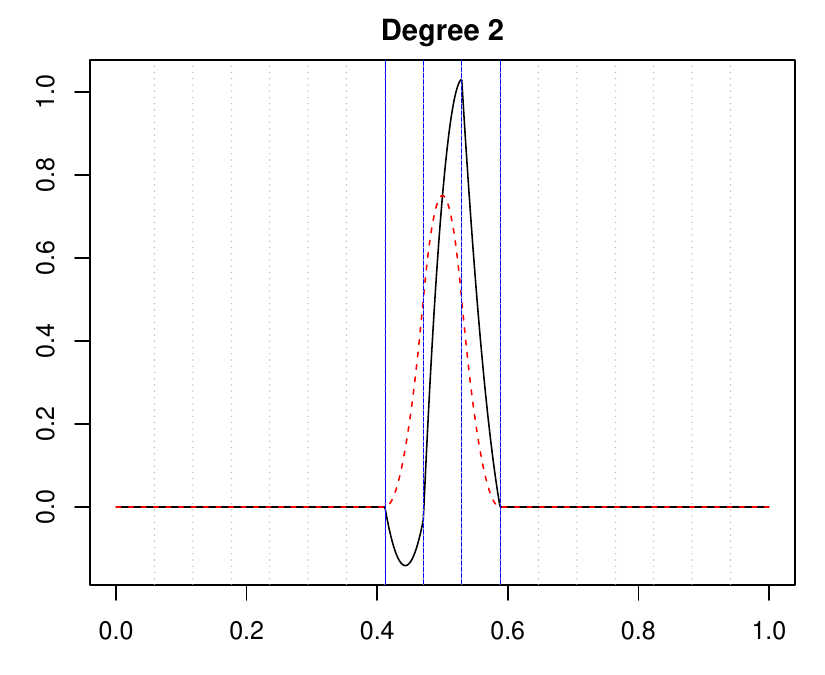} 
\includegraphics[width=0.495\textwidth]{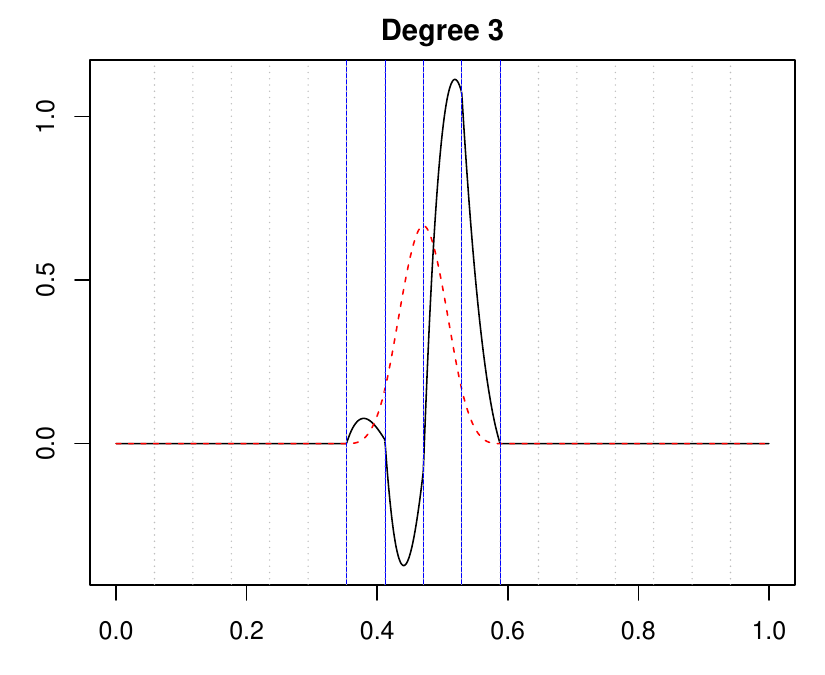} 
\caption{\small Normalized DB-splines in black, and normalized B-splines in
  dashed red, of degrees 0 through 3. In each example, the $n=16$ design points
  are evenly-spaced between 0 and 1, and marked by dotted vertical lines. The
  knot points are marked by blue vertical lines (except for $k=0$, as here these
  would obscure the B-splines, so in this case we use small blue ticks on the
  horizontal axis). In the bottom row, the knots are closer together; we can see
  that the DB-splines of degrees 2 and 3 have negative ``ripples'' near their
  leftmost knots, which is much more noticeable when the knot points are 
  closer together.}    
\label{fig:bs}
\end{figure}

As before, it is convenient to define the {\it normalized discrete B-spline} or
normalized DB-spline:
\begin{equation}
\label{eq:discrete_nbs_even}
V^k(x; z_{1:(k+2)}) = (-1)^{k+1} (z_{k+2}-z_1) \Big((x - \cdot)_{k,v}
\cdot 1\{x > \cdot\}\Big) [z_1,\ldots,z_{k+2}].  
\end{equation}
We must emphasize that
$$
V^k(x; z_{1:(k+2)}) = M^k(x; z_{1:(k+2)}) \quad 
\text{for $k=0$ or $k=1$} 
$$
(and the same for the unnormalized versions). This should not be a surprise, as
discrete splines are themselves exactly splines for degrees $k=0$ and
$k=1$. Back to a general degree $k \geq 0$, it is easy to show that    
\begin{equation}
\label{eq:discrete_nbs_even_supp}
\text{$V^k(\cdot; z_{1:(k+2)})$ is supported on $[z_1,z_{k+2}]$}. 
\end{equation}
Curiously, $V^k(\cdot; z_{1:(k+2)})$ is no longer positive on the whole interval
$(z_1,z_{k+2})$: for $k \geq 2$, it has a negative ``ripple'' close to the 
leftmost knot $z_1$. This is more pronounced when the knots are closer together
(separated by fewer design points), see Figure \ref{fig:bs}.

\paragraph{Discrete B-spline basis.}

To develop a local basis for \smash{$\DS^k_v(t_{1:r}, [a,b]_v)$}, the space of 
$k$th degree discrete splines with knots in $t_{1:r}$, where $a < t_1 < 
\cdots < t_r < b$, and also $t_{1:r} \subseteq [a,b]_v$ and $t_r \leq b-kv$, 
we first define boundary knots    
$$
t_{-k} < \cdots < t_{-1} < t_0 = a, \quad \text{and} \quad 
b = t_{r+1} < t_{r+2} < \cdots < t_{r+k+1},
$$
as before. We then define the normalized discrete B-spline basis
\smash{$V^k_j$}, $j=1,\ldots,r+k+1$ for \smash{$\DS^k_v(t_{1:r}, [a,b]_v)$}
by   
\begin{equation}
\label{discrete_nbsb_even}
V^k_j = V^k(\cdot; t_{(j-k-1):j}) \Big|_{[a,b]}, 
\quad j=1,\ldots,r+k+1. 
\end{equation}
It is clear that each \smash{$V^k_j$}, $j=1,\ldots,r+k+1$ is a $k$th
degree discrete spline with knots in $t_{1:r}$; hence to verify that they form a
basis for \smash{$\DS^k_n(t_{1:r}, [a,b])$}, we only need to show their linear 
independence, which follows from similar arguments to the result for the usual
B-splines (see also Theorem 8.55 of \citet{schumaker2007spline}). 

\paragraph{Recursive formulation.} 

To derive a recursion for discrete B-splines, we proceed as in the usual
B-spline case, using the recursion that underlies divided differences: for any
$k \geq 1$ and centers $z_1 < \cdots < z_{k+2}$ (such that $z_{1:(k+2)}
\subseteq [a,b]_v$ and $z_{k+2} \leq b-kv$), 
\begin{align*}
\Big((x - \cdot)_{k,v} \cdot 1\{x > \cdot\}\Big) [z_1,&\ldots,z_{k+2}] \\ 
&= \frac{((x - \cdot)_{k,v} \cdot 1\{x > \cdot\}) [z_2,\ldots,z_{k+2}] - 
((x - \cdot)_{k,v} \cdot 1\{x > \cdot\}) [z_1,\ldots,z_{k+1}]}{z_{k+2}-z_1} \\ 
&= \bigg((x-z_{k+2}-(k-1)v) \cdot ((x - \cdot)_{k-1,v} \cdot 1\{x > \cdot\})
  [z_2,\ldots,z_{k+2}] \\
&\quad -  (x-z_1-(k-1)v) \cdot ((x - \cdot)_{k-1,v} \cdot 1\{x > 
  \cdot\}) [z_1,\ldots,z_{k+1}]\bigg) / (z_{k+2}-z_1),
\end{align*}
where as before, in the second line, we applied the Leibniz rule for divided 
differences to conclude 
\begin{align*}
\Big((x - \cdot)_{k,v} \cdot 1\{x > \cdot\}\Big)[z_1,\ldots,z_{k+1}]
&= \big(x-z_1-(k-1)v\big) \cdot \Big((x - \cdot)_{k-1,v} \cdot 1\{x >
  \cdot\}\Big) [z_1,\ldots,z_{k+1}] \\
\Big((x - \cdot)_{k,v} \cdot 1\{x > \cdot\}\Big)[z_2,\ldots,z_{k+2}]
&= \Big((x - \cdot)_{k-1,v} \cdot 1\{x > \cdot\}\Big) [z_1,\ldots,z_{k+1}] 
  \cdot \big(x-z_{k+2}-(k-1)v\big).
\end{align*}
Translating the above recursion over normalized DB-splines, we get 
\begin{equation}
\label{eq:discrete_nbs_even_rec}
V^k(x; z_{1:(k+2)}) = \frac{x-z_1-(k-1)v}{z_{k+1}-z_1} \cdot
V^{k-1}(x; z_{1:(k+1)}) + \frac{z_{k+2}+(k-1)v - x}{z_{k+2}-z_2} \cdot
V^{k-1}(x; z_{2:(k+2)}), 
\end{equation}
which means that for the normalized basis, 
\begin{equation}
\label{eq:discrete_nbsb_even_rec}
V^k_j(x) = \frac{x-t_{j-k-1}-(k-1)v}{t_{j-1}-t_{j-k-1}} \cdot
V^{k-1}_{j-1}(x) + \frac{t_j+(k-1)v-x}{t_j-t_{j-k}} \cdot 
V^{k-1}_j(x), \quad j=1,\ldots,r+k+1.    
\end{equation}
Above, we naturally interpret \smash{$V^{k-1}_0 = V^{k-1}(\cdot;  
  t_{-k:0})|_{[a,b]}$} and \smash{$V^{k-1}_{r+k+1} = V^{k-1}(\cdot;   
  t_{(r+1):(r+k+1)})|_{[a,b]}$}. 

\newpage 
\section{Fast matrix multiplication}
\label{app:fast_mult}

We recall the details of the algorithms from \citet{wang2014falling} for fast 
multiplication by 
\smash{$\H^k_n, (\H^k_n)^{-1},(\H^k_n)^\T,(\H^k_n)^{-\T}$}, in Algorithms
\ref{alg:hk}--\ref{alg:hkt_inv}. In each case, multiplication takes $O(nk)$
operations (at most $4nk$ operations), and is done in-place (no new memory
required). We use $\cumsum$ to denote the cumulative sum operator,  
$\cumsum(v) = (v_1, v_1+v_2, \ldots,  v_1+\cdots+v_n)$, for $v \in \R^n$, and 
$\diff$ for the pairwise difference operator, $\diff(v) = (v_2-v_1, v_3-v_2,
\ldots, v_n-v_{n-1})$. We also use $\rev$ for the reverse operator, $\rev(v) =
(v_n,\ldots,v_1)$, and $\odot$ for elementwise multiplication between vectors.     

\begin{algorithm}[h]
\caption{Multiplication by $\H^k_n$}
\label{alg:hk}
\begin{algorithmic}
\STATE {\bf Input:} Integer degree $k \geq 0$, design points $x_{1:n}$ 
(assumed in sorted order), vector to be multiplied $v \in \R^n$. 
\STATE {\bf Output:} $v$ is overwritten by \smash{$\H^k_n v$}. 
\FOR {$i=k$ to $0$}
\STATE $v_{(i+1):n} = \cumsum(v_{(i+1):n})$
\IF {$i \neq 0$}
\STATE $v_{(i+1):n} = v_{(i+1):n} \odot \frac{x_{(i+1):n}-x_{1:(n-i)}}{i}$ 
\ENDIF
\ENDFOR
\STATE Return $v$.
\end{algorithmic}
\end{algorithm}

\vspace{-12pt}
\begin{algorithm}[h]
\caption{Multiplication by $(\H^k_n)^{-1}$}
\label{alg:hk_inv}
\begin{algorithmic}
\STATE {\bf Input:} Integer degree $k \geq 0$, design points $x_{1:n}$  
(assumed in sorted order), vector to be multiplied $v \in \R^n$. 
\STATE {\bf Output:} $v$ is overwritten by \smash{$(\H^k_n)^{-1} v$}. 
\FOR {$i=0$ to $k$}
\IF {$i \neq 0$}
\STATE $v_{(i+1):n} = v_{(i+1):n} \odot \frac{i}{x_{(i+1):n}-x_{1:(n-i)}}$
\ENDIF
\STATE $v_{(i+2):n} = \diff(v_{(i+1):n})$
\ENDFOR
\STATE Return $v$.
\end{algorithmic}
\end{algorithm}

\vspace{-12pt}
\begin{algorithm}[h]
\caption{Multiplication by $(\H^k_n)^\T$}
\label{alg:hkt}
\begin{algorithmic}
\STATE {\bf Input:} Integer degree $k \geq 0$, design points $x_{1:n}$  
(assumed in sorted order), vector to be multiplied $v \in \R^n$. 
\STATE {\bf Output:} $v$ is overwritten by \smash{$(\H^k_n)^{-1} v$}. 
\FOR {$i=0$ to $k$}
\IF {$i \neq 0$}
\STATE $v_{(i+1):n} = v_{(i+1):n} \odot \frac{x_{(i+1):n}-x_{1:(n-i)}}{i}$ 
\ENDIF
\STATE $v_{(i+1):n} x= \rev(\cumsum(\rev(v_{(i+1):n})))$
\ENDFOR
\STATE Return $v$.
\end{algorithmic}
\end{algorithm}

\vspace{-12pt}
\begin{algorithm}[h!]
\caption{Multiplication by $(\H^k_n)^{-\T}$}
\label{alg:hkt_inv}
\begin{algorithmic}
\STATE {\bf Input:} Integer degree $k \geq 0$, design points $x_{1:n}$  
(assumed in sorted order), vector to be multiplied $v \in \R^n$. 
\STATE {\bf Output:} $v$ is overwritten by \smash{$(\H^k_n)^{-\T} v$}. 
\FOR {$i=k$ to $0$}
\STATE $v_{(i+1):n-1}= \rev(\diff(\rev(v_{(i+1):n})))$
\IF {$i \neq 0$}
\STATE $v_{(i+1):n} \odot \frac{i}{x_{(i+1):n}-x_{1:(n-i)}}$
\ENDIF
\ENDFOR
\STATE Return $v$.
\end{algorithmic}
\end{algorithm}

\newpage
\RaggedRight 
\bibliographystyle{plainnat}
\bibliography{ryantibs}

\end{document}